\newtheorem{thm}{Theorem}[section]
\newtheorem{lemma}[thm]{Lemma}
\newtheorem{cor}[thm]{Corollary}
\newtheorem{prop}[thm]{Proposition}
\theoremstyle{definition}
\newtheorem{defn}[thm]{Definition}
\newtheorem*{ack}{Acknowledgements}
\newtheorem{remark}[thm]{Remark}
\newtheorem{example}[thm]{Example}
\newtheorem*{theorem*}{Theorem}
\newtheorem*{namedtheorem}{\theoremname}
\newcommand{\theoremname}{testing}
\newcommand{\N}{\mathbb{N}}
\newcommand{\Z}{\mathbb{Z}}
\newcommand{\R}{\mathbb{R}}
\newcommand{\C}{\mathbb{C}}
\newcommand{\s}{\mathbb{S}}
\newcommand{\HF}{\mathrm{HF}}
\newcommand{\norm}[1]{\left\lVert#1\right\rVert}
\DeclareMathOperator{\diam}{diam}
\DeclareMathOperator{\nmod}{mod_n}
\DeclareMathOperator{\dist}{dist}
\DeclareMathOperator{\Hdist}{\dist_{\mathcal{H}}}
\begin{document}

\title[Branched quasisymmetries]{Vertical quasi-isometries and branched quasisymmetries}
\author{Jeff Lindquist and Pekka Pankka}
\address{Department of Mathematical Sciences, University of Cincinnati, P.O.Box 210025}
\email{lindqujy@ucmail.uc.edu}
\address{P.O. Box 64 (Pietari Kalmin katu 5), FI-00014 University of Helsinki, Finland}
\email{pekka.pankka@helsinki.fi}

\begin{abstract}
We introduce a class of mappings called vertical quasi-isometries and show that  branched quasisymmetries $X\to Y$ of Guo and Williams between compact, bounded turning metric doubling spaces admit natural vertically quasi-isometric extensions $\widehat X\to \widehat Y$ between hyperbolic fillings $\widehat X$ and $\widehat Y$ of $X$ and $Y$, respectively. We also give a converse for this result by showing that a finite multiplicity vertical quasi-isometry $\widehat X \to \widehat Y$ between hyperbolic fillings induces a branched quasisymmetry $X \to Y$. 
\end{abstract}

\thanks{J.L. was partially supported by the Academy of Finland projects \#297258 and \#308759. P.P. was partially supported by the Academy of Finland project \#297258 and by the Simons Semester \emph{Geometry and analysis in function and mapping theory on Euclidean and metric measure spaces} at IMPAN, Warsaw.}

\maketitle

\tableofcontents

\section{Introduction}
\label{sec:intro}

In this article, we consider a non-injective counterpart for the classical correspondence of quasi-isometries between hyperbolic spaces and quasiconformal homeomorphisms between their boundaries. In the homeomorphic case, this correspondence has long history which goes back to Mostow's rigidity theorem \cite{Mostow}. 

Recall that a mapping $f\colon X\to Y$ between metric spaces $(X,d_X)$ and $(Y,d_Y)$ is a \emph{quasi-isometry} if there exist constants $\alpha \ge 1$ and $\beta >0$ satisfying
\begin{equation}
\label{QI}\tag{QI}
\frac{1}{\alpha} d_X(x,x') - \beta \le d_Y(f(x),f(x')) \le \alpha d_X(x,x') + \beta
\end{equation}
for all $x,x'\in X$ and such that for every point $y \in Y$, there exists a point $x \in X$ with $d_Y(f(x), y) \leq \beta$ (i.e. $f$ is cobounded). A mapping $f\colon X\to Y$ is \emph{cobounded} if there exists $C>0$ for which $B_Y(fX,C) = Y$.

A homeomorphism $f \colon X\to Y$ between metric spaces $(X,d_X)$ and $(Y,d_Y)$ is \emph{quasiconformal} if there exists a distortion constant $H\ge 1$ for which, for all $x\in X$, 
\begin{equation}
\label{QC}\tag{QC}
\lim_{r \to 0} \frac{\sup_{y\in B_X(x,r)} d_Y(f(x),f(y))}{\inf_{y\in X\setminus B_X(x,r)} d_Y(f(x),f(y))} \le H;
\end{equation}
this is the so-called \emph{metric definition of quasiconformality}. 

We refer to a recent survey of Bourdon \cite{Bourdon} on the vast literature related to Mostow's theorem and merely comment here that the correspondence we allude to is as follows. On one direction, the boundary map $\partial F \colon \s^{n-1}\to \s^{n-1}$ induced by a quasi-isometry $F \colon \mathbb H^n \to \mathbb H^n$ is quasiconformal, quantitatively. To the other direction, each quasiconformal map $f\colon \s^{n-1}\to \s^{n-1}$ is an extension of a quasi-isometry $F\colon \mathbb H^n \to \mathbb H^n$ of the hyperbolic $n$-space. 

As the generality of these definitions hints, the correspondence between these classes of maps is understood in a more general setting of metric spaces. Roughly speaking, under mild conditions on spaces, quasi-isometries of Gromov hyperbolic spaces correspond to quasiconformal maps between their Gromov boundaries. We do not attempt to discuss the various assumptions here and merely refer the interest reader here to e.g.\;~Gromov's seminal paper \cite{Gromov} on Gromov hyperbolic spaces, Bonk--Schramm \cite{Bonk-Schramm}, and Paulin \cite{Paulin}, or again to a survey of Bourdon \cite{Bourdon}, or to the monographs Ghys--de la Harpe \cite{GH} and Drutu--Kapovich \cite{Drutu-Kapovich}.

Instead of considering quasiconformal maps $\partial_\infty X\to \partial_\infty Y$ between the Gromov boundaries, we take as our starting point the class of maps called branched quasisymmetries introduced by Guo and Williams \cite{GW}. As we discuss shortly, this class of mappings is a generalization of quasiregular mappings between Riemannian manifolds, which in turn are a non-injective counterpart of quasiconformal maps.

Since branched quasisymmetries are not injective, even locally, the maps which they induce between hyperbolic fillings are not quasi-isometric. For this reason we introduce a class of maps called vertical quasi-isometries and show that the correspondence of vertical quasi-isometries and branched quasisymmetries is analogous to the one of quasi-isometries and quasiconformal maps.

Before discussing branched quasisymmetries and vertical quasi-isometries in more detail, we briefly recall the relationships between quasiconformal, quasisymmetric, and quasiregular maps.

A homeomorphism $f\colon X\to Y$ is \emph{quasisymmetric} if there exists a homeomorphism $\eta \colon [0,\infty) \to [0,\infty)$ satisfying
\begin{equation}
\label{QS}\tag{QS}
d_Y(f(x),f(x'')) \le \eta\left( \frac{d_X(x,x'')}{d_X(x,x')} \right) d_Y(f(x),f(x'))
\end{equation}
for all triples $x,x',x''\in X$ of distinct points. Quasisymmetric homeomorphisms are always quasiconformal and for a large class of spaces, called Loewner spaces, these two classes of homeomorphisms agree. We refer to Heinonen and Koskela \cite{HK} and a monograph of Heinonen \cite{He} for a detailed discussion. 

Quasiconformal homeomorphisms between smooth manifolds admit also an analytic characterization, which we may take as a definition also for maps which are not homeomorphisms. A continuous map $f\colon M \to N$ between Riemannian $n$-manifolds is \emph{quasiregular} if $f$ is in the Sobolev class $W^{1,n}_{\mathrm{loc}}(M,N)$ and there exists a constant $K\ge 1$ for which the distortion inequality
\begin{equation}
\label{QR}\tag{QR}
\norm{Df(x)}^n \le K J_f(x) 
\end{equation}
holds for almost every $x\in M$. Here $Df(x) \colon T_xM \to T_{f(x)}N$ is the distributional derivative of $f$, $\norm{Df(x)}$ is the operator norm of $Df$, and $J_f(x) = \det Df(x)$ the Jacobian determinant. Note that there is no injectivity condition on the mapping $f$ and typically $f$ is not even locally injective. However, if $f$ is assumed to be a homeomorphism then $f$ is quasiconformal in the sense of the metric definition. We refer to Heinonen-Koskela \cite{Heinonen-Koskela_Invent} for a discussion on the relationship definitions of quasiconformal mappings.

Quasiregular mappings give a higher dimensional analogue for the holomorphic maps between Riemann surfaces. In particular, by a theorem of Reshetnyak, \emph{a non-constant quasiregular mapping between Riemannian $n$-manifolds is a discrete and open map}. Thus, by the {\v C}ernavski\u{\i}--V\"ais\"al\"a theorem \cite{Chernavskii, Vaisala} a non-constant quasiregular mapping between Riemannian $n$-manifolds is a local homeomorphism in a complement of a set of (topological) codimension $2$. We refer to monographs of Reshetnyak \cite{Reshetnyak} and Rickman \cite{Ri} for the theory of quasiregular mappings.

\subsection{Branched quasisymmetries and quasiregular maps}

We are now ready to the definition of branched quasisymmetries of Guo and Williams. A continuous mapping $f\colon X\to Y$ between metric spaces $X$ and $Y$ is a \emph{branched quasisymmetry} if there exists a homeomorphism $\eta \colon [0,\infty) \to [0,\infty)$ for which the distortion inequality
\begin{equation}
\label{BQS}\tag{BQS}
\diam fE' \le \eta\left( \frac{\diam E'}{\diam E}\right) \diam fE
\end{equation}
holds for all intersecting continua $E$ and $E'$ in $X$. Recall that a subset $E \subset X$ is a \emph{continuum} if $E$ is compact, connected, and consists of at least two points. In particular, a continuum has positive diameter. Note that, we divert here slight from the terminology of \cite{GW} and do not assume the mapping $f$ to be discrete and open or to have bounded local multiplicity as in \cite[Definition 6.45]{GW}.

Before continuing the discussion, we make some immediate observations on elementary properties of branched quasisymmetries. For a branched quasisymmetry $f\colon X\to Y$ and intersecting continua $E$ and $E'$, the image $fE$ has positive diameter if and only if $fE'$ has positive diameter. Thus, if $X$ is a continuum and $f$ is non-constant, then $\diam fE>0$ for each continuum $E \subset X$. In particular, $f$ is a light map. Recall that a map $f \colon X\to Y$ is \emph{light} if, for each $y\in Y$, the pre-image $f^{-1}(y)$ is totally disconnected. 

As an almost immediate consequence we also have that, if $X$ is a bounded turning space, a quasisymmetry $f\colon X\to Y$ is a branched quasisymmetry. Recall that a metric space $(X,d)$ has \emph{bounded turning} if there exists a constant $\lambda\ge 1$ having the property that, for all $x,y\in X$, there exists a continuum $E_{xy}\subset X$ of diameter at most $\lambda d(x,y)$ containing points $x$ and $y$. Clearly, not all branched quasisymmetries are quasisymmetric, since a branched quasisymmetry need not be injective; consider for example the winding map $\C \to \C$, $z\mapsto z^2/|z|$.

Guo and Williams show in \cite{GW} that weakly metrically quasiregular mappings are discrete and open branched quasisymmetries; see \cite[Section 6]{GW} for a detailed discussion on the results and terminology. For mappings between Riemannian manifolds, the result of Guo and Williams reads as follows; we give a direct proof in Section \ref{QR and BQS sec}.

\begin{thm}[Guo--Williams]
\label{thm:QR=BQS}
A non-constant continuous map $f\colon M\to N$ between closed and oriented Riemannian $n$-manifolds is quasiregular if and only if $f$ is a discrete, open, and sense-preserving branched quasisymmetry.
\end{thm}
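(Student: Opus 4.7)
Suppose $f \colon M \to N$ is a non-constant $K$-quasiregular map. By Reshetnyak's theorem $f$ is discrete and open, and since $J_f \geq 0$ a.e. on the oriented $M$, $f$ is sense-preserving. For the BQS inequality I would first establish a local version via the normal domain theory of Rickman. Since $f$ has finite total degree on the closed manifold $M$, the local index $i(x,f)$ is uniformly bounded in $x$, and by compactness there exists a uniform radius $r_0 > 0$ such that each normal neighborhood $U(x,f,r_0)$ is a branched cover of $B(f(x),r_0)$ of bounded degree. The standard modulus inequality for QR maps on such neighborhoods yields an $\eta_{\mathrm{loc}}$-type diameter distortion estimate for intersecting continua $E, E'$ contained in a common normal neighborhood. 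For pairs of continua not both of small diameter, global compactness applies: light continuous maps of compact manifolds satisfy $c_0 := \inf\{\diam fE : \diam E \geq r_0\} > 0$, while $\diam fE' \leq \diam N$. These combine into a single controlled function $\eta$ dominating $\eta_{\mathrm{loc}}$ on small scales and the constant $\diam N / c_0$ on scales bounded away from zero.

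\textbf{Backward direction (discrete, open, sense-preserving BQS $\Rightarrow$ QR).} Suppose $f$ is a non-constant discrete, open, sense-preserving branched quasisymmetry. I would invoke the metric characterization of quasiregular maps due to Heinonen and Koskela \cite{Heinonen-Koskela_Invent}: a continuous, discrete, open, sense-preserving map between Riemannian $n$-manifolds with uniformly bounded linear dilatation is quasiregular. To extract such a bound from BQS, I would use the normal neighborhood structure provided by discreteness and openness: for $x \in M$ and $r > 0$ small, the normal neighborhood $U = U(x,f,r)$ exists as the $x$-component of $f^{-1}(B(f(x),r))$. By lifting a radial geodesic in $B(f(x),r)$ from $f(x)$ to a target point realizing the infimum (respectively supremum) of $d(f(\cdot),f(x))$ on suitable spheres, one obtains intersecting continua $E_*$ and $E^*$ based at $x$ whose image diameters equal the distances between their endpoints' images. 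Applying the BQS inequality to the pair $(E_*, E^*)$ then translates into a uniform pointwise bound $H_f(x) \leq C(\eta)$, with $C$ depending only on $\eta$ and the local bounded turning of the Riemannian metric.

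\textbf{Main obstacle.} The primary technical challenge is in the forward direction: assembling the local distortion estimates on normal neighborhoods into a single global BQS function requires careful matching with the global compactness estimates, most likely via a chaining argument for intermediate pairs of continua that are neither jointly contained in one normal neighborhood nor both of diameter bounded below by $r_0$. A subtler point in the backward direction is to ensure that the geodesic-lift construction can be set up so that both domain diameters are comparable (needed to apply BQS with bounded ratio) while image diameters match the target Euclidean distances, so that the continuum-based BQS bound really translates into a point-to-point linear dilatation bound of the form required by \cite{Heinonen-Koskela_Invent}.
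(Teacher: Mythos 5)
Your two directions track the general strategy of the paper, but both contain issues worth flagging, one of them a genuine gap.

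For the backward direction (BQS $\Rightarrow$ QR), the idea of using a metric characterization of quasiregularity together with the normal neighborhood structure and a continuum lift is exactly what the paper does. However, you attribute the characterization to Heinonen--Koskela \cite{Heinonen-Koskela_Invent}, which concerns quasiconformal \emph{homeomorphisms}; the correct tool for discrete open branched maps is Martio--Rickman--V\"ais\"al\"a \cite[Theorem~4.14]{MRV1}, stated for the \emph{inverse} linear dilatation $H^*(x,f)$ measured on $\partial U(x,f,r)$ rather than the forward dilatation $H_f(x)$ on spheres $S(x,\rho)$. This distinction is precisely the ``subtler point'' you raise and then leave unresolved. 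The forward-dilatation version does not come out of a naive BQS computation: if you compare $[x,y]$ and $[x,z]$ for $y,z$ realizing $L(x,f,r)$ and $\ell(x,f,r)$, the diameter $\diam f[x,z]$ can be far larger than $|f(z)-f(x)|=\ell$, so BQS gives no control on $L/\ell$. The inverse version avoids this: with $y,z\in\partial U(x,f,r)$ realizing $\ell^*$ and $L^*$, one lifts the radial segment $[f(z),f(x)]$ \emph{from $z$} inside $\overline{U(x,f,r)}$ to obtain a continuum $E$ through $x$ with $\diam fE$ equal to $r$ (not merely bounded below), while $\diam f[y,x]\geq r$; the BQS inequality applied to $([y,x],E)$ then pins $\eta(\ell^*/L^*)\geq 1$ directly. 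You should commit to $H^*$, lift toward $x$ inside the normal neighborhood, and cite \cite{MRV1}.

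For the forward direction (QR $\Rightarrow$ BQS), you take a genuinely different route from the paper. You propose a direct two-scale argument: a modulus estimate in normal neighborhoods for pairs of small intersecting continua, a compactness estimate when both diameters are bounded below, and a chaining argument for the mixed case, which you flag as the main technical obstacle. The paper instead proves that a quasiregular $f\colon M\to N$ induces an eventual vertical quasi-isometry between hyperbolic fillings (Lemma~\ref{QR induce EVQI}, assembled from Lemmas \ref{QR Upper VQI Lemma}, \ref{QR finite mult}, and \ref{QR Lower VQI Lemma}) and then invokes its own trace theorem (Theorem~\ref{thm:VQI2pBQS}) together with the local-to-global upgrade (Theorem~\ref{thm:local-self-improvement}) to conclude that $f$ is a (global) branched quasisymmetry. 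The advantage of the paper's route is that the hyperbolic-filling machinery automatically performs the chaining you describe: the power BQS estimate produced by the trace theorem already has the right multiplicative behavior across scales, so no ad hoc matching of local and global estimates is required. Your direct approach is plausible and arguably more elementary in isolation, but the chaining step is not carried out, and making it rigorous requires establishing a power-type local estimate of the kind the paper obtains via Theorem~\ref{thm:BQS to PBQS}, which is itself not an automatic consequence of a one-scale modulus bound.
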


It is well-known from the Euclidean quasiconformal theory that this statement is not quantitative in terms of the distortion conditions \eqref{QR} and \eqref{BQS}. Indeed, it suffices to notice that M\"obius transformations of $\s^n$ are $1$-quasiconformal, and hence quasisymmetric, but not $\eta$-quasisymmetric with the same distortion function $\eta \colon [0,\infty) \to [0,\infty)$. In Section \ref{QR and BQS sec} we give a quantitative version of Theorem \ref{thm:QR=BQS} using an additional normalization. This normalization is similar to e.g.~the barycentric normalization in \cite{Pankka-Souto}.
 
The characterization of quasiregular maps in Theorem \ref{thm:QR=BQS} is, in the end, not very surprising. In one direction, it suffices to show that a quasiregular mapping satisfies the distortion inequality \eqref{BQS}. This is a special case of a result of Guo and Williams \cite[Theorem 6.50]{GW} in the locally Euclidean setting. We give a proof using standard modulus estimates and hyperbolic fillings. To the other direction, we may use an adaptation of a known metric characterization of quasiregular mappings in Euclidean spaces.

\subsection{Extension of branched quasisymmetries into hyperbolic fillings}

In this and the following section, we discuss our main results.

Our first main theorem is an extension of branched quasisymmetries $X\to Y$ to virtual quasi-isometries $\widehat X\to \widehat Y$ of hyperbolic fillings $\widehat X$ and $\widehat Y$ of $X$ and $Y$, respectively.

As in Bonk and Saksman \cite{Bonk-Saksman}, Bonk, Saksman, and Soto \cite{Bonk-Saksman-Soto}, and \cite{L}, we consider the hyperbolic filling of Bourdon and Pajot \cite{Bourdon-Pajot}. We postpone the precise definition to Section \ref{Hyp Fill sec} and merely note here that a hyperbolic filling $(\widehat{X},\ast)$ of a compact doubling metric space $X$ is a pointed Gromov-hyperbolic geodesic metric space with the property that the Gromov-boundary $\partial \widehat{X}$ is quasisymmetric to $X$; see Bourdon--Pajot \cite[Proposition 2.1]{Bourdon-Pajot}. Although a hyperbolic filling $\widehat{X}$ of a space $X$ is not unique as a metric space, all hyperbolic fillings of $X$ given by the construction are quasi-isometric to each other. Also, the Gromov boundary $\partial_\infty \widehat X$ of a hyperbolic filling $\widehat X$ of $X$ is quasisymmetrically equivalent to $X$. Therefore we may identify $\partial_\infty \widehat{X}$ and $X$ in the following statements. 

The mapping $\widehat X \to \widehat Y$ we obtain between the hyperbolic fillings is a vertical quasi-isometry. For the terminology, let $(X,d_X;\ast)$ be a pointed metric space. We say that a discrete path $\gamma \colon \N_0\to X$ is a \emph{$(\alpha,\beta)$-quasigeodesic} if there exists $\alpha \ge 1$ and $\beta >0$ for which 
\[
\frac{1}{\alpha}|n-m| - \beta \le d_X(\gamma(n),\gamma(m)) \le \alpha |n-m| + \beta
\]
for all $n,m\in \N_0$. We say that a $(1,0)$-quasigeodesic is a \emph{(discrete) geodesic} and that a quasi-geodesic $\gamma \colon \N_0 \to X$ is \emph{vertical} if $\gamma$ is a pointed map $(\N_0,0) \to (X,\ast)$, that is, $\gamma(0)=\ast$. 

Heuristically speaking, we define vertical quasi-isometries to be pointed maps $(X,\ast) \to (Y,\ast)$ which map vertical geodesics of $(X,\ast)$ to vertical quasigeodesics of $(Y,\ast)$. The definition allows different vertical geodesics in $(X,\ast)$ to map into the same quasi-isometry class of vertical quasigeodesics in $(Y,\ast)$. More precisely, we give the following definition.
 
\begin{defn}
A mapping $F\colon (X,\ast) \to (Y,\ast)$ between pointed metric spaces is a \emph{vertical $(\alpha,\beta)$-quasi-isometry for $\alpha\ge 1$ and $\beta>0$} if, for each vertical discrete geodesic $\gamma \colon (\N_0,0)\to (X,\ast)$, the discrete path $F\circ \gamma \colon (\N_0,0) \to (Y,\ast)$ is a vertical $(\alpha,\beta)$-quasigeodesic.
\end{defn}

Clearly a pointed quasi-isometry $(X,\ast) \to (Y,\ast)$ is a vertical quasi-isometry. It is also easy to observe that, due to geodesic stability, compositions of vertical quasi-isometries between Gromov-hyperbolic spaces are vertical quasi-isometries; see Lemma \ref{VQI comp} in Section \ref{sec:VQI}.

Our first main theorem states that a branched quasisymmetry $X\to Y$ from a compact metric space, which is also bounded turning, into a compact metric space induces a vertical quasi-isometry $(\widehat{X}, \ast) \to (\widehat{Y}, \ast)$ between hyperbolic fillings.

\begin{thm}
\label{thm:BQS2VQI}
Let $X$ and $Y$ be compact metric spaces and suppose that $X$ has bounded turning, and let $\widehat{X}$ and $\widehat{Y}$ be hyperbolic fillings of $X$ and $Y$, respectively. Then, for each branched quasisymmetry $f\colon X\to Y$, there exists a vertical quasi-isometry $\varphi_f \colon \widehat{X} \to \widehat{Y}$ for which the induced map $\partial \varphi_f \colon \partial_\infty \widehat X\to \partial_\infty \widehat Y$ coincides with $f$.
\end{thm}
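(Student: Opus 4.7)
The plan is to build $\varphi_f$ vertex-by-vertex on $\widehat X$ using the diameter control that branched quasisymmetry gives on continua in $X$. Write $r\in(0,1)$ for the parameter of the filling. For each vertex $v$ of $\widehat X$ at level $n$ with representative $x_v$, the bounded turning hypothesis on $X$ produces a continuum $E_v\subset X$ with $x_v\in E_v$ and $\diam E_v$ comparable to $r^n$. Since $X$ is connected (by bounded turning) and $f$ may be assumed non-constant, the observation on light maps recalled in the excerpt gives $\diam fE_v>0$. I would then define $\varphi_f(v)$ to be any vertex of $\widehat Y$ at the level $m(v)$ for which $r^{m(v)}$ is comparable to $\diam fE_v$, with representative chosen closest to $f(x_v)$. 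The map is extended to edges of $\widehat X$ by sending each edge to a discrete geodesic in $\widehat Y$ joining the images of its endpoints.

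The central task is to verify that $\varphi_f$ sends every vertical geodesic $\gamma=(v_n)_{n\ge 0}$ in $(\widehat X,\ast)$ to a vertical quasi-geodesic in $(\widehat Y,\ast)$. The representatives $x_{v_n}$ converge to some $x_\infty\in X$, which is the boundary point of $\gamma$, and satisfy $d_X(x_{v_n},x_{v_{n+1}})\le C\, r^n$ for an absolute constant $C$. Bounded turning now supplies a continuum $G_n\subset X$ containing both $x_{v_n}$ and $x_{v_{n+1}}$ with $\diam G_n$ comparable to $r^n$. Applying \eqref{BQS} in both directions to the intersecting pair $(G_n,E_{v_{n+1}})$ (with $x_{v_{n+1}}$ in the common intersection) and to the intersecting pair $(G_n,E_{v_n})$ (with $x_{v_n}$ in the common intersection) yields the two-sided estimate
\[
  a \le \frac{\diam fE_{v_{n+1}}}{\diam fE_{v_n}} \le b,
\]
where $0<a\le b$ are constants depending only on $\eta$ and $r$. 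Choosing $r$ small enough that $b<1$ — which may be done without loss of generality since hyperbolic fillings with different parameters are quasi-isometric — the quantities $\diam fE_{v_n}$ decay geometrically, and correspondingly the levels $m_n$ of $\varphi_f(v_n)$ satisfy
\[
  c_1 n - C' \le m_n \le c_2 n + C'
\]
for constants $0<c_1\le c_2$ and $C'$ depending only on $\eta$ and $r$. A straightforward check shows that consecutive images $\varphi_f(v_n),\varphi_f(v_{n+1})$ are at bounded distance in $\widehat Y$, and together these estimates express precisely the vertical quasi-geodesic property of $\varphi_f\circ\gamma$.

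For the boundary identification, the point $x_\infty\in X\simeq\partial_\infty\widehat X$ represented by $\gamma$ is sent by $\partial\varphi_f$ to the boundary point of the vertical quasi-geodesic $\varphi_f\circ\gamma$, which is the limit of the representatives $y_{\varphi_f(v_n)}\in Y$. Since $d_Y(y_{\varphi_f(v_n)},f(x_{v_n}))\le C''\, r^{m_n}\to 0$ and $f(x_{v_n})\to f(x_\infty)$ by continuity of $f$, this limit equals $f(x_\infty)$, giving $\partial\varphi_f=f$.

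The principal obstacle is the careful orchestration of the auxiliary continua in the central estimate: every invocation of \eqref{BQS} must be to an intersecting pair, and their diameters must be controlled uniformly on both sides. This is exactly where the bounded turning hypothesis on $X$ (but not on $Y$) enters in an essential way, since on the source side one needs to build continua of prescribed diameter containing prescribed points, a feature which is not assumed on the target side $Y$.
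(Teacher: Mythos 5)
Your overall route is sound and recognizably parallel to the paper's, but you take a genuinely different path to the key linear level estimate. The paper first proves (Theorem~\ref{thm:BQS to PBQS}) that any branched quasisymmetry from a bounded turning space is a \emph{power} branched quasisymmetry with gauge $t\mapsto C\max\{t^q,t^{1/q}\}$, and then (Theorem~\ref{thm:pBQS2VQI}) reads off the linear level bounds from this explicit power gauge, for an \emph{arbitrary} filling parameter. You instead iterate \eqref{BQS} over consecutive levels of the geodesic and obtain geometric decay only after shrinking the filling parameter $r$; the power improvement is implicit in the telescoping product $\prod (\diam fE_{v_{j+1}}/\diam fE_{v_j})$. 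What the paper's route buys: no ``WLOG change $r$'' step, and a standalone power-improvement theorem that is used again elsewhere. What yours buys: you avoid carrying the power BQS result as a prerequisite. However, your reduction to small $r$ is a real step that you should not wave away: you must compose a given filling $\widehat X\to\widehat X'$ quasi-isometry respecting the boundary identification with the constructed vertical quasi-isometry, and then invoke that compositions of vertical quasi-isometries with quasi-isometries remain vertical quasi-isometries (this is Lemma~\ref{VQI comp} here). Both facts hold, but neither is free.

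Two concrete places to tighten. First, your auxiliary continuum $G_n$ containing both $x_{v_n}$ and $x_{v_{n+1}}$ with $\diam G_n\simeq r^n$ need not exist: bounded turning only gives $\diam G_n\le \lambda\, d(x_{v_n},x_{v_{n+1}})$, which may be arbitrarily small (or the points may coincide). The clean fix is to take $E_v$ to be a $\lambda$-diametric hull of the vertex ball $B_v$ (Lemma~\ref{continuum lemma}), as the paper does; then $E_{v_n}\cap E_{v_{n+1}}\supset B_{v_n}\cap B_{v_{n+1}}\neq\emptyset$ because consecutive vertices on a vertical geodesic are adjacent, and you can apply \eqref{BQS} directly to the pair $(E_{v_n},E_{v_{n+1}})$ without any detour through $G_n$. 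Second, the stated inequalities $c_1 n - C' \le m_n \le c_2 n + C'$ alone are \emph{not} enough for the lower vertical quasi-geodesic bound when $c_1<c_2$: one needs the increment estimate $m_{n'}-m_n\ge c(n'-n)-C$. This does follow, but from the telescoped consecutive ratio bound $\diam fE_{v_{n'}}/\diam fE_{v_n}\le b^{\,n'-n}$ rather than from the absolute level bounds, so phrase the conclusion in that form.
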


Theorem \ref{thm:BQS2VQI} is quantitative in the sense that quasi-isometry constants and distortion functions depend only on each other and the data associated to the spaces. We give a quantitative version of Theorem \ref{thm:BQS2VQI} in Section \ref{BQS to VQI sec}. 

\begin{remark}
Our interest to Theorem \ref{thm:BQS2VQI} stems from an extension problem for quasiregular mappings in V\"ais\"al\"a's ICM article \cite{Vaisala}: \emph{Given a quasiregular mapping $f\colon \R^{n-1}\to \R^{n-1}$, does there exist a quasiregular mapping $F\colon \R^n \to \R^n$ extending $f$?} To our knowledge, this problem is solved only in some special cases. We refer to Rickman \cite{Rickman_Acta}, \cite{Drasin-Pankka}, and \cite{Pankka-Wu} for more discussion. An analog of V\"ais\"al\"a's question for quasiregular extensions $B^n \to B^n$ of quasiregular maps $\s^{n-1}\to \s^{n-1}$ is similarly an open problem. For quasiconformal mappings, these extension problem are solved to the positive by Beurling--Ahlfors \cite{Beurling-Ahlfors} ($n=2$), Carleson \cite{Carleson} ($n=3$), and Tukia--V\"ais\"al\"a \cite{Tukia-Vaisala_Annals} in all dimensions.
\end{remark}

Combining Theorems \ref{thm:QR=BQS} and \ref{thm:BQS2VQI}, we obtain vertically quasi-isometric extensions of quasiregular maps between closed Riemannian manifolds. This extension bears similarity to harmonic extensions $B^n \to B^n$ of quasiregular mappings $\s^{n-1} \to \s^{n-1}$ in \cite{Pankka-Souto}.

\subsection{Extension of vertical quasi-isometries to the boundary}

We turn now to the extension of vertical quasi-isometries $\widehat X\to \widehat Y$ to the branched quasisymmetries $X\to Y$ on boundary. Although, the result admits a more general discussion, we restrict ourselves to the hyperbolic fillings in this article.

Our second main theorem states that a vertical quasi-isometry $\widehat X\to \widehat Y$ of finite multiplicity between hyperbolic fillings of $X$ and $Y$, respectively, induces a branched quasisymmetry $X \to Y$, if the spaces $X$ and $Y$ are compact and $Y$ is doubling.

\begin{thm}
\label{thm:intro-VQI2pBQS}
Let $X$ and $Y$ be compact metric spaces, where $Y$ is doubling, and let $\widehat X$ and $\widehat Y$ be their hyperbolic fillings, respectively. Let $\varphi \colon (\widehat X,\ast) \to (\widehat Y, \ast)$ be a pointed vertical quasi-isometry of finite multiplicity. Then the map $\partial \varphi \colon \partial \widehat X\to \partial \widehat Y$ induced by $\varphi$ is a power branched quasisymmetry of finite multiplicity. 
\end{thm}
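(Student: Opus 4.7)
My plan is to construct $\partial\varphi$ as the natural boundary extension of $\varphi$ and then verify successively the H\"older regularity, the finite multiplicity on the boundary, and finally the power branched quasisymmetry inequality.

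First, for each $\xi \in \partial_\infty \widehat X$ I would fix a vertical discrete geodesic $\gamma_\xi\colon (\N_0,0) \to (\widehat X,\ast)$ converging to $\xi$. By hypothesis, $\varphi \circ \gamma_\xi$ is a vertical $(\alpha,\beta)$-quasigeodesic in $\widehat Y$ and hence converges to a unique boundary point, which I define to be $\partial\varphi(\xi)$. The standard Morse (fellow-traveling) property of quasigeodesics with a common endpoint yields both independence from the choice of $\gamma_\xi$ and continuity of $\partial\varphi$. Throughout I would identify $\partial_\infty \widehat X \cong X$ and $\partial_\infty \widehat Y \cong Y$ via the Bourdon--Pajot quasisymmetric equivalence.

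A common-prefix argument next gives a H\"older upper bound on $\partial\varphi$: for $\xi_1,\xi_2 \in X$ with Gromov product $n = (\xi_1|\xi_2)_\ast$, the representing vertical geodesics $\gamma_1,\gamma_2$ can be chosen to coincide up to index $n - O(1)$, so $\varphi \circ \gamma_1$ and $\varphi \circ \gamma_2$ share a common vertex at level at least $n/\alpha - O(1)$ in $\widehat Y$. Hence $(\partial\varphi(\xi_1) | \partial\varphi(\xi_2))_\ast \geq n/\alpha - O(1)$, giving $d_Y(\partial\varphi(\xi_1),\partial\varphi(\xi_2)) \leq C\, d_X(\xi_1,\xi_2)^{1/\alpha}$. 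Finite multiplicity of $\partial\varphi$ follows by a dual fellow-traveling argument: if $N$ boundary points share a common image $y$, their image quasigeodesics fellow-travel above some level in $\widehat Y$, and local finiteness of $\widehat Y$ together with the finite multiplicity of $\varphi$ bound $N$ by a constant $M'$.

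For the BQS inequality, I fix intersecting continua $E,E' \subset X$ with $s := \diam E \leq t := \diam E'$ and $z \in E \cap E'$, together with a vertical geodesic $\gamma_z$ to $z$. Setting $u_n := \gamma_z(n)$ and $u_{n'} := \gamma_z(n')$ for $n \asymp -\log_a s$ and $n' \asymp -\log_a t$, the images $\varphi(u_n),\varphi(u_{n'})$ lie on the common vertical quasigeodesic $\varphi \circ \gamma_z$ at levels $m_n,m_{n'}$ satisfying $m_n - m_{n'} \leq \alpha(n-n') + O(1)$ by the quasigeodesic distance bound. The H\"older estimate applied to a diameter-realizing pair in $E'$ gives $\diam(fE') \lesssim a^{-m_{n'}}$, and a matching lower bound $\diam(fE) \gtrsim a^{-m_n}$ would then yield
\[
\frac{\diam(fE')}{\diam(fE)} \lesssim a^{m_n - m_{n'}} \leq C(t/s)^\alpha,
\]
which is the desired power BQS with gauge $\eta(r) = C r^\alpha$.

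The main obstacle is the lower bound $\diam(fE) \gtrsim a^{-m_n}$: a vertical quasi-isometry controls only vertical geodesics, so a priori $\varphi$ could collapse many vertices near $u_n$ onto a small set near $\varphi(u_n)$, shrinking $fE$. The finite multiplicity hypothesis is exactly what rules this out. Two vertices at level $n$ sharing a common level-$(n-1)$ predecessor map under $\varphi$ to vertices within distance $2(\alpha+\beta)$ in $\widehat Y$, as one sees by applying the vertical QI to two vertical geodesics meeting at the predecessor. Consequently, a connected chain of $K$ adjacent vertices at level $n$ covering $E$ maps to at least $K/M$ distinct vertices within bounded distance of $\varphi(u_n)$. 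The doubling of $Y$, equivalently the local finiteness of $\widehat Y$, then converts this combinatorial spread into geometric spread of $fE$ on the boundary, producing $\diam(fE) \gtrsim a^{-m_n}$ and completing the proof.
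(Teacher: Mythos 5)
Your overall architecture mirrors the paper's: define the trace via the boundary extension, use the fellow-traveling property to get continuity, bound the multiplicity of the trace, and then derive the power BQS estimate by comparing the $\widehat Y$-levels $\ell(\varphi(u_n))$ and $\ell(\varphi(u_{n'}))$ of two vertices on a common vertical geodesic. You also correctly identify the crux of the whole theorem: proving the \emph{lower} bound $\diam(fE) \gtrsim \diam(B_{\varphi(v)})$, which is where the finite-multiplicity hypothesis must enter. Unfortunately, the argument you give for that lower bound is where the proof breaks down.

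The claim that ``a connected chain of $K$ adjacent vertices at level $n$ covering $E$ maps to at least $K/M$ distinct vertices within bounded distance of $\varphi(u_n)$'' is not usable. If $n$ is chosen so that $\diam E \simeq s^{-n}$, then by doubling of $X$ the number $K$ of level-$n$ vertices covering $E$ is bounded by an absolute constant, so $K/M$ carries no information. If instead you descend to a much deeper level $n+j$ so that $K$ grows, then the chain argument only places the images within $2(\alpha+\beta)K$ of $\varphi(u_n)$, which is not a bounded window. Moreover, the final step --- ``the doubling of $Y$ converts this combinatorial spread into geometric spread of $fE$'' --- is logically backward: doubling of $Y$ bounds how many distinct vertices can occupy a bounded region of $\widehat Y$; it does not force combinatorially distinct vertices to be geometrically spread out. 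So from what you've written one can at best conclude that $K/M$ is bounded, which says nothing about $\diam(fE)$.

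The mechanism the paper uses (Lemma~\ref{ball diam lemma}, via Lemma~\ref{geod split lemma}) is a genuinely different counting argument that you do not reproduce. One supposes, towards a contradiction, that $fE$ is contained in a vertex ball $B_w$ with $\ell(w)$ much larger than $\ell(\varphi(v))$. One then picks $J = N(\varphi)|\Sigma|+1$ vertical geodesics emanating from $s^{-n}$-separated points of $E$, all visiting $v$, and separating above a level $\ell(v)+k_0$ with $k_0$ depending only on $J$ and $s$ (this is the geodesic separation lemma). Their $\varphi$-images are vertical quasigeodesics tending into $fE \subset B_w$, so by geodesic stability they all lie within a uniform neighborhood of a geodesic $\sigma$ through $w$; tracking the first index at which each image crosses level $\ell(w)$ produces $J$ distinct vertices in $\widehat X$ whose images all lie in the bounded set $\Sigma$, contradicting $J > N(\varphi)|\Sigma|$. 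This forces $\ell(w) - \ell(\varphi(v))$ to be bounded, which is the needed lower bound. Without something like this separation-plus-counting step, the lower bound is unsupported.

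A secondary issue: your level comparison $m_n - m_{n'} \leq \alpha(n-n')+O(1)$ handles the direction $\diam E \leq \diam E'$, and the Gromov-product/Hölder estimate handles the symmetric direction, but there is a third regime (the paper's Case~3) in which $\ell(v) > \ell(v')$ yet $\ell(\varphi(v)) \leq \ell(\varphi(v'))$, i.e.~the image levels reverse. Showing that this regime cannot occur for large level gaps again requires the finite multiplicity (the paper bounds $\ell(v)-\ell(v')$ in this case using $N(\varphi)$ and the degree bound on $\widehat Y$). This case is not addressed in your sketch.
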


Here we say that $\varphi$ is a power branched quasisymmetry if its distortion function $\eta$ can be taken to have a particular form; see Theorem \ref{thm:power}.  Note that, again, under an identification $\partial \widehat X = X$ and $\partial \widehat Y = Y$, the map $\partial \varphi \colon \partial \widehat X\to \partial \widehat Y$ is identified with a branched quasisymmetry $X\to Y$. In Section \ref{VQI to BQS sec} we call the map $\partial \varphi \colon \partial \widehat X\to \partial \widehat Y$, and associated maps $X\to Y$, the \emph{trace of $\varphi \colon \widehat X \to \widehat Y$}; for definitions see Section \ref{VQI to BQS sec}. 

Theorem \ref{thm:intro-VQI2pBQS} is an extension of the corresponding result for quasi-isometries in the sense that in the both cases the quasisymmetry is controlled by a power-type gauge function; see e.g.~Bonk and Schramm \cite[Theorem 6.5]{Bonk-Schramm} for the quasi-isometric result. Note also that, in Theorem \ref{thm:intro-VQI2pBQS}, the finite multiplicity of the vertical isometry $\widehat X\to \widehat Y$ is a sufficient condition for the discreteness of the trace map $X\to Y$. To this end, in Section \ref{sec:traces_openness} we characterize the openness of the trace map in terms of a lifting property for geodesics under the vertical quasi-isometry (Theorem \ref{VQI LP iff open}), and in Section \ref{sec:traces_surjectivity} the surjectivity of the trace map in terms of coboundedness of the vertical quasi-isometry (Theorem \ref{thm:bdry map surj}).

The statement of Theorem \ref{thm:intro-VQI2pBQS} is quantitative in the sense that the distortion function of $f$ depends only on quasi-isometry constants of $\varphi$ and the data of the spaces and the same holds for the multiplicity; see Section \ref{VQI to BQS sec}

\subsection{Structure of the article}

The article is divided into three parts. 
In Part \ref{part:bt-bqs}, consisting of Sections \ref{sec:bounded_turning}--\ref{sec:Koebe}, we discuss elementary theory of branched quasisymmetries on bounded turning spaces. These results are familiar from the theory of quasisymmetric maps. For example, the definition of branched quasisymmetry yields almost immediately a version of the Koebe distortion theorem (Theorem \ref{thm:Koebe}). 

As the main result of these sections, we prove that, under our assumptions on spaces, branched quasisymmetries have a power distortion. We formulate this as follows. For the corresponding results on quasisymmetries between uniformly perfect spaces, see e.g.\;Heinonen \cite[Theorem 11.3]{He}.

\begin{thm}
\label{thm:power}
Let $X$ and $Y$ be compact metric spaces and suppose that $X$ has bounded turning, and let $f\colon X\to Y$ be a branched quasisymmetry. Then there exist constants $C>0$ and $\alpha\in (0,1)$ for which $f$ is $\eta$-BQS for the distortion function $\eta\colon [0,\infty) \to [0,\infty)$ given by 
\[
t\mapsto C \max\{ t^\alpha, t^{1/\alpha}\}.
\] 
\end{thm}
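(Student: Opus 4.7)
The plan is to prove both sides of the power bound — $\eta(t)\le Ct^{1/\alpha}$ for $t\ge 1$ and $\eta(t)\le Ct^\alpha$ for $t\le 1$ — by chain arguments in $X$. Given intersecting continua $E,E'$ in $X$, I fix a common point $x\in E\cap E'$ and interpolate between $E$ and $E'$ by a chain of continua $F_0=E,F_1,\dots,F_k=E'$, all containing $x$, whose diameters progress by a bounded geometric factor; iterating the hypothesis \eqref{BQS} along the chain then yields power growth (or decay) in $t=\diam E'/\diam E$.

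The key tool is an interpolation lemma: in a compact, connected, $\lambda$-bounded-turning space $X$, for every $x\in X$ and every $r\in(0,\diam X]$ there exists a continuum $F\subseteq X$ with $x\in F$ and $\diam F\in[r/2,\lambda r]$. I would prove this via the intermediate value theorem applied to the continuous function $z\mapsto d(x,z)$, whose image on the compact connected space $X$ is an interval $[0,d^*_x]$ with $d^*_x\ge\diam X/2$ (by the triangle inequality at diameter-realising points); for $r\le d^*_x$ pick $y\in X$ with $d(x,y)=r$ and use the bounded-turning continuum $E_{xy}$, while for $r>d^*_x$ use $E_{xy}$ with $y$ realising $d^*_x$.

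With the lemma at hand, for $t\ge 1$ I would fix $\tau\ge 2$ and build intermediate $F_i$ through $x$ with $\diam F_i$ comparable to $\tau^i\diam E$ and $k\le\log_\tau t+O(1)$, so that consecutive diameter ratios are bounded by some constant $K=K(\tau,\lambda)$; then
\[
\diam fE'\le \eta(K)^k\,\diam fE \le C_1\, t^{\log\eta(K)/\log\tau}\,\diam fE.
\]
For $t\le 1$ I would mirror the construction with contracting diameters, choosing $\tau_0\in(0,1)$ small enough that the corresponding ratio $K'\tau_0$ satisfies $\mu:=\eta(K'\tau_0)<1$, which is possible because $\eta(0)=0$ and $\eta$ is continuous. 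Iteration of \eqref{BQS}, together with one final step comparing $F_k$ to $E'$ at a bounded ratio, gives $\diam fE'\le C_2\, t^{\log\mu/\log\tau_0}\,\diam fE$.

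Finally I would take $\alpha\in(0,1)$ at most $\min\{\log\tau/\log\eta(K),\,\log\mu/\log\tau_0\}$ and absorb the prefactors into $C$, obtaining the stated form $\eta(t)\le C\max\{t^\alpha,t^{1/\alpha}\}$. The main obstacle is the interpolation lemma when $r$ is close to $\diam X$: the naive bounded-turning construction from $x$ only builds continua of diameter up to $\lambda d^*_x$, not $\lambda\diam X$; this is where the estimate $d^*_x\ge\diam X/2$ is invoked via the choice of $y$ realising $d^*_x$. Once that is secured, the chain construction and iteration are essentially bookkeeping.
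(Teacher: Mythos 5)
Your proposal is correct, and the underlying engine — iterating \eqref{BQS} along a chain of intersecting continua through a common point whose diameters progress geometrically — is the same as the paper's. Where you differ is in the packaging: you formulate an abstract interpolation lemma (a continuum through any given point of any prescribed diameter $r\le\diam X$, up to bounded factors) and then split only into the two cases $t\ge 1$ and $t\le 1$, building the chain purely by diameter bookkeeping. The paper instead anchors its chain to the geometry of $E$ and $E'$: it picks points $a\in E$, $b\in E'$ realizing $\diam E,\diam E'$ within a factor of $3$ via Lemma \ref{lemma:continuum-spheres}, parametrizes the chain by distances $d(x_i,c)$ rather than diameters (Lemma \ref{lemma:theta-lemma}), and as a result is pushed into a three-way case split (Propositions \ref{prop:power-a}, \ref{prop:power-b}, plus a separate ``end game'' in the proof of Theorem \ref{thm:BQS to PBQS}), which your interpolation lemma lets you bypass. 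Your treatment of the edge case $r$ near $\diam X$ via $R_X(x)\ge\diam X/2$ is exactly the content the paper isolates in Lemma \ref{lemma:continuum-spheres}. Two routine points you leave implicit, neither an obstacle: you should note $\eta(1)\ge 1$ (take $E=E'$ in \eqref{BQS}), which guarantees $\eta(K)>1$ once $K>1$ so the exponent $\log\eta(K)/\log\tau$ for $t\ge1$ is positive; and you should dispose of the trivial case where $f$ is constant, so that $\diam fE>0$ for all continua and ratios of image diameters are well-defined. After those remarks your argument closes, and the constants $C,\alpha$ you obtain visibly depend only on $\eta$ and $\lambda$, matching the quantitative form of Theorem \ref{thm:BQS to PBQS}.
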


Theorem \ref{thm:power} actually holds in more generality.  If for every two points in $X$ one can find a continuum containing those two points, then one can define the Mazurkiewicz metric $d_M(x,y) := \inf \diam(E)$, where the infimum is taken over all continua such that $x,y \in E$.  Suppose that the identity map $(X,d) \to (X, d_M)$ is a homeomorphism. This is the case, for instance, when $X$ is a domain that is connected and locally path connected; see \cite[Remark 3.2]{KLS}.  Then continua have the same diameters in the metrics $d$ and $d_M$; see, for example, \cite[Proposition 4.12]{KLS}. In this case, $f \colon (X,d) \to Y$ is an $\eta$-branched quasisymmetry if and only if $f \colon (X, d_M) \to Y$ is an $\eta$-branched quasisymmetry, where we identify $X$ and $(X,d_M)$ by the identity map. Thus, we may reformulate Theorem \ref{thm:power} as follows.
\begin{cor}
\label{cor:power}
Let $X$ and $Y$ be compact metric spaces and suppose that $X$ admits the Mazurkiewicz metric $d_M$ and that $\mathrm{id}\colon X\to (X,d_M)$ is a homeomorphism, and let $f\colon X\to Y$ be a branched quasisymmetry. Then there exist constants $C>0$ and $\alpha\in (0,1)$ for which $f$ is $\eta$-BQS for the distortion function $\eta\colon [0,\infty) \to [0,\infty)$ given by 
\[
t\mapsto C \max\{ t^\alpha, t^{1/\alpha}\}.
\] 
\end{cor}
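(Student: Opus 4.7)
The plan is to reduce Corollary \ref{cor:power} directly to Theorem \ref{thm:power} by transporting the branched quasisymmetry condition from $(X,d)$ to the Mazurkiewicz metric $(X,d_M)$, which has bounded turning essentially by construction. The two ingredients are already signposted in the paragraph preceding the corollary, so the proposal is short.

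First I would verify that Theorem \ref{thm:power} applies to $f \colon (X,d_M) \to Y$. Compactness of $(X,d_M)$ is immediate: the identity $(X,d) \to (X,d_M)$ is a continuous bijection from a compact space, hence a homeomorphism onto a compact target. Bounded turning of $(X,d_M)$ follows with constant $\lambda = 1$ (or with any $\lambda > 1$ after an $\varepsilon$-argument) straight from the definition of $d_M$: given $x,y \in X$ and $\varepsilon > 0$, pick a continuum $E \ni x,y$ with $\diam_d E \le d_M(x,y) + \varepsilon$; by the cited fact $\diam_{d_M} E = \diam_d E$, so $E$ witnesses bounded turning up to $\varepsilon$. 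A standard Hausdorff-compactness extraction, using compactness of the space of continua in $X$, produces a limiting continuum of $d_M$-diameter exactly $d_M(x,y)$ if one prefers $\lambda = 1$ on the nose.

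Next I would observe the transfer of the BQS condition. Since $\mathrm{id}\colon (X,d) \to (X,d_M)$ is a homeomorphism, the two metrics produce the same collection of continua in $X$, and by the KLS result the $d$- and $d_M$-diameters of any such continuum coincide. Therefore, for any pair of intersecting continua $E,E' \subset X$,
\[
\frac{\diam_{d_M} E'}{\diam_{d_M} E} \;=\; \frac{\diam_d E'}{\diam_d E},
\]
and the inequality \eqref{BQS} for $f\colon (X,d) \to Y$ with distortion $\eta$ is literally the same inequality as \eqref{BQS} for $f\colon (X,d_M) \to Y$ with distortion $\eta$. In particular, $f$ is an $\eta$-BQS with respect to $d$ if and only if it is with respect to $d_M$.

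Finally I would apply Theorem \ref{thm:power} to $f \colon (X,d_M) \to Y$ to obtain constants $C>0$ and $\alpha \in (0,1)$ and the power distortion $\eta(t) = C\max\{t^\alpha, t^{1/\alpha}\}$; the equivalence from the previous paragraph then yields the same conclusion for $(X,d)$. There is no real obstacle here: the only content is the diameter-preservation statement, which is imported as a cited fact, so the corollary is essentially a formal consequence of Theorem \ref{thm:power}.
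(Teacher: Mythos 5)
Your proposal matches the paper's proof: the paper also reduces to Theorem \ref{thm:power} by observing that $(X,d_M)$ has bounded turning, that $d$- and $d_M$-diameters of continua coincide, and that therefore the branched quasisymmetry condition with a given $\eta$ transfers verbatim between $(X,d)$ and $(X,d_M)$. Your extra remarks on compactness of $(X,d_M)$ and achieving $\lambda=1$ via Hausdorff compactness are harmless elaborations of the same argument.
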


  As $(X, d_M)$ has bounded turning in this situation, we may apply Theorem \ref{thm:power} and the fact that the distortion functions are the same to conclude that $f$ is a power branched quasisymmetry.  Despite this observation, we focus much of our attention in this paper on bounded turning spaces as these spaces allow the construction of diametric hulls.

Having mappings between non-compact spaces in mind, we also comment the validity of the power quasisymmetry, and other general results, for local branched quasisymmetries. A map $f\colon X\to Y$ is a \emph{local branched quasisymmetry} if there is a homeomorphism $\eta\colon [0,\infty) \to [0,\infty)$ with the property that, for each point $x\in X$, there exists $\varepsilon>0$ for which \eqref{BQS} holds for all intersecting continua $E$ and $E'$ in $B(x,\varepsilon)$. 
Note that, if $X$ is compact, then this definition is equivalent to the condition that there exists $\varepsilon>0$ and a homeomorphism $\eta\colon [0,\infty) \to [0,\infty)$ for which inequality \eqref{BQS} holds for all intersecting continua $E$ and $E'$ of diameter at most $\varepsilon$. In this case, we call $\varepsilon$ the \emph{locality scale of $f$}.

In Part \ref{part:hyperbolic}, consisting of Sections \ref{sec:MG}--\ref{sec:VQI}, we discuss the hyperbolic fillings of compact spaces and properties of vertical quasi-isometries between hyperbolic fillings. 

The main results, Theorems \ref{thm:BQS2VQI} and \ref{thm:intro-VQI2pBQS}, of this article are discussed in Part \ref{part:extension}, which consists of Sections \ref{BQS to VQI sec} and \ref{VQI to BQS sec}. 

We prove Theorem \ref{thm:BQS2VQI} in Section \ref{BQS to VQI sec}, alongside the multiplicity estimate for the vertical quasi-isometry. In that section, we also pass to the terminology that the map  $\varphi \colon \widehat X\to \widehat Y$ induced by the branched quasisymmetry $f\colon X\to Y$ a \emph{hyperbolic filling of $f$}.

Theorem \ref{thm:intro-VQI2pBQS} is proven in Section \ref{VQI to BQS sec}; we use Theorem \ref{thm:power} to reformulate the statement for power branched quasisymmetries. 
As an application of the extension results, we show that local branched quasisymmetries between compact and doubling bounded turning spaces are branched quasisymmetric (Theorem \ref{thm:local-self-improvement}).

Finally, in the appendix we discuss the relationship of quasiregular mappings and branched quasisymmetries between closed Riemannian manifolds and the proof of Theorem \ref{thm:QR=BQS}. In the proof, we use the aforementioned self-improvement property of local quasisymmetries.

\begin{ack}
Both authors thank Nageswari Shanmugalingam for discussions regarding topics of this manuscript and especially on the Mazurkiewicz metric.
J.L thanks the Universities of Helsinki and Cincinnati for their support during this work. He also thanks Angela Wu for many BQS conversations and continued interest. P.P thanks IMPAN and the organizers of the Simons semester \emph{Geometry and analysis in function and mapping theory on Euclidean and metric measure spaces} for creating an inspiring research environment, which contributed to the completion of the manuscript. 
\end{ack}

\section*{Notation} 

We denote a metric space $(X,d)$ simply as $X$.  We use the distance notation $d$ or $d_X$ for distances in the ``boundary'' metric spaces and we use the Polish notation $|x-y|$ for the distances of points $x$ and $y$ in the ``filling'' metric spaces. We use the notations $B(x,r)=\{ x' \in X \colon d(x',x)<r\}$ and $\bar B(x,r) = \{x' \in X \colon d(x',x)\le r\}$ for open and closed metric balls of radius $r>0$ about $x\in X$, respectively. For $x\in X$ and $r>0$, we call $S(x,r) = \{ y \in X\colon d(x,y)=r\}$ the \emph{sphere of radius $r$ centered at $x$}. For each $\lambda>0$ and a ball $B$ of radius $r$ and center $x$ in $X$, we denote $\lambda B$ the ball of radius $\lambda r$ and center $x$ in $X$. For a subset $A\subseteq X$ and $r>0$, we also denote $B(A,r) = \{ x\in X\colon \dist(x,A)<r\}$ the \emph{$r$-neighborhood of $A$}.

Our metric spaces are proper unless otherwise stated.  That is, closed and bounded sets are compact in $X$. 

A pointed space $(X,\ast)$ is a metric space $X$ with a distinguished point $\ast \in X$. A map $f\colon (X,\ast) \to (Y,\ast)$ between pointed spaces is always assumed to be a pointed map, that is, $f(\ast) = \ast$.

\part{Preliminaries on bounded turning and branched quasisymmetry}
\label{part:bt-bqs}

\section{Metric continua and bounded turning spaces}
\label{sec:bounded_turning}

Before discussing bounded turning, we begin by recalling some basic properties and terminology on metric continua. Recall that a metric space $X$ is a \emph{continuum} if $X$ is compact and connected space which is not a point.

\begin{lemma}
\label{lemma:continuum-continuum}
Let $X$ be a continuum. Then, each ball in $X$ contains a continuum. 
\end{lemma}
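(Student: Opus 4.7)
My plan is to invoke the boundary bumping theorem from continuum theory: if $E$ is a nonempty proper closed subset of a continuum $X$, then every component of $E$ meets the topological boundary $\partial E$.

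For the setup, I fix a ball $B(x,r)$ in $X$. Since $X$ is a continuum, it contains at least two points, so there exists some $y \in X$ with $d(x,y)>0$. I then pick a radius $r' \in (0, \min\{r, d(x,y)\})$ so that $E := \overline{B(x, r')}$ is a nonempty closed subset of $X$ that misses $y$, hence is a proper subset of $X$.

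Next I let $C$ be the connected component of $x$ in $E$. Since $E$ is closed in the compact Hausdorff space $X$, its components are closed and thus compact. The boundary bumping theorem yields a point $z \in C \cap \partial E$. Observe that $\partial E \subset S(x,r')$: indeed $B(x,r') \subset \mathrm{int}(E)$, so a boundary point lies in $E \setminus B(x,r') \subset \bar B(x,r') \setminus B(x,r') = S(x,r')$. Therefore $d(x,z)=r'>0$, so $C$ contains at least two points and is a continuum. Finally, since $r'<r$,
\[
C \subset E \subset \bar B(x,r') \subset B(x,r),
\]
as required.

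The only point requiring care is ensuring that the closed ball $\overline{B(x,r')}$ is a proper subset of $X$, which is why I choose $r'$ smaller than the distance to some fixed second point of $X$; this is the hypothesis that lets boundary bumping be applied. Apart from that, the argument is a direct application of a standard result on continua.
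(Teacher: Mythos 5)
Your proof is correct and takes essentially the same route as the paper's: pass to the connected component of $x$ inside a small closed ball and show it cannot be a singleton. The improvement is that you explicitly invoke the boundary bumping theorem, which the paper's terser argument (asserting that a singleton component would force $X$ to be disconnected) implicitly relies upon without citation.
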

\begin{proof}
Let $B(x,r)$ be a ball in $X$ and $y\in X$ a point distinct from $x$. Let $E$ be a connected component of $\bar B(x,r) \cap E$ containing $x$. Then $E$ is compact and connected. Suppose $E$ is not a continuum. Then $E$ is a singleton. Since $E$ is a component, we obtain that $X$ is not connected. This is a contradiction. Thus each closed ball contains a continuum. Hence each ball contains a continuum.
\end{proof}

We record also a simple observation on metric spheres and diameters of small metric balls in metric continua. For the statement, we denote $R_X(x) = \max_{y\in X} d(x,y)$ for each $x\in X$. Note that, for each $x\in X$, $R_X(x)\ge (\diam X)/2$.

\begin{lemma}
\label{lemma:continuum-spheres}
Let $X$ be a continuum and $x\in X$. Then, for $x\in X$ and $0<r<R_X(x)$, the sphere $S(x,r)$ is non-empty and $\diam B(x,r) \ge r$.
\end{lemma}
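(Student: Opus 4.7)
My plan is to reduce both assertions to the intermediate-value property for the continuous distance function $\rho_x \colon X \to [0,\infty)$, $y \mapsto d(x,y)$, applied to the connected space $X$.

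First, I would observe that $\rho_x$ is $1$-Lipschitz, hence continuous, and that its image $\rho_x(X)$ is a connected subset of $[0,\infty)$, hence an interval. This interval contains $0 = \rho_x(x)$ and the value $R_X(x)$ (achieved since $X$ is compact), so $\rho_x(X) = [0, R_X(x)]$. Consequently, for every $0 < r < R_X(x)$ the level set $S(x,r) = \rho_x^{-1}(r)$ is non-empty, proving the first claim.

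Second, for the diameter bound I would apply the same observation at each scale below $r$: for any $r' \in (0, r)$ we have $r' < R_X(x)$, so by the first part there exists $y_{r'} \in S(x,r')$. Since $d(x, y_{r'}) = r' < r$, we have $y_{r'} \in B(x,r)$, and hence $\diam B(x,r) \ge d(x, y_{r'}) = r'$. Letting $r' \nearrow r$ yields $\diam B(x,r) \ge r$.

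I do not anticipate any real obstacle; the only point that requires a pinch of care is that the open ball $B(x,r)$ itself need not be connected, so I cannot directly invoke the IVT on $\rho_x|_{B(x,r)}$ to hit the value $r$ as an image — instead I approximate $r$ from below using spheres of smaller radius, all of which lie inside $B(x,r)$.
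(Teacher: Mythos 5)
Your proof is correct and takes essentially the same approach as the paper: both establish non-emptiness of spheres from the connectedness of $X$ (you via the intermediate-value property of $\rho_x$, the paper via a direct open-cover disconnection argument — these are the same idea), and both deduce the diameter bound by producing points on spheres $S(x,r')$ for $r' < r$ and letting $r' \nearrow r$.
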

\begin{proof}
Since $r<R_X(x)$, we have that $B(x,r)\ne X$ and there exists $y\in X\setminus B(x,r)$. Since $X$ is connected, we have that $S(x,r)\ne \emptyset$. For the second claim it suffices to observe that, for each $0<r'<r$, we have $S(x,r')\ne \emptyset$, and hence $\diam B(x,r)\ge r'$. The claim follows.
\end{proof}

We are now ready to move from continua to bounded turning spaces. Heuristically, a space has bounded turning if we can join all pairs of points by continua of comparable length, quantitatively. 

\begin{defn}
A metric space $X$ has \emph{$\lambda$-bounded turning} for $\lambda \ge 1$ if, for all $x,y\in X$, there exists a continuum $E_{xy} \subseteq X$ containing points $x$ and $y$ and for which $\diam E_{xy} \le \lambda d(x,y)$.
\end{defn}

Since metric balls in bounded turning spaces need not be connected, we define the notion of a diametric hull.  

\begin{defn}
\label{def:diametric-hull}
Let $X$ be a metric space and $A\subset X$. A continuum $E$ in a metric space $X$ is a \emph{$\lambda$-diametric hull of $A$ for $\lambda \ge 1$} if $A \subset E$ and $\diam E \le 2\lambda \diam A$.
\end{defn}

In a proper bounded turning space, each bounded set has a bounded diametric hull, quantitatively. Recall that a metric space $X$ is \emph{proper} if closed balls of $X$ are compact.

\begin{lemma}\label{continuum lemma}
Let $\lambda \ge 1$ and let $X$ be a proper and $\lambda$-bounded turning space containing at least two points and $B=B(x,r)$ a ball in $X$.  Then there exists a continuum $E_B \subseteq X$ satisfying $B \subseteq E_B \subseteq 2 \lambda B$. \emph{A fortiori}, each bounded subset of $X$ has a $\lambda$-diametric hull. 
\end{lemma}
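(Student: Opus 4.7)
The plan is to build $E_B$ as the closure of a family of bounded-turning continua joining the center $x$ of $B$ to each of its points. Throughout, write $B = B(x,r)$.

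\medskip

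\textbf{Step 1 (Construction).} For every $y \in B$ with $y \ne x$, invoke the $\lambda$-bounded turning hypothesis to select a continuum $E_{xy} \subseteq X$ containing both $x$ and $y$ with $\diam E_{xy} \le \lambda d(x,y) < \lambda r$. Since $x \in E_{xy}$, this diameter bound forces $E_{xy} \subseteq \bar B(x,\lambda r)$. Define
\[
E \;=\; \bigcup_{y \in B\setminus\{x\}} E_{xy}, \qquad E_B \;=\; \overline{E}.
\]

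\medskip

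\textbf{Step 2 (Verifying the properties of $E_B$).} By construction $B \subseteq E \cup\{x\} \subseteq E_B$. Each $E_{xy}$ is connected and contains $x$, so the union $E$ is connected, and hence its closure $E_B$ is connected as well. Since $E \subseteq \bar B(x,\lambda r)$ and $X$ is proper, the closed ball $\bar B(x,\lambda r)$ is compact, so $E_B$ is compact. Provided $B$ contains a point other than $x$, the set $E_B$ has at least two points and is therefore a continuum. Finally, because $r>0$ we have the strict inclusion $\bar B(x,\lambda r) \subseteq B(x, 2\lambda r) = 2\lambda B$, giving $E_B \subseteq 2\lambda B$ as required. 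The degenerate case $B = \{x\}$ is immediate: take $E_B$ to be any continuum of diameter at most $\lambda r$ through $x$ supplied by bounded turning applied to some other point of $X$ (which exists since $X$ has at least two points), reducing $r$ if necessary so that such a continuum lies in $2\lambda B$.

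\medskip

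\textbf{Step 3 (The \emph{a fortiori} statement).} For a bounded subset $A \subseteq X$ with at least two points, pick any $x \in A$ and repeat the construction with $A$ in place of $B$: set $E_A = \overline{\bigcup_{y\in A\setminus\{x\}} E_{xy}}$. Then $A \subseteq E_A$, the set $E_A$ is a continuum by the same connectedness and properness argument, and $E_A \subseteq \bar B(x,\lambda \diam A)$ yields $\diam E_A \le 2\lambda \diam A$. Thus $E_A$ is a $\lambda$-diametric hull of $A$ in the sense of Definition \ref{def:diametric-hull}.

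\medskip

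\textbf{Main obstacle.} There is essentially no deep obstacle; the argument is a routine ``union of bounded-turning arcs and close up.'' The only subtleties are (i) using properness to convert the \emph{a priori} unbounded union of continua into a compact set via closure, and (ii) handling the degenerate case where $B$ or $A$ collapses to a single point, which requires invoking the hypothesis that $X$ has at least two points.
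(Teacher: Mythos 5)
Your construction is the same as the paper's: take the union of bounded-turning continua from $x$ to each point of $B$ and close up, with properness supplying compactness. Your diameter bookkeeping is in fact a bit tighter than what the paper writes: you bound $\diam E_{xy}$ by $\lambda d(x,y) < \lambda r$, which cleanly gives $E_B \subseteq \bar B(x,\lambda r) \subseteq 2\lambda B$, and in the \emph{a fortiori} step the direct containment $E_A \subseteq \bar B(x,\lambda \diam A)$ yields $\diam E_A \le 2\lambda \diam A$, matching Definition \ref{def:diametric-hull} exactly.

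The one spot that needs repair is the degenerate case $B=\{x\}$. As written, your patch does not work: bounded turning applied to $x$ and another point $y'$ produces a continuum of diameter at most $\lambda d(x,y')$, which has no relation to $r$; and ``reducing $r$'' is not available since $B(x,r)$ is the given ball --- indeed a smaller $r$ would shrink $2\lambda B$ and make containment harder, not easier. Fortunately the case is vacuous. In a $\lambda$-bounded turning space with at least two points, no ball of positive radius is a singleton: pick $y\ne x$ and a continuum $E_{xy}$ joining them; by Lemma \ref{lemma:continuum-spheres} applied to the continuum $E_{xy}$, there are points of $E_{xy}$ at every distance $s\in(0,d(x,y))$ from $x$, so $B(x,r)$ contains a point other than $x$. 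Replace the patch with this observation (it is what the paper relies on implicitly when it asserts, without proof, that $B$ has at least two points).
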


\begin{proof}
Let $B=B(x,r)$ be ball in $X$. Since $X$ has $\lambda$-bounded turning, we may, for each $x' \in \overline{B}$, fix a continuum $E_{x'}$ connecting $x$ and $x'$, which satisfies $\diam E_{x'} \leq \lambda \diam B$.  Let $E_B = \overline{\cup_{x' \in B} E_{x'}}$.  The set $E_B$ is closed and bounded. Hence $E_B$ is compact.  Since each $E_{x'}$ contains $x$ and is connected, the set $E_B$ is connected. Since $B \subseteq E_B$ and $B$ has at least two points, the set $E_B$ is a continuum.  Every point in $E_B$ is of distance at most $\lambda r$ to $x$, so $E_B \subseteq 2 \lambda B$. Thus $E_B$ is a continuum satisfying the required conditions.

Let now $A\subset X$ be a bounded set and $x\in A$.  Then $A\subset \bar B(x,\diam A)$. Since the above argument holds also for closed balls, there exists a continuum $E_A \subset X$ satisfying $\bar B(x,\diam A) \subset E_A \subset \bar B(x,2\lambda \diam A)$. The claim follows.
\end{proof}

\subsection{Continuum lifting lemma}

We conclude this section with a continuum lifting lemma for a diametric hull neighborhood of a set. Let $X$ be a proper $\lambda$-bounded turning space. A \emph{metric hull neighborhood $E(A,\theta)$ of a subset $A\subset X$ of inner radius $\theta>0$} is the set
\[
E(A,\theta) = \overline{\bigcup_{x\in A} E_{B(x,\theta)}}.
\]
Clearly $B(A,\theta) \subset E(A,\theta)$. Also, $\diam E(A,\theta) \le \diam A + 2\lambda \theta$. Furthermore, if $A$ is connected, then so is $E(A,\theta)$. Indeed, for each $x\in A$, the set $A\cup E_{B(x,\theta)}$ is connected, and $\bigcap_{x\in A} (A\cup E_{B(x,\theta)}) \supset A$. Thus $E(A,\theta)$ is connected. In particular, if $A$ is connected, then $E(A,\theta)$ is a continuum.

\begin{lemma}[Continuum Lifting Lemma]\label{cont lift}
Let $X$ and $Y$ be compact $\lambda$-bounded turning metric spaces, let $f \colon X \to Y$ be a discrete and open mapping, and let $x\in X$.  Let also $G \subseteq Y$ be a continuum containing $f(x)$, and $\theta>0$. Then there exists a continuum $E \subseteq Z$ containing $x$ for which $G \subseteq fE \subseteq E(G,\theta)$.
\end{lemma}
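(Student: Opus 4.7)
The plan is to take $E$ to be a maximal compact connected set in $X$ through $x$ whose $f$-image is trapped inside $E(G,\theta)$, and then to show, using openness of $f$, bounded turning of $X$, and uniform continuity on the compact space, that this maximal $f(E)$ must contain all of $G$. Concretely, I would set
\[
\mathcal{F}:=\{C\subseteq X : C \text{ compact and connected},\ x\in C,\ f(C)\subseteq E(G,\theta)\}, \qquad E:=\overline{\bigcup\mathcal{F}}.
\]
Then $E$ is compact and, as the closure of a union of connected sets all containing $x$, is connected; it contains $x$; and because $X$ is compact and $E(G,\theta)$ is closed, $f(E)=\overline{f(\bigcup\mathcal{F})}\subseteq E(G,\theta)$. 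Hence $E$ itself lies in $\mathcal{F}$ and is its largest element, so the inclusion $f(E)\subseteq E(G,\theta)$ is automatic; only $G\subseteq f(E)$ requires work.

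I would prove this containment by contradiction. If $G\not\subseteq f(E)$, then since $f(E)$ is closed and $G$ is a connected set meeting $f(E)$ at $f(x)$, the relatively open nonempty set $G\setminus f(E)$ cannot also be relatively closed, yielding $y\in G\cap f(E)$ and a sequence $y_n\in G\setminus f(E)$ with $y_n\to y$. Pick $x'\in f^{-1}(y)\cap E$. Choose a uniform-continuity modulus $\delta'>0$ for $f$ so that $\diam A<\delta'$ implies $\diam f(A)<\theta$, and fix $\delta<\delta'/\lambda$. Since $f$ is open, $f(B(x',\delta))$ is an open neighborhood of $y=f(x')$, so it contains a ball $B(y,\varepsilon)$. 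For all sufficiently large $n$, $y_n\in B(y,\varepsilon)\subseteq f(B(x',\delta))$, and we may lift $y_n$ to some $x_n\in B(x',\delta)$.

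Now $\lambda$-bounded turning of $X$ produces a continuum $C_n$ containing $x'$ and $x_n$ with $\diam C_n\leq\lambda d(x',x_n)<\lambda\delta<\delta'$, so by the choice of $\delta'$, $\diam f(C_n)<\theta$. Since $y=f(x')\in f(C_n)$ and $y\in G$, this forces
\[
f(C_n)\subseteq B(y,\theta)\subseteq E_{B(y,\theta)}\subseteq E(G,\theta),
\]
and hence $E\cup C_n$ is again a continuum in $\mathcal{F}$. But it strictly contains $E$, because $x_n\in C_n$ while $x_n\notin E$ (as $f(x_n)=y_n\notin f(E)$), contradicting maximality. The main obstacle is precisely this enlargement step: we must attach to $E$ a new continuum reaching a fresh preimage of a point of $G$ while keeping the entire $f$-image inside the thin, nonsmooth set $E(G,\theta)$, even though $f$ has no Lipschitz or quasisymmetric control available. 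The resolution bundles three ingredients of different natures: openness of $f$ to realize a nearby preimage $x_n$, bounded turning of $X$ to upgrade that proximity to a genuine continuum, and uniform continuity of $f$ on the compact space $X$ to return that smallness to the target scale $\theta$.
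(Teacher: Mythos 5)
Your proposal is correct and is essentially the paper's own proof. Your maximal set $E$ coincides with the $x$-component of $f^{-1}(E(G,\theta))$ used there, and your sequence/contradiction argument is the contrapositive of the paper's direct check that $G\cap fE$ is open in $G$, assembled from the same ingredients (continuity to shrink the image scale below $\theta$, bounded turning to produce a small continuum, maximality of $E$, and openness of $f$).
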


\begin{proof}
Let $E$ be the component of $f^{-1} E(G,\theta)$ containing $x$. Since $f$ is continuous, $E$ is closed and hence compact.  Since $E$ is a component, it is connected. Since $f(x)$ is an interior point of $E(G,\theta)$, we also have that $x$ is an interior point of $E$. Thus $E$ is a continuum by Lemma \ref{continuum lemma}.

It remains to show that $G\subset fE$. Let $H = G \cap fE$. We show $H$ is both open and closed in $G$. Since $fE$ is compact, it is closed in $Y$. Thus $H$ is closed in $G$ by relative topology. To see $H$ is open in $G$, let $y' \in H$ and fix $x' \in E\cap f^{-1}(y')$.  Since $f$ is continuous, there exists $t > 0$ for which $f B(x', t) \subseteq B(y', \theta) \subseteq E(G,\theta)$.  Let $t'>0$ be small enough so that $E_{B(x', t')} \subseteq B(x', t)$.  Then, $fE_{B(e, t')} \subseteq E(G,\theta)$. So $E \cup E_{B(x', t')}$ is a continuum containing $x$ and $f(E\cup E_{B(x',t')}) \subseteq E(G,\theta)$. Since components are maximal connected subsets, we have that $E_{B(x', t')} \subseteq E$.  Since $f$ is open and $B(x', t') \subseteq E$, we have that $fE$ contains a ball centered at $y' = f(x')$.  Thus $H$ is open in $G$.  Since $G$ is connected, we conclude $H = G$.  

Let $w \in G \setminus fB$.  From $H = G$, there must be a point $e' \in E$ with $fe' = w$.  As $w \notin fB$, we must have $e' \notin B$.  Hence, $d(x, e') \geq r$, so $\diam(E) \geq r$.  
\end{proof}

We use this lemma in the following form.
\begin{cor}
\label{cor:cont lift}
Let $X$ and $Y$ be compact and $\lambda$-bounded turning  metric spaces.  Let $f \colon X \to Y$ be a discrete and open mapping and let $B = B(x, r) \subseteq X$ be a ball. Suppose $G \subseteq Y$ is a continuum containing $f(x)$ and not contained in $fB$. Then, for each $\theta>0$, there exists a continuum $E \subseteq X$ containing $x$ such that $E\not\subseteq B$ and $fE \subseteq E(G,\theta)$.
\end{cor}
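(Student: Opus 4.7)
The plan is to apply Lemma \ref{cont lift} directly and then extract from the inclusion $G \subseteq fE$ a point of $E$ lying outside $B$. First I would invoke the Continuum Lifting Lemma with the given discrete and open map $f$, the point $x$, the continuum $G \ni f(x)$, and the scale $\theta > 0$, obtaining a continuum $E \subseteq X$ with $x \in E$ and $G \subseteq fE \subseteq E(G,\theta)$. This automatically provides both the containment $x \in E$ and the bound $fE \subseteq E(G,\theta)$ required by the corollary.

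It only remains to verify that $E \not\subseteq B$. By hypothesis $G \not\subseteq fB$, so I may fix a point $w \in G \setminus fB$. Since $G \subseteq fE$, there is some $e \in E$ with $f(e) = w$. If $e$ were in $B$, then $w = f(e)$ would lie in $fB$, contradicting the choice of $w$. Hence $e \in E \setminus B$, so $E \not\subseteq B$. There is no serious obstacle here; the corollary is essentially a restatement of the final paragraph of the proof of Lemma \ref{cont lift}, packaged in the form that will be convenient for later applications, and the argument is almost entirely a matter of citing the lemma and using the extra hypothesis $G \not\subseteq fB$ to witness that $E$ escapes $B$.
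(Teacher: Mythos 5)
Your proposal is correct and takes essentially the same approach the paper intends: apply Lemma \ref{cont lift} to obtain $E$ with $x\in E$ and $G\subseteq fE\subseteq E(G,\theta)$, then use $G\not\subseteq fB$ to pick $w\in G\setminus fB$, whose preimage in $E$ necessarily lies outside $B$. (In fact the paper's proof of Lemma \ref{cont lift} itself ends with this very argument, even though the ball $B$ is not part of the lemma's statement — so you have correctly reconstructed what was clearly the intended proof of the corollary.)
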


\section{Branched quasisymmetries}

In this section we record some basic properties of branched quasisymmetries. First, a composition of branched quasisymmetries is a branched quasisymmetry, qualitatively.
\begin{lemma}
Let $f \colon X \to Y$ and $g \colon Y \to Z$ be branched quasisymmetries with gauge functions $\eta_f \colon [0,\infty) \to [0,\infty)$ and $\eta_g \colon [0,\infty) \to [0,\infty)$, respectively. Then the map $h = g \circ f \colon X \to Z$ is a branched $(\eta_g \circ \eta_f)$-quasisymmetry.  
\end{lemma}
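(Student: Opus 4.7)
The plan is to verify the defining inequality \eqref{BQS} for $h = g \circ f$ by chaining the inequalities for $f$ and $g$, after arranging that the images $fE$ and $fE'$ are again intersecting continua to which the hypothesis on $g$ applies.

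First I would fix intersecting continua $E, E' \subset X$ and observe that $fE$ and $fE'$ are compact, connected subsets of $Y$ which share the image of any common point, so they intersect. The only subtlety is that a continuum by convention has at least two points, so I would split according to whether $\diam fE > 0$ or $\diam fE = 0$. As the authors noted just before the statement, for a branched quasisymmetry the vanishing of $\diam fE$ is equivalent to the vanishing of $\diam fE'$ for intersecting continua; in that case $hE$ and $hE'$ are single points and both sides of \eqref{BQS} for $h$ vanish, so the inequality is trivial.

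In the non-degenerate case, $fE$ and $fE'$ are genuine intersecting continua in $Y$, so the branched quasisymmetry condition for $g$ gives
\[
\diam g(fE') \le \eta_g\!\left(\frac{\diam fE'}{\diam fE}\right) \diam g(fE).
\]
The condition for $f$ yields $\diam fE' / \diam fE \le \eta_f(\diam E' / \diam E)$. Since $\eta_g$ is a homeomorphism of $[0,\infty)$ fixing $0$, it is monotone non-decreasing, so applying $\eta_g$ preserves the inequality and one obtains
\[
\diam hE' = \diam g(fE') \le \eta_g\!\left(\eta_f\!\left(\tfrac{\diam E'}{\diam E}\right)\right) \diam hE,
\]
which is exactly the claim with distortion function $\eta_g \circ \eta_f$.

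There is no real obstacle here; the only thing to be careful about is the degenerate case where $f$ crushes the intersecting continua to points, which one dispatches using the elementary observation on branched quasisymmetries recalled right after the definition. The monotonicity of $\eta_g$ is automatic from its being a self-homeomorphism of $[0,\infty)$, and the composition $\eta_g \circ \eta_f$ is again a homeomorphism of $[0,\infty)$, so it is an admissible distortion function.
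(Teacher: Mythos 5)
Your proof is correct and follows essentially the same chaining argument as the paper: apply the branched quasisymmetry condition for $g$ to the intersecting continua $fE, fE'$, then bound the ratio $\diam fE' / \diam fE$ by $\eta_f(\diam E'/\diam E)$ and use monotonicity of $\eta_g$. Your explicit dispatch of the degenerate case where $\diam fE = 0$ is in fact a bit cleaner than the paper's brief reduction to ``$X$ connected and $f$ non-constant.''
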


\begin{proof}
By passing to a connected component of $X$ if necessary, we may assume that $X$ is connected. A fortiori, we may assume $f$ is not constant.

Let $E, E' \subseteq X$ be intersecting continua.  Since $fE$ and $fE'$ are intersecting continua, we have that
\begin{align*}
\diam(hE) = \diam g(fE) &\leq \eta_g \biggl(\frac{\diam(fE)}{\diam(fE')}\biggr) \diam g(fE') \\
&\leq \eta_g \biggl( \eta_f \biggl( \frac{\diam(E)}{\diam(E')} \biggr) \biggr)
\diam g(fE').
\end{align*}
This completes the proof.
\end{proof}

The second observation is that quasisymmetries are branched quasisymmetries quantitatively; see Guo--Williams \cite[Remark 6.49]{GW}.
\begin{lemma}
\label{lemma:qs-bqs}
An $\eta$-quasisymmetry $f \colon X \to Y$ for $\eta\colon [0,\infty) \to [0,\infty)$ is a branched $\eta'$-quasisymmetry for $\eta' \colon t \mapsto 2\eta(2 t)$.
\end{lemma}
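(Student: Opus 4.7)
The plan is to exploit the intersection point of the two continua as a common basepoint, apply the triangle inequality inside $Y$, and then control the two resulting distances by a single application of the quasisymmetry inequality \eqref{QS} against a well-chosen point in $E$.

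Fix intersecting continua $E,E'\subset X$ and a point $x_0\in E\cap E'$. Since $E'$ is a continuum, $fE'\subset Y$ is compact, so we may pick $a,b\in E'$ with $d_Y(f(a),f(b))=\diam fE'$. The triangle inequality gives
\[
\diam fE' \;\le\; d_Y(f(x_0),f(a))+d_Y(f(x_0),f(b)),
\]
so it suffices to show that
\[
d_Y(f(x_0),f(z))\;\le\;\eta\!\Bigl(\tfrac{2\diam E'}{\diam E}\Bigr)\,\diam fE
\qquad\text{for every } z\in E'.
\]

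To obtain this, I would choose a point $z'\in E$ with $d_X(x_0,z')\ge \tfrac{1}{2}\diam E$; such a point exists since $E$ is a continuum (hence of positive diameter) and one of any pair of diameter-realizing points of $E$ is at distance at least $\tfrac{1}{2}\diam E$ from $x_0$. If $z=x_0$ the desired bound is trivial; otherwise $x_0,z',z$ are pairwise distinct (note $z'\ne x_0$ as $\diam E>0$), and \eqref{QS} applied to this triple with basepoint $x_0$ yields
\[
d_Y(f(x_0),f(z))\;\le\;\eta\!\Bigl(\tfrac{d_X(x_0,z)}{d_X(x_0,z')}\Bigr)\,d_Y(f(x_0),f(z')).
\]
Now $d_X(x_0,z)\le \diam E'$ since $x_0,z\in E'$, $d_X(x_0,z')\ge \tfrac{1}{2}\diam E$, and $d_Y(f(x_0),f(z'))\le \diam fE$ since $x_0,z'\in E$. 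Using that $\eta$ is a homeomorphism of $[0,\infty)$ and hence nondecreasing, the bound above becomes the desired inequality.

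Combining with the triangle-inequality step, we obtain
\[
\diam fE'\;\le\;2\eta\!\Bigl(\tfrac{2\diam E'}{\diam E}\Bigr)\,\diam fE\;=\;\eta'\!\Bigl(\tfrac{\diam E'}{\diam E}\Bigr)\,\diam fE,
\]
which is \eqref{BQS} with the claimed $\eta'$. There is no real obstacle here — the only subtlety is the choice of the auxiliary point $z'\in E$ so that the ratio fed into $\eta$ is at most $2\diam E'/\diam E$, which explains the factors of $2$ inside and outside $\eta$ in the definition of $\eta'$.
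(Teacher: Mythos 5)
Your proof is correct and follows essentially the same route as the paper's: anchor at the intersection point, pick one point in the "reference" continuum at domain distance at least half its diameter, and apply \eqref{QS} once; each factor of $2$ in $\eta'$ comes from the same place as in the paper (one from bounding the image diameter by twice a distance to the basepoint, one from the domain-side half-diameter lower bound). The paper states the inequality with the roles of $E$ and $E'$ swapped and picks a single image-distance-maximizing point rather than invoking the triangle inequality for a diameter-realizing pair, but these are cosmetic differences.
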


\begin{proof}
Let $E$ and $E'$ be intersecting continua in $X$ and $z_0\in E\cap E'$. Let also $x\in E$ and $x'\in E'$ be points which maximize the distances $d(f(z_0),f(x))$ and $d(z_0,y)$ among the points in $E$ and $E'$, respectively. Then $\diam fE \le 2 d(f(z_0),f(x))$ and $\diam E' \le 2 d(z_0,y)$. Thus
\begin{align*}
\diam fE \leq 2 d(f(z_0),f(x)) &\le 2\eta\left( \frac{d(x_0,x)}{d(x_0,y)}\right) d(f(z_0),f(y)) \\
&\le 2\eta\left( 2\frac{\diam E}{\diam E'}\right) \diam fE'.
\end{align*}
This completes the proof.
\end{proof}

Similarly to quasisymmetries, branched quasisymmetries easily form normal families due to equicontinuity; cf.\;Heinonen \cite[Section 10.25]{He} for analogous discussion. 

\begin{lemma}
Let $X$ be a $\lambda$-bounded turning space for $\lambda \ge 1$, $Y$ a metric space, and $\eta\colon [0,\infty) \to [0,\infty)$ a homeomorphism. Then, for each $M>0$ and continuum $E$ in $X$, the family of $\eta$-branched quasisymmetries $f \colon X\to Y$ satisfying $\diam fE \le M$ is equicontinuous. 
\end{lemma}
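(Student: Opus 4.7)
My plan is to establish pointwise equicontinuity: fixing $x_0 \in X$ and $\varepsilon>0$, I will find $\delta>0$ so that $d(f(x_0),f(x))<\varepsilon$ for every $f$ in the family and every $x$ with $d(x_0,x)<\delta$. The idea is to build, once and for all, a \emph{bridging continuum} $F_0 \subseteq X$ through $x_0$ that meets $E$ and has positive diameter, so that the BQS inequality can be applied twice: first against $E$ to control $\diam fF_0$ uniformly in $f$, then against a short continuum joining $x_0$ to $x$ to control $d(f(x_0),f(x))$.

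For the first step, if $x_0\in E$ I set $F_0=E$; otherwise I pick any $e\in E$, note that $d(x_0,e)>0$, and use $\lambda$-bounded turning to produce a continuum $F_0$ containing $x_0$ and $e$ with $\diam F_0\le \lambda d(x_0,e)$. In either case $F_0\cap E\neq\emptyset$ and $\diam F_0>0$, so applying \eqref{BQS} to the intersecting pair $(F_0,E)$ yields
\[
\diam fF_0 \;\le\; \eta\!\left(\frac{\diam F_0}{\diam E}\right)\diam fE \;\le\; \eta\!\left(\frac{\diam F_0}{\diam E}\right) M \;=:\; C,
\]
where $C$ depends only on $x_0$, $E$, $\lambda$, $\eta$, and $M$, but not on $f$.

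For the second step, given $x\in X$ with $x\neq x_0$, bounded turning supplies a continuum $E_{x_0 x}$ containing $x_0$ and $x$ with $\diam E_{x_0 x}\le \lambda d(x_0,x)$. Since $x_0 \in F_0 \cap E_{x_0 x}$, the pair $(E_{x_0 x}, F_0)$ is intersecting and \eqref{BQS} gives
\[
d(f(x_0),f(x)) \;\le\; \diam f E_{x_0 x} \;\le\; \eta\!\left(\frac{\diam E_{x_0 x}}{\diam F_0}\right) \diam f F_0 \;\le\; \eta\!\left(\frac{\lambda\, d(x_0,x)}{\diam F_0}\right) C.
\]
Because $\eta\colon[0,\infty)\to[0,\infty)$ is a homeomorphism, $\eta(t)\to 0$ as $t\to 0^+$, so the right-hand side tends to $0$ as $d(x_0,x)\to 0$, and the required $\delta$ exists uniformly over the family.

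The whole argument is essentially a double application of \eqref{BQS}; the only genuine subtlety is arranging that $\diam F_0>0$ so that the first BQS estimate is non-degenerate, which is why I split into the cases $x_0\in E$ and $x_0\notin E$. I expect no serious obstacle beyond this bookkeeping.
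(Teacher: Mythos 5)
Your proof is correct and takes essentially the same approach as the paper: a double application of the BQS inequality across an intermediate continuum joining the base point to $E$, followed by a short continuum from bounded turning joining the two nearby points. The only cosmetic differences are that you name the intermediate constant $C$ and explicitly split the case $x_0\in E$ to guarantee $\diam F_0>0$ (the paper tacitly assumes the base point is distinct from its chosen $a\in E$), whereas the paper keeps the two $\eta$-factors $\eta(\lambda d(x,y)/d(x,a))\,\eta(\lambda d(x,a)/\diam E)\,M$ unbundled.
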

\begin{proof}
Let $M>0$ and let $E$ be a continuum in $X$. We fix a point $a\in E$. Let $x$ and $y$ be points in $X$. We fix first a continuum $E_a$ connecting $x$ to $a$ for which $\diam E_a \le \lambda d(a, x)$, and then a continuum $E_{y}$ connecting $x$ and $y$ for which $\diam E_{y} \le \lambda d(x, y)$. Since $x\in E_y \cap E_a$ and $a\in E\cap E_a$, we have that
\begin{align*}
d(f(x),f(y)) &\le \diam fE_{y} \le \eta\left( \frac{\diam E_y}{\diam E_a} \right) \diam fE_a \\
& \le \eta\left( \frac{\diam E_y}{\diam E_a} \right) \eta\left( \frac{\diam E_a}{\diam E}\right) \diam fE \\
&\le \eta\left( \lambda \frac{d(x,y)}{d(x,a)} \right) \eta\left( \lambda \frac{d(x,a)}{\diam E}\right) M. 
\end{align*}
Thus the family is equicontinuous at $x$.
\end{proof}

Note that is it almost immediate from the definition that locally uniform limits of branched quasisymmetries are branched quasisymmetries. In what follows, we say a metric space $X$ is continuum-connected if any two points can be joined by a continuum.
  
\begin{lemma}
\label{lemma:BQS-local-limits}
Let $X$ and $Y$ be metric spaces. Let $(f_i \colon X\to Y)$ be a sequence of branched $\eta$-quasisymmetries which converge locally uniformly to a (possibly constant) map $f\colon X\to Y$. Then $f$ is branched $\eta$-quasisymmetric.
\end{lemma}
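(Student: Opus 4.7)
The plan is to verify the inequality \eqref{BQS} for $f$ by passing to the limit in the corresponding inequality for each $f_i$. The only analytic ingredient needed is that uniform convergence on a compact set implies convergence of the diameters of the images of compact subsets.

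First, I would fix intersecting continua $E$ and $E'$ in $X$. Since $E \cup E'$ is compact, the hypothesis of locally uniform convergence gives $f_i \to f$ uniformly on $E \cup E'$. Given $\varepsilon > 0$, for all sufficiently large $i$ one has $\sup_{x \in E \cup E'} d_Y(f_i(x), f(x)) < \varepsilon$; hence $f_i E \subseteq B(fE, \varepsilon)$ and $fE \subseteq B(f_i E, \varepsilon)$, so $|\diam f_i E - \diam fE| \le 2\varepsilon$ and analogously for $E'$. This gives $\diam f_i E \to \diam fE$ and $\diam f_i E' \to \diam fE'$.

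Next, recalling that every continuum has positive diameter, so that $\diam E > 0$, the branched quasisymmetry hypothesis applied to each $f_i$ yields
\[
\diam f_i E' \le \eta\!\left(\frac{\diam E'}{\diam E}\right) \diam f_i E.
\]
Letting $i \to \infty$ and using the convergence of diameters established above gives
\[
\diam fE' \le \eta\!\left(\frac{\diam E'}{\diam E}\right) \diam fE,
\]
which is precisely \eqref{BQS} for $f$. Since $E, E'$ were arbitrary intersecting continua, $f$ is $\eta$-branched quasisymmetric.

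The only borderline case is when $\diam fE = 0$, but the same limiting inequality forces $\diam fE' = 0$, consistent with the parenthetical allowance that $f$ may be constant (and reflecting that the right-hand side of \eqref{BQS} has the form $\eta(\cdot)\cdot 0$). I do not anticipate any substantive obstacle here: the whole argument rests on the routine stability of the Hausdorff diameter under uniform convergence on a compact set, combined with the fact that the distortion function $\eta$ does not depend on $i$.
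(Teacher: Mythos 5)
Your proposal is correct and follows essentially the same route as the paper: establish convergence of $\diam f_i E$ and $\diam f_i E'$ to $\diam fE$ and $\diam fE'$ via locally uniform convergence on the compact set $E \cup E'$, then pass to the limit in the BQS inequality. The extra detail you supply (the $\varepsilon$-neighborhood argument for diameter convergence, and the explicit remark about the degenerate case $\diam fE = 0$) is harmless elaboration of steps the paper leaves implicit.
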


\begin{proof}
Let $E$ and $E'$ be intersecting continua in $X$. Since $f_i \to f$ locally uniformly, sequences $(f_iE)$ and $(f_iE')$ converge in the Hausdorff distance to compact sets $fE$ and $fE'$, respectively. From local uniform convergence, we have that $\diam f_iE \to \diam fE$ and $\diam f_iE' \to \diam fE'$ as $i \to \infty$. Thus
\begin{align*}
\diam fE = \lim_{i \to \infty} \diam f_i E &\le \limsup_{i \to \infty} \eta\left( \frac{\diam E}{\diam E'} \right) \diam f_i E' \\
&= \eta\left( \frac{\diam E}{\diam E'} \right) \diam f E'
\end{align*}
The claim is proven.
\end{proof}

\subsection{Examples of branched quasisymmetries}

We first discuss the relationship of branched quasisymmetric homeomorphisms and quasisymmetries. Lemma \ref{lemma:qs-bqs} admits a converse for bounded turning spaces: \emph{Let $X$ and $Y$ be bounded turning spaces. Then a branched quasisymmetric homeomorphism $X\to Y$ is quasisymmetric, quantitatively}; see Guo--Williams \cite[Proposition 6.48]{GW}. 

This correspondence of branched quasisymmetric homeomorphisms and quasisymmetries, however, does not hold if we only assume that only one of the spaces has bounded turning. 
\begin{example}
Consider spaces $X=([0,1]\times \{0\}) \cup G \subset \R^2$, where $G=\{ (x,x^2) \in \R^2 \colon x\in [0,1]\}$, and $Y=[-1,1]$. Note that $X$ does not have bounded turning. Let now $f\colon X\to Y$ be the map $(x,0) \mapsto x$ and $(x,x^2) \mapsto -x$. For all continua $E\subset G$, we have that $(\diam E)/2\le \diam(fE) \le \diam(E)$. Thus, by a simple case study, we observe that $f$ is a branched quasisymmetric homeomorphism. However, $f$ is not a quasisymmetry. Indeed, suppose that $f$ is $\eta$-quasisymmetric. Let $x\in (0,1]$. Then 
\[
2 = \frac{|f(x,0)-f(x,x^2)|}{|f(0,0)-f(x,x^2)|} \le \eta\left( \frac{|(x,0)-(x,x^2)|}{|(0,0)-(x,x^2)|}\right) \le \eta\left( \frac{x^2}{\sqrt{x^2 + x^4}} \right) \le \eta\left( x\right).
\]
This is a contradiction, since $\eta \colon [0,\infty)\to [0,\infty)$ is a homeomorphism. A similar example, in the case that the target is not bounded turning, is given by the inverse $f^{-1}\colon Y \to X$ of $f$. 
\end{example}

It is also easy to find branched quasisymmetries which are neither discrete nor open. Clearly, the inclusion $[0,1] \to [0,1]^2$ is a branched quasisymmetry which is not open. Classical folding maps give bit more elaborate examples of a branched quasisymmetry between two dimensional spaces which are not open. The following example is of this type. In this, and in the following example, we let $\pi_i \colon \R^2 \to \R$ be the coordinate projections $(x_1,x_2) \mapsto x_i$ for $i=1,2$. 

\begin{example}
Consider the spaces $X=Y=[-1,1]\times [0,1]$ and the map $f \colon X \to Y$, $(x_1,x_2) \mapsto (h(x_1),x_2)$, where $h \colon [-1,1]\to [-1,1]$ is a piece-wise linear function 
\[
t\mapsto \left\{ \begin{array}{ll}
3(t+1)-1, & t \le -1/2 \\
|t|, & |t| \le 1/2 \\
t, & t \ge 1/2.
\end{array}\right.
\]
For a continuum $E \subseteq X$, we have that $\diam(\pi_2(fE)) = \diam(\pi_2(E))$ and that $\diam(\pi_1(E)/2 \leq \diam(\pi_1(fE)) \leq 3\diam(\pi_1(E))$. Thus $f$ is a branched quasisymmetry. The map $f$ is not open, since $f((-1/2,1/2)\times (0,1)) = [0,1/2) \times (0,1)$. Note that, by the choice of $h$, the issue of openness of $f$ is not related to the surjectivity of the map. 
\end{example}

The following example shows that a branched quasisymmetry need not be discrete. The example is again folding based. This is not a surprise, since by Stoilow's theorem \cite{Stoilow, Stoilow-book} an open and light map is discrete; see also \cite{Luisto-Pankka}. 
 
\begin{example}
Define a function $g \colon [0,3] \to [0,1]$ as follows. For each $k \in \N_0$, let $a_k = 1 + 2(\sum_{j=1}^k 2^{-j})$ and define
\[
g(x) = \left\{ \begin{array}{ll}
1-x, & 0 \le x \le 1, \\
x-a_k, & a_k \le x \le a_k + 2^{-k-1}, \\
a_{k+1}-x, & a_k + 2^{-k-1} \le x \le a_{k+1};
\end{array}\right.
\]
see Figure \ref{fig:only-fig}.

\begin{figure}[h!]
\begin{tikzpicture}[scale = 2]

\draw[<->,thick] (-.25,0) -- (3.25,0) node[below]{$x$} ;
\draw[<->, thick] (0,-.25) -- (0,1.25) node[left] {$g(x)$};
\foreach \x in {1,2,3} \draw (\x, 2pt) -- (\x, -2pt) node[below]{\x};

\draw[domain = 0:1.01, line width = .7mm, smooth] plot (\x, 1 - \x);
\draw[domain = .99:1.51, line width = .7mm, smooth] plot (\x, \x - 1);
\draw[domain = 1.49:2.01, line width = .7mm, smooth] plot (\x, 2 - \x);
\draw[domain = 2.01:2.26, line width = .7mm, smooth] plot (\x, \x - 2);
\draw[domain = 2.24:2.51, line width = .7mm, smooth] plot (\x, 2.5 - \x);
\draw[domain = 2.49:2.635, line width = .7mm, smooth] plot (\x, \x - 2.5);
\draw[domain = 2.615:2.76, line width = .7mm, smooth] plot (\x, 2.75 - \x);
\end{tikzpicture}
\caption{Part of the graph of the function $g$.}
\label{fig:only-fig}
\end{figure}
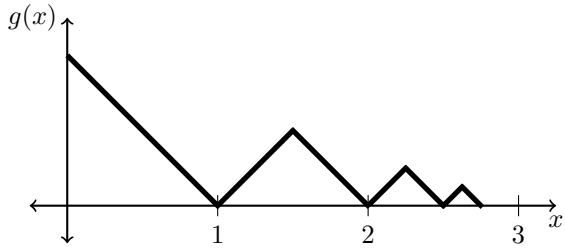

Consider now spaces $X = [0,3] \times [0,1]$ and $Y = [0,1]^2$, and let $f \colon X \to Y$ be the map $(x_1,x_2) \mapsto (g(x_1), x_2)$. As in the previous example, we observe that, for a continuum $E \subseteq X$, we have that $\diam(\pi_2(fE)) = \diam(\pi_2(E))$ and $\diam(\pi_1(E))/2\leq \diam(\pi_1(fE)) \leq \diam(\pi_1(E))$. Hence, $f$ is a branched quasisymmetry. Since $f^{-1}(0,0)$ accumulates to $(3,0)\in X$, we observe that $f$ is not discrete.
\end{example}

\subsection{Diametric hull lemma}

In what follows we will use repeatedly the following observation that, for branched quasisymmetries from bounded turning spaces, images of balls and their diametric hulls have comparable diameters.
 
\begin{lemma}
\label{lemma:BQS-ball continuum equiv}
Let $X$ and $Y$ be compact metrically Ahlfors regular spaces, where $X$ is $\lambda$-bounded turning. Let also $f \colon X \to Y$ be an branched $\eta$-quasisymmetry.  Then, for each ball $B\subset X$ and its $\lambda$-diametric hull $E_B \subset X$, we have
\begin{equation*}
\diam(fE_B) \simeq \diam (fB),
\end{equation*}
where the implicit constants depend only on $\eta$ and $\lambda$.
\end{lemma}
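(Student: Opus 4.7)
The inclusion $B\subset E_B$ immediately yields $\diam fB \le \diam fE_B$, so the task is the reverse estimate $\diam fE_B \lesssim \diam fB$. My plan is to produce a continuum $E_0\subset B$ that contains the center of $B$ and has diameter comparable to $\diam B$, and then apply the defining inequality \eqref{BQS} to the intersecting pair $(E_0, E_B)$ in order to bound $\diam fE_B$ by $\diam fE_0 \le \diam fB$.

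For the construction of $E_0$, I would invoke the fact that Ahlfors regularity implies uniform perfectness: there is a constant $c_0\in(0,1)$, depending only on the regularity data of $X$, such that $\diam B(x,s)\ge c_0 s$ for every $s\le \diam X$. Writing $B=B(x,r)$, I would set $B'=B(x,r/(3\lambda))$ and apply Lemma \ref{continuum lemma} to produce a continuum $E_0 := E_{B'}$ satisfying $B'\subset E_0\subset 2\lambda B' = B(x, 2r/3)\subset B$. Uniform perfectness then gives $\diam E_0 \ge \diam B' \ge c_0 r/(3\lambda)$. The degenerate case $r > \diam X$ reduces to $B = X = E_B$ and is trivial.

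Since both $E_0$ and $E_B$ contain $x$, they form an intersecting pair of continua, and \eqref{BQS} yields
\[
\diam fE_B \le \eta\!\left(\frac{\diam E_B}{\diam E_0}\right)\diam fE_0 \le \eta\!\left(\frac{6\lambda^2 \diam B}{c_0 r}\right)\diam fB,
\]
using $\diam E_B \le 2\lambda \diam B$ from Definition \ref{def:diametric-hull}, the lower bound $\diam E_0 \ge c_0 r/(3\lambda)$ above, and the inclusion $fE_0\subset fB$. Since $\diam B \le 2r$, the argument of $\eta$ is dominated by the constant $12\lambda^2/c_0$, and the resulting estimate has constant depending only on $\eta$, $\lambda$, and the Ahlfors data of $X$.

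The only subtle step is fitting a continuum of diameter $\simeq r$ inside the (possibly highly disconnected) ball $B(x,r)$, and this is precisely where Ahlfors regularity enters through uniform perfectness; without such a continuum, the ratio $\diam E_B/\diam E_0$ in the branched quasisymmetry inequality cannot be controlled. Everything else is a direct application of \eqref{BQS} together with Lemma \ref{continuum lemma}.
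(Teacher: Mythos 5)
Your proof is correct and follows the same overall template as the paper's: fit a comparably sized continuum $E_0$ inside $B$ by applying Lemma~\ref{continuum lemma} to a shrunken ball, then apply~\eqref{BQS} to the intersecting pair $(E_0,E_B)$, which share the center $x$, and finally use $fE_0 \subset fB$. The one place you diverge is in justifying the lower bound $\diam E_0 \gtrsim r$. You obtain it from uniform perfectness, which you derive from the Ahlfors regularity hypothesis; this introduces a constant $c_0$ depending on the Ahlfors data of $X$. The paper instead invokes Lemma~\ref{lemma:continuum-spheres}, which gives $\diam B(x,s) \ge s$ (for $s < R_X(x)$) directly from the observation that a compact, nondegenerate, $\lambda$-bounded turning space is a continuum, and so never uses Ahlfors regularity. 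The practical consequence is quantitative: the paper's route yields constants depending only on $\eta$ and $\lambda$, exactly as the lemma asserts, whereas your constant $\eta(12\lambda^2/c_0)$ additionally depends on the Ahlfors data through $c_0$. Your argument is valid, but it proves a formally weaker statement and uses a hypothesis the paper's proof shows is dispensable; if you replace the uniform-perfectness step by the elementary fact that $\diam B(x,s) \ge s$ whenever $B(x,s) \neq X$ in a continuum, you recover the stated dependence with no extra effort.
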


\begin{proof}
Let $B =B(x,r) \subset X$ be a ball. We may assume that $B\ne X$. Since $fB \subseteq fE_B$, we have
\begin{equation*}
\diam (fB)  \leq \diam(fE_B).
\end{equation*}

To the other direction, let $b = 1/(2\lambda)$.  Then, by Lemma \ref{continuum lemma}, we have
\[
bB \subseteq E_{bB} \subseteq B \subseteq E_B \subseteq 2\lambda B.
\]
Since $E_{bB}$ and $E_B$ are continua and $f$ is an $\eta$-branched quasisymmetry, we have
\begin{align*}
\diam(f E_B) \le \eta \left( \frac{\diam(E_B)}{\diam(E_{bB})}\right)\diam(f E_{b B}) \le \eta \left( \frac{\diam( 2\lambda B)}{\diam(bB)}\right) \diam(f B).
\end{align*} 
By Lemma \ref{lemma:continuum-spheres}, we have that $\diam(bB) \ge b r$. Thus
\[
\frac{\diam( 2\lambda B)}{\diam(bB)} \le \frac{4\lambda}{b} = 8\lambda^2.
\]
The claim follows.
\end{proof}

\begin{remark}
\label{rmk:local ball continuum equiv}
Lemma \ref{lemma:BQS-ball continuum equiv} holds also for local branched quasisymmetries in the following form: \emph{Let $f\colon X\to Y$ be a local branched $\eta$-quasisymmetry. Then for $R=\varepsilon/\lambda>0$, where $\varepsilon$ is the locality scale of $f$ and $\lambda$ the bounded turning constant of $X$, we have that $\diam(fE_B) \simeq \diam(fB)$ for each balls $B\subset X$ of radius at most $R$.} The proof is verbatim and is omitted.
\end{remark}

\section{Power branched quasisymmetries}
\label{sec:power_quasisymmetries}

It is well-known that a quasisymmetry between uniformly perfect spaces is a power quasisymmetry; see Tukia and V\"ais\"al\"a \cite[Corollary 3.12]{Tukia-Vaisala_Annals} or e.g.~Heinonen \cite[Theorem 11.3]{He}. As mentioned in the introduction, an analogous result holds also for branched quasisymmetries defined on a bounded turning space. The following theorem is a quantitative version of Theorem \ref{thm:power} stated in the introduction.

\begin{thm}
\label{thm:BQS to PBQS}
Let $X$ and $Y$ be compact metric spaces and suppose that $X$ has $\lambda$-bounded turning for $\lambda \ge 1$, and let $f \colon X \to Y$ be an branched $\eta$-quasisymmetry with $\eta\colon [0,\infty) \to [0,\infty)$.  Then $f$ is an $\eta'$-branched quasisymmetry, where $\eta'\colon [0,\infty) \to [0,\infty)$ is the homeomorphism 
\[
t \mapsto C_{\eta'} \max\{t^q, t^{1/q}\},
\]
where constants $C_{\eta'}>0$ and $q > 0$ depend only on $\eta$ and $\lambda$.
\end{thm}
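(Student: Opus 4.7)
The plan is to adapt the classical Tukia--V\"ais\"al\"a argument for power quasisymmetries between uniformly perfect spaces to the branched setting, by systematically replacing balls with their $\lambda$-diametric hulls from Lemma \ref{continuum lemma}. Since $X$ is $\lambda$-bounded turning with at least two points, it is a continuum, and I may assume $f$ is non-constant, as otherwise the statement is trivial.

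The central construction is a geometric chain of concentric diametric hulls. Fix a base point $p \in X$ and a ratio $\rho > 1$ to be chosen, and for $k = 0, 1, 2, \ldots$ let $\hat B_k = E_{B(p, \rho^k r_0)}$. Every $\hat B_k$ contains $p$, so the family is pairwise intersecting, and Lemmas \ref{lemma:continuum-spheres} and \ref{continuum lemma} sandwich the diameters as $\rho^k r_0 \le \diam \hat B_k \le 4\lambda \rho^k r_0$, valid while $\rho^k r_0 \le R_X(p)$. Applying \eqref{BQS} to the intersecting pair $(\hat B_k, \hat B_{k+1})$ in both directions gives
\[
\frac{1}{\eta(4\lambda/\rho)} \diam f\hat B_k \le \diam f \hat B_{k+1} \le \eta(4\lambda \rho) \diam f \hat B_k.
\]
The decisive step is to select $\rho$, depending only on $\eta$ and $\lambda$, large enough that $\eta(4\lambda/\rho) \le 1/2$, which is possible because $\eta$ is a homeomorphism with $\eta(0) = 0$. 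Setting $M = \eta(4\lambda \rho)$, iteration produces the two-sided chain estimate
\[
2^n \diam f \hat B_0 \le \diam f \hat B_n \le M^n \diam f \hat B_0.
\]
Since $\diam \hat B_n / \diam \hat B_0 \asymp \rho^n$, these translate into power-type bounds with exponents $q_{\mathrm{low}} = \log 2 / \log \rho$ (lower side) and $1/q_{\mathrm{up}} = \log M / \log \rho$ (upper side).

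To deduce the theorem from the chain, take intersecting continua $E, E' \subseteq X$ with common point $p \in E \cap E'$, and apply the chain with $r_0 = \min\{\diam E, \diam E'\}$ and $n \asymp \log_\rho(\max\{\diam E, \diam E'\}/r_0)$. Lemma \ref{lemma:BQS-ball continuum equiv} together with a direct use of \eqref{BQS} at the intersection point $p$ shows that $\diam f E$ and $\diam f E'$ are comparable to $\diam f \hat B_0$ and $\diam f \hat B_n$ respectively, with multiplicative constants depending only on $\eta$ and $\lambda$. The upper chain estimate then yields $\diam f E'/\diam f E \le C (\diam E'/\diam E)^{1/q_{\mathrm{up}}}$ when $\diam E' \ge \diam E$, while the lower chain estimate yields the matching $\diam f E/\diam f E' \le C (\diam E/\diam E')^{q_{\mathrm{low}}}$ when $\diam E \le \diam E'$. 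Setting $q = \min\{q_{\mathrm{low}}, q_{\mathrm{up}}, 1\} \in (0, 1]$ and absorbing all constants into a single $C_{\eta'} > 0$ produces the required distortion function $t \mapsto C_{\eta'} \max\{t^q, t^{1/q}\}$.

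The main obstacle is the top of the chain: once $\rho^k r_0$ exceeds $R_X(p)$, the hull $\hat B_k$ saturates at $X$ and further geometric growth stalls, so the chain may terminate before the ratio $\diam E'/\diam E$ is fully accommodated. I intend to handle this by capping the chain at its maximal admissible index and combining the chain bounds with the trivial estimate $\diam f E' \le \diam f X$, together with a uniform positive lower bound on $\diam f E$ whenever $\diam E$ is comparable to $\diam X$, obtained from compactness of $X$ and a finite covering by small hulls on which $f$ is non-constant. The bookkeeping is arranged so that both the exponent $q$ and the multiplicative constant $C_{\eta'}$ depend only on $\eta$ and $\lambda$, giving the asserted quantitative dependence.
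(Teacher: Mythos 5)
Your proposal takes a genuinely different route from the paper. The paper's proof is a three-way case analysis (Propositions \ref{prop:power-a} and \ref{prop:power-b} plus an endgame), each case running an \emph{inward} chain via Lemma \ref{lemma:theta-lemma}: a sequence of points whose distances to the common point $c$ decrease geometrically, together with the continua from bounded turning joining each to $c$. You instead build a single \emph{outward} chain of nested diametric hulls $\hat B_k = E_{B(p,\rho^k r_0)}$ about $p$, iterate \eqref{BQS} on consecutive pairs, and read off the power exponents directly. This is cleaner and closer in spirit to the Tukia--V\"ais\"al\"a proof for uniformly perfect spaces. The chain estimates themselves are correct: the sandwich $\rho^k r_0 \le \diam \hat B_k \le 4\lambda\rho^k r_0$ from Lemmas \ref{lemma:continuum-spheres} and \ref{continuum lemma}, the two-sided iteration with $\rho$ chosen so $\eta(4\lambda/\rho)\le 1/2$, and the comparability $\diam fE \asymp \diam f\hat B_0$, $\diam fE' \asymp \diam f\hat B_N$ via a single application of \eqref{BQS} at the common point $p$ all go through, producing exponents $q_{\mathrm{low}}, q_{\mathrm{up}}$ and an absorbing constant.

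The problem is your handling of the top of the chain. You propose to combine $\diam fE' \le \diam fX$ with ``a uniform positive lower bound on $\diam fE$ whenever $\diam E$ is comparable to $\diam X$, obtained from compactness of $X$ and a finite covering by small hulls on which $f$ is non-constant.'' Taking a minimum of $\diam f(\mathrm{hull}_i)$ over a finite cover produces a constant that depends on $f$ itself, not only on $\eta$ and $\lambda$. That destroys the asserted quantitative dependence of $C_{\eta'}$ and $q$, which is essential to the theorem (and to its downstream use in Theorem \ref{thm:pBQS2VQI}). Moreover, the fix is aimed at the wrong regime: the chain never stalls ``too early,'' because $\diam E' \le \diam X \le 2R_X(p)$, so the smallest $n$ with $\rho^n r_0 \ge \diam E'$ satisfies $\rho^{n-2} r_0 \le R_X(p)$ once $\rho \ge 2$. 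At most the last one or two links have the ratio $\diam \hat B_k/\diam\hat B_{k+1}$ degrade from $4\lambda/\rho$ to $4\lambda$, which only multiplies $C_{\eta'}$ by $\eta(4\lambda)/\eta(4\lambda/\rho)$. No compactness argument is needed: when $\diam E' \asymp \diam X$, applying \eqref{BQS} directly to the intersecting continua $E'$ and $X$ at $p$ already gives $\diam fE' \ge \diam fX/\eta(\diam X/\diam E')$, which is a quantitative lower bound depending only on $\eta$. The paper sidesteps the whole issue by only ever running inward chains ($\theta<1$), so the radii never approach $R_X(p)$; this is the structural reason for their case split. With the correct treatment of the saturation step, your outward-chain proof is complete and arguably tidier, but as written the boundary paragraph does not deliver the stated quantitative dependence.
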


We divide the proof into three cases and consider the first two cases separately in auxiliary propositions. We begin with a lemma, which records an immediate observation based on definitions.

\begin{lemma}
\label{lemma:theta-lemma}
Let $X$ and $Y$ be compact metrics and suppose in addition that $X$ has $\lambda$-bounded turning for $\lambda \ge 1$. Let $f \colon X \to Y$ be an $\eta$-branched quasisymmetry with $\eta\colon [0,\infty) \to [0,\infty)$ and let also $\theta >0$ and $c\in X$. Suppose $x_0,\ldots, x_s$ is a sequence of points in $X$ with the property that 
\[
d(x_{i+1},c) = \theta d(x_i,c)
\]
for each $i=0,\ldots, s-1$. For each $i=0,\ldots, s$, let $E_i$ be a continuum of diameter at most $\lambda d(x_i,c)$ connecting $x_i$ and $c$. Then 
\[
\frac{1}{\eta(\lambda/\theta)^s} \le 
\frac{\diam fE_s}{\diam f E_0} \le \eta(\lambda\theta)^s.
\]
\end{lemma}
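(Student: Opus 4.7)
The plan is to apply the branched quasisymmetry condition iteratively to consecutive pairs $(E_i, E_{i+1})$. For each $i$, the continua $E_i$ and $E_{i+1}$ intersect, since both contain the basepoint $c$, so the BQS distortion inequality applies.

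First I would record the two-sided diameter bounds for each $E_i$. Since $E_i$ is a continuum containing both $x_i$ and $c$, we have
\[
d(x_i, c) \le \diam E_i \le \lambda d(x_i, c).
\]
Combined with the scaling identity $d(x_{i+1}, c) = \theta d(x_i, c)$, this yields
\[
\frac{\diam E_{i+1}}{\diam E_i} \le \frac{\lambda d(x_{i+1}, c)}{d(x_i, c)} = \lambda \theta
\qquad \text{and} \qquad
\frac{\diam E_i}{\diam E_{i+1}} \le \frac{\lambda d(x_i, c)}{d(x_{i+1}, c)} = \lambda/\theta.
\]
(Here we implicitly use $x_i \ne c$, which is forced by the continua being non-degenerate.)

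Next I would feed these ratios into the BQS condition \eqref{BQS} applied to the intersecting pair $E_i, E_{i+1}$, in both directions. Since $\eta$ is non-decreasing, this gives
\[
\diam fE_{i+1} \le \eta(\lambda \theta)\, \diam fE_i
\qquad \text{and} \qquad
\diam fE_i \le \eta(\lambda/\theta)\, \diam fE_{i+1}
\]
for each $i = 0, \ldots, s-1$. Telescoping the first chain of inequalities from $i=0$ to $i=s-1$ yields the upper bound $\diam fE_s \le \eta(\lambda\theta)^s \diam fE_0$, and telescoping the second chain yields $\diam fE_0 \le \eta(\lambda/\theta)^s \diam fE_s$, which is the required lower bound.

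There is no real obstacle; the only subtlety is verifying the elementary diameter estimates using the hypothesis $\diam E_i \le \lambda d(x_i, c)$ and the trivial bound $\diam E_i \ge d(x_i, c)$. After that, the argument is a mechanical telescoping application of the definition of branched quasisymmetry.
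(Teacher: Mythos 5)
Your argument is essentially identical to the paper's: both establish the ratio bounds $\diam E_{i+1}/\diam E_i \le \lambda\theta$ (and its reciprocal version) using $d(x_i,c)\le \diam E_i \le \lambda d(x_i,c)$, then apply the BQS condition to the consecutive pairs intersecting at $c$ and telescope, with the paper doing the upper bound explicitly and noting the lower bound is symmetric. Your explicit note that $x_i\ne c$ follows from nondegeneracy of the continua is a reasonable small addition but does not change the substance.
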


\begin{proof}
By assumption,
\[
\frac{\diam E_{i+1}}{\diam E_i} \le \frac{\lambda d(x_{i+1},c)}{d(x_i,c)} = \lambda \theta
\]
for each $i=0,\ldots, s$. Thus, since all continua $E_0,\ldots, E_s$ have the point $c$ in common, we have, by the definition of branched quasisymmetry, that
\begin{align*}
\frac{\diam fE_s}{\diam fE_0} &= \frac{\diam fE_s}{\diam fE_{s-1}} \frac{\diam fE_{s-1}}{\diam fE_{s-2}} \cdots \frac{\diam fE_1}{\diam fE_0} \\
&\le \eta\left(\frac{\diam E_s}{\diam E_{s-1}}\right) \eta\left(\frac{\diam E_{s-1}}{\diam E_{s-2}}\right) \cdots \eta\left(\frac{\diam E_1}{\diam E_0}\right) 
\le \eta( \lambda \theta)^s.
\end{align*}
The other estimate is obtained similarly.
\end{proof}

\begin{prop}
\label{prop:power-a}
Let $X$ and $Y$ be compact metrics and suppose in addition that $X$ has $\lambda$-bounded turning for $\lambda \ge 1$. Let $f \colon X \to Y$ be an $\eta$-branched quasisymmetry with $\eta\colon [0,\infty) \to [0,\infty)$. Let $E$ and $E'$ be continua in $X$ intersecting at $c\in E\cap E'$. Suppose that there exists a point $a\in E$ satisfying 
\[
\diam fE \le 3 d(f(a),f(c))
\]
and
\[
\diam E' \le 3d(a,c).
\]
Then there exist constants $C>0$ and $\alpha$, depending only on $\eta$ and $\lambda$, for which 
\[
\frac{\diam fE}{\diam fE'} \le C \left( \frac{\diam E}{\diam E'}\right)^\alpha.
\]
\end{prop}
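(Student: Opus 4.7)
The plan is to build a telescoping chain of continua $E_0, E_1, \ldots, E_s$ through $c$ whose diameters shrink geometrically from scale $d(a,c)$ down to scale $\diam E'$, so that Lemma~\ref{lemma:theta-lemma} converts the BQS distortion into a power law. Two endpoint comparisons will then link $\diam fE_0$ to $\diam fE$ (via the hypothesis $\diam fE \le 3 d(f(a),f(c))$) and $\diam fE_s$ to $\diam fE'$ (via a direct application of \eqref{BQS}, once $\diam E_s$ and $\diam E'$ are forced to be comparable).

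First I would set $D := d(a,c)$ and $r := \diam E'$, and dispose of the degenerate case $D = 0$, in which the hypothesis already gives $\diam fE = 0$. In the nontrivial case, the hypotheses also force $\diam E \ge D \ge r/3$, hence $\diam E/\diam E' \ge 1/3$; this lower bound will be used at the end to absorb a $\max\{1,\cdot\}$ into a multiplicative constant. Next I would fix a parameter $\theta \in (0,1)$ depending only on $\eta$ and $\lambda$ (small enough, say, that $\eta(\lambda/\theta) \ge 1$), and use $\lambda$-bounded turning to produce a continuum from $a$ to $c$ of diameter at most $\lambda D$; applying the intermediate value theorem to $y \mapsto d(y,c)$ on this connected set yields points $x_0 = a, x_1, \ldots, x_s$ with $d(x_i,c) = \theta^i D$, where $s$ is the smallest non-negative integer with $\theta^s D \le r$. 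A second application of bounded turning produces continua $E_i$ containing $x_i$ and $c$ with $\diam E_i \le \lambda \theta^i D$; this is exactly the hypothesis of Lemma~\ref{lemma:theta-lemma}.

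Next I would invoke Lemma~\ref{lemma:theta-lemma} in the direction $\diam fE_0 \le \eta(\lambda/\theta)^s \diam fE_s$. On the left endpoint, $\{a,c\} \subset E_0$ gives $\diam fE \le 3 d(f(a),f(c)) \le 3 \diam fE_0$. On the right endpoint, $E_s$ and $E'$ share the point $c$ and satisfy $\diam E_s/\diam E' \le \lambda \theta^s D/r \le \lambda$ by the choice of $s$, so \eqref{BQS} applied to $E_s$ and $E'$ yields $\diam fE_s \le \eta(\lambda) \diam fE'$. Stringing these together gives
\[
\diam fE \le 3\, \eta(\lambda)\, \eta(\lambda/\theta)^s \, \diam fE'.
\]
Finally, the minimality of $s$ furnishes $s \le 1 + \max\{0,\log(D/r)\}/\log(1/\theta) \le 1 + \max\{0,\log(\diam E/\diam E')\}/\log(1/\theta)$ since $D \le \diam E$. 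Setting $\alpha := \log \eta(\lambda/\theta)/\log(1/\theta) \ge 0$ and $C := 3^{1+\alpha} \eta(\lambda) \eta(\lambda/\theta)$, the lower bound $\diam E/\diam E' \ge 1/3$ absorbs the $\max\{1,\cdot\}$ coming from the $\max$ in the exponent, and the claimed inequality follows with $C$ and $\alpha$ depending only on $\eta$ and $\lambda$.

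The only delicate point is the construction of the chain with geometric diameter control; everything else is bookkeeping once Lemma~\ref{lemma:theta-lemma} is available. Bounded turning is used twice here — once to produce the auxiliary continuum from $a$ to $c$ along which the $x_i$ are chosen by IVT, and once more to realize each $E_i$ at the appropriate scale — and I do not see a way around either use.
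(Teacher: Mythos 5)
Your argument is correct and follows the paper's telescoping strategy: a geometric chain of continua through $c$ from scale $d(a,c)$ down to scale $\diam E'$, Lemma~\ref{lemma:theta-lemma} to telescope the distortion, and endpoint comparisons of $\diam fE_0$ against $\diam fE$ (via $\diam fE\le 3d(f(a),f(c))$) and of $\diam fE_s$ against $\diam fE'$ (via \eqref{BQS} once $\diam E_s\lesssim\diam E'$). The only organizational difference is that the paper first fixes $b\in E'$ with $\diam E'\le 3d(b,c)$, stops the chain at $d(b,c)$ with $\theta=1/2$, and treats $d(a,c)<d(b,c)$ as a separate easy case, whereas you stop directly at $\diam E'$ with a general $\theta$ and absorb both cases into the single $\max\{0,\cdot\}$ bound on $s$; a small remark is that your stipulation that $\theta$ be small enough for $\eta(\lambda/\theta)\ge 1$ is automatic for every $\theta\in(0,1)$, since $\lambda/\theta>1$ and $\eta(1)\ge 1$.
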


\begin{proof}
Since $\diam E' \le 3d(a,c)$, $c\in E'$, and $E'$ is a continuum, we may fix, by Lemma \ref{lemma:continuum-spheres}, a point $b\in E'$ satisfying 
\[
\diam E' \le 3d(b,c).
\]

First suppose that $d(a,c) \geq d(b,c)$.  Let $x_0 = a$ and let $x_1,\ldots, x_s$ be a sequence of points in $X$ satisfying 
\[
d(x_{i+1},c) = d(x_i,c)/2
\]
for each $i=0,\ldots, s-1$, where $s\in \N_0$ is the unique index satisfying
\[
d(x_s,c)/2 < d(b,c) \le d(x_s, c);
\]
such points exist by Lemma \ref{lemma:continuum-spheres}. 

For each $i=0,\ldots, s$, we fix a continuum $E_i$ of diameter at most $\lambda d(x_i,c)$ connecting $x_i$ and $c$. Then, by Lemma \ref{lemma:theta-lemma}, 
\[
\frac{\diam fE_0}{\diam fE_s} \le \eta(\lambda/2)^s.
\]

Since
\[
\diam fE \le 3 d(f(a),f(c)) \le 3 \diam fE_0
\]
and 
\[
\diam E' \ge d(b,c) > d(x_s,c)/2 \ge \left( \diam E_s\right)/(2\lambda),
\]
we have, by the definition of branched quasisymmetry, that
\begin{align*}
\frac{\diam fE}{\diam fE'} &= \frac{\diam fE}{\diam fE_0} \frac{\diam fE_0}{\diam fE_s}\frac{\diam fE_s}{\diam fE'} \\
&\le 3  \eta\left(\frac{\diam E_s}{\diam E'}\right) \eta( \lambda/2)^s
\le 3\eta(2\lambda) \eta(\lambda/2)^s.
\end{align*}
Since
\[
\frac{\diam E}{\diam E'} \ge \frac{d(a,c)}{3d(b,c)} \ge \frac{2^s d(x_s,c)}{3d(x_s,c)} =\frac{2^s}{3},
\]
we have that
\[
\frac{\diam fE}{\diam fE'} \le C \left( \frac{\diam E}{\diam E'}\right)^\alpha,
\]
where constants $C>0$ and $\alpha>0$ depend only on $\eta$ and $\lambda$.

If instead we have $d(a,c) < d(b,c)$, then we choose $E_0$ to be a continuum connecting $c$ and $a$ with $\diam(E_0) \leq \lambda d(a,c)$.  Then, as above, 
\[
\diam fE \leq 3 \diam f E_0.
\]
We also have 
\[
\diam E' \geq d(b,c) > d(a,c) \geq \frac{\diam E_0}{\lambda}
\]
and so
\[
\frac{\diam fE}{\diam fE'} \leq \frac{\diam fE}{\diam fE_0} \frac{\diam fE_0}{\diam fE'} \leq 3 \eta(\lambda)
\]
and
\[
\frac{\diam E}{\diam E'} \geq \frac{d(a,c)}{3 d(a,c)} = \frac{1}{3},
\]
so we can achieve the desired form of $\eta$ by possibly increasing the value of $C$ from above.
\end{proof}

\begin{prop}
\label{prop:power-b}
Let $X$ and $Y$ be compact metrics and suppose in addition that $X$ has $\lambda$-bounded turning for $\lambda \ge 1$. Let $f \colon X \to Y$ be an branched $\eta$-quasisymmetry with $\eta\colon [0,\infty) \to [0,\infty)$. Let $E$ and $E'$ be continua in $X$ intersecting at $c\in E\cap E'$. Suppose there exists a point $b\in E'$ satisfying
\[
\max\{ \diam E, \diam E'\} \le 3d(b,c).
\]
Then there exists constants $C>0$ and $\alpha>0$, depending only on $\eta$ and $\lambda$, for which
\[
\frac{\diam fE}{\diam fE'} \le C \left( \frac{\diam E}{\diam E'} \right)^\alpha.
\]
\end{prop}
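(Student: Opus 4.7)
My plan is to mirror the proof of Proposition \ref{prop:power-a}, but with the shrinking chain of continua built from the point $b\in E'$ down toward $c$ (rather than from a point in $E$). The hypothesis $\max\{\diam E,\diam E'\}\le 3d(b,c)$ controls both $\diam E$ and $\diam E'$ in terms of $d(b,c)$, so such a chain will interpolate between the scale of $E'$ at the top and the scale of $E$ at the bottom.

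I would first apply Lemma \ref{lemma:continuum-spheres} to the continuum $E$ to pick $a\in E$ with $d(a,c)\ge\diam E/2$. The subcase $d(a,c)\ge d(b,c)$ is easy: it forces $\diam E'\le 3d(b,c)\le 3\diam E$, so $\diam E/\diam E'\ge 1/3$, and applying the BQS inequality directly to $(E,E')$ yields $\diam fE/\diam fE'\le\eta(3)$, a constant that can be absorbed into a power bound $C(\diam E/\diam E')^\alpha$ for any $\alpha>0$.

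In the main subcase $d(a,c)<d(b,c)$, I would fix a scaling factor $\theta\in(0,1)$, depending only on $\eta$ and $\lambda$, with $\eta(\lambda\theta)\le 1/2$; such a $\theta$ exists since $\eta$ is a homeomorphism with $\eta(0)=0$. Applying Lemma \ref{lemma:continuum-spheres} inside the continuum $E'$, I would select points $x_0=b,x_1,\dots,x_s$ with $d(x_i,c)=\theta^i d(b,c)$, where $s\in\N_0$ is determined by $\theta d(x_s,c)<d(a,c)\le d(x_s,c)$. For each $i$, using $\lambda$-bounded turning, I pick a continuum $E_i$ joining $x_i$ to $c$ with $\diam E_i\le\lambda d(x_i,c)$. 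Lemma \ref{lemma:theta-lemma} then telescopes to
\[
\frac{\diam fE_s}{\diam fE_0}\le\eta(\lambda\theta)^s\le 2^{-s}.
\]
Since $\diam E\le 2d(a,c)\le 2d(x_s,c)\le 2\diam E_s$ and $\diam E_0\le\lambda d(b,c)\le\lambda\diam E'$, BQS applied to the intersecting pairs $(E,E_s)$ and $(E',E_0)$ gives $\diam fE\le\eta(2)\diam fE_s$ and $\diam fE_0\le\eta(\lambda)\diam fE'$; chaining the three estimates produces
\[
\frac{\diam fE}{\diam fE'}\le \eta(2)\eta(\lambda)\,2^{-s}.
\]
Converting $s$ back to the ratio of diameters, the relations $\theta d(x_s,c)<d(a,c)\le\diam E$ and $d(b,c)\ge\diam E'/3$ yield $\theta^{s+1}\le 3\diam E/\diam E'$, hence $2^{-s}\le 2(3\diam E/\diam E')^\alpha$ with $\alpha=\log 2/\log(1/\theta)>0$. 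This yields the desired estimate $\diam fE/\diam fE'\le C(\diam E/\diam E')^\alpha$ with $C,\alpha$ depending only on $\eta$ and $\lambda$.

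The main conceptual obstacle is precisely the choice of $\theta$. Using $\theta=1/2$ as in the proof of Proposition \ref{prop:power-a} would be the natural first attempt, but the resulting factor $\eta(\lambda/2)^s$ need not decay when $\eta(\lambda/2)\ge 1$, and it would then produce a nonpositive exponent in front of $\diam E/\diam E'$. That is harmless for Proposition \ref{prop:power-a} because its hypothesis forces $\diam E/\diam E'\ge 1/3$, but in the setting of Proposition \ref{prop:power-b} the ratio $\diam E/\diam E'$ can be arbitrarily small, so a strictly positive exponent is essential. The input $\eta(0)=0$ is what permits selecting $\theta$ small enough that $\eta(\lambda\theta)<1$, producing genuine geometric decay along the chain and hence the strictly positive power $\alpha$.
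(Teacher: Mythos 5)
Your proof is correct and follows essentially the same route as the paper's: the same two-subcase split on whether $d(a,c)<d(b,c)$, the same geometric chain $x_0=b,\,x_1,\dots,x_s$ with ratio $\theta$ chosen so that $\eta(\lambda\theta)\le 1/2$, the same telescoping via Lemma~\ref{lemma:theta-lemma} sandwiched between two single applications of \eqref{BQS}, and the same conversion of $s$ into a power of $\diam E/\diam E'$. Your closing observation---that using $\theta=1/2$ as in Proposition~\ref{prop:power-a} would fail here because the ratio $\diam E/\diam E'$ can be arbitrarily small, so a strictly positive exponent is essential and $\eta(0)=0$ is what makes the right choice of $\theta$ available---is exactly the right reading of why the paper switches the choice of $\theta$ between the two propositions.
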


\begin{proof}
Since $c\in E$ and $E$ is a continuum, there exists, by Lemma \ref{lemma:continuum-spheres}, a point $a\in E$ for which 
\[
\diam E \le 3d(a,c).
\]

We first assume that $d(a,c) \leq d(b,c)$.  Let $\theta>0$ be the unique number for which $\eta(\lambda \theta) = 1/2$. Note that $\theta < 1$ because $\eta(1) \geq 1$ (take $E = E'$, for example) and $\lambda \geq 1$.  Let $x_0 = b$. We fix a sequence $x_1,\ldots, x_s$ of points in $X$ satisfying 
\[
d(x_{i+1},c) = \theta d(x_i,c)
\]
for each $i=0,\ldots, s-1$, where $s\in \N_0$ is the unique index satisfying
\[
\theta d(x_s,c) < d(a,c) \le d(x_s,c).
\]
As before, we fix for each $i=0,\ldots, s$, a continuum $E_i$ of diameter at most $\lambda d(x_i,c)$ connecting $x_i$ and $c$.

Since 
\[
\diam E \le 3d(a,c) \le 3d(x_s,c) \le 3\lambda \diam E_s,
\]
we have, by the $\eta$-branched quasisymmetry of $f$, that
\[
\frac{\diam fE}{\diam fE_s} \le \eta\left( \frac{\diam E}{\diam E_s}\right) \le \eta(3\lambda). 
\]
Similarly, since 
\[
\diam E_0 \le \lambda d(b,c) \le \lambda \diam E',
\]
we have that
\[
\frac{\diam fE_0}{\diam fE'} \le \eta\left(\frac{\diam E_0}{\diam E'}\right) \le \eta(\lambda).
\]

Thus, by Lemma \ref{lemma:theta-lemma},
\begin{align*}
\frac{\diam fE}{\diam fE'} &= \frac{\diam fE}{\diam fE_s}  \frac{\diam fE_s}{\diam fE_0} \frac{\diam fE_0}{\diam fE'} \\
&\le \eta(3\lambda)\eta(\lambda) \eta(\lambda \theta)^s 
= \eta(3\lambda)\eta(\lambda) 2^{-s}.
\end{align*}

Since
\[
\frac{\diam E}{\diam E'} \ge \frac{d(a,c)}{3d(b,c)} \ge \frac{\theta d(x_s,c)}{3\theta^{-s} d(x_s,c)} = \frac{\theta}{3} \theta^{s},
\]
we conclude that
\[
\frac{\diam fE}{\diam fE'} \le C \left( \frac{\diam E}{\diam E'}\right)^\alpha,
\]
where $C>0$ and $\alpha>0$ depend only on $\eta$ and $\lambda$.

Now, suppose instead that $d(b,c) < d(a,c)$.  Then, 
\[
\frac{\diam E}{\diam E'} \leq \frac{3 d(b,c)}{d(b,c)} \leq 3
\]
and so 
\[
\frac{\diam(fE)}{\diam(fE')} \leq \eta(3).
\]
Moreover,
\[
\diam(E') \leq 3 d(b,c) < 3d(a,c) \leq 3 \diam(E)
\]
and so in this case we have
\[
\frac{1}{3} \leq \frac{\diam(E)}{\diam(E')}.  
\]
Hence, as before, we can guarantee that $\eta$ has the desired form by choosing $C$ large.
\end{proof}

\begin{proof}[Proof of Theorem \ref{thm:BQS to PBQS}]
Let $E$ and $E'$ be continua in $X$ intersecting at $c\in E\cap E'$. If there exist points $a\in E$ or $b\in E'$ which satisfy the conditions in Proposition \ref{prop:power-a} or \ref{prop:power-b}, respectively, we have that 
\[
\frac{\diam fE}{\diam fE'} \le C \left( \frac{\diam E}{\diam E'} \right)^\alpha,
\]
where $C>0$ and $\alpha>0$ depend only on $\eta$ and $\lambda$ (note that there are two possible $\alpha$ values, one from each lemma; we choose the larger $\eta$ expression depending on $\diam(E)/\diam(E')$ which is what gives rise to the two exponents in the power branched quasisymmetry definition). Thus we may assume that this is not the case.

Let $a\in E$ be a point for which $\diam fE \le 3 d(f(a),f(c))$; such point exists by Lemma \ref{lemma:continuum-spheres} applied to $fE$. Since conditions of Proposition \ref{prop:power-a} do not hold, we conclude that $\diam E'\ge 3d(a,c)$. Thus, by Lemma \ref{lemma:continuum-spheres}, we may fix a point $b\in E'$ for which $3d(b,c) \ge \diam E' \ge 3d(a,c)$, so $d(b,c)\ge d(a,c)$.  As the conditions of Proposition \ref{prop:power-b} do not hold, we have $\diam E \geq 3 d(b,c) \geq \diam E'$.  Thus,
\[
\frac{\diam E}{\diam E'} \geq 1
\]
and so it suffices to show that there is a constant $C>0$, depending only on $\eta$ and $\lambda$, for which 
\[
\diam fE \le C \diam fE'. 
\]

Let $B=B(c,d(b,c))$ and let $E_B$ be a $\lambda$-diametric hull of $B$ in $X$. Since $c\in E_B \cap E'$ and 
\[
\frac{\diam E_B}{\diam E'} \le \frac{2\lambda d(b,c)}{d(b,c)} = 2\lambda,
\]
we have that
\[
\diam fE_B \le \eta(2\lambda) \diam fE'.
\]

Since $d(a,c) \le d(b,c)$, we have that $a\in E_B$. Thus
\[
\diam fE \le 3 d(f(a),f(c)) \le 3\diam fE_B \le 3\eta(2\lambda) \diam fE'.
\]
The claim follows.
\end{proof}

\begin{remark}
\label{rmk:local-BQS to pBQS}
Observe that the discussion in this section holds for a local branched quasisymmetry $f\colon X\to Y$ if the distances are below a scale depending on $f$ and bounded turning constant $\lambda$ of $X$. Thus, we observe that a local branched quasisymmetry $X\to Y$ is a power branched quasisymmetry, quantitatively.
\end{remark}

\section{Koebe distortion theorem}
\label{sec:Koebe}

In this section, we prove the following version of the Koebe distortion theorem for branched quasisymmetries.

\begin{thm}
\label{thm:Koebe}
Let $X$ and $Y$ be compact $\lambda$-bounded turning metric spaces, and let $f \colon X \to Y$ be a discrete and open $\eta$-branched quasisymmetry. Let $B = B(x, r) \subseteq X$ be a ball. Then there is a constant $c_0>0$, depending only on $\eta \colon [0,\infty) \to [0,\infty)$ and $\lambda>0$, for which 
\[
B(f(x), c_0 \diam(fB)) \subseteq fB.
\]  
\end{thm}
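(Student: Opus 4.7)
The plan is to argue by contradiction: if some point $y$ lies in $B(f(x), c_0 \diam fB)$ but outside $fB$, then the continuum lifting lemma (Corollary \ref{cor:cont lift}) will produce a continuum in $X$ containing $x$, escaping $B$, yet whose image in $Y$ is small. Measuring this against a $\lambda$-diametric hull of $B$ via the branched quasisymmetry condition, and using the ball-continuum equivalence (Lemma \ref{lemma:BQS-ball continuum equiv}) to bring $\diam fB$ back into the picture, we arrive at an inequality of the form $c_1 \diam fB \le \eta(4\lambda)\,\lambda c_0 \diam fB$, which is false once $c_0$ is chosen small enough.

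In detail, I may assume $\diam fB > 0$, as otherwise the ball $B(f(x), c_0 \diam fB)$ is empty. Suppose toward contradiction there exists $y \in B(f(x), c_0 \diam fB) \setminus fB$. By the $\lambda$-bounded turning of $Y$, there is a continuum $G\subseteq Y$ containing $f(x)$ and $y$ with $\diam G \le \lambda d(f(x),y) < \lambda c_0 \diam fB$; since $y \in G\setminus fB$, we have $G \not\subseteq fB$. Fix $\theta>0$ and apply Corollary \ref{cor:cont lift} to obtain a continuum $E\subseteq X$ with $x\in E$, $E\not\subseteq B$, and $fE \subseteq E(G,\theta)$. The first two conditions force $\diam E \ge r$, while the third yields
\[
\diam fE \le \diam E(G,\theta) \le \diam G + 2\lambda\theta < \lambda c_0 \diam fB + 2\lambda\theta.
\]

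Now let $E_B$ be a $\lambda$-diametric hull of $B$, so $B\subseteq E_B\subseteq 2\lambda B$ and hence $\diam E_B \le 4\lambda r$. Since $x\in E\cap E_B$, the $\eta$-BQS condition gives
\[
\diam fE_B \le \eta\!\left(\frac{\diam E_B}{\diam E}\right)\diam fE \le \eta(4\lambda)\,\diam fE.
\]
By Lemma \ref{lemma:BQS-ball continuum equiv} there is a constant $c_1 = c_1(\eta,\lambda)>0$ with $\diam fE_B \ge c_1 \diam fB$. Combining these estimates,
\[
c_1 \diam fB \;\le\; \eta(4\lambda)\bigl(\lambda c_0 \diam fB + 2\lambda\theta\bigr).
\]
Letting $\theta \to 0$ (which is legitimate since for each $\theta$ we obtain such an $E$), we get $c_1 \le \lambda\,\eta(4\lambda)\,c_0$. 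Choosing $c_0 := c_1/(2\lambda \eta(4\lambda))$ at the outset yields the desired contradiction, and this $c_0$ depends only on $\eta$ and $\lambda$.

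The proof is essentially a bookkeeping assembly of the tools already developed: no step is truly hard. The one place that requires care is the final quantitative balancing, where we must make sure that the error term $2\lambda\theta$ can be absorbed while keeping $c_0$ depending only on $\eta$ and $\lambda$; this is why we take $\theta$ arbitrarily small and pass to the limit.
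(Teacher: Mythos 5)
Your proof is correct and follows essentially the same strategy as the paper: lift a small continuum $G$ around $f(x)$ through $f$ to a continuum $E$ containing $x$ and escaping $B$, then compare the diametric hull of $B$ against $E$ via the BQS condition. The only real structural difference is that the paper compares against the hull $E_{cB}$ of the \emph{smaller} ball $cB = B(x,r/(2\lambda))$, which forces it to first prove the auxiliary estimate $\diam fB \lesssim \diam fE_{cB}$ (their \eqref{eq:Koebe}), whereas you compare against $E_B$ directly, so the corresponding estimate $\diam fB \le \diam fE_B$ is trivial from $B \subseteq E_B$; this is a mild simplification. Your invocation of Lemma \ref{lemma:BQS-ball continuum equiv} is therefore overkill (you only need the inclusion, not the harder reverse inequality) and also slightly off in hypotheses since that lemma is stated for Ahlfors regular spaces while Theorem \ref{thm:Koebe} assumes only compact bounded turning; citing $fB\subseteq fE_B$ directly avoids both issues. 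Your choice to argue by contradiction and to let $\theta\to 0$, rather than the paper's direct argument with $\theta$ fixed proportional to $\diam G$, is cosmetic.
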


\begin{proof}
Let $c = 1/(2\lambda)$.  We first show that 
\begin{equation}
\label{eq:Koebe}
\diam(fB) \leq 3\eta(2\lambda^2) \diam(fE_{cB}).
\end{equation}
Let $y \in fB$ be such that $\diam(fB) \leq 3d(f(x), y)$ and fix $z \in B\cap f^{-1}(y)$.  Let $F$ be a continuum connecting $x$ and $z$ such that $\diam(F) \leq \lambda d(x, z)$.  Now, $fF$ is a continuum connecting $f(x)$ and $f(z) = y$. Moreover,
\[
\frac{\diam(F)}{\diam(E_{cB})} \le \frac{\lambda r}{r/(2\lambda)} = 2 \lambda^2.
\]
Since $x \in F \cap E_{cB}$, we have that
\begin{align*}
\diam (fB) \leq 3\diam(fF)
\leq 3\eta \biggl( \frac{\diam(F)}{\diam(E_{cB})}\biggr) \diam(f E_{cB}) 
\leq 3\eta(2\lambda^2) \diam(fE_{cB}).
\end{align*}
This proves \eqref{eq:Koebe}.

Let now $y' \in Y \setminus fB$. It suffices to show that $d(f(x),y') \ge c_0\diam(fB)$, where $c_0 = 1 / (7\eta(2\lambda^2) \eta(2) \lambda)$.
Let $G \subseteq Y$ be a continuum connecting $f(x)$ and $y'$ and satisfying $\diam(G) \leq \lambda d(f(x), y')$.  Let $\theta = \diam(G)/2\lambda$. By Corollary \ref{cor:cont lift} there exists a continuum $E\subset X$ for which $x \in E$, $E\not \subseteq B$, and $fE \subseteq E(G,\theta)$. In particular, $\diam(E) \geq r$ and
\begin{equation*}
\diam(fE) \leq \diam(E(G,\theta)) \leq 2 \diam(G).
\end{equation*}
Since $x \in E_{cB} \cap E$, we have that 
\begin{equation*}
\frac{\diam(f E_{cB})}{\diam(G)} \leq \frac{2\diam(f E_{cB})}{\diam(fE)} \leq 2 \eta \biggl(\frac{\diam(E_{cB})}{\diam(E)}\biggr) \leq 2\eta(2).
\end{equation*}
Thus
\begin{equation*}
\diam(fE_{cB}) \leq 2\eta(2) \lambda d(f(x), y').
\end{equation*}
Combining this with \eqref{eq:Koebe}, we obtain that
\begin{equation*}
\diam(fB) \leq 6\eta(2\lambda^2) \eta(2) \lambda d(f(x), y').
\end{equation*}
Thus, if $c_0 = 1 / (7\eta(2\lambda^2) \eta(2) \lambda)$, we have that $B(f(x), c_0 \diam(fB)) \subseteq fB$. The claim is proven.
\end{proof}



\part{Preliminaries on hyperbolic fillings and vertical quasi-isometries}
\label{part:hyperbolic}

\section{Metric graphs and Gromov hyperbolicity}
\label{sec:MG}

In what follows, a metric graph $(\Gamma, \rho_\Gamma)$ is a graph $\Gamma$ with the natural graph metric $\rho_\Gamma$ giving adjacent vertices of $\Gamma$ distance $1$. Vertices $v$ and $w$ of $\Gamma$ are \emph{adjacent} if $\{v,w\}$ is an edge of $\Gamma$. To simplify notation, we also denote $|v-w|$ the distance $\rho_\Gamma(v,w)$ of points $v,w\in \Gamma$. 

A map $\gamma \colon \N_0 \to \Gamma$ is a \emph{discrete path} if, for each $n\in \N_0$, points $\gamma(n)$ and $\gamma(n+1)$ are adjacent in $\Gamma$. A finite discrete path $\gamma \colon \{0,\ldots, m\} \to \Gamma$ for $m\in \N_0$ is defined similarly. In both cases, we denote $|\gamma|$ the image of the path $\gamma$. 

A discrete path $\gamma \colon \N_0 \to \Gamma$ is a \emph{geodesic ray} if, for all $n,m\in \N_0$, we have $|\gamma(n)-\gamma(m)| = |n-m|$. A \emph{(finite) geodesic $\gamma \colon \{0,\ldots, m\} \to \Gamma$}, where $m\in \N_0$, is defined similarly. Two geodesic rays $\gamma \colon \N_0 \to \Gamma$ and $\gamma' \colon \N_0 \to \Gamma$ are said to be equivalent, denoted $\gamma \sim \gamma'$, if the function $n\mapsto |\gamma(n)-\gamma'(n)|$ is bounded.

\subsection{Gromov hyperbolicity and Gromov boundary}

We introduce now very briefly Gromov hyperbolicity and Gromov boundary; see e.g.~Bonk and Schramm \cite{Bonk-Schramm} or Ghys and de la Harpe \cite{GH} for detailed discussion.

The \emph{Gromov product $(\cdot|\cdot)_o \colon \Gamma \times \Gamma \to \R$ in $\Gamma$ for the base point $o\in X$} is the function
\[
(v|w)_o = \frac{1}{2}\left( |v-o|+|w-o| - |v-w| \right).
\]
Graph $\Gamma$ is \emph{Gromov $\delta$-hyperbolic for $\delta\ge 0$} if, for all triples of points $v,w,u\in \Gamma$, the inequality
\[
(v|w)_o \ge \min\{ (v|u)_o, (u|w)_o\} - \delta
\]
holds. 

The Gromov boundary $\partial \Gamma$ of a $\delta$-hyperbolic graph $\Gamma$ is
 defined as follows. Let $o\in \Gamma$. We say that a map $\gamma \colon \N_0 \to \Gamma$ in $\Gamma$ \emph{tends to infinity} if 
\[
(\gamma(i)|\gamma(j))_o \to \infty
\]
as $i,j\to \infty$. The maps $\gamma \colon \N_0 \to \Gamma$ and $\sigma \colon \N_0 \to \Gamma$ tending to infinity are \emph{equivalent} if 
\[
(\gamma(i)|\sigma(i))_o \to \infty
\]
as $i \to \infty$. It is easy to see that this equivalence is really an equivalence relation and that neither tending to infinity nor equivalence of maps depends on the chosen base point $o\in \Gamma$. 

\begin{remark}
\label{rmk:maps-paths}
Since $\Gamma$ is geodesic, we may replace maps $\N_0 \to \Gamma$ in these definitions by paths $\N_0 \to \Gamma$. Indeed, for each map $\gamma \colon \N_0 \to \Gamma$ there exists an increasing function $\iota \colon \N_0 \to \N_0$ and a path $\bar \gamma \colon \N_0 \to \Gamma$ having the property that $\bar \gamma(\iota(j)) = \gamma(j)$ for each $j\in \N_0$ and that $\bar \gamma|_{\{\iota(j),\ldots, \iota(j+1)\}} \colon \{\iota(j),\ldots \iota(j+1)\} \to \Gamma$ is a geodesic from $\gamma(j)$ to $\gamma(j+1)$. It is now easy to check that $\bar \gamma$ tends to infinity and that $\gamma$ and $\bar \gamma$ are equivalent.
\end{remark}

The Gromov boundary $\partial \Gamma$ of $\Gamma$ is the space of equivalence classes of paths $\N_0 \to \Gamma$ tending to infinity; note that we use here Remark \ref{rmk:maps-paths}. The Gromov product $(\cdot,\cdot)_o$ extends to the boundary $(\cdot,\cdot)_o \colon \partial \Gamma \times \partial \Gamma \to \R$ by formula
\[
(\xi,\zeta)_o = \sup \{ \liminf_{j \to \infty}(\gamma(j),\sigma(j))_o \colon \gamma \in \xi, \sigma \in \zeta\}
\]
for $\xi,\zeta\in \partial \Gamma$. 

The Gromov boundary $\partial \Gamma$ carries a class of metrics associated to the Gromov product. More precisely, there exists an absolute constant $\varepsilon_0>0$ having the property that, for each $\delta$-hyperbolic space $X$, base point $o\in X$, and $\varepsilon < \varepsilon_0/\delta$, the Gromov boundary $\partial X$ admits a metric $d_{o,\varepsilon} \colon \partial \Gamma\times \partial \Gamma \to [0,\infty)$ satisfying
\[
\frac{1}{2}e^{-\varepsilon (\xi,\zeta)_o} \le d_{o,\varepsilon}(\xi, \zeta) \le e^{-\varepsilon (\xi,\zeta)_o}
\]
for all $\xi,\zeta \in \partial \Gamma$. The metrics in this class, called \emph{visual metrics of $\partial \Gamma$}, are quasisymmetrically equivalent. We refer again to Bonk and Schramm \cite[Section 6]{Bonk-Schramm} or Ghys and de la Harpe \cite{GH} for detailed discussion.

\section{Hyperbolic Fillings}\label{Hyp Fill sec}

In this section we discuss hyperbolic fillings of compact metric spaces. We refer to \cite{L} for a more detailed discussion. We begin with auxiliary definitions related to coverings. 

\begin{defn}
Let $Z$ be a compact metric space and $P$ an $\varepsilon$-net in $Z$ for $\varepsilon>0$. A covering $\mathscr U = \{ B(z,2\varepsilon) \colon z\in P\}$ of $Z$ is an \emph{$\varepsilon$-covering (associated to $P$)}. 
\end{defn}

For an $\varepsilon$-covering $\mathscr U$ associated to an $\varepsilon$-net $P$ of $Z$, we also denote $c_{\mathscr U} \colon \mathscr U \to P$ the function having the property that, for each $U=B(z,2\varepsilon)\in \mathscr U$, $c_{\mathscr U}(U)=z\in P$. 

\begin{defn}
A sequence $(\mathscr U_n)_n$ of coverings of $Z$ is an \emph{$s$-sequence (for $s>1$)} if, for each $n\in \Z$, the covering $\mathscr U_n$ is an $s^{-n}$-covering. 
\end{defn}

For the definition of a hyperbolic filling, we define an associated class of graphs. A graph $\Gamma(\mathcal U)$ is the \emph{incidence graph of a sequence $\mathcal U = (\mathscr U_n)_n$ of coverings of $Z$} if $\Gamma(\mathcal U)$ has the disjoint union $\bigsqcup_{n\in \N_0} \mathscr U_n$ as its vertex set and vertices $U\in \mathscr U_n$ and $V\in \mathscr U_m$ of $\Gamma(\mathcal U)$ are joined by an edge in $\Gamma(\mathcal U)$ if and only if $U\cap V\ne \emptyset$ and $|m-n|\le 1$.

\begin{remark}
Note that, if $Z$ is metrically doubling, then $\Gamma(\mathcal U)$ is a graph of bounded degree.
\end{remark}

The incidence graph $\Gamma(\mathcal U)$ carries two natural functions. The \emph{level function $\ell \colon \Gamma(\mathcal U)\to \Z$} is the unique function satisfying $v\in \mathscr U_{\ell(v)}$ for each $v\in \hat Z$. The \emph{center function $c \colon \Gamma(\mathcal U)\to Z$} is the function satisfying $v\mapsto c_{\mathscr U_{\ell(v)}}(v)$. Note that, for every $v\in \Gamma(\mathcal U)$, we have $v = B(c(v), 2s^{-\ell(v)})$.

\bigskip

Let $Z$ be a compact metric space and $\mathcal U = (\mathscr U_n)_n$ and $s$-sequence. Since $Z$ has finite diameter, there exists a maximal index $n_Z\in \Z$ for which $\mathscr U_{n_Z} = \{Z\}$. We call the corresponding vertex $O_{\hat Z}\in \Gamma$ the \emph{root of $\Gamma(\mathcal U)$} and call the the subgraph $\Gamma_+(\mathcal U)$ of $\Gamma(\mathcal U)$, obtained by removing the vertices associated to coverings $\bigsqcup_{n<n_Z} \mathscr U_n$, the \emph{root pruned subgraph of $\Gamma$}.

\begin{defn}
Let $Z$ be a compact metric space. A metric graph $(\widehat Z, \rho_{\widehat Z})$ is a \emph{hyperbolic $s$-filling of $Z$ for $s>1$} if $\hat Z$ is the root pruned incidence graph $\Gamma_+(\mathcal U)$ of an $s$-sequence $\mathcal U$ of coverings of $Z$. A metric graph $\hat Z$ is a hyperbolic filling of $Z$ if $\widehat Z$ is a hyperbolic $s$-filling of $Z$ for some $s>1$. 
\end{defn}

For each $s>0$, we denote $\HF_s(Z)$ the \emph{set of hyperbolic $s$-fillings of $Z$} and 
\[
\HF(Z) = \bigcup_{s>0} \HF_s(Z)
\]
the \emph{set of hyperbolic fillings of $Z$}.

Recall that a vertex in $\widehat Z$ is a ball in $Z$. We emphasize this with notation $B_v = v\subset Z$ for $v\in \widehat Z$.

\begin{remark}
In what follows, we may assume, by rescaling, that the space $Z$ has diameter $1$. Thus the root vertex $O_Z$ is the unique vertex of level $0$. Thus it suffices to consider the subgraph of $\widehat Z$ consisting of vertices on non-negative levels. We follow this convention in what follows.
\end{remark}

\begin{remark}
In what follows, we tacitly assume that the $s$-sequence $(\mathscr U_n)_n$ of coverings of $Z$ is given, and merely use the notation $\widehat Z$ for a hyperbolic filling in question. In this case, we denote $\ast = O_Z$ the root of $\widehat Z$, and denote $\ell = \ell_{\widehat Z} \colon \widehat Z\to \Z$ and $c = c_{\widehat Z} \colon \widehat Z\to Z$ the \emph{level and center functions} associated to $\widehat Z$.
\end{remark}

\subsection{Hyperbolicity of hyperbolic fillings}

The coarse geometry of a hyperbolic filling largely stems from the following observation on Gromov products. We refer to \cite{L} for a proof.

\begin{lemma}[{\cite[Lemma 3.1]{L}}]
\label{lemma:Gromov-product}
Let $Z$ be a compact space, and $\widehat Z\in \HF_s(Z)$. Then, for $v,w\in \widehat Z$, we have
\[
s^{-(v,w)} \simeq \diam(B_v \cup B_w),
\]
where the constants depend only on $s$.
\end{lemma}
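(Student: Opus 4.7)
The plan is to reduce the asserted equivalence to a combinatorial analogue living in the covering sequence $(\mathscr U_n)$ itself. Specifically, I would define the \emph{meeting level} $k^\ast = k^\ast(v,w) \in \Z$ as the largest integer $k \le \min\{\ell(v), \ell(w)\}$ for which some $u \in \mathscr U_k$ has $c(v), c(w) \in B_u$. I would first establish the preliminary identity $|v - \ast| = \ell(v)$ (lower bound from single-step level changes; upper bound by descending through ancestors $v_k \in \mathscr U_k$ containing $c(v)$, whose balls share $c(v)$ and so form edges). This gives
\[
(v|w)_\ast = \tfrac12\bigl(\ell(v) + \ell(w) - |v-w|\bigr),
\]
and reduces the lemma to showing $|v-w| = \ell(v)+\ell(w)-2k^\ast + O_s(1)$ together with $s^{-k^\ast} \simeq \diam(B_v \cup B_w)$.

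The equivalence $s^{-k^\ast} \simeq \diam(B_v \cup B_w)$ is the comparatively soft half: a midpoint argument exploiting the $s^{-k}$-net property of $P_k$ shows that a common ancestor at level $k$ exists iff $d(c(v), c(w)) \lesssim_s s^{-k}$, so $d(c(v), c(w)) \simeq_s s^{-k^\ast}$ (with the obvious boundary convention when $k^\ast$ is capped by $\min\{\ell(v), \ell(w)\}$). Combining with $\diam B_v \le 4 s^{-\ell(v)} \le 4 s^{-k^\ast}$ (and the analogue for $B_w$) and the triangle inequality gives the claimed equivalence. The upper bound $|v-w| \le \ell(v)+\ell(w) - 2k^\ast$ is then obtained by concatenating the descending ancestor chain from $v$ to a common ancestor $u$ at level $k^\ast$ with the ascending ancestor chain from $u$ to $w$; this yields an explicit discrete path of length $(\ell(v)-k^\ast) + (\ell(w)-k^\ast)$.

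The matching lower bound $|v-w| \ge \ell(v)+\ell(w)-2k^\ast - O_s(1)$ is the main obstacle. For a geodesic $\gamma = (v_0, \ldots, v_N)$ with $k_\gamma = \min_i \ell(v_i)$, the level-change property alone gives $N \ge (\ell(v)-k_\gamma) + (\ell(w)-k_\gamma)$, so the task is to show $k_\gamma \le k^\ast + O_s(1)$. The consecutive-intersection estimate $d(c(v_i), c(v_{i+1})) \le 2s^{-\ell(v_i)} + 2s^{-\ell(v_{i+1})}$ yields $d(c(v), c(w)) \le 4 \sum_i s^{-\ell(v_i)}$; the delicate step is to bound this sum by a geometric series of total mass $\lesssim_s s^{-k_\gamma}$, which cannot be done for arbitrary paths but works for suitably chosen V-shaped geodesics whose descent from $v$ to a lowest vertex and ascent back up to $w$ each decrement the level by one at every step. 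Existence of such a V-geodesic (equivalently, that replacing any geodesic by its monotone ancestor-V does not increase its length) is the technical core of the proof; once established, the telescope $\sum_{k \ge k_\gamma} s^{-k} \lesssim_s s^{-k_\gamma}$ finishes the argument via the equivalence of the previous paragraph.
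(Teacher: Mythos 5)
Your skeleton is sound: the identity $|v - \ast| = \ell(v)$, the reduction to the meeting level $k^\ast$, the equivalence $\diam(B_v \cup B_w) \simeq_s s^{-k^\ast}$ (with the diameter of the lower-level vertex ball handling the boundary case), and the upper bound $|v-w| \le \ell(v)+\ell(w)-2k^\ast$ via concatenated ancestor chains through a common level-$k^\ast$ ancestor are all correct and are the right reductions.

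The genuine gap is exactly where you flag it, but the V-geodesic claim you propose as the ``technical core'' is both harder than necessary and not literally true in the form you state it. The monotone ancestor-V through the level-$k^\ast$ common ancestor has length exactly $\ell(v)+\ell(w)-2k^\ast$, and this \emph{can} be strictly longer than $|v-w|$ (for instance, two adjacent vertices at the same level whose deepest common ancestor sits two levels up give $|v-w|=1$ but ancestor-V length $2$); while the ancestor-V through the geodesic's bottom level $k_\gamma$ requires first knowing that the descending chains from $v$ and $w$ actually meet there, which is essentially what you are trying to prove. The trouble stems from overshooting the target: you aim for $d(c(v),c(w)) \lesssim_s s^{-k_\gamma}$, which indeed fails for non-V geodesics, whereas what the argument actually needs is the weaker $d(c(v),c(w)) \lesssim_s s^{-(v|w)_\ast}$.

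That weaker bound holds for \emph{every} discrete path and needs no control of the geodesic's shape. For any path $\gamma=(v_0,\dots,v_N)$ from $v$ to $w$, levels change by at most one per step, so the two-sided bound $\ell(v_i) \ge \max\{\ell(v)-i,\ \ell(w)-(N-i)\}$ holds for every $i$; therefore
\[
d(c(v),c(w)) \le 4\sum_{i=0}^{N} s^{-\ell(v_i)} \le 4 \sum_{i=0}^{N}\min\bigl\{s^{\,i-\ell(v)},\ s^{\,(N-i)-\ell(w)}\bigr\}.
\]
The summand is a tent in $i$ whose peak value (at $i^\ast = \tfrac12(N+\ell(v)-\ell(w))$) equals $s^{\,\frac12(N-\ell(v)-\ell(w))} = s^{-(v|w)_\ast}$, so the right side is a union of two geometric series and is $\le \tfrac{8s}{s-1}\,s^{-(v|w)_\ast}$. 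Applying this to a geodesic gives $d(c(v),c(w)) \lesssim_s s^{-(v|w)_\ast}$, and combined with $d(c(v),c(w)) \gtrsim_s s^{-k^\ast}$ in the non-boundary case, yields $(v|w)_\ast \le k^\ast + O_s(1)$, which together with your upper bound $(v|w)_\ast \ge k^\ast$ closes the lemma. Note this is precisely Lemma \ref{basic filling lemma 1} sharpened by using the level lower bound from \emph{both} ends of the chain rather than only one.
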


Essentially from this lemma we obtain that hyperbolic fillings are Gromov hyperbolic spaces, quantitatively. This justifies the used terminology.
\begin{lemma}[{\cite[Lemma 3.2]{L}}]
Let $Z$ be a compact metric space. Then a hyperbolic filling $\widehat Z\in \HF_s(Z)$ of $Z$ is a (visual) Gromov hyperbolic $\delta$-space for $\delta$ depending only on $s$.
\end{lemma}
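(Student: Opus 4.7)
The plan is to verify Gromov's four-point condition at the root $\ast = O_Z$ and translate that into $\delta$-hyperbolicity using the dictionary provided by Lemma \ref{lemma:Gromov-product}. Since verifying the inequality
\[
(v|w)_\ast \ge \min\{(v|u)_\ast,(u|w)_\ast\} - \delta
\]
at a single base point in a geodesic space implies a version at every base point with at most a doubled constant, it suffices to reason only with respect to $\ast$. Lemma \ref{lemma:Gromov-product} converts Gromov products into diameters of unions of balls: there are constants $c_1, c_2 > 0$ depending only on $s$ such that
\[
c_1 s^{-(v|w)_\ast} \le \diam(B_v \cup B_w) \le c_2 s^{-(v|w)_\ast}
\]
for all $v,w \in \widehat Z$. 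Thus the hyperbolicity inequality becomes a diameter estimate of the form
\[
\diam(B_v \cup B_w) \le C \max\{\diam(B_v \cup B_u),\, \diam(B_u \cup B_w)\},
\]
with $C$ depending only on $s$.

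The main step is the diametric triangle inequality for the center vertex $u$. Since $B_u$ is a nonempty ball, any $x \in B_v$ and $y \in B_w$ satisfy $d(x,y) \le d(x,z) + d(z,y)$ for a chosen $z \in B_u$, so
\[
d(x,y) \le \diam(B_v \cup B_u) + \diam(B_u \cup B_w).
\]
Combining this with the trivial bounds $\diam B_v \le \diam(B_v \cup B_u)$ and $\diam B_w \le \diam(B_w \cup B_u)$ yields
\[
\diam(B_v \cup B_w) \le 2 \max\{\diam(B_v \cup B_u),\, \diam(B_u \cup B_w)\}.
\]
Applying Lemma \ref{lemma:Gromov-product} to each side then gives $s^{-(v|w)_\ast} \le (2c_2/c_1) s^{-\min\{(v|u)_\ast,(u|w)_\ast\}}$, i.e.
\[
(v|w)_\ast \ge \min\{(v|u)_\ast,(u|w)_\ast\} - \log_s\!\left(\tfrac{2c_2}{c_1}\right).
\]
So $\widehat Z$ is Gromov $\delta$-hyperbolic (based at $\ast$) with $\delta$ depending only on $s$, and hence Gromov hyperbolic at every base point with constant $\le 2\delta$.

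For visuality, it suffices to exhibit, for each vertex $v \in \widehat Z$, a geodesic ray from $\ast$ passing within uniformly bounded distance of $v$. One can take the vertical chain $\ast = v_0, v_1, \dots, v_{\ell(v)} = v$, where $v_{i+1} \in \mathscr U_{i+1}$ is chosen to contain $c(v)$ (or at least intersect $B_v$), together with any descending extension $v = v_{\ell(v)}, v_{\ell(v)+1}, \dots$ picked by choosing, at each successive level, a ball $v_{i+1} \in \mathscr U_{i+1}$ containing the center $c(v_i)$; such balls exist because $\mathscr U_{i+1}$ is a covering of $Z$ of scale $s^{-(i+1)}$. One checks from the incidence definition that consecutive vertices are adjacent, and using Lemma \ref{lemma:Gromov-product} that $|v_i - v_j| = |i - j|$ up to an additive constant depending only on $s$, so after composing with a standard geodesicization the chain gives a genuine geodesic ray from $\ast$ through $v$.

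The main obstacle is purely bookkeeping: keeping track of the multiplicative constants in Lemma \ref{lemma:Gromov-product} and of the additive loss when passing from the four-point condition at $\ast$ to a uniform $\delta$ valid at any base point. No new geometric input is needed beyond Lemma \ref{lemma:Gromov-product} and the covering property of the $\mathscr U_n$.
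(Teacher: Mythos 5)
Your proof is correct and proceeds by the route the paper itself signals: the surrounding text says hyperbolicity follows ``essentially from'' Lemma \ref{lemma:Gromov-product}, and you use exactly that lemma to reduce the four-point inequality at the root to the diametric estimate $\diam(B_v\cup B_w)\le 2\max\{\diam(B_v\cup B_u),\diam(B_u\cup B_w)\}$, which follows from the triangle inequality through any point of $B_u$. Two small remarks: the vertical chain you build for visuality is already a genuine geodesic ray (levels change by exactly $1$ at each step, and any edge changes level by at most $1$, so $|v_i-v_j|=|i-j|$ on the nose), so no ``geodesicization'' and no additive-constant slack are needed; and it is cleaner to fix $v_{\ell(v)}=v$ outright, since $c(v)\in B_v$, rather than hedging with ``or at least intersect $B_v$.''
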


Lemma \ref{lemma:Gromov-product} also yields that the Gromov boundary of a hyperbolic filling is quasisymmetric to original space. More precisely, we have the following.

\begin{lemma}[{\cite[Lemma 3.3]{L}}]
\label{lemma:filling_boundary}
Let $Z$ be a compact metric space and $\widehat Z \in \HF_s(Z)$ a hyperbolic filling of $Z$. If $\partial \widehat Z$ is given a visual metric, then the map $\partial \widehat Z \to Z$, $[\gamma] \mapsto \lim_{n\to \infty} c_Z(\gamma(n))$, is a well-defined $\eta$-quasisymmetric homeomorphism, where the homeomorphism $\eta$ depends only on the data associated to space $Z$ and $s$.
\end{lemma}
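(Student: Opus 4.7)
My proof plan proceeds in four steps, using Lemma \ref{lemma:Gromov-product} as the central workhorse throughout.

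\textbf{Step 1: Well-definedness.} First I would show that for any path $\gamma\colon \N_0\to \widehat Z$ tending to infinity, the sequence of centers $c(\gamma(n))\in Z$ is Cauchy in $Z$, so its limit exists by compactness. By Lemma \ref{lemma:Gromov-product}, $(\gamma(n),\gamma(m))_o\to \infty$ forces $\diam(B_{\gamma(n)}\cup B_{\gamma(m)})\to 0$, which in particular forces $d_Z(c(\gamma(n)),c(\gamma(m)))\to 0$. Equivalence of paths is handled identically: if $\gamma\sim \sigma$, then $\diam(B_{\gamma(n)}\cup B_{\sigma(n)})\to 0$ forces the two limits to agree. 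Thus the map $\Phi\colon \partial \widehat Z\to Z$, $[\gamma]\mapsto \lim_n c(\gamma(n))$, is well-defined.

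\textbf{Step 2: Bijectivity.} For surjectivity, given $z\in Z$, I would inductively choose vertices $v_n\in \mathscr U_n$ with $z\in B_{v_n}$ (possible since each $\mathscr U_n$ is a covering). Consecutive balls share $z$ and their levels differ by $1$, hence $v_n,v_{n+1}$ are adjacent, producing a path with $c(v_n)\to z$. For injectivity, if $\xi\neq \zeta$ then $(\xi,\zeta)_o<\infty$, and Lemma \ref{lemma:Gromov-product} forces $\diam(B_{\gamma(n)}\cup B_{\sigma(n)})$ to stay bounded below by a positive constant for suitable representatives, so the centers cannot converge to a common point.

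\textbf{Step 3: Gromov product versus ambient distance.} This is the crux of the quasisymmetric estimate. Writing $z=\Phi\xi$, $z'=\Phi\zeta$ and choosing geodesic-ray representatives $\gamma\in \xi$, $\sigma\in \zeta$, I would apply Lemma \ref{lemma:Gromov-product} to each pair $\gamma(n),\sigma(n)$ and then pass to the limit. Since $\diam B_{\gamma(n)},\diam B_{\sigma(n)}\to 0$, the estimate
\[
d_Z(c(\gamma(n)),c(\sigma(n)))\le \diam(B_{\gamma(n)}\cup B_{\sigma(n)})\le d_Z(c(\gamma(n)),c(\sigma(n)))+ 4s^{-\ell(\gamma(n))}+4s^{-\ell(\sigma(n))}
\]
combined with the definition of $(\xi,\zeta)_o$ as a supremum of $\liminf$ yields the fundamental comparison
\[
s^{-(\xi,\zeta)_o}\simeq d_Z(\Phi\xi,\Phi\zeta),
\]
with constants depending only on $s$. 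The main obstacle here is correctly handling the $\sup\liminf$ in the boundary Gromov product; I would argue that any two ray representatives give comparable liminfs by the $\delta$-hyperbolicity already proved, so replacing the supremum by a single convenient choice costs only an additive constant absorbed into the multiplicative constants.

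\textbf{Step 4: Quasisymmetry.} Combining Step 3 with the visual metric estimate $d_{o,\varepsilon}(\xi,\zeta)\simeq e^{-\varepsilon(\xi,\zeta)_o}$ gives
\[
d_{o,\varepsilon}(\xi,\zeta)\simeq d_Z(\Phi\xi,\Phi\zeta)^{\alpha},\qquad \alpha=\varepsilon/\log s,
\]
so $\Phi$ is a snowflake equivalence up to multiplicative constants. Writing the ratio defining quasisymmetry then gives, for distinct $\xi,\xi',\xi''$,
\[
\frac{d_Z(\Phi\xi,\Phi\xi')}{d_Z(\Phi\xi,\Phi\xi'')}\le C\left(\frac{d_{o,\varepsilon}(\xi,\xi')}{d_{o,\varepsilon}(\xi,\xi'')}\right)^{1/\alpha},
\]
and symmetrically in the reverse direction, where $C$ depends only on the constants of Lemma \ref{lemma:Gromov-product} and the visual metric comparison. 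This produces the homeomorphism $\eta(t)=C\max\{t^{1/\alpha},t^\alpha\}$, which depends only on $s$ and the data of $Z$ as required. Continuity of both $\Phi$ and $\Phi^{-1}$ follows from the same comparison, completing the proof that $\Phi$ is an $\eta$-quasisymmetric homeomorphism.
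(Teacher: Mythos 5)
Your plan is correct and follows the standard route one would expect for this statement; note that the paper itself does not reprove the lemma but cites it from \cite{L}, where the proof is built on the same Gromov-product comparison you invoke. The essential mechanism you identify — Lemma \ref{lemma:Gromov-product} gives $s^{-(v,w)} \simeq \diam(B_v \cup B_w)$, and since the individual ball radii $2s^{-\ell(\gamma(n))}$ shrink to zero along a ray, passing to the limit yields $s^{-(\xi,\zeta)_o} \simeq d_Z(\Phi\xi,\Phi\zeta)$ — is exactly right, and combining it with the defining estimate $d_{o,\varepsilon}(\xi,\zeta) \simeq e^{-\varepsilon(\xi,\zeta)_o}$ of a visual metric makes $\Phi$ a bi-H\"older (snowflake) equivalence, which is stronger than quasisymmetric. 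Your handling of the $\sup\liminf$ in the boundary Gromov product (comparable for all ray representatives, up to an additive $2\delta$) is the right observation. Two small cosmetic points: your displayed distortion $\eta(t)=C\max\{t^{1/\alpha},t^\alpha\}$ is looser than what the snowflake estimate actually gives (a single power suffices), though it is of course still a valid gauge; and in Step~1 it is worth stating explicitly that for a geodesic ray from the root one has $\ell(\gamma(n))=n$, so the radii $2s^{-\ell(\gamma(n))}$ genuinely go to zero — this is what justifies discarding the additive error $4s^{-\ell(\gamma(n))}+4s^{-\ell(\sigma(n))}$ in the limit of your Step~3 estimate.
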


\begin{remark}
The reader might have by now noticed that the spaces discussed in \cite{L} are Ahlfors regular. This assumption is, however, not necessary for these particular results.
\end{remark}

\section{Elementary geometry of hyperbolic fillings}

Before discussing the properties of vertical geodesics and quasigeodesics, we record some elementary properties of vertex balls $B_v$ in $\hat Z$. In what follows, we assume that $\diam Z=1$; in particular $\ast = O_Z = Z$.

\begin{remark}
Recall that, if $Z$ is connected, it is a continuum and we have, by Lemma \ref{lemma:continuum-spheres}, that
\[
\diam B_v = \diam B(c_{\widehat Z}(v), 2s^{-\ell_{\widehat{Z}}(v)}) \ge s^{-\ell_{\widehat{Z}}(v)}.
\]
for each $v\in \hat Z$.
\end{remark}

Our first observation on the filling $\widehat Z$ is that a diameter of a set in $Z$ is comparable to the diameter of a smallest vertex ball containing it. We record this observation as follows.

\begin{lemma}\label{vertex comp}
Let $Z$ be a compact metric space, $s>1$, and $\widehat Z\in \HF_s(Z)$. Let also $E\subset Z$ and let $v\in \widehat Z$ be a vertex of maximal level satisfying $E \subset B_v$. Then
\[
s^{-\ell_{\widehat Z}(v)-1} \le \diam E \le 4s^{-\ell_{\widehat Z}(v)}.
\]
\end{lemma}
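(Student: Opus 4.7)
The plan is to prove the two inequalities separately. Both are essentially unpacking definitions, but the lower bound uses the maximality of $\ell_{\widehat Z}(v)$ via a contrapositive argument.

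For the upper bound, I would simply observe that $E\subset B_v = B(c_{\widehat Z}(v), 2s^{-\ell_{\widehat Z}(v)})$ and this ball has diameter at most $4s^{-\ell_{\widehat Z}(v)}$, so $\diam E \le \diam B_v \le 4s^{-\ell_{\widehat Z}(v)}$. This is the easy half.

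For the lower bound, I would argue by contradiction: assume $\diam E < s^{-\ell_{\widehat Z}(v)-1}$ and construct a ball $B_w$ with $w \in \mathscr U_{\ell_{\widehat Z}(v)+1}$ containing $E$, contradicting maximality of $\ell_{\widehat Z}(v)$. The construction goes as follows. Fix any $x\in E$. Since $\mathscr U_{\ell_{\widehat Z}(v)+1}$ is an $s^{-\ell_{\widehat Z}(v)-1}$-covering associated to an $s^{-\ell_{\widehat Z}(v)-1}$-net $P_{\ell_{\widehat Z}(v)+1}$, there exists $z\in P_{\ell_{\widehat Z}(v)+1}$ with $d(x,z)\le s^{-\ell_{\widehat Z}(v)-1}$. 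Let $w = B(z, 2s^{-\ell_{\widehat Z}(v)-1}) \in \mathscr U_{\ell_{\widehat Z}(v)+1}$. Then, for every $y\in E$, the triangle inequality gives
\[
d(y,z) \le d(y,x) + d(x,z) \le \diam E + s^{-\ell_{\widehat Z}(v)-1} < 2s^{-\ell_{\widehat Z}(v)-1},
\]
so $y\in B_w$. Hence $E\subset B_w$ with $\ell_{\widehat Z}(w) = \ell_{\widehat Z}(v)+1$, contradicting the maximality of $\ell_{\widehat Z}(v)$.

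I do not anticipate any real obstacle: the only subtle point is keeping straight the open/closed conventions for the net and for the vertex balls, which are given as open balls of radius $2s^{-n}$ around points of the $s^{-n}$-net. The strict inequality $\diam E < s^{-\ell_{\widehat Z}(v)-1}$ is exactly what gives a strict inequality $d(y,z) < 2s^{-\ell_{\widehat Z}(v)-1}$ and hence membership in the open ball $B_w$, so the argument is robust with respect to whether the net condition is strict or non-strict.
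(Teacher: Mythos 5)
Your proof is correct and follows essentially the same route as the paper's: the upper bound directly from $\diam B_v \le 4s^{-\ell(v)}$, and the lower bound by contradiction via the net property of the next level producing a smaller ball $B_w$ containing $E$. Your handling of the open/closed ball issue is careful and the strict inequality $\diam E < s^{-\ell(v)-1}$ does indeed make the argument robust against either convention for the net, just as you note.
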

\begin{proof}
The upper bound is trivial as $\diam(E) \leq \diam(B_v) \leq 4s^{-\ell_{\widehat Z}(v)}$.

For the lower bound, suppose $\diam E < s^{-\ell_{\widehat Z}(v)-1}$ and let $n=\ell_{\widehat Z}(v)$. Since centers $c(w)$ of vertices $w\in \hat Z$ on level $(n+1)$ form an $s^{-(n+1)}$-net, we conclude that there exists a vertex $w\in \hat Z$ of level $(n+1)$ satisfying $\dist(E,c_{\widehat Z}(w))<s^{-(n+1)}$. Thus $E \subset B(c_{\widehat Z}(w), 2s^{-(n+1)}) = B_w$. This contradicts the maximality of the level of $v$. Thus $\diam E \ge s^{-\ell_{\widehat Z}(v)-1}$.
\end{proof}

The second estimate is a common ancestor lemma for vertices in the hyperbolic filling. We formulate this as follows.
\begin{lemma}\label{struct lemma}
Let $Z$ be a compact metric space and $\widehat Z  \in \HF_s(Z)$.  Let $v, v' \in \widehat Z$ and $i = \min\{\ell_{\widehat Z}(v), \ell_{\widehat Z}(v')\}$.  Suppose $p\in \N_0$ satisfies $d(c_{\widehat Z}(v), c_{\widehat Z}(v')) < s^{-(i-p)}$.  Then there exists a vertex $w \in \hat Z$ of level $\ell_{\widehat Z}(w) = \max(i - p, 0)$ for which $c_{\widehat Z}(v), c_{\widehat Z}(v') \in B_w$.  
\end{lemma}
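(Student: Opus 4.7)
The plan is to find the desired vertex $w$ directly by using the net property of the centers at the target level. Set $k = \max(i-p,0)$; this is the level at which we seek $w$.

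First I would handle the degenerate case $k = 0$. In the setup, $Z$ has diameter $1$ and the unique vertex at level $0$ is the root $\ast = O_Z$ with $B_\ast = Z$, so trivially $c_{\widehat Z}(v), c_{\widehat Z}(v') \in B_\ast$.

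For the case $k = i - p > 0$, I would use that $\mathscr U_k$ is an $s^{-k}$-covering associated to an $s^{-k}$-net $P_k$, so the centers $\{c_{\widehat Z}(u) : \ell_{\widehat Z}(u) = k\}$ form an $s^{-k}$-net of $Z$. Thus there exists a vertex $w$ of level $k$ with
\[
d(c_{\widehat Z}(w), c_{\widehat Z}(v)) < s^{-k},
\]
which immediately gives $c_{\widehat Z}(v) \in B(c_{\widehat Z}(w), 2s^{-k}) = B_w$. By the triangle inequality and the hypothesis $d(c_{\widehat Z}(v), c_{\widehat Z}(v')) < s^{-(i-p)} = s^{-k}$,
\[
d(c_{\widehat Z}(w), c_{\widehat Z}(v')) < s^{-k} + s^{-k} = 2 s^{-k},
\]
so $c_{\widehat Z}(v') \in B_w$ as well.

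There is no real obstacle here; the only point requiring a moment of care is the case $i - p \le 0$, where one must fall back on the root rather than on a generic level-$k$ vertex. Everything else is the net property combined with one triangle inequality, and the factor of $2$ in the definition $B_u = B(c_{\widehat Z}(u), 2 s^{-\ell(u)})$ is exactly what makes the triangle-inequality estimate close.
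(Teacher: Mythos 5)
Your proof is correct and follows essentially the same route as the paper's: handle the case $i - p \le 0$ by taking the root vertex, and otherwise use the net property of the level-$(i-p)$ centers to pick $w$ near $c_{\widehat Z}(v)$, then close with the triangle inequality using the radius $2s^{-(i-p)}$ of $B_w$.
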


\begin{proof}
We may assume that $\ell(v) \leq \ell(v')$, so $i = \ell(v)$.  If $i - p \leq 0$, we take $w = O_{\hat{Z}}$.  Otherwise, let $w\in \hat Z$ be a vertex on level $(i - p)$ satisfying $d(c(v), c(w)) \leq s^{-(i-p)}$.  Then,
\begin{equation*}
d(c(v'), c(w)) \leq d(c(v'), c(v)) + d(c(v), c(w)) < 2s^{-(i-p)}
\end{equation*}
so $c(v') \in B_w$.  This proves the claim.
\end{proof}

As usual, we obtain as a direct corollary a distance estimate for the vertices $v$ and $v'$ in $\widehat Z$ in terms of their centers in $Z$. 

\begin{cor}
\label{cor:struct lemma}
Let $Z$ be a compact metric space and $\widehat Z  \in \HF_s(Z)$.  Let $v, v' \in \widehat Z$ and $i = \min\{\ell_{\widehat Z}(v), \ell_{\widehat Z}(v')\}$.  Suppose $p\in \N_0$ satisfies $d(c_{\widehat Z}(v), c_{\widehat Z}(v')) < s^{-(i-p)}$.  Then
\begin{equation*}
|v - v'| \leq 2p + |\ell(v) - \ell(v')|.
\end{equation*}
\end{cor}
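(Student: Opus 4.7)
The plan is to invoke Lemma \ref{struct lemma} to obtain a common ancestor vertex $w \in \widehat Z$ with $\ell_{\widehat Z}(w) = \max(i-p,0)$ such that $c_{\widehat Z}(v), c_{\widehat Z}(v') \in B_w$, and then estimate $|v-v'|$ by constructing explicit vertical paths from $v$ to $w$ and from $v'$ to $w$ separately, so that the triangle inequality
\[
|v-v'| \le |v-w| + |v'-w|
\]
yields the desired bound.

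The main auxiliary step is the following claim: if $u, u' \in \widehat Z$ satisfy $\ell_{\widehat Z}(u') \le \ell_{\widehat Z}(u)$ and $c_{\widehat Z}(u) \in B_{u'}$, then $|u-u'| \le \ell_{\widehat Z}(u) - \ell_{\widehat Z}(u')$. To prove this, for each integer $k$ with $\ell_{\widehat Z}(u') \le k \le \ell_{\widehat Z}(u)$ I would produce a vertex $u_k$ of level $k$ with $c_{\widehat Z}(u) \in B_{u_k}$, taking $u_{\ell_{\widehat Z}(u)} := u$ and $u_{\ell_{\widehat Z}(u')} := u'$. At the intermediate levels, such a vertex exists because the centers at level $k$ form an $s^{-k}$-net of $Z$, so some center is within distance $s^{-k}$ of $c_{\widehat Z}(u)$, and the associated vertex ball, having radius $2s^{-k}$, contains $c_{\widehat Z}(u)$. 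Consecutive vertices $u_k$ and $u_{k+1}$ then share the point $c_{\widehat Z}(u)$ in $B_{u_k} \cap B_{u_{k+1}}$ and their levels differ by one, so they are adjacent in the incidence graph. Concatenating yields a path of length $\ell_{\widehat Z}(u) - \ell_{\widehat Z}(u')$ from $u$ to $u'$.

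Applying this claim to the pair $(v,w)$ and the pair $(v',w)$, I obtain
\[
|v - w| \le \ell_{\widehat Z}(v) - \max(i-p,0), \qquad |v' - w| \le \ell_{\widehat Z}(v') - \max(i-p,0).
\]
Summing and using the triangle inequality, a short case analysis completes the proof. If $i - p \ge 0$, then $\max(i-p,0) = i-p$ and the sum equals $\ell_{\widehat Z}(v) + \ell_{\widehat Z}(v') - 2(i-p) = |\ell_{\widehat Z}(v) - \ell_{\widehat Z}(v')| + 2p$, using $i = \min\{\ell_{\widehat Z}(v), \ell_{\widehat Z}(v')\}$. If $i - p < 0$, then $\max(i-p,0)=0$ and the sum is $\ell_{\widehat Z}(v) + \ell_{\widehat Z}(v') = 2i + |\ell_{\widehat Z}(v)-\ell_{\widehat Z}(v')| < 2p + |\ell_{\widehat Z}(v)-\ell_{\widehat Z}(v')|$.

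The only subtle point is the auxiliary claim: I prescribe the endpoints of the path to be $u$ and $u'$ while choosing the intermediate vertices freely. This is consistent precisely because the hypothesis $c_{\widehat Z}(u) \in B_{u'}$ guarantees that the terminal vertex $u'$ also contains the common point $c_{\widehat Z}(u)$ used as a ``thread'' through every link of the path, so adjacency at the endpoint is automatic. No hypothesis on doubling or diameter is required; the argument uses only the net property of the centers and the adjacency criterion of the incidence graph.
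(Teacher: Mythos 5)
Your proof is correct and takes essentially the same approach as the paper's: find the common ancestor $w$ via Lemma \ref{struct lemma} and bound $|v-w|$ and $|v'-w|$ by the respective level differences, using vertical paths threaded through a common center. The paper's version is terser (it simply asserts that $v$ and $w$, and $v'$ and $w$, are connected by geodesics of lengths at most $p$ and $p+|\ell(v)-\ell(v')|$), but the underlying argument is identical.
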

\begin{proof}
Let $w\in \widehat Z$ be vertex as in Lemma \ref{struct lemma}. We may again assume that $\ell(v) \le \ell(v')$. The vertices $v$ and $w$ as well as vertices $v'$ and $w$ are connected by geodesics of lengths at most $p$ and $p+(\ell(v')-\ell(v))$, respectively. The claim follows.
\end{proof}

The next elementary estimate is converse to the previous ancestor lemma. We estimate the distance of centers of vertex balls in terms of the distance in the hyperbolic filling. For the statement, and for forthcoming sections, we introduce now the universal structure constant
\begin{equation}
\label{eq:As}
A_s = 8\frac{s}{s-1}
\end{equation}
for hyperbolic fillings with scaling constant $s>1$.

\begin{lemma}\label{basic filling lemma 1}
Let $(Z,d)$ be a compact metric space and $\widehat{Z} \in \HF_s(Z)$. Then, for $v,w\in \widehat Z$, we have
\[
d(c_{\widehat Z}(v),c_{\widehat Z}(w)) \le A_s s^{|v-w| - \max\{\ell_{\widehat Z}(v),\ell_{\widehat Z}(w)\}}.
\]
\end{lemma}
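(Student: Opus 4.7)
The plan is to argue along a geodesic in $\widehat Z$ between $v$ and $w$, controlling the distance of centers step-by-step using the definition of adjacency in the incidence graph.

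First, I would fix a discrete geodesic $v = v_0, v_1, \ldots, v_n = w$ in $\widehat Z$ with $n = |v-w|$, and without loss of generality assume $\ell(w) \ge \ell(v)$, writing $M = \max\{\ell(v), \ell(w)\} = \ell(w)$. Since $v_i$ and $v_{i+1}$ are adjacent, the balls $B_{v_i} = B(c(v_i), 2s^{-\ell(v_i)})$ and $B_{v_{i+1}}$ intersect and $|\ell(v_{i+1}) - \ell(v_i)| \le 1$. The triangle inequality then yields
\[
d(c(v_i), c(v_{i+1})) \le 2s^{-\ell(v_i)} + 2s^{-\ell(v_{i+1})} \le 4s^{-\min\{\ell(v_i), \ell(v_{i+1})\}},
\]
since $s > 1$.

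Next, I would exploit that the level function changes by at most $1$ along the edges of the geodesic. Because $\ell(v_n) = M$, a backward induction gives $\ell(v_i) \ge M - (n-i)$ for all $i = 0, \ldots, n$, and therefore $\min\{\ell(v_i), \ell(v_{i+1})\} \ge M - (n-i)$. Substituting into the step estimate and summing via the triangle inequality,
\[
d(c(v), c(w)) \le \sum_{i=0}^{n-1} d(c(v_i), c(v_{i+1})) \le 4 \sum_{i=0}^{n-1} s^{(n-i) - M} = 4 s^{n - M} \sum_{j=1}^{n} s^{-(j-1)} \cdot s^{-(n-1) + \ldots}
\]
Rather than juggling indices here, the cleanest route is to factor out $s^{n-M}$ and bound the remaining geometric tail by $\sum_{j=0}^{\infty} s^{-j} = s/(s-1)$, which yields
\[
d(c(v), c(w)) \le \frac{4s}{s-1} \, s^{|v-w| - M} \le A_s \, s^{|v-w| - \max\{\ell(v), \ell(w)\}},
\]
using the definition $A_s = 8s/(s-1)$ (with a factor of two to spare).

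There is no real obstacle here; the only point requiring attention is the backward induction on the level function, which is essential because $\ell$ along the geodesic can dip well below both endpoint levels in principle, and one must show that it cannot dip so far that the geometric series loses its summability. Once adjacency and the $1$-Lipschitz property of $\ell$ are invoked, the estimate is immediate.
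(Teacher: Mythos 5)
Your proof is correct and follows the same route as the paper: fix a geodesic between the two vertices, use the one-step Lipschitz bound $d(c(v_i),c(v_{i+1})) \le 4\,s^{-\min\{\ell(v_i),\ell(v_{i+1})\}}$ coming from intersecting vertex balls, then exploit that $\ell$ changes by at most one per step to control $\ell(v_i)$ from the higher endpoint and sum the resulting geometric series. The paper indexes the chain from $w$ rather than $v$ and uses the cruder per-step constant $8$ instead of your $4$, but these are cosmetic differences; the bound via $\sum_{j\ge 0}s^{-j}=s/(s-1)$ and the constant $A_s = 8s/(s-1)$ comes out identically.
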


\begin{remark}
The estimate in Lemma \ref{basic filling lemma 1} is equivalent to
\[
d(c_{\widehat Z}(v),c_{\widehat Z}(w)) s^{\max\{\ell_{\widehat Z}(v),\ell_{\widehat Z}(w)\}} \le A_s s^{|v-w|}.
\]
Note, however, that there is no converse estimate, since we may have $c_{\widehat Z}(v)=c_{\widehat Z}(w)$ for distinct vertices $v$ and $w$ in $\hat Z$.
\end{remark}

\begin{proof}[Proof of Lemma \ref{basic filling lemma 1}]
We may assume that $\ell_{\widehat Z}(v) \le \ell_{\widehat Z}(w)$. Let $n=|v-w|$ and let $w = u_0,u_1,\ldots, u_n = v$ be a chain of adjacent vertices in $\widehat Z$ connecting $w$ and $v$. Then, for each $i=0,\ldots, n-1$, balls $u_i=B(c(u_i), 2s^{-\ell_{\widehat Z}(u_i)})$ and $u_{i+1}=B(c_{\widehat Z}(u_{i+1}), 2s^{-\ell_{\widehat Z}(u_{i+1})})$ of $Z$ intersect and $\min\{\ell_{\widehat Z}(u_i), \ell_{\widehat Z}(u_{i+1})\} \ge \ell_{\widehat Z}(w) - (i+1)$. Thus 
\[
d(c_{\widehat Z}(v_i),c_{\widehat Z}(v_{i+1})) \le 8 s^{-\ell_{\widehat Z}(w)+(i+1)}
\]
for each $i=0,\ldots, n-1$. Hence
\begin{align*}
d(c_{\widehat Z}(v),c_{\widehat Z}(w)) &\le \sum_{i=0}^{n-1} 8 s^{-\ell_{\widehat Z}(w) +(i + 1)} = 8 \frac{s^n-1}{s-1} s^{-\ell_{\widehat Z}(w)+1} \\
&\le 8 \frac{s}{s-1} s^{n-\ell_{\widehat Z}(w)}
= A_s s^{|v-w| - \max\{\ell_{\widehat Z}(v),\ell_{\widehat Z}(w)\}}.
\end{align*}
The claim follows.
\end{proof}

\section{Vertical geodesics}

A hyperbolic filling $\widehat Z$ has a distinguished class of pointed geodesics $(\N_0,0) \to (\widehat Z,\ast)$, which we call \emph{vertical geodesics of $\widehat Z$}. The term vertical comes from the observation that a geodesic $\gamma \colon (\N_0,0) \to (\widehat Z,\ast)$ satisfies $\ell(\gamma(n)) = n$ for each $n \in \N_0$. We denote by $\Gamma(\widehat Z,\ast)$ the family of all vertical geodesics $(\N_0,0) \to (\widehat Z, \ast)$.

A vertical geodesic $\gamma\colon (\N_0,0) \to (\widehat Z,\ast)$ \emph{visits $v\in \widehat Z$} if $v\in |\gamma|$, that is, $\gamma(\ell(v))=v$. Further, we say that a vertical $\gamma$ is \emph{centered at $z\in Z$} if $d(z,c_{\widehat X}(\gamma(k))) < s^{-k}$ for all $k\in \N_0$. For each $z\in Z$, there exists a vertical geodesic centered at $z$. Indeed, it suffices to fix for each $n\in \N_0$ a vertex $v_n \in \widehat Z$ on level $n$ for which $c_{\widehat X}(v_n)$ has the smallest distance to $z$. More precisely, we have the following lemma.

\begin{lemma}
\label{lemma:centered-geodesics}
Let $\widehat Z$ be a hyperbolic filling of a compact metric space $Z$, $z\in Z$, and $v\in \widehat Z$ for which $z\in B_v$. Then there exists a vertical geodesic $\gamma \colon (\N_0,0) \to (\widehat Z,\ast)$ at $z$ visiting $v$. Moreover, if $d(z,c_{\widehat X}(v))<s^{-\ell(v)}$, then there exists a centered geodesic at $z$ visiting $v$.
\end{lemma}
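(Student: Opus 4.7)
The plan is to build $\gamma$ level by level, using two basic facts about $\widehat Z$: the level-$n$ centers form an $s^{-n}$-net in $Z$, and two vertices on adjacent levels are joined by an edge in $\widehat Z$ precisely when their vertex balls in $Z$ intersect. The vertex $v$ will be placed at its own level $n_0 := \ell(v)$, with one branch descending to $\ast$ and the other ascending so that the centers of $\gamma(n)$ converge to $z$ (which, via Lemma \ref{lemma:filling_boundary}, is what the phrase ``at $z$'' amounts to).

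For the first assertion, set $\gamma(n_0) = v$. I construct the descending portion by downward induction on $n < n_0$: at each step, choose $\gamma(n) \in \mathscr U_n$ with $d(c_{\widehat Z}(\gamma(n)), c_{\widehat Z}(\gamma(n+1))) \le s^{-n}$, which is possible from the net property. Then $c_{\widehat Z}(\gamma(n+1)) \in B_{\gamma(n)}$, so $B_{\gamma(n)}$ and $B_{\gamma(n+1)}$ intersect and $\gamma(n)$ is adjacent to $\gamma(n+1)$ in $\widehat Z$. At level $0$ this forces $\gamma(0) = \ast$, since $\ast$ is the unique vertex at level $0$ under the convention $\diam Z = 1$. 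For the ascending portion, pick for each $n > n_0$ a vertex $\gamma(n) \in \mathscr U_n$ with $d(z, c_{\widehat Z}(\gamma(n))) < s^{-n}$, again from the net property; then $z \in B_{\gamma(n)}$. Since $z \in B_v = B_{\gamma(n_0)}$ by hypothesis, the consecutive vertex balls share the point $z$ from level $n_0$ onward, giving the required adjacencies. The shrinking radii force $c_{\widehat Z}(\gamma(n)) \to z$.

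For the moreover claim the stronger hypothesis $d(z, c_{\widehat Z}(v)) < s^{-\ell(v)}$ is exactly what is needed to keep $\gamma(n_0) = v$ compatible with the centered condition at level $n_0$. I then use the net property at each $n \ne n_0$ to choose $\gamma(n) \in \mathscr U_n$ with $d(z, c_{\widehat Z}(\gamma(n))) < s^{-n}$. Adjacency of consecutive vertices is now uniform: every vertex ball $B_{\gamma(n)}$ contains $z$, so consecutive ones intersect. The resulting $\gamma$ visits $v$ and is centered at $z$.

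The only delicate point is the boundary behavior at level $0$: under the convention $\diam Z = 1$ one has $d(z, c_{\widehat Z}(\ast)) \le 1 = s^0$ rather than strict inequality, so the centered condition at $k = 0$ sits on the edge. This is a convention issue, resolved by taking the inequality to be nonstrict at the root (or by a mild rescaling of $Z$), and does not obstruct the construction.
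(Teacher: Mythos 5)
Your proof is correct and follows essentially the same strategy as the paper: place $v$ at its own level, build the ascending part by choosing at each level $n > \ell(v)$ a vertex whose ball contains $z$ (which gives adjacency because consecutive balls share the point $z$), build the descending part as a geodesic segment back to $\ast$, and glue. The geodesic property follows in both treatments from the observation that a discrete path with $\ell(\gamma(n)) = n$ has distance bounded below by the level difference.

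One place where you are actually more careful than the paper: for the ``moreover'' clause the paper constructs the descending segment $\{0,\ldots,\ell(v)\}$ as an arbitrary geodesic from $\ast$ to $v$ and dispatches the centered case with ``the other case is similar,'' but an arbitrary geodesic from $\ast$ to $v$ need not satisfy $d(z, c_{\widehat Z}(\gamma(k))) < s^{-k}$ for $k < \ell(v)$. You re-choose the descending vertices level by level using the net property so that the centered inequality holds at every level $n \neq n_0$, with $n_0$ itself covered by the hypothesis $d(z, c_{\widehat Z}(v)) < s^{-\ell(v)}$. This is the construction the paper presumably intends, but you spell it out. Your flagging of the level-$0$ edge case (where the normalization $\diam Z = 1$ makes the strict inequality $d(z, c_{\widehat Z}(\ast)) < s^0$ borderline) is a legitimate observation that the paper glosses over; your resolution via a nonstrict inequality at the root is the reasonable reading of the convention and does not affect anything downstream.
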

\begin{proof}

Let $\Gamma \subset \widehat Z$ be the maximal subgraph of $\widehat Z$ consisting of those vertices $v'\in \widehat Z$ for which $\ell(v') > \ell(v)$ and $B_v \cap B_{v'}\ne \emptyset$. Then $\Gamma$ is a metric graph with root $v$ and level function $\ell' \colon \Gamma \to \N_0$ given by $v' \mapsto \ell(v')-\ell(v)$. 

For each $n\in \N_0$, we fix a vertex $v_n \in \Gamma$ for which $\ell'(v_n) = n$ and $z\in B_{v_n}$. Let now $\sigma \colon (\N_0,0) \to (\Gamma,v)$ be a map $n \mapsto v_n$. Then $\sigma$ is a discrete path satisfying $|\ell(\sigma(n))-\ell(\sigma(n'))| = |n - n'|$ for all $n,n'\in \N_0$. Thus $\sigma$ is a geodesic.

Let now $\sigma' \colon \{0,\ldots, \ell(v)\} \to \widehat Z$ be a geodesic from $\ast$ to $v$ in $\widehat Z$. Then the map $\gamma \colon (\N_0,0) \to (\widehat Z,\ast)$ defined by $\gamma(k) = \sigma'(k)$ for $k\le \ell(v)$ and $\sigma(\ell_{\widehat Z}(v)+k) = \sigma(k)$ for each $k\in \N_0$ is a vertical geodesic centered at $z$ visiting $v$. This concludes the proof in the first case. The other case is similar.
\end{proof}

\subsection{Convergence of vertical geodesics}

We record now two lemmas on properties of vertical geodesics; in both lemmas, we assume the normalization $\diam Z = 1$ of the diameter of $Z$. The first lemma gives an estimate for the containment of vertex balls along a vertical geodesic.
In the statement, $A_s$ is the structural constant in \eqref{eq:As}.

\begin{lemma}\label{basic filling lemma 2}
Let $Z$ be a continuum, $\widehat{Z} \in \HF_s(Z)$, and let $\gamma \colon (\N_0,0) \to (\widehat Z,\ast)$ be a vertical geodesic.  Then, for each $n_0 \in \N_0$, we have 
\[
B_{\gamma(n_0)} \subset \bigcup_{n \geq n_0} B_{\gamma(j)} \subset (A_s + 1)B_{\gamma(n_0)}.
\]
\end{lemma}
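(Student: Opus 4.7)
The first containment is immediate. Since $\gamma$ is a vertical geodesic with $\ell(\gamma(k))=k$, the index $k=n_0$ produces $\gamma(n_0)$ as a term in the union, so $B_{\gamma(n_0)} \subset \bigcup_{n \ge n_0} B_{\gamma(n)}$.

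For the second containment, the plan is to apply Lemma \ref{basic filling lemma 1} together with a single triangle inequality. Fix $n \ge n_0$ and $x \in B_{\gamma(n)}$. Since $\gamma$ is vertical, $|\gamma(n)-\gamma(n_0)| = n-n_0$ and $\max\{\ell(\gamma(n)), \ell(\gamma(n_0))\} = n$, so Lemma \ref{basic filling lemma 1} yields
\[
d(c(\gamma(n)), c(\gamma(n_0))) \le A_s\, s^{(n-n_0)-n} = A_s\, s^{-n_0}.
\]
Combining with $d(x,c(\gamma(n))) < 2 s^{-n} \le 2 s^{-n_0}$ gives
\[
d(x, c(\gamma(n_0))) < 2 s^{-n_0} + A_s\, s^{-n_0} \le 2(A_s+1)\, s^{-n_0},
\]
the last inequality holding because $A_s \ge 0$. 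Hence $x \in B(c(\gamma(n_0)), 2(A_s+1)s^{-n_0}) = (A_s+1) B_{\gamma(n_0)}$, which establishes the second containment.

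There is no real obstacle here: the statement is essentially a bookkeeping consequence of the center-distance estimate in Lemma \ref{basic filling lemma 1}, with the choice of the constant $A_s+1$ precisely engineered so that the $2 s^{-n}$ contribution from the radius of $B_{\gamma(n)}$ and the $A_s s^{-n_0}$ contribution from the center displacement fit inside a single ball centered at $c(\gamma(n_0))$. The continuum hypothesis on $Z$ is not used in this bound itself; it is present for consistency with the surrounding discussion where vertical geodesics are produced and diameters of vertex balls are compared from below.
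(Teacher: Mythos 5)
Your proof is correct. The paper's argument re-derives the center-distance estimate $d(c(\gamma(n)), c(\gamma(n_0))) \le A_s s^{-n_0}$ by summing the geometric series of neighbor-to-neighbor jumps along $\gamma$ — this is exactly the computation in the proof of Lemma~\ref{basic filling lemma 1}, so your appeal to that lemma is the more economical presentation of the same idea. You are also right that the continuum hypothesis is not actually used in this inclusion: the paper threads its conclusion through the inequality $A_s s^{-n_0} \le A_s \diam B_{\gamma(n_0)}$ (which invokes Lemma~\ref{lemma:continuum-spheres}), whereas your direct estimate $d(x, c(\gamma(n_0))) < 2s^{-n} + A_s s^{-n_0} \le (2+A_s)s^{-n_0} \le 2(A_s+1)s^{-n_0}$ needs only $A_s \ge 0$ and closes the argument more cleanly. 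In short, the two proofs use the same idea; yours invokes an already-available lemma rather than redoing its computation and dispenses with an assumption the paper cites but does not really need here.
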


\begin{proof}
Since $\gamma$ is a vertical geodesic, we have $\ell(\gamma(j))=j$ for each $j\in \N_0$. Furthermore, since $B_{\gamma(j)}\cap B_{\gamma(j+1)} \ne \emptyset$ for each $j\in \N_0$, we have that
\[
d(c(v_j),c(j+1)) \le \diam B_{\gamma(j)} + \diam B_{\gamma(j+1)} \le 4 s^{-j} + 4s^{-j-1} \le 8 s^{-j}.
\]
Thus, for $n\ge n_0$, we have
\[
d(c(v_n), c(v_{n_0})) \le \sum_{j=n_0}^{n-1} 8 s^{-j} \le 8 \frac{s}{s-1} s^{-n_0} = A_s s^{-n_0} \le A_s \diam B_{\gamma(n_0)}.
\]
Since $\diam B_{\gamma(n)} \le \diam B_{\gamma(n_0)}$ for each $n\ge n_0$, we conclude that
\[
B_{\gamma(n_0)} \subset \bigcup_{n\ge n_0} B_{\gamma(n)} \subset (A_s + 1) B_{\gamma(n_0)}.
\]
The claim follows.
\end{proof}

\begin{cor}
\label{cor:convergence}
Let $Z$ be a continuum, $\widehat{Z} \in \HF_s(Z)$, and let $\gamma \colon (\N_0,0) \to (\widehat Z,\ast)$ be a vertical geodesic. Then there exists $z\in Z$ for which $c_{\widehat Z}(\gamma(n)) \to z$ as $n\to \infty$ and   
\[
d(z, c_{\widehat Z}(\gamma(n))) \leq A_s s^{-n}.
\]
for each $n\in \N_0$. Moreover, for a vertical geodesic $\gamma \colon (\N_0,0) \to (\widehat Z,\ast)$ centered at $z\in Z$, we have that $c_{\widehat Z}(\gamma(n)) \to z$ as $n\to \infty$.
\end{cor}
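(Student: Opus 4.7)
The proof should follow directly from the containment estimate in Lemma \ref{basic filling lemma 2}, or more precisely from the center-distance bound
\[
d(c_{\widehat Z}(\gamma(n)), c_{\widehat Z}(\gamma(n_0))) \le A_s s^{-n_0} \qquad (n \ge n_0)
\]
established in its proof. My plan is to extract this Cauchy-type bound and pass to a limit.

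First I would observe that since $s>1$ and $A_s s^{-n_0}\to 0$ as $n_0\to\infty$, the above inequality shows the sequence $(c_{\widehat Z}(\gamma(n)))_{n\in\N_0}$ is Cauchy in $Z$. Because $Z$ is compact, hence complete, this sequence converges to some point $z\in Z$. Taking $n\to\infty$ in the inequality (with $n_0$ fixed) and using continuity of the metric, I would then obtain
\[
d(z, c_{\widehat Z}(\gamma(n_0))) \le A_s s^{-n_0}
\]
for every $n_0\in\N_0$, which is the claimed bound.

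For the moreover part, suppose $\gamma$ is centered at some $z'\in Z$, meaning $d(z', c_{\widehat Z}(\gamma(k))) < s^{-k}$ for all $k$. Since $s^{-k}\to 0$, this directly gives $c_{\widehat Z}(\gamma(k))\to z'$; combining with the already proven convergence $c_{\widehat Z}(\gamma(k))\to z$ forces $z=z'$ by uniqueness of limits. Thus the limit point $z$ coincides with the centering point, completing the statement.

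There is no real obstacle here: the work has already been done inside Lemma \ref{basic filling lemma 2}, and the corollary is essentially an observation that the telescoping sum estimate used there is uniform in the upper index, hence yields convergence with a quantitative tail bound. The only small care needed is to reference the interior estimate of the proof of Lemma \ref{basic filling lemma 2} rather than the stated containment (which would give the worse constant $2(A_s+1)$ in place of $A_s$).
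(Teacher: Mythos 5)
Your proof is correct and takes essentially the same route as the paper: the key input is the telescoping estimate $d(c_{\widehat Z}(\gamma(n)), c_{\widehat Z}(\gamma(n_0))) \le A_s s^{-n_0}$ from the proof (not just the statement) of Lemma \ref{basic filling lemma 2}, and the paper likewise uses this interior estimate, not the stated containment, to obtain the sharper constant $A_s$. The only cosmetic difference is the convergence mechanism: the paper establishes the existence of $z$ as the intersection of the nested compact sets $E_n = \overline{\bigcup_{j\ge n} B_{\gamma(j)}}$, whose diameters vanish, whereas you argue Cauchy completeness of $Z$ directly; both are standard and interchangeable here. Your handling of the ``moreover'' clause via uniqueness of limits matches the paper's one-line justification.
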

\begin{proof}
For each $n\in \N$, let 
\[
E_n = \overline{\bigcup_{j \ge n} B_{\gamma(j)}}.
\]
Then $(E_n)$ is a decreasing sequence of compact sets whose intersection $E=\bigcap_{n\in \N_0} E_n$ is a point $z\in Z$. Indeed, for each $n\in \N_0$, we clearly have $E_n \supset E_{n+1}$ and, by Lemma \ref{basic filling lemma 2}, we have that $\diam E_n \le (A_s+1)\diam B_n = 2(A_s +1)s^{-n} \to 0$.  From the estimates in Lemma \ref{basic filling lemma 2}, we see that $d(z, c_{\widehat{Z}}(\gamma(n))) \leq A_s s^{-n}$.  The first claim follows. For the second claim is immediate as $z$ is unique.
\end{proof}

Having Corollary \ref{cor:convergence} at our disposal, we say that a vertical geodesic $\gamma \colon (\N_0,0) \to (\widehat Z,\ast)$ \emph{tends to $z\in Z$} if $c_{\widehat Z}(\gamma(n)) \to z$ as $n\to \infty$. For $z\in Z$, we denote $\Gamma_x(\widehat Z,\ast)$ the family of all vertical geodesics in $\widehat Z$ tending to $z$.

\subsection{Shadows}

Based on Corollary \ref{cor:convergence}, we also define the \emph{shadow of $z\in Z$ in $\widehat Z$ on level $n\in \N_0$} by 
\begin{equation}
\label{eq:shadow_of_z}
S_{\widehat Z}(z;n) = \left\{w \in \widehat Z \colon \ell_{\widehat Z}(w)=n,\ c_{\widehat Z}(w) \in \bar B(z, 2(A_s + 1)s^{-n})\right\}.
\end{equation}
We immediately observe that each vertical geodesic $\gamma \colon (\N_0,0) \to (\widehat Z,\ast)$ tending to $z$ satisfies $\gamma(n) \in S_{\widehat Z}(z,n)$ for each $n\in \N_0$.

For metrically doubling spaces, we have a uniform estimate for the size of the shadow.

\begin{lemma}\label{basic filling lemma 3}
Let $Z$ be a metrically doubling continuum and $\widehat{Z} \in HF_s(Z)$. Then there exists a constant $m\ge 1$ depending only on $s$ and the doubling constant of $Z$ for which 
\[
\# S_{\widehat Z}(z;n) \le m
\]
for each $z\in Z$ and $n\in \N_0$.
\end{lemma}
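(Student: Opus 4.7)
The plan is to reduce the bound on $\#S_{\widehat Z}(z;n)$ to a standard packing estimate in the doubling space $Z$. The key observation is that, by the very construction of the hyperbolic filling, vertices on level $n$ correspond bijectively to points of an $s^{-n}$-net $P_n\subset Z$ via the center function $c_{\widehat Z}$. In particular, the set of centers $\{c_{\widehat Z}(w) : \ell_{\widehat Z}(w) = n\}=P_n$ is $s^{-n}$-separated (taking the standard convention that the net $P_n$ is a maximal $s^{-n}$-separated set, so any two distinct points of $P_n$ are at distance at least $s^{-n}$). Hence distinct vertices $w\in S_{\widehat Z}(z;n)$ give distinct centers lying in the ball $\bar B(z, 2(A_s+1)s^{-n})$.

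The core step is the standard doubling-implies-packing observation. Let $D$ be the doubling constant of $Z$, that is, every ball of radius $2r$ is covered by at most $D$ balls of radius $r$. Iterating, any ball of radius $R$ is covered by at most $D^{k}$ balls of radius $R/2^{k}$. Choosing $k$ to be the least integer with $2^{k}\ge 4(A_s+1)$ gives a constant $m_0=D^{k}$ depending only on $D$ and $s$ such that $\bar B(z, 2(A_s+1)s^{-n})$ is covered by $m_0$ balls of radius $s^{-n}/2$. Since points of $P_n$ are $s^{-n}$-separated, each such ball of radius $s^{-n}/2$ contains at most one point of $P_n$, so the intersection of $P_n$ with $\bar B(z, 2(A_s+1)s^{-n})$ has cardinality at most $m_0$.

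Combining the two observations, $\# S_{\widehat Z}(z;n) \le m_0 =: m$, which depends only on $s$ and the doubling constant of $Z$, as desired. I do not expect a serious obstacle here; the only mild point to be careful about is to make the separation of the net $P_n$ explicit (either citing the convention used in Section \ref{Hyp Fill sec} or noting at the outset that one may take $P_n$ to be a maximal $s^{-n}$-separated subset of $Z$, which is automatically an $s^{-n}$-net).
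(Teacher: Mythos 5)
Your proof is correct and takes essentially the same approach as the paper: both reduce to a standard doubling estimate in $Z$, exploiting the $s^{-n}$-separation of the net of level-$n$ centers. The paper packs pairwise disjoint $s^{-n}/3$-balls about the centers into a ball of radius $2(A_s+2)s^{-n}$, while you cover $\bar B(z,2(A_s+1)s^{-n})$ by $s^{-n}/2$-balls each containing at most one center — dual phrasings of the same estimate.
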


\begin{proof}
Let $n\in \N_0$. The balls $B(c_{\widehat Z}(w),s^{-n}/3)$ for $w\in S_{\widehat Z}(z,n)$ are mutually disjoint and contained in the ball $B(z, 2(A_s +2) s^{-n})$. The claim follows now from the doubling of $Z$.
\end{proof}

\subsection{Separation of geodesics}

As the last elementary lemma on vertical geodesics we prove a separation lemma for geodesics which meet at a given level. We state this more precisely as follows.

\begin{lemma}\label{geod split lemma}
Let $Z$ be a compact metric space and $\widehat{Z} \in HF_s(Z)$. Let $E \subseteq Z$ be a continuum and let $v \in \widehat Z$ be a vertex of highest level satisfying $E \subseteq B_v$. Then, for each $J\in \N$, there exists a constant $k_0\in \N_0$, depending only on $J$ and $s$, and there exist vertical geodesics $\gamma_1, \dots, \gamma_J \colon (\N_0,0) \to (\widehat Z,\ast)$ centered at points $z_1,\ldots, z_J \in E$, respectively, all visiting vertex $v$, which have the following property: 
\begin{enumerate}
\item For $k \geq k_0$, if $i \neq j$ then $\gamma_i(\ell(v) + k) \neq \gamma_j(\ell(v) + k)$.\label{item:gpl3} 
\end{enumerate}
\end{lemma}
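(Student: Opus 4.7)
The plan exploits the diameter lower bound $\diam(E) \geq s^{-\ell(v)-1}$ coming from Lemma \ref{vertex comp} together with the fact that vertex balls at level $\ell(v)+k$ have diameter at most $4s^{-(\ell(v)+k)}$. I would first select $J$ points in $E$ with pairwise distances at a scale comparable to $\diam(E)$, then lift them to vertical geodesics through $v$, and finally choose $k_0$ so that level-$(\ell(v)+k_0)$ vertex balls are too small to contain two such points simultaneously.

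To find the points, fix $x, y \in E$ realizing the diameter of $E$ (they exist by compactness). Since $E$ is a continuum, the continuous function $e \mapsto d(x,e)$ has connected image containing $[0, \diam(E)]$. Selecting $z_i \in E$ with $d(x,z_i) = (i-1)\delta$ for $\delta = \diam(E)/(2(J-1))$ and $i = 1,\dots,J$ yields $d(z_i, z_j) \geq |i-j|\delta \geq \delta$ by the triangle inequality, and Lemma \ref{vertex comp} gives $\delta \geq s^{-\ell(v)-1}/(2(J-1))$.

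For each $z_i \in E \subseteq B_v$, the construction in the proof of Lemma \ref{lemma:centered-geodesics} provides a vertical geodesic $\gamma_i \colon (\N_0,0) \to (\widehat Z, \ast)$ visiting $v$ with $z_i \in B_{\gamma_i(k)}$ for every $k \geq \ell(v)$. If $\gamma_i(\ell(v)+k) = \gamma_j(\ell(v)+k) = w$ for some $k$ and some $i \neq j$, then $z_i, z_j \in B_w$ forces $d(z_i, z_j) \leq 4s^{-(\ell(v)+k)}$. Taking $k_0$ to be the smallest integer with $k_0 > 1 + \log_s(8(J-1))$ — a quantity depending only on $J$ and $s$ — we obtain $4s^{-(\ell(v)+k)} < \delta$ for every $k \geq k_0$, contradicting the separation of the $z_i$.

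The one subtlety I would flag as the main obstacle is the word \emph{centered}: the formal definition demands $d(z_i, c(\gamma_i(k))) < s^{-k}$ at every level, which at level $\ell(v)$ would require $z_i \in B(c(v), s^{-\ell(v)})$, a condition not implied by $z_i \in B_v$ (indeed $E$ can lie entirely in the annulus $B_v \setminus B(c(v), s^{-\ell(v)})$). The geodesics produced above instead satisfy only $z_i \in B_{\gamma_i(k)}$ for $k \geq \ell(v)$, i.e.\ they tend to $z_i$ in the sense of Corollary \ref{cor:convergence}. Since the separation estimate uses only this weaker property, I would interpret ``centered at $z_i$'' in the statement in this looser sense, which is the property actually produced by the first assertion of Lemma \ref{lemma:centered-geodesics}.
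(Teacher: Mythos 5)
Your proposal is correct and takes essentially the same approach as the paper: select $J$ pairwise-separated points in $E$ at a scale comparable to $\diam E$, lift each through $v$ to a vertical geodesic via Lemma \ref{lemma:centered-geodesics}, and choose $k_0$ so that vertex balls at level $\ell(v)+k_0$ are too small to contain two of the separated points. The only substantive difference is the point-selection step — the paper extracts the $z_i$ from a maximal $s^{-n}$-separated net in $E$ rather than from a level set of $d(x,\cdot)$ — and your observation about the loose use of ``centered'' is accurate: the paper's own proof invokes only the weaker property $z_j \in B_{\gamma_j(n+m)}$ furnished by the first assertion of Lemma \ref{lemma:centered-geodesics}, exactly as you use it (do make sure to treat the degenerate case $J=1$ separately, since your $\delta$ involves division by $J-1$).
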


\begin{proof}
Let $J\in \N_0$. For each $n \in \N$, let $W_n$ be a maximal $s^{-n}$ separated subset of $E$. By Lemma \ref{vertex comp}, we have that $\diam E \ge s^{-\ell(v)-1}$. Since $E$ is connected, we also have that $\diam(E) \leq 2s^{-n} |W_n|$. We fix now the smallest $n \in \N$ for which $s^n s^{-\ell(v)-1} > 2J$. Since $\diam E \ge s^{-\ell(v)-1}$ by Lemma \ref{vertex comp}, we have that $|W_n| > J$.  

Let $z_1, \dots, z_J \in W_n$. By Lemma \ref{lemma:centered-geodesics}, we may fix, for $1 \leq j \leq J$, a vertical geodesic $\gamma_j \colon \N_0 \to \widehat Z$ centered at $z_j$ visiting $v$, that is, satisfying $\gamma_j(\ell(v)) = v$. This is possible as $E \subseteq B_v$.

It remains to prove \eqref{item:gpl3}.  Let $m_0\in \N_0$ be such that $4s^{-m_0} < 1$ and let $m > m_0$.  Since $z_j \in B_{\gamma_j(n + m)}$, we have $d(z_j, c_{\widehat Z}(\gamma_j(n + m))) < 2s^{-(n + m)} < s^{-n}/2$.  Let $i \neq j$. Then $d(z_j, z_i) \geq s^{-n}$ and
\begin{equation*}
\begin{split}
d(z_i, c_{\widehat Z}(\gamma_j(n + m))) &\geq d(z_j, z_i) - d(z_j, c_{\widehat Z}(\gamma_j(n + m))) > s^{-n} - s^{-n}/2 = s^{-n}/2.
\end{split}
\end{equation*}
Hence, $\gamma_j(n + m) \neq \gamma_i(n + m)$.  It remains to relate $n + m_0$ with $\ell(v)$.  As we chose the smallest $n\in \N_0$ satisfying $s^n s^{-\ell(v)-1} > 2J$, we have that $n \leq \log_s(2J) + \ell(v) + 2$.  Let $k_0 \in \N$ be the smallest integer such that $k_0 \geq \log_s(2J) + 2 + m_0$.  Then, for $k \geq k_0$ and $i \neq j$, we have that
\begin{equation*}
\ell(v) + k \geq n - \log_s(2J) - 2 + k_0 \geq n + m_0
\end{equation*}
and so $\gamma_i(\ell(v) + k) \neq \gamma_j(\ell(v) + k)$.
\end{proof}

\section{Vertical quasigeodesics}
\label{sec:VQG}

Recall that a map $\gamma \colon \N_0 \to X$ into a metric space $(X,d)$ is a \emph{$(\alpha,\beta)$-quasigeodesic} if
\[
\frac{1}{\alpha}|m-n| - \beta \le d(\gamma(m),\gamma(n)) \le \alpha |m-n| + \beta
\]
for all $m,n\in \N_0$. Following the terminology for geodesics, we say that a pointed quasigeodesic $(\N_0,0) \to (\widehat Z,\ast)$ into a hyperbolic filling $\widehat Z$ of a space $Z$ is \emph{vertical}.

We record first two versions of a geodesic stability lemma for infinite rays in hyperbolic fillings; see e.g.~Ghys and de la Harpe \cite[Theorem 5.6]{GH} or Bonk and Schramm \cite[Proposition 5.4]{Bonk-Schramm} for a proof.

\begin{lemma}\label{bdd hausdorff vqi rays}
Let $(Z, d)$ be a compact metric space, $s>1$, and $\widehat{Z} \in \HF_s(Z)$. Let also $\gamma \colon (\N_0,0) \to (\widehat{Z},\ast)$ be a vertical $(\alpha, \beta)$-quasigeodesic.  Then there is a constant $H = H(\alpha, \beta, \delta)>0$, where $\delta$ is the Gromov hyperbolicity constant of $\widehat{Z}$, and a vertical geodesic $\gamma' \colon (\N_0,0) \to (\widehat{Z},\ast)$ satisfying $\Hdist (|\gamma|, |\gamma'|) \leq H$.
\end{lemma}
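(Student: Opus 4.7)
The plan is to realize this as a Morse-style geodesic stability result, where the only additional care needed, compared to the general statement in $\delta$-hyperbolic spaces, is to arrange for the approximating geodesic to be vertical, i.e., to emanate from the basepoint $\ast$.

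First I would show that $\gamma$ determines a canonical endpoint in $Z$. Computing the Gromov product
\[
(\gamma(m)\,|\,\gamma(n))_\ast = \tfrac{1}{2}\bigl(|\gamma(m)-\ast|+|\gamma(n)-\ast|-|\gamma(m)-\gamma(n)|\bigr)
\]
and using the two quasi-geodesic inequalities $|\gamma(k)-\ast| \ge k/\alpha - \beta$ and $|\gamma(m)-\gamma(n)|\le \alpha|m-n|+\beta$, one obtains a lower bound on $(\gamma(m)\,|\,\gamma(n))_\ast$ that tends to $\infty$ as $m,n\to\infty$. By Lemma \ref{lemma:Gromov-product} this forces $\diam(B_{\gamma(m)} \cup B_{\gamma(n)}) \to 0$, so the centers $c_{\widehat Z}(\gamma(n))$ form a Cauchy sequence in $Z$ with a limit $z\in Z$.

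Next, I would produce a candidate vertical geodesic. Applying Lemma \ref{lemma:centered-geodesics} to $z$ yields a vertical geodesic $\gamma' \colon (\N_0,0)\to(\widehat Z,\ast)$ centered at $z$; by Corollary \ref{cor:convergence}, $c_{\widehat Z}(\gamma'(n))\to z$ as well. In particular $\gamma$ and $\gamma'$ are two vertical quasi-geodesic rays (since a geodesic is a $(1,0)$-quasi-geodesic) based at the common point $\ast$ that determine the same point on the Gromov boundary $\partial \widehat Z$.

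Finally, I would invoke the standard geodesic stability theorem for quasi-geodesic rays in $\delta$-hyperbolic spaces, as stated in Ghys--de la Harpe \cite[Theorem 5.6]{GH} or Bonk--Schramm \cite[Proposition 5.4]{Bonk-Schramm}: any two $(\alpha,\beta)$-quasi-geodesic rays in a $\delta$-hyperbolic space starting at the same base point and tending to the same ideal point have Hausdorff distance bounded by some constant $H=H(\alpha,\beta,\delta)$. Applied to $\gamma$ and $\gamma'$, this gives the desired conclusion. The only potential obstacle is that $\widehat Z$ need not be proper without a doubling hypothesis on $Z$, but the cited versions of stability use only $\delta$-thinness of triangles and do not require properness, so the argument goes through with the $\delta$ supplied by Lemma \ref{lemma:Gromov-product}.
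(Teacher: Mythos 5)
Your overall strategy is the right one, and in fact the paper itself offers no internal proof of this lemma; it merely cites the geodesic stability results of Ghys--de la Harpe and Bonk--Schramm. So the relevant question is simply whether your outline is sound, and it is --- except for one step.

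The gap is in the claim that the Gromov product estimate follows from the quasi-geodesic inequalities alone. Plugging $|\gamma(k)-\ast|\ge k/\alpha-\beta$ and $|\gamma(m)-\gamma(n)|\le\alpha|m-n|+\beta$ (for $m\le n$) into the definition gives
\[
(\gamma(m)\,|\,\gamma(n))_\ast \;\ge\; \tfrac12\Bigl(\tfrac{m+n}{\alpha}-\alpha(n-m)-3\beta\Bigr)
 \;=\; \tfrac12\Bigl(m\bigl(\tfrac1\alpha+\alpha\bigr)+n\bigl(\tfrac1\alpha-\alpha\bigr)-3\beta\Bigr),
\]
and for any $\alpha>1$ the coefficient of $n$ is negative, so this lower bound tends to $-\infty$ if $n\gg m$. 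Hence it does not establish $(\gamma(m)|\gamma(n))_\ast\to\infty$, and consequently does not by itself give convergence of the centers $c_{\widehat Z}(\gamma(n))$. This is not a cosmetic issue: in a non-hyperbolic space (e.g.\ $\mathbb R^2$) a quasi-geodesic ray need not converge to a single boundary direction, so hyperbolicity has to enter this step. The standard fix is to invoke the \emph{finite-segment} stability theorem first: for each $N$ there is a geodesic $\sigma_N$ from $\ast$ to $\gamma(N)$ with $\Hdist(\gamma|_{\{0,\dots,N\}},|\sigma_N|)\le H_0(\alpha,\beta,\delta)$. Since $(\sigma_N(i)|\sigma_N(j))_\ast=\min(i,j)$ along a geodesic from $\ast$, projecting $\gamma(m)$ and $\gamma(n)$ to $\sigma_N$ (with $N=\max(m,n)$) yields $(\gamma(m)|\gamma(n))_\ast\ge \min(m,n)/\alpha-\beta-3H_0$, which genuinely tends to $\infty$ and now legitimately feeds into Lemma \ref{lemma:Gromov-product}. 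With that correction, the remainder of your argument --- producing $\gamma'\in\Gamma_z(\widehat Z,\ast)$ via Lemma \ref{lemma:centered-geodesics}, observing both are vertical quasi-geodesic rays from $\ast$ with the same boundary point, and concluding by the stability theorem for rays --- is correct, and you are right that this version of stability uses only $\delta$-thinness and so does not require properness of $\widehat Z$.
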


\begin{lemma}\label{quasigeod close lemma}
Let $Z$ be a compact metric space, $s>1$, and $\widehat{Z} \in \HF_s(Z)$. Let $\alpha \ge 1$, $\beta \geq 0$, and $z\in Z$. Then there is a constant $H_1>0$ depending on $\alpha$, $\beta$, and Gromov hyperbolicity constant of $\widehat Z$ for the following: If $\gamma \colon (\N_0,0) \to (\widehat Z,\ast)$ and $\gamma' \colon (\N_0,0) \to (\widehat Z,\ast)$ are $(\alpha, \beta)$-quasigeodesics tending to $z$, then $\Hdist(|\gamma|, |\gamma'|) \leq H_1$.  
\end{lemma}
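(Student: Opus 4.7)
The plan is to reduce the statement to the special case of two vertical \emph{geodesics} tending to the same boundary point $z$ via the geodesic stability lemma, and then control the pointwise separation of such geodesics using the bilipschitz-type estimates between graph distance in $\widehat Z$ and metric distance in $Z$ given by Lemma \ref{basic filling lemma 1} and Corollary \ref{cor:struct lemma}.

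First I would apply Lemma \ref{bdd hausdorff vqi rays} to obtain vertical geodesics $\sigma, \sigma' \colon (\N_0, 0) \to (\widehat Z, \ast)$ with $\Hdist(|\gamma|, |\sigma|) \le H$ and $\Hdist(|\gamma'|, |\sigma'|) \le H$, where $H = H(\alpha, \beta, \delta)$. By the triangle inequality for Hausdorff distance, it then suffices to show that $\Hdist(|\sigma|, |\sigma'|)$ is bounded by a constant depending only on $s$.

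Next, I would verify that $\sigma$ and $\sigma'$ themselves tend to $z$. From the lower bound in the quasigeodesic condition applied with $n = 0$, we have $\ell_{\widehat Z}(\gamma(n)) = |\gamma(n) - \ast| \ge n/\alpha - \beta \to \infty$, so by the hypothesis on $\gamma$ the centers $c_{\widehat Z}(\gamma(n))$ converge to $z$ in $Z$. For any index $m$, Hausdorff closeness produces $n$ with $|\sigma(m) - \gamma(n)| \le H$, so $\ell_{\widehat Z}(\gamma(n)) \ge m - H$ and hence $n \to \infty$ as $m \to \infty$; by Lemma \ref{basic filling lemma 1},
\[
d(c_{\widehat Z}(\sigma(m)), c_{\widehat Z}(\gamma(n))) \le A_s s^{H - \max\{\ell_{\widehat Z}(\sigma(m)), \ell_{\widehat Z}(\gamma(n))\}} \longrightarrow 0.
\]
Thus $c_{\widehat Z}(\sigma(m)) \to z$; by Corollary \ref{cor:convergence} this means the unique limit point of $\sigma$ is $z$, and identically for $\sigma'$.

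With both geodesics tending to $z$, Corollary \ref{cor:convergence} gives $d(c_{\widehat Z}(\sigma(n)), z) \le A_s s^{-n}$ and likewise for $\sigma'$, so $d(c_{\widehat Z}(\sigma(n)), c_{\widehat Z}(\sigma'(n))) \le 2A_s s^{-n}$ for each $n \in \N_0$. Choose $p = \lceil \log_s(2A_s) \rceil + 1$ so that $2A_s s^{-n} < s^{-(n-p)}$. Since $\ell_{\widehat Z}(\sigma(n)) = \ell_{\widehat Z}(\sigma'(n)) = n$, Corollary \ref{cor:struct lemma} then yields $|\sigma(n) - \sigma'(n)| \le 2p$, giving $\Hdist(|\sigma|, |\sigma'|) \le 2p$. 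Combining the three bounds, $\Hdist(|\gamma|, |\gamma'|) \le 2H + 2p =: H_1$, which depends only on $\alpha$, $\beta$, $\delta$, and $s$.

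The main obstacle is verifying that the approximating geodesics really do tend to $z$, since the quasigeodesic hypothesis is phrased in terms of the target point in $Z$ rather than in terms of a Gromov-boundary equivalence class; the argument above handles this by using that a vertical quasigeodesic has levels going to infinity, which is forced by the quasigeodesic inequality from the basepoint and the fact that $\ell_{\widehat Z}(v) = |v - \ast|$.
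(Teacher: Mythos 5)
Your proof is correct. The paper does not give its own proof of this lemma; it cites Ghys–de~la~Harpe and Bonk–Schramm for general geodesic-stability results, and the remark immediately following the lemma records (without proof) exactly the convergence step you verify. Your argument therefore supplies a complete, self-contained proof where the paper offers only a reference, and the route you take is the natural one: reduce to a pair of vertical geodesics via Lemma \ref{bdd hausdorff vqi rays}, confirm that these geodesics tend to $z$ by passing the boundary limit through the Hausdorff-closeness using Lemma \ref{basic filling lemma 1}, and then bound the pointwise separation $|\sigma(n)-\sigma'(n)|$ using Corollary \ref{cor:convergence} and Corollary \ref{cor:struct lemma}. Two small points worth making explicit if you were to write this up for publication: first, the identity $\ell_{\widehat Z}(v)=|v-\ast|$, which you use to show the levels along a quasigeodesic go to infinity, holds because every vertex at level $n\ge 1$ has a neighbor at level $n-1$ (the level-$(n-1)$ balls cover $Z$ so one of them contains $c(v)$), so it deserves a sentence; second, the final constant $H_1 = 2H + 2p$ depends also on $s$ through $p$ and $A_s$, which is consistent with the fact that the Gromov hyperbolicity constant $\delta$ of $\widehat Z$ itself depends on $s$.
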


\begin{remark}
As a direct consequence of Lemma \ref{bdd hausdorff vqi rays}, if $\gamma \colon (\N_0,0) \to (\widehat Z, \ast)$ is a vertical quasigeodesic and $\gamma' \colon (\N_0,0) \to (\widehat Z,\ast)$ is a vertical geodesic as in Lemma \ref{bdd hausdorff vqi rays}, then $\gamma$ and $\gamma'$ tend to the same point on $Z$, that is, $\lim_{n \to \infty}c_{\widehat Z}(\gamma(n)) = \lim_{n \to \infty} c_{\widehat Z}(\gamma(n))\in Z$. 
\end{remark}

\subsection{Shadows revisited}

We now adapt Lemma \ref{basic filling lemma 3} to vertical quasigeodesics. 
For $\alpha\ge 1$ and $\beta>0$, we define an \emph{$(\alpha,\beta)$-shadow of $z\in Z$ in $\widehat Z$ on level $n\in \N_0$} by 
\[
S^{\alpha,\beta}_{\widehat Z}(z;n) = \{v \in \widehat{Z} \colon \ell_{\widehat Z}(v) = n,\ c_{\widehat Z}(v) \in B(z, 3(A_s + 1)s^Hs^{-n})\},
\]
where $H=H(\alpha, \beta, \delta)$ is the constant in Lemma  \ref{bdd hausdorff vqi rays}.

\begin{lemma}\label{qg filling lemma}
Let $Z$ be a compact metric space, $\widehat{Z} \in HF_s(Z)$, and let $\alpha\ge 1$ and $\beta >0$. Then, for $z\in Z$ and a vertical $(\alpha,\beta)$-quasigeodesic $\gamma \colon (\N_0,0) \to (\widehat Z, \ast)$ tending to $z$, we have
\[
\{\gamma(k)\in \widehat Z \colon \ell(\gamma(k))=n\} \subset S^{\alpha,\beta}_{\widehat Z}(z,n)
\]
for each $n\in \N_0$. In particular,
\[
d(z,c(\gamma(k))) \leq 3(A_s + 1)s^H s^{-\ell(\gamma(k))}
\]
for each $k\in \N_0$.
\end{lemma}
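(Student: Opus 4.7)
The plan is to combine the geodesic stability lemma (Lemma \ref{bdd hausdorff vqi rays}) with the center-convergence estimate (Corollary \ref{cor:convergence}) and the center-distance bound (Lemma \ref{basic filling lemma 1}). The idea is to approximate the vertical quasigeodesic $\gamma$ by a genuine vertical geodesic $\gamma'$ tending to the same boundary point $z$, use the known center estimate on $\gamma'$, and then transfer it back to $\gamma$ with an error controlled by the graph distance between nearby vertices on the two paths.

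In detail, I would first invoke Lemma \ref{bdd hausdorff vqi rays} to fix a vertical geodesic $\gamma' \colon (\N_0, 0) \to (\widehat Z, \ast)$ satisfying $\Hdist(|\gamma|, |\gamma'|) \le H$, where $H = H(\alpha, \beta, \delta)$ is the stability constant. The remark following Lemma \ref{quasigeod close lemma} ensures that $\gamma'$ tends to the same point $z$ as $\gamma$. Now fix $k \in \N_0$ and set $v = \gamma(k)$, $n = \ell_{\widehat Z}(v)$. By the Hausdorff bound there exists $v' \in |\gamma'|$ with $|v-v'| \le H$; since adjacent vertices in $\widehat Z$ differ in level by at most one, we get $|\ell_{\widehat Z}(v')-n| \le H$ and hence $\ell_{\widehat Z}(v') \ge n - H$. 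Letting $m = \ell_{\widehat Z}(v')$ so that $\gamma'(m) = v'$, Corollary \ref{cor:convergence} gives
\[
d(z, c_{\widehat Z}(v')) \le A_s s^{-m} \le A_s s^{H-n}.
\]
On the other hand, since $\max\{\ell_{\widehat Z}(v), \ell_{\widehat Z}(v')\} \ge n$, Lemma \ref{basic filling lemma 1} yields
\[
d(c_{\widehat Z}(v), c_{\widehat Z}(v')) \le A_s s^{|v-v'| - \max\{\ell_{\widehat Z}(v), \ell_{\widehat Z}(v')\}} \le A_s s^{H-n}.
\]
The triangle inequality then gives
\[
d(z, c_{\widehat Z}(v)) \le 2 A_s s^H s^{-n} < 3(A_s+1) s^H s^{-n},
\]
which is exactly the condition $v \in S^{\alpha,\beta}_{\widehat Z}(z; n)$. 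The ``in particular'' statement is just the same bound applied at the level $\ell_{\widehat Z}(\gamma(k))$.

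The only mild subtlety I foresee is verifying that the comparison geodesic $\gamma'$ tends to the same boundary point $z$ as $\gamma$; this is supplied by the remark following Lemma \ref{quasigeod close lemma}, so it requires no separate argument. Everything else is a routine three-term chain of estimates, and the constants in the shadow's definition are plainly calibrated to absorb both contributions of size $A_s s^H s^{-n}$.
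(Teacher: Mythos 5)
Your proof is correct and follows essentially the same route as the paper's: replace $\gamma$ by a stability-equivalent vertical geodesic $\gamma'$, bound $d(z,c(\gamma'(k')))$ via Corollary \ref{cor:convergence}, bound $d(c(\gamma(k)),c(\gamma'(k')))$ via Lemma \ref{basic filling lemma 1}, and combine by the triangle inequality. The only difference is cosmetic: you use the tighter $A_s s^{-m}$ bound that Corollary \ref{cor:convergence} actually states, whereas the paper quotes $2(A_s+1)s^{H-n}$ at that step; both are comfortably absorbed by the $3(A_s+1)s^H s^{-n}$ in the shadow's definition, so the conclusion is identical.
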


\begin{proof}
By Lemma \ref{bdd hausdorff vqi rays}, there exists a vertical geodesic $\gamma' \colon (\N_0,0) \to (\widehat{Z},\ast)$ satisfying $\Hdist (\gamma, \gamma') \leq H$, where $H$ is as in Lemma \ref{bdd hausdorff vqi rays}. Let $n\in \N_0$ and suppose there exists $k\in \N_0$ for which $\ell(\gamma(k)) = n$.  Then there exists $k'\in \N_0$ for which $|\gamma'(k') - \gamma(k)| \leq H$. Thus $\ell(\gamma'(k')) \ge \ell(\gamma(k)) - H = n - H$. By Lemma \ref{basic filling lemma 1}, we further have that 
\[
d(c(\gamma'(k')), c(\gamma(k))) \leq A_s s^{H - n}.
\]
Since $\gamma'$ tends to $z$, we have, by Corollary \ref{cor:convergence}, that
\[
d(z, c(\gamma'(k'))) \leq 2(A_s+1) s^{H - n}.
\]
Thus
\[
d(z,c(\gamma(k))) \le 3(A_s +1)s^H s^{-n}.
\]
The claim follows.
\end{proof}

\begin{remark}\label{vert qg bound}
Lemma \ref{qg filling lemma} yields the following property. Let $\widehat Z \in \HF_s(Z)$ and let $\gamma \colon (\N_0,0)\to (\widehat Z,\ast)$ and $\sigma \colon (\N_0,0) \to (\widehat Z,\ast)$ be a vertical geodesic and a vertical $(\alpha,\beta)$-quasigeodesic  both tending to $z\in Z$. Suppose $v \in |\gamma|$ and $w\in |\sigma|$ have the same level in $\widehat Z$. Then $d(c(v),c(w)) \leq K_1 s^{-\ell(v)}$, where $K_1 = 6(A_s + 1)s^H$.
\end{remark}

\begin{lemma}\label{qg filling lemma 2}
Let $Z$ be a compact, doubling metric space, $\widehat{Z} \in HF_s(Z)$, and let $\alpha\ge 1$ and $\beta \ge 0$. Then there exists a constant $m\ge 1$, depending only on $\alpha$, $\beta, \delta$, $s$, and the doubling constant of $Z$, for which
\[
\# S^{\alpha,\beta}_{\hat Z}(z;n) \le m
\]
for each $n\in \N_0$.
\end{lemma}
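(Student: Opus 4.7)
The plan is to mirror the argument of Lemma \ref{basic filling lemma 3}, using the fact that vertices of the same level $n$ in $\widehat{Z}$ have centers coming from an $s^{-n}$-net of $Z$. Recall that $\mathscr{U}_n$ is associated to an $s^{-n}$-net $P_n$ and that the center map sends each $v \in \mathscr{U}_n$ to a point of $P_n$. Thus for distinct vertices $v, w \in \widehat{Z}$ with $\ell(v) = \ell(w) = n$, the centers satisfy $d(c(v), c(w)) \geq s^{-n}$.

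Given this separation, for any $v, w \in S^{\alpha,\beta}_{\widehat{Z}}(z;n)$ with $v \neq w$, the open balls $B(c(v), s^{-n}/3)$ and $B(c(w), s^{-n}/3)$ in $Z$ are disjoint. Since every such vertex $v$ satisfies $c(v) \in B(z, 3(A_s+1)s^H s^{-n})$, each of these disjoint balls lies inside the single ball
\[
B\bigl(z,\, (3(A_s+1)s^H + 1/3)\, s^{-n}\bigr).
\]
The ratio of the radius of this containing ball to $s^{-n}/3$ is $9(A_s+1)s^H + 1$, a quantity depending only on $s$, $\alpha$, $\beta$, and the Gromov hyperbolicity constant $\delta$ of $\widehat{Z}$ (through $H = H(\alpha,\beta,\delta)$ from Lemma \ref{bdd hausdorff vqi rays} and $A_s$ from \eqref{eq:As}).

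The doubling property of $Z$ then bounds the number of pairwise disjoint balls of radius $s^{-n}/3$ that can fit inside a ball of radius $(9(A_s+1)s^H + 1/3)\cdot s^{-n}/3$'s parent scale, uniformly in $n$ and $z$; this upper bound is precisely the desired $m$, depending only on the doubling constant of $Z$, on $s$, and on $\alpha, \beta, \delta$. There is no real obstacle here: once the separation of centers at a common level is observed, the statement reduces to a standard packing estimate, exactly as in the proof of Lemma \ref{basic filling lemma 3}, with the only new ingredient being the enlargement factor $s^H$ coming from the quasigeodesic stability.
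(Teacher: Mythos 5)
Your proof is correct and takes the same approach as the paper, which simply remarks that the bound follows exactly as in Lemma \ref{basic filling lemma 3} with the constant now also depending on $H$; you have spelled out that packing argument (separation of centers at a common level, containment in the enlarged ball of radius $3(A_s+1)s^H s^{-n}$, and the doubling estimate) in full. The final sentence's phrasing about ``$(9(A_s+1)s^H + 1/3)\cdot s^{-n}/3$'s parent scale'' is a bit garbled, but the intended containing ball of radius $(3(A_s+1)s^H + 1/3)s^{-n}$ and the correct ratio $9(A_s+1)s^H + 1$ appear earlier, so the mathematics is sound.
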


\begin{proof}
The estimate on $\# S^{\alpha,\beta}_{\hat Z}(z;j)$ follows as in the proof of Lemma \ref{basic filling lemma 3} with the difference that the constant now also depends on $H$.
\end{proof}

\section{Vertical quasi-isometries between hyperbolic fillings}
\label{sec:VQI}

In this section, we discuss briefly two basic properties of vertical quasi-isometries: the Lipschitz property and stability under composition. We begin with the Lipschitz property.

\begin{lemma}\label{upper qi}
Let $X$ and $Y$ be compact metric spaces, $s>1$, and let $\widehat{X} \in \HF(X)$ and $\widehat{Y} \in \HF(Y)$.  Let $\alpha\ge 1$, $\beta>0$, and let $\varphi \colon (\widehat{X},\ast) \to (\widehat{Y},\ast)$ be an $(\alpha, \beta)$-vertical quasi-isometry.  Then, for all $v, v' \in \widehat{X}$, we have 
\[
|\varphi(v) - \varphi(v')| \leq 2(\alpha+\beta) |v - v'|.
\]
\end{lemma}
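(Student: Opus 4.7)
The plan is to reduce the general inequality to the case of adjacent vertices by concatenating: if $v = u_0, u_1, \ldots, u_N = v'$ is a geodesic in $\widehat X$ of length $N = |v-v'|$, then the triangle inequality gives
\[
|\varphi(v) - \varphi(v')| \le \sum_{i=0}^{N-1} |\varphi(u_i) - \varphi(u_{i+1})|,
\]
so it suffices to establish $|\varphi(v) - \varphi(v')| \le 2(\alpha+\beta)$ whenever $v \sim v'$, i.e., $B_v \cap B_{v'} \ne \emptyset$ and $|\ell(v)-\ell(v')|\le 1$. The only tool for this will be the defining property of $\varphi$: for every vertical geodesic $\gamma$ in $\widehat X$, $\varphi \circ \gamma$ is a vertical $(\alpha,\beta)$-quasigeodesic, which applied at two indices $m,n \in \N_0$ yields $|\varphi(\gamma(m)) - \varphi(\gamma(n))| \le \alpha|m-n|+\beta$.

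Split into two subcases according to the level difference. If $\ell(v') = \ell(v)+1$, I would construct a vertical geodesic through both $v$ and $v'$ by concatenating three pieces: (i) a geodesic from $\ast$ to $v$, which has length $\ell(v)$; (ii) the single edge from $v$ to $v'$ (valid since $v \sim v'$ and levels differ by one); and (iii) an infinite extension beyond $v'$ obtained by fixing some $z \in B_{v'}$ and selecting, for each $k > \ell(v')$, a vertex $v_k \in \mathscr U_k$ with $z \in B_{v_k}$, as in the proof of Lemma \ref{lemma:centered-geodesics}. Consecutive balls in this extension share the point $z$, so the resulting map is a vertical geodesic $\gamma$ with $\gamma(\ell(v)) = v$ and $\gamma(\ell(v)+1) = v'$. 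Applying the quasigeodesic inequality to $\varphi \circ \gamma$ at indices $\ell(v)$ and $\ell(v)+1$ yields $|\varphi(v) - \varphi(v')| \le \alpha+\beta \le 2(\alpha+\beta)$.

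If instead $\ell(v) = \ell(v')$, I would produce a common child by fixing $z \in B_v \cap B_{v'}$ and using the fact that $\mathscr U_{\ell(v)+1}$ is associated to an $s^{-(\ell(v)+1)}$-net of $X$, so there exists $u \in \mathscr U_{\ell(v)+1}$ with $d(z, c(u)) \le s^{-(\ell(v)+1)}$; then $z \in B_u$, so $u$ is adjacent to both $v$ and $v'$ (levels differ by one and intersections are nonempty). Applying the first subcase to the pairs $(v,u)$ and $(v',u)$ gives $|\varphi(v) - \varphi(u)| \le \alpha+\beta$ and $|\varphi(v') - \varphi(u)| \le \alpha+\beta$, hence by the triangle inequality $|\varphi(v) - \varphi(v')| \le 2(\alpha+\beta)$.

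The only real obstacle is the geodesic-construction bookkeeping in the two subcases—verifying that a vertical geodesic may be prescribed to pass through two consecutive adjacent vertices of consecutive levels, and that a common child of two adjacent same-level vertices always exists; both are immediate from the structure of a hyperbolic filling. Once these are in hand, the statement follows from a single application of the vertical quasigeodesic inequality at consecutive integers.
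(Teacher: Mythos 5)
Your proof is correct and follows essentially the same strategy as the paper's: reduce to adjacent vertices via the triangle inequality, then split by whether the level difference is $1$ or $0$. The only cosmetic difference is that in the equal-level case you pass through a common \emph{child} at level $\ell(v)+1$, while the paper passes through a common \emph{parent} at level $\ell(v)-1$; your variant has the mild advantage of sidestepping the (trivial) edge case $\ell(v)=\ell(v')=0$, where no parent exists but the two vertices already coincide with the root.
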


\begin{proof}
Since $\widehat X$ is (discretely) geodesic, it suffices to prove the result for adjacent vertices $v$ and $v'$ in $\widehat X$, that is, for vertices satisfying $|v-v'|= 1$. The claim follows then from the triangle inequality.  

Let $v$ and $v'$ be adjacent vertices in $\widehat X$. If $\ell(v) \neq \ell(v')$, then $v$ and $v'$ lie on a vertical geodesic $\gamma \colon (\N_0,0) \to (\widehat X,\ast)$. Since $\varphi$ is an $(\alpha,\beta)$-vertical quasi-isometry, we have that 
\[
|\varphi(v)-\varphi(v')| \le \alpha + \beta = (\alpha+\beta)|v-v'|.
\]

Suppose $\ell(v) = \ell(v')$.  Then, $B_{v} \cap B_{v'} \neq \emptyset$.  Let $z \in B_v \cap B_{v'}$ and let $w\in \widehat X$ be a vertex satisfying $\ell(w) = \ell(v) - 1$ and $z \in B_w$.  Then $|v-w| = |w - v'| = 1$.  Since
\begin{equation*}
|\varphi(v) - \varphi(v')| \leq |\varphi(v) - \varphi(x)| + |\varphi(x) - \varphi(v')|
\end{equation*}
and since both edges $\{v, w\}$ and $\{w, v'\}$ lie on vertical geodesics, we have that
\[
|\varphi(v)-\varphi(v')| \le 2(\alpha+\beta)|v-v'|.
\]
The claim follows.
\end{proof}

We move now to compositions of vertical quasi-isometries. Heuristically, the claim follows from the geodesic stability in hyperbolic fillings.

\begin{lemma}\label{VQI comp}
Let $X, Y, Z$ be doubling metric spaces, and let $\widehat{X}\in \HF(X)$, $\widehat{Y}\in \HF(Y)$, and $\widehat{Z}\in \HF(Z)$ be hyperbolic fillings. Let $\varphi \colon (\widehat{X}, \ast) \to (\widehat{Y}, \ast)$ and $\psi \colon (\widehat{Y}, \ast) \to (\widehat{Z}, \ast)$ be $(\alpha_\varphi, \beta_\varphi)$ and $(\alpha_\psi, \beta_\psi)$-vertical quasi-isometries, respectively.  Then $\omega = \psi \circ \varphi \colon (\widehat{X}, \ast) \to (\widehat{Z}, \ast)$ is an $(\alpha, \beta)$-vertical quasi-isometry, where $\alpha \ge 1$ and $\beta\ge 0$ depend quantitatively on the data of $\widehat{X}$, $\widehat{Y}$, $\widehat{Z}$, $\varphi$, and $\psi$.
\end{lemma}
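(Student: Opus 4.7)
The plan is to combine the Lipschitz bound from Lemma \ref{upper qi} with the geodesic stability statement of Lemma \ref{bdd hausdorff vqi rays}. Fix a vertical geodesic $\gamma\colon (\N_0,0)\to(\widehat X,\ast)$. Note that $\omega\circ\gamma$ is automatically pointed since both $\varphi$ and $\psi$ are, so it remains only to verify the two-sided quasigeodesic estimate for $\omega\circ\gamma$ with constants depending on the stated data.

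The upper bound is immediate: by Lemma \ref{upper qi}, $\varphi$ and $\psi$ are Lipschitz with constants $L_\varphi=2(\alpha_\varphi+\beta_\varphi)$ and $L_\psi=2(\alpha_\psi+\beta_\psi)$, so for all $m,n\in\N_0$,
\[
|\omega(\gamma(m))-\omega(\gamma(n))|\le L_\psi L_\varphi|\gamma(m)-\gamma(n)|=L_\psi L_\varphi|m-n|.
\]
For the lower bound, the hypothesis on $\varphi$ gives that $\varphi\circ\gamma$ is a vertical $(\alpha_\varphi,\beta_\varphi)$-quasigeodesic in $\widehat Y$. Applying Lemma \ref{bdd hausdorff vqi rays} in $\widehat Y$, whose Gromov hyperbolicity constant is controlled by $s_Y$, we obtain a vertical geodesic $\tilde\gamma\colon(\N_0,0)\to(\widehat Y,\ast)$ with $\Hdist(|\varphi\circ\gamma|,|\tilde\gamma|)\le H$, for a constant $H$ depending only on $\alpha_\varphi,\beta_\varphi$ and $s_Y$. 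For each $n\in\N_0$ choose $k_n\in\N_0$ with $|\varphi(\gamma(n))-\tilde\gamma(k_n)|\le H$.

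Since $\tilde\gamma$ is a geodesic in $\widehat Y$, the triangle inequality gives
\[
|k_m-k_n|=|\tilde\gamma(k_m)-\tilde\gamma(k_n)|\ge|\varphi(\gamma(m))-\varphi(\gamma(n))|-2H\ge \tfrac{1}{\alpha_\varphi}|m-n|-\beta_\varphi-2H.
\]
The hypothesis on $\psi$ then guarantees that $\psi\circ\tilde\gamma$ is a vertical $(\alpha_\psi,\beta_\psi)$-quasigeodesic in $\widehat Z$, so
\[
|\psi(\tilde\gamma(k_m))-\psi(\tilde\gamma(k_n))|\ge \tfrac{1}{\alpha_\psi}|k_m-k_n|-\beta_\psi.
\]
Applying the Lipschitz estimate for $\psi$ twice to switch back to $\varphi\circ\gamma$,
\[
|\omega(\gamma(m))-\omega(\gamma(n))|\ge|\psi(\tilde\gamma(k_m))-\psi(\tilde\gamma(k_n))|-2L_\psi H,
\]
and chaining the three inequalities produces the desired lower bound
\[
|\omega(\gamma(m))-\omega(\gamma(n))|\ge \tfrac{1}{\alpha_\varphi\alpha_\psi}|m-n|-\beta,
\]
for a constant $\beta$ depending on $\alpha_\varphi,\beta_\varphi,\alpha_\psi,\beta_\psi,s_Y$ and nothing else.

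The only conceptual obstacle is that $\varphi\circ\gamma$ need not be a geodesic, so one cannot directly feed its image into the hypothesis on $\psi$; this is exactly what geodesic stability repairs, at the cost of an additive error that is absorbed into $\beta$ via the Lipschitz estimate for $\psi$. All constants are uniform in $\gamma$ because $H$ depends only on $\alpha_\varphi,\beta_\varphi$ and the hyperbolicity constant of $\widehat Y$, which is itself determined by $s_Y$. Hence $\omega$ is an $(\alpha,\beta)$-vertical quasi-isometry with $\alpha=\max\{L_\varphi L_\psi,\alpha_\varphi\alpha_\psi\}$, and the doubling assumption on the spaces plays no role beyond ensuring that the hyperbolic fillings are available.
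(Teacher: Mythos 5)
Your proof is correct and follows essentially the same route as the paper's: use geodesic stability (Lemma \ref{bdd hausdorff vqi rays}) to replace the vertical quasigeodesic $\varphi\circ\gamma$ by a nearby vertical geodesic in $\widehat Y$, feed that geodesic into the hypothesis on $\psi$, and control the resulting additive errors with the Lipschitz bound from Lemma \ref{upper qi}. The only difference is cosmetic: you get the upper bound directly from the Lipschitz estimates for $\varphi$ and $\psi$, whereas the paper chains the upper bound through the auxiliary geodesic as well (yielding the slightly sharper multiplicative constant $\alpha_\varphi\alpha_\psi$), but both are valid.
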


\begin{proof}
Let $\gamma \colon (\N_0,0) \to (\widehat{X},\ast)$ be a vertical geodesic.  Then, $\varphi \circ \gamma \colon (\N_0,0) \to (\widehat{Y},\ast)$ is a vertical quasigeodesic.  By Lemma \ref{quasigeod close lemma}, there is a constant $H = H(\alpha_\varphi, \beta_\varphi, \widehat{Y})$ and a vertical geodesic $\sigma \colon (\N_0,\ast) \to (\widehat Y,\ast)$ with $\Hdist(|\sigma|, |\gamma|) \leq H$.

Let $j, j' \in \N_0$ and let $k, k'\in \N_0$ be indices for which $|(\varphi \circ \gamma)(j) - \sigma(k)| \leq H$ and  $|(\varphi \circ \gamma)(j') - \sigma(k')| \leq H$.   Then
\begin{equation*}
|(\varphi \circ \gamma)(j) - (\varphi \circ \gamma)(j')| \leq 2H + |\sigma(k) - \sigma(k')| = 2H + |k - k'|.
\end{equation*}
By Lemma \ref{upper qi}, $\psi$ is $2(\alpha_\psi + \beta_\psi)$-Lipschitz. Thus, for $H' = 2(\alpha_\psi + \beta_\psi)H$, we have
\begin{equation*}
|(\psi \circ \varphi \circ \gamma)(j) - (\psi \circ \sigma)(k)| = |\psi(\varphi(\gamma(j)))-\psi(\sigma(k))| \le  2(\alpha_\psi + \beta_\psi) |\varphi(\gamma(j))-\sigma(k)| \le H'.
\end{equation*}
Likewise
\begin{equation*}
|(\psi \circ \varphi \circ \gamma)(j') - (\psi \circ \sigma)(k')| \le H'.
\end{equation*}

Since $\psi$ is a vertical quasi-isometry and $\sigma$ is a vertical geodesic, the path $\psi \circ \sigma$ is a vertical quasigeodesic. Thus
\begin{align*}
|(\psi \circ \varphi \circ \gamma)(j) - (\psi \circ \varphi \circ \gamma)(j')| &\leq 2H' + |(\psi \circ \sigma)(k) - (\psi \circ \sigma)(k')| \\
&\leq 2H' + \alpha_\psi|k-k'| + \beta_\psi.
\end{align*}
Since $\gamma$ is a vertical geodesic, we also have that
\begin{equation*}
|k - k'| = |\sigma(k) - \sigma(k')| \leq 2H + |(\varphi \circ \gamma)(j) - (\varphi \circ \gamma)(j')| \leq 2H + \alpha_\varphi |j-j'| + \beta_\varphi.
\end{equation*}
Thus
\begin{equation*}
|(\psi \circ \varphi \circ \gamma)(j) - (\psi \circ \varphi \circ \gamma)(j')| \leq 2H' + \alpha_\psi ( 2H + \alpha_\varphi |j-j'| + \beta_\varphi) + \beta_\psi
\end{equation*}
This establishes the upper inequality of the vertical quasi-isometry condition with $\alpha = \alpha_\psi \alpha_\varphi$ and $\beta = 2H' + \alpha_\psi (2H + \beta_\varphi) + \beta_\psi$.

We now establish the lower vertical quasi-isometry inequality. Let $j, j', k, k'$ be indices as above. We apply the vertical quasi-isometry condition to vertical quasi-isometry $\varphi \colon (\widehat X, \ast) \to (\widehat Y,\ast)$ and to vertical geodesic $\gamma \colon (\N_0,0) \to (\widehat X,\ast)$ and rearrange the above inequalities to have that
\begin{equation*}
\alpha_\varphi^{-1} |j-j'| - \beta_\varphi - 2H \leq |(\varphi \circ \gamma)(j) - (\varphi \circ \gamma)(j')| -2H \leq |k - k'|.
\end{equation*}
The vertical quasi-isometry condition applied to vertical quasi-isometry $\psi \colon (\widehat Y,\ast) \to (\widehat Z,\ast)$ and geodesic $\sigma \colon (\N_0,0) \to (\widehat Y,\ast)$
gives
\begin{equation*}
\alpha_\psi ^{-1} |k - k'| - \beta_\psi \leq |(\psi \circ \sigma)(k) - (\psi \circ \sigma)(k')| \leq 2H' + |(\psi \circ \varphi \circ \gamma)(j) - (\psi \circ \varphi \circ \gamma)(j')|.
\end{equation*}
Combining these inequalities yields
\begin{equation*}
\alpha_\varphi^{-1} |j-j'| - \beta_\varphi - 2H \leq \alpha_\psi(2H' + |(\psi \circ \varphi \circ \gamma)(j) - (\psi \circ \varphi \circ \gamma)(j')| + \beta_\psi)
\end{equation*}
and so
\begin{equation*}
\alpha_\psi^{-1}(\alpha_\varphi^{-1} |j-j'| - \beta_\varphi - 2H) - 2H' - \beta_\psi \leq |(\psi \circ \varphi \circ \gamma)(j) - (\psi \circ \varphi \circ \gamma)(j')|.
\end{equation*}
This establishes the lower inequality of the vertical quasi-isometry condition with $\alpha = \alpha_\psi \alpha_\varphi$ and $\beta = \alpha_\psi^{-1}(\beta_\varphi + 2H) + 2H' + \beta_\psi$.
\end{proof}


\part{Extensions}
\label{part:extension}

\section{Extension of branched quasisymmetries into hyperbolic fillings}
\label{BQS to VQI sec}

In this section, we prove Theorem \ref{thm:BQS2VQI} in the introduction. We call the map $\varphi_f \colon \widehat X \to \widehat Y$ in the statement a hyperbolic filling of $f\colon X\to Y$. For the statement, we say that a continuous map $f\colon X\to Y$ is \emph{non-collapsing} if $\diam fB>0$ for all balls $B \subset X$. In particular, a non-constant branched quasisymmetry from a bounded turning space is non-collapsing.

\begin{defn}\label{Extension def}
Let $f\colon X\to Y$ be a non-collapsing map between compact metric spaces $X$ and $Y$, and let $\widehat X \in \HF(X)$ and $\widehat{Y} \in \HF(Y)$ be hyperbolic fillings of $X$ and $Y$, respectively. A map $\varphi \colon \widehat X \to \widehat Y$ is a \emph{hyperbolic filling of $f$} if, for each vertex $v\in \widehat X$, the image $\varphi(v)$ is a vertex in $\widehat Y$ of maximal level $\ell(\varphi_f(v))$ among all vertices $w\in \widehat Y$ satisfying $f(B_v) \subset B_w$.
\end{defn}

\begin{remark}
Note that the assumption that $f$ is non-collapsing is necessary and sufficient condition in the definition for the existence of hyperbolic filling. If we assume that, in addition, the space $X$ is a continuum, we may replace non-collapsing by more natural assumption that the map is light; see Lemma \ref{lemma:continuum-continuum}. 
\end{remark}

Having this terminology and Theorem \ref{thm:BQS to PBQS} at our disposal, it suffices to show that each power branched quasisymmetry has a hyperbolic filling, which is a vertical quasi-isometry. We reformulate Theorem \ref{thm:BQS2VQI} as follows.

\begin{thm}
\label{thm:pBQS2VQI}
Let $X$ and $Y$ be compact metric spaces and suppose that $X$ has bounded turning, let $\widehat{X}$ and $\widehat{Y}$ be hyperbolic fillings of $X$ and $Y$, respectively, and let $f\colon X\to Y$ be a power branched quasisymmetry. Then a hyperbolic filling $\varphi \colon \widehat{X} \to \widehat{Y}$ of $f$ is a vertical quasi-isometry, quantitatively.
\end{thm}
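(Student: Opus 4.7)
The plan is to prove that, for every vertical geodesic $\gamma\colon(\N_0,0)\to(\widehat X,\ast)$, the composition $\varphi\circ\gamma$ is a vertical $(1/\alpha,\beta)$-quasigeodesic in $\widehat Y$, where $\alpha\in(0,1)$ is the exponent from the power branched quasisymmetry distortion of $f$. Set $\ell_n := \ell_{\widehat Y}(\varphi(\gamma(n)))$. Since $\varphi(\gamma(n))$ is, by definition, of maximal level in $\widehat Y$ with $f(B_{\gamma(n)})\subset B_{\varphi(\gamma(n))}$, Lemma \ref{vertex comp} gives $s^{-\ell_n-1}\le\diam(fB_{\gamma(n)})\le 4s^{-\ell_n}$, using that $f$ is non-collapsing (which follows from bounded turning of $X$ and non-constancy of $f$ via Lemma \ref{lemma:continuum-continuum}). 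The quasigeodesic property will reduce to the affine control on $\ell_n$,
\begin{equation*}
\alpha(m-n)-C \;\le\; \ell_m-\ell_n \;\le\; \tfrac{1}{\alpha}(m-n)+C \qquad (n\le m),
\end{equation*}
with $C$ depending only on $\lambda$, $s$, and the power distortion data of $f$.

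This level bound translates, via Lemma \ref{vertex comp}, to the diameter comparison
\begin{equation*}
s^{-(m-n)/\alpha}\diam(fB_{\gamma(n)}) \;\lesssim\; \diam(fB_{\gamma(m)}) \;\lesssim\; s^{-\alpha(m-n)}\diam(fB_{\gamma(n)}).
\end{equation*}
To establish it I would build intersecting continua $F_n,F_m\subseteq X$ with $\diam F_j\simeq s^{-j}$ and $\diam(fF_j)\simeq\diam(fB_{\gamma(j)})$ for $j\in\{n,m\}$, and then apply the power distortion \eqref{BQS} to the pair $(F_n,F_m)$ in both directions. For the construction, let $E_j:=E_{B_{\gamma(j)}}$ be the $\lambda$-diametric hull from Lemma \ref{continuum lemma}; by Lemma \ref{lemma:BQS-ball continuum equiv}, $\diam(fE_j)\simeq\diam(fB_{\gamma(j)})$. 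Lemma \ref{basic filling lemma 1} gives $d(c(\gamma(n)),c(\gamma(m)))\le A_s s^{-n}$, so bounded turning yields a continuum $F$ of diameter $\lesssim s^{-n}$ joining these two centers. Setting $F_m:=E_m$ and $F_n:=E_n\cup F\cup E_m$ places $E_m$ in their intersection; a short application of \eqref{BQS} to $(F,E_n)$, both meeting at $c(\gamma(n))$, controls $\diam(fF)$ by $\diam(fE_n)$ and produces $\diam(fF_n)\simeq\diam(fB_{\gamma(n)})$ (using first the trivial monotonicity $\diam(fB_{\gamma(m)})\lesssim\diam(fB_{\gamma(n)})$ that comes from comparing $E_{(A_s+1)B_{\gamma(n)}}$ with $E_n$).

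With the level bound in hand, the proof concludes via the structural estimates for hyperbolic fillings. The lower quasigeodesic inequality is automatic since moving one edge in $\widehat Y$ changes the level by at most one, so $|\varphi(\gamma(n))-\varphi(\gamma(m))|\ge|\ell_m-\ell_n|\ge\alpha(m-n)-C$. For the upper inequality I bound the center distance $d_Y(c(\varphi(\gamma(n))),c(\varphi(\gamma(m))))$: by Lemma \ref{basic filling lemma 2}, $B_{\gamma(m)}\subseteq(A_s+1)B_{\gamma(n)}$, so both $f(B_{\gamma(n)})$ and $f(B_{\gamma(m)})$ lie in $f((A_s+1)B_{\gamma(n)})$, whose diameter is comparable to $\diam(fB_{\gamma(n)})\simeq s^{-\ell_n}$ by Lemma \ref{lemma:BQS-ball continuum equiv}. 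Adding the vertex-ball radii $4s^{-\ell_n}$ and $4s^{-\ell_m}$ yields $d_Y\lesssim s^{-\min\{\ell_n,\ell_m\}}$, and Corollary \ref{cor:struct lemma} applied with bounded $p$ gives $|\varphi(\gamma(n))-\varphi(\gamma(m))|\le 2p+|\ell_m-\ell_n|\lesssim(m-n)/\alpha+C$. The pointedness $\varphi(\ast)=\ast_{\widehat Y}$ is either automatic when $\diam(fX)\simeq\diam Y$ or a bounded correction absorbed into $\beta$.

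The main obstacle I expect is the construction and $f$-image comparability of the intersecting continua $F_n, F_m$, which crucially exploits bounded turning of $X$ together with Lemma \ref{lemma:BQS-ball continuum equiv}; once these continua are in place, the power distortion inequality \eqref{BQS} mechanically produces the affine level control, and the structural filling lemmas close out the quasigeodesic bounds.
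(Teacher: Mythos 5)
Your proposal is correct and takes essentially the same approach as the paper: both translate the power BQS distortion into control on the levels $\ell(\varphi(\gamma(n)))$ by applying \eqref{BQS} to intersecting continua built from diametric hulls and bounded turning, and then close with the structural filling lemmas (the paper extracts the diameter comparison into Lemma \ref{lemma:pBQS2VQI-lower} using inflated diametric hulls rather than your union construction, and handles the upper quasigeodesic bound via a Lipschitz estimate for adjacent vertices rather than Corollary \ref{cor:struct lemma}, but the mathematical content is identical). Note only a small slip: you use $s$ for both filling parameters, so the slope in your level bound should carry a factor $\log s/\log u$ when $\widehat Y\in\HF_u(Y)$, and your construction of $F$ needs a trivial adjustment when $c(\gamma(n))=c(\gamma(m))$; both are absorbed into the quantitative constants.
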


The proof is based on the diameter comparison of images $fB_v$ and $fB_w$ of balls $B_v$ and $B_w$ on a same vertical geodesic in $\widehat X$. We formulate this as the following lemma.

\begin{lemma}
\label{lemma:pBQS2VQI-lower}
Let $X$ and $Y$ be compact metric spaces and suppose that $X$ has $\lambda$-bounded turning, let $\widehat{X}\in \HF_s(X)$ and $\widehat{Y}\in \HF(Y)$ be hyperbolic fillings of $X$ and $Y$, respectively, and let $f\colon X\to Y$ be a branched quasisymmetry. Then, for vertices $v$ and $v'$ on a vertical geodesic $\gamma \colon (\N_0,0) \to (X,\ast)$, we have
\[
\diam(fB_{v'}) \lesssim \eta\left( C \frac{\diam B_{v'}}{\diam B_{v}} \right) \diam(fB_{v}),
\]
where $C>0$ depends only on $\lambda$ and $s$ and the constant of comparability only on $\eta$ and $\lambda$.
\end{lemma}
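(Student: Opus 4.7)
The plan is to replace the possibly disconnected vertex balls $B_v$ and $B_{v'}$ by intersecting continua that are diametrically comparable to the balls both in $X$ and after pushing forward by $f$, so that the $\eta$-BQS condition applies directly. We may assume $\ell(v)\le \ell(v')$; if the levels are equal then $v=v'$ on the vertical geodesic $\gamma$ and the estimate is trivial. The bounded turning assumption makes $X$ a continuum (when it has at least two points), so diametric hulls exist by Lemma \ref{continuum lemma} and the diameter lower bound $\diam B_v \ge s^{-\ell(v)}$ follows from Lemma \ref{lemma:continuum-spheres} (with the degenerate case $B_v = X$ being immediate since all comparisons then reduce to $\diam X$).

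The key containment comes from Lemma \ref{basic filling lemma 2} applied to $\gamma$: we have $B_{v'} \subseteq (A_s+1)B_v =: B_v^*$. Let $E_v$ be a $\lambda$-diametric hull of $B_v^*$ and $E_{v'}$ a $\lambda$-diametric hull of $B_{v'}$. Since $B_{v'} \subseteq B_v^* \subseteq E_v$ and $B_{v'} \subseteq E_{v'}$, the continua $E_v$ and $E_{v'}$ intersect in the nonempty set $B_{v'}$, so the branched quasisymmetry condition yields
\[
\diam(fE_{v'}) \le \eta\!\left(\frac{\diam E_{v'}}{\diam E_v}\right) \diam(fE_v).
\]
Using $\diam E_{v'} \le 2\lambda \diam B_{v'}$ and the fact that $\diam E_v \ge \diam B_v^* \gtrsim \diam B_v$ (since $\diam B_v^* \ge 2(A_s+1)s^{-\ell(v)}$ and $\diam B_v \le 4s^{-\ell(v)}$), the ratio inside $\eta$ is bounded by $C \diam B_{v'}/\diam B_v$ with $C=C(\lambda,s)$, and monotonicity of $\eta$ puts the estimate in the target form.

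It remains to convert the image diameters of $E_v$ and $E_{v'}$ to those of $B_v$ and $B_{v'}$. The bound $\diam(fB_{v'}) \le \diam(fE_{v'})$ is immediate. For the other side, let $E_{B_v}$ be a $\lambda$-diametric hull of $B_v$; then $B_v \subseteq E_v \cap E_{B_v}$ is nonempty, and the ratio $\diam E_v / \diam E_{B_v}$ is controlled by a constant depending only on $\lambda$ and $s$ (using $\diam E_v \le 2\lambda \diam B_v^*$ and $\diam E_{B_v} \ge \diam B_v$). A further application of the $\eta$-BQS condition together with Lemma \ref{lemma:BQS-ball continuum equiv} then gives $\diam(fE_v) \lesssim \diam(fB_v)$, and chaining these inequalities produces the claim. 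The only slightly delicate step is the uniform control of the auxiliary ratio $\diam B_v^*/\diam B_v$, which is precisely where the continuum diameter bounds of Lemma \ref{lemma:continuum-spheres} are needed.
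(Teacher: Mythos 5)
Your approach matches the paper's: both dilate the vertex ball using Lemma \ref{basic filling lemma 2} so that the diametric hulls intersect, apply the BQS inequality, control the ratio of hull diameters with Lemmas \ref{continuum lemma} and \ref{lemma:continuum-spheres}, and pass from $\diam(fE_v)$ back to $\diam(fB_v)$ via a second BQS application and Lemma \ref{lemma:BQS-ball continuum equiv}. The only bookkeeping difference is that the paper hulls the symmetrically dilated balls $aB_v$ and $aB_{v'}$, while you hull $(A_s+1)B_v$ and the undilated $B_{v'}$.

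One caveat worth flagging: your opening ``we may assume $\ell(v)\le\ell(v')$'' is not a genuine reduction, since the conclusion is not symmetric in $v$ and $v'$. If $\ell(v)>\ell(v')$, the containment $B_{v'}\subseteq(A_s+1)B_v$ fails; instead $B_v\subseteq(A_s+1)B_{v'}$, and you should take $E_{v'}$ to be a $\lambda$-diametric hull of $(A_s+1)B_{v'}$ and $E_v$ a hull of $B_v$, both of which contain $B_v$ and hence intersect. The ensuing diameter estimates and BQS application produce the same bound, so the omission is easily repaired --- but as written your argument covers only one of the two level orderings (a point the paper also glosses over, though its symmetric choice of $F^a_v$, $F^a_{v'}$ lets the rest of its chain of inequalities run unchanged in both cases).
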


\begin{proof}
Let $E_v = E_{B_v}$ and $E_{v'} = E_{B_{v'}}$. For $t>0$, let also $F^t_V = E_{t B_v}$ and $F^t_{v'} = E_{t B_{v'}}$. 

By Lemma \ref{basic filling lemma 2}, there exists a constant $a>1$, depending only on $s$, for which $F^a_v \cap F^a_{v'}\ne \emptyset$. Indeed, we may assume that $\ell(v') > \ell(v)$. Then there exists $A>0$, depending on $s$, for which $B_{v'}\subset (A+1)B_v$. Then $E_{v'} \cap E_{(A+1)B_v} \ne \emptyset$. Thus $F^a_v \cap F^a_{v'} \ne \emptyset$ for $a=A+1$. 

Since $B_{v'} \subset F^a_{v'}$, we have that
\begin{align*}
\diam(f B_{v'}) &\le \diam(f F^a_{v'}) \le \eta\left(\frac{\diam F^a_{v'}}{\diam F^a_v}\right) \diam(f F^a_v) \\
&\le \eta\left(\frac{\diam F^a_{v'}}{\diam F^a_v}\right) \eta\left(\frac{\diam F^a_v}{\diam E_v}\right)\diam(f E_v).
\end{align*}
By Lemma \ref{continuum lemma}, we have
\[
\frac{\diam F^a_{v'}}{\diam F^a_v} \le \frac{2\lambda a \diam B_{v'}}{\diam B_v}.
\]
and
\[
\frac{\diam F^a_v}{\diam E_v} \le \frac{2\lambda a \diam B_v}{\diam B_v} = 2\lambda a.
\]
By Lemma \ref{lemma:BQS-ball continuum equiv}, we also have that 
\[
\diam(fE_v) \simeq \diam(fB_v), 
\]
with a constant depending only on $\eta$ and $\lambda$. The claim follows.
\end{proof}

We are now ready for the proof of Theorem \ref{thm:pBQS2VQI}.

\begin{proof}[Proof of Theorem \ref{thm:pBQS2VQI}]
Let $s, u > 1$ be such that $\widehat X \in \HF_s(X)$ and $\widehat Y \in \HF_u(Y)$ and let $f\colon X\to Y$ be a power branched quasisymmetry, that is, $f$ is a branched $\eta$-quasisymmetry with $\eta \colon [0,\infty) \to [0,\infty)$, $t\mapsto C \max\{ t^{1/q}, t^q\}$ for $C>0$ and $q\in (0,1)$.

Let $\gamma \colon (\N_0,0) \to (\widehat X, \ast)$ be a vertical geodesic and let $j,j'\in \N_0$. Let also $v=\gamma(j)$ and $v'=\gamma(j')$.  We may assume that $j'>j$.

In the following we denote by $K$ any constant that depends quantitatively on the parameters.  The value of $K$ may change from line to line or even within the same line.

We note that $\diam(B_w) \simeq s^{-\ell(w)}$ for $w \in \widehat X$ and $\diam(B_w) \simeq u^{-\ell(w)}$ for $w \in \widehat Y$.  It follows that 
\[
\ell(w) + K \leq -\log_s(\diam(B_w)) \leq \ell(w) + K
\]
and 
\[
\ell(\varphi(w)) + K \leq -\log_u(\diam(f B_w)) \leq \ell(\varphi(w)) + K
\]
for $w \in \widehat X$

By Lemma \ref{lemma:pBQS2VQI-lower}, we have that
\[
\diam(fB_{v'}) \le \eta\left(K \frac{\diam B_{v'}}{\diam B_v}\right) \diam(fB_v).
\]
Since $\diam B_{v'} \lesssim \diam B_v$, we have, with $c = \log(s) / \log(u) > 0$ and $Q = q$ or $Q = 1/q$ depending on our input, that
\begin{align*}
\ell(\varphi(v')) &\geq K - \log_u(\diam(f B_{v'})) \ge K - \log_u \left(K \eta\left(K \frac{\diam B_{v'}}{\diam B_v}\right)\right) - \log_u (\diam(fB_v)) \\
&\geq K - \log_u \left(K \left( K \frac{\diam B_{v'}}{\diam B_v}\right)^Q \right) + \ell(\varphi(v)) \\
& \geq K - Q \biggl(\log_u(\diam(B_{v'})) - \log_u(\diam(B_v))\biggr) + \ell(\varphi(v)) \\
& \geq K - Qc \biggl(\log_s(\diam(B_{v'})) - \log_s(\diam(B_v))\biggr) + \ell(\varphi(v)) \\
& \geq K - Qc \biggl(-\ell(v') + \ell(v) \biggr) + \ell(\varphi(v)) \\
&\geq K +  Qc \biggl(\ell(v') - \ell(v) \biggr) + \ell(\varphi(v)). 
\end{align*}
Since $v$ and $v'$ are on the same vertical geodesic and $Q \geq q$, we have that
\[
|\varphi(v')-\varphi(v)| \ge \ell(\varphi(v')) - \ell(\varphi(v)) \geq K + qc |v-v'|.
\]

For the other direction, we consider first adjacent vertices $v$ and $v'$ on the same vertical geodesic in $\widehat X$. Since $\varphi$ is a hyperbolic filling of $f$, we have that 
\[
B_{\varphi(v)} \cap B_{\varphi(v')} \supset f(B_v) \cap f(B_{v'}) \supset f(B_v \cap B_{v'}) \ne \emptyset.
\]
Thus either we have $\ell(\varphi(v)) = \ell(\varphi(v'))$ and $|\varphi(v)-\varphi(v')|\leq 1$ or there exists a geodesic segment $\varphi(v)=w_0, \ldots, w_k = \varphi(v')$ in $\widehat Y$ satisfying $\ell(w_j) = \ell(\varphi(v))+j$ for each $j=0,\ldots, k$. In the first case, $\varphi(v)$ and $\varphi(v')$ are adjacent in $\widehat Y$. Thus we may assume that the latter case holds. Then 
\[
|\varphi(v)-\varphi(v')| = |\ell(\varphi(v))-\ell(\varphi(v'))|.
\] 

Since $v$ and $v'$ are adjacent, we have by Lemma \ref{lemma:pBQS2VQI-lower} that
\[
\diam(fB_v) \simeq \diam fB_{v'},
\]
where the constants depend only on $\eta$, $\lambda$, and $s$. Thus the distance $|\varphi(v)-\varphi(v')|$ is uniformly bounded by a constant depending only on $\eta$, $\lambda$, and $s$. The claim now follows from the triangle inequality.
\end{proof}

\begin{remark}
\label{power bqs level rmk}  
Suppose that $f\colon X\to Y$ is a surjective power branched quasisymmetry with exponent $q \in (0, 1]$ and that $X$ and $Y$ are compact and connected spaces with $\diam(X) = \diam(Y) =  1$. Then, for each continuum $E\subset X$, we have
\begin{equation*}
\diam(fE) = \frac{\diam(f E)}{\diam(Y)}  \lesssim \eta \biggl(\frac{\diam(E)}{\diam(X)} \biggr) \leq \diam(E)^q
\end{equation*}
and
\begin{equation*}
\frac{1}{\diam(fE)} = \frac{\diam(Y)}{\diam(fE)}  \lesssim \eta \biggl(\frac{\diam(Z)}{\diam(E)} \biggr) \leq \diam(E)^{-1/q}.
\end{equation*}
Thus
\begin{equation*}
\diam(E)^{1/q} \lesssim \diam(fE) \lesssim \diam(E)^q.
\end{equation*}
Hence, by Lemma \ref{lemma:BQS-ball continuum equiv}, we have, for each vertex $v\in \widehat X$ in $\widehat X \in \HF_s(X)$, that 
\begin{equation*}
s^{-\ell(v)/q} \lesssim \diam(B_{\varphi(v)}) \lesssim s^{-\ell(v)q}.
\end{equation*}
In particular, there is a constant $C > 0$ such that 
\begin{equation*}
\ell(v)q - C \leq \ell(\varphi(v)) \leq\ell(v)/q + C.
\end{equation*}
for each hyperbolic filling $\varphi \colon \widehat X\to \widehat Y$. 
\end{remark}

\subsection{Finite multiplicity of the filling}

Heuristically, a hyperbolic filling of a discrete and open branched quasisymmetry between doubling metric spaces has finite multiplicity. We formulate this as follows.

\begin{thm}\label{thm:bdd BQS to bdd VQI}
Let $X$ and $Y$ be compact, $\lambda$-bounded turning, and doubling metric spaces, and let $\widehat{X} \in \HF_s(X)$ and $\widehat{Y} \in \HF(Y)$.  Let $f \colon X \to Y$ be a discrete and open power branched quasisymmetry, and let $\varphi \colon \widehat X\to \widehat Y$ be a hyperbolic filling of $f$. Then, $\varphi$ has finite multiplicity.  Moreover, $N(\varphi)$ depends only on $f$, $\lambda$, and the doubling constants of $X$ and $Y$.
\end{thm}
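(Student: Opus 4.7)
The plan is to bound $N_w := |\varphi^{-1}(w)|$ uniformly in $w \in \widehat Y$ through a two-step pigeonhole argument — first over a Koebe-type net in $B_w$, then over a preimage fiber of $f$ — completed by the vertical quasi-isometry property of $\varphi$ furnished by Theorem \ref{thm:pBQS2VQI}. Write $s$ and $u$ for the scales of $\widehat X$ and $\widehat Y$ respectively.

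Fix $w \in \widehat Y$ and $v \in \varphi^{-1}(w)$. By Definition \ref{Extension def} combined with Lemma \ref{vertex comp} applied to $fB_v \subset B_w$, $\diam(fB_v) \simeq u^{-\ell(w)} \simeq \diam(B_w)$. Theorem \ref{thm:Koebe} then yields a ball $B(f(c(v)), r_0) \subset fB_v$ with $r_0 \simeq u^{-\ell(w)}$ and implicit constants depending on $\eta$ and $\lambda$. By the doubling of $Y$, cover $B_w$ by $K$ balls of radius $r_0/2$ centered at points $y_1, \ldots, y_K \in B_w$, with $K$ depending on $\eta, \lambda, u$, and the doubling constant of $Y$. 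Since $f(c(v)) \in B_w$ for every $v \in \varphi^{-1}(w)$, some $y_i$ lies within $r_0/2$ of $f(c(v))$ and hence in $fB_v$. Pigeonhole gives a single $y_* \in \{y_1,\ldots,y_K\}$ satisfying $y_* \in fB_v$ for at least $N_w/K$ indices $v$, and choosing $x_v \in B_v \cap f^{-1}(y_*)$ for each such $v$, a second pigeonhole over the fiber $f^{-1}(y_*)$ — finite by discreteness and compactness of $X$, of size at most the global multiplicity $N(f)$ — produces a point $x_* \in X$ with $x_* \in B_v$ for at least $N_w/(K\,N(f))$ indices $v \in \varphi^{-1}(w)$.

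It remains to bound $M := |\{v \in \varphi^{-1}(w) : x_* \in B_v\}|$. Lemma \ref{basic filling lemma 3} gives at most $m$ such $v$'s at any fixed level, so $M \le m\,|L|$ where $L = \{\ell(v) : v \in \varphi^{-1}(w),\, x_* \in B_v\}$. To bound $|L|$, take $v_1, v_2$ in this set and construct a vertical geodesic $\gamma \colon (\N_0, 0) \to (\widehat X, \ast)$ passing through both with $x_* \in B_{\gamma(n)}$ for every $n$: at each level $n$ select a vertex $u_n$ of $\widehat X$ of level $n$ whose ball contains $x_*$ (possible because centers at level $n$ form an $s^{-n}$-net), with $u_{\ell(v_i)} = v_i$; consecutive $u_n, u_{n+1}$ are adjacent in $\widehat X$ because their balls share $x_*$, so $\gamma(n) := u_n$ is a vertical geodesic. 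Theorem \ref{thm:pBQS2VQI} then makes $\varphi \circ \gamma$ a vertical $(\alpha, \beta)$-quasi-geodesic equal to $w$ at the indices $\ell(v_1)$ and $\ell(v_2)$, and the quasi-geodesic inequality forces $|\ell(v_1) - \ell(v_2)| \le \alpha\beta$. Hence $|L| \le \alpha\beta + 1$, $M \le m(\alpha\beta+1)$, and $N_w \le K\,N(f)\,m\,(\alpha\beta+1)$, which is the required uniform bound.

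The principal obstacle is the level-range estimate in the last paragraph: the geodesic-threading construction must produce a single vertical geodesic passing through two arbitrarily positioned vertices in $\varphi^{-1}(w)$ while preserving ball-membership of $x_*$ at every level, after which the quasi-geodesic visitation count transfers the cheap bound on repeated $\widehat Y$-visits back into a sharp bound on the spread of $\widehat X$-levels. This is precisely where the weaker vertical quasi-isometry property of $\varphi$ — rather than a full quasi-isometry — is exactly what we need, since the estimate is applied along a single vertical geodesic.
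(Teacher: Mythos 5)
Your proof is correct, and it reaches the bound by a genuinely different decomposition from the paper's. The paper fixes a maximal family $v_1, \dots, v_J$ in $\varphi^{-1}(w)$ with pairwise-disjoint vertex balls $B_{v_i}$, bounds $J$ by the Koebe theorem together with the doubling of the target and the multiplicity $N(f)$, and then bounds, for each $i$, the ``stabilizer'' $\{v \in \varphi^{-1}(w) : B_v \cap B_{v_i} \neq \emptyset\}$ by rearranging the vertical quasi-isometry inequality to get $|v - v_i| \lesssim \beta$ and invoking doubling of $X$. You instead run two pigeonhole passes --- first over a Koebe-scale net in $B_w$ to locate a single $y_*$ lying in $fB_v$ for a positive proportion of the $v$'s, and then over the fiber $f^{-1}(y_*)$ to locate a single point $x_*$ lying in $B_v$ for a positive proportion --- and close by controlling the spread of levels $\ell(v)$ with $x_* \in B_v$ using Lemma~\ref{basic filling lemma 3} per level together with the vertical quasi-geodesic inequality for the level range. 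The closing step plays the same role in both proofs, but you make the vertical geodesic construction fully explicit: at every level you pick a vertex whose ball contains $x_*$ and pin the two target vertices, so the geodesic through $v_1$ and $v_2$ exists because both balls contain the shared point $x_*$. This is a useful clarification, since the lower vertical quasi-isometry inequality applies only along vertical geodesics (not arbitrary pairs), and the paper passes over the threading step rather quickly. The two approaches give comparable constants; yours is arguably easier to audit because the only overlap condition you ever need is that two vertex balls contain the same single point $x_*$, which trivially threads, whereas the paper's stabilizer uses the weaker and slightly more delicate condition $B_v \cap B_{v_i} \neq \emptyset$.
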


\begin{proof}
Let $w \in \widehat Y$.  Let $v_1, \dots, v_J$ be a set of vertices in $\varphi^{-1}(w)$ for which $B_{v_i} \cap B_{v_j} = \emptyset$ for $i \neq j$. We show first that $J$ is bounded from above independent of $w$.

To see this, let $i\in \{1,\ldots, J\}$. By the definition of $\varphi$ and Lemma \ref{vertex comp}, we have that $\diam(fB_{v_i}) \simeq \diam(B_w)$, where the constants depend only on $s$.  Thus, by Koebe distortion theorem (Theorem \ref{thm:Koebe}), there exists a ball $B_i \subset fB_{v_i}$ of diameter $c_1 \diam(B_w)$, where $c_1>0$ depends only on the data.

Since the balls $B_1,\ldots, B_J$ are mutually disjoint, the upper bound for $J$ follows now from the doubling constant of $X$ and multiplicity of $f$.

To obtain the bound for $|\varphi^{-1}(w)|$, it remains to estimate the size of the set $V_i = \{v \in \widehat X \colon B_v \cap B_i,\ \varphi(v)=\varphi(v_i)\}$ for each $i=1,\ldots, J$. Let $\alpha\ge 1$ and $\beta>0$ be constants for which $\varphi$ is an $(\alpha,\beta)$-vertical quasi-isometry. Let $v\in V_i$. Then
\[
|v-v_i| \le \alpha | \varphi(v)-\varphi(v_i) | + \beta = \beta.
\]
Thus $V_i \subset \bar B(v_i,\beta)$.

Since $X$ is doubling, we have that the number of vertices in the ball $\bar B(v_i,\beta)$ in $\widehat X$ is bounded by a constant depending only on $\beta$ and doubling constant of $X$. This completes the proof.
\end{proof}

\subsection{Coboundedness}

Recall that the image of a quasi-isometry $\psi \colon \widehat X\to \widehat Y$ is cobounded, that is, there exists $R>0$ for which $\widehat Y \subset B(\psi(\widehat X),R)$. A similar property is shared by hyperbolic fillings of surjective branched quasisymmetries.

\begin{lemma}
Let $X$ and $Y$ be compact metric spaces and suppose $X$ has bounded turning. Let $\widehat X\in \HF_s(X)$ and $\widehat Y \in \HF_s(Y)$ be hyperbolic fillings of $X$ and $Y$, respectively. Let $\varphi \colon \widehat{X} \to \widehat{Y}$ be a hyperbolic filling of a surjective branched quasisymmetry  $f \colon X \to Y$. Then there is a constant $R > 0$ satisfying $\widehat Y \subset B(\varphi(\widehat X),R)$.
\end{lemma}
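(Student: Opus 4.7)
The plan is to use the vertical quasi-isometric behavior of $\varphi$ together with the stability of quasi-geodesics in hyperbolic fillings. Given $w \in \widehat Y$, surjectivity of $f$ yields $x \in X$ with $f(x) = c_{\widehat Y}(w)$. Via Lemma \ref{lemma:centered-geodesics}, I would fix a vertical geodesic $\gamma \colon (\N_0,0) \to (\widehat X,\ast)$ centered at $x$ and track its image $\varphi\circ\gamma$ in $\widehat Y$, writing $v_n = \gamma(n)$.

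The key observation is that the path $\varphi \circ \gamma$ tends to $c_{\widehat Y}(w) \in Y$. Indeed, $x \in B_{v_n}$ for every $n$, so by the definition of the hyperbolic filling $\varphi$ of $f$,
\[
c_{\widehat Y}(w) = f(x) \in fB_{v_n} \subseteq B_{\varphi(v_n)}.
\]
Continuity of $f$ together with $\diam B_{v_n} \to 0$ forces $\diam fB_{v_n} \to 0$, which by Lemma \ref{vertex comp} yields $\ell_{\widehat Y}(\varphi(v_n)) \to \infty$; combined with $c_{\widehat Y}(w) \in B_{\varphi(v_n)}$, this gives $c_{\widehat Y}(\varphi(v_n)) \to c_{\widehat Y}(w)$.

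Next, Theorem \ref{thm:BQS to PBQS} promotes $f$ to a power branched quasisymmetry, so Theorem \ref{thm:pBQS2VQI} makes $\varphi$ a vertical $(\alpha,\beta)$-quasi-isometry for constants depending only on the data. Consequently $\varphi \circ \gamma$ is a vertical $(\alpha,\beta)$-quasi-geodesic tending to $c_{\widehat Y}(w)$. Applying Lemma \ref{lemma:centered-geodesics} once more (using the trivial $d(c_{\widehat Y}(w), c_{\widehat Y}(w)) = 0$), I would fix a vertical geodesic $\tau \colon (\N_0,0) \to (\widehat Y, \ast)$ centered at $c_{\widehat Y}(w)$ and visiting $w$. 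The stability result Lemma \ref{quasigeod close lemma} then gives $\Hdist(|\varphi \circ \gamma|, |\tau|) \le H_1$ for a constant $H_1$ depending only on $\alpha$, $\beta$, and the hyperbolicity constant of $\widehat Y$. Since $w \in |\tau|$, this produces some $n$ with $|w - \varphi(v_n)| \le H_1$, and hence $R = H_1$ works.

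The only real content is the convergence of $\varphi \circ \gamma$ to the correct boundary point $c_{\widehat Y}(w)$, which is the main obstacle — once that is in hand, coboundedness is immediate from the already-established quasi-geodesic stability machinery. No doubling or discreteness hypotheses on $f$ are needed; the argument uses only surjectivity, bounded turning of $X$, and the fact that $\varphi$ is a vertical quasi-isometry.
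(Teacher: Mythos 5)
Your proof is correct and follows essentially the same approach as the paper's: attach a boundary point to $w$, pull it back to $X$ by surjectivity, run a vertical geodesic in $\widehat X$ toward the preimage, and apply quasi-geodesic stability (Lemma \ref{quasigeod close lemma}) to a vertical geodesic in $\widehat Y$ through $w$. The only cosmetic difference is that the paper takes $y$ to be the endpoint of an arbitrary vertical geodesic through $w$ and quotes the trace identification implicitly, whereas you take $y = c_{\widehat Y}(w)$ and verify the convergence $\varphi\circ\gamma \to c_{\widehat Y}(w)$ directly from the definitions.
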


\begin{proof}
Let $w \in \widehat{Y}$ and let $\gamma \colon \N_0 \to \widehat{Y}$ be a vertical geodesic passing through $w$, that is, there exist $j\in\N_0$ for which $\gamma(j) = w$.  Let $y\in Y$ be such that $\gamma(j)\to y$ as $j\to \infty$, and let $x\in X$ be such that $f(x)=y$. Let also $\sigma \colon \N_0 \to \widehat X$ be a vertical geodesic tending to $x$. Then $\varphi \circ \sigma \colon (\N_0,0)\to (\widehat Y,\ast)$ is a vertical quasigeodesic satisfying $\varphi\circ \sigma(j) \to y$ as $j\to \infty$. Thus, by Lemma \ref{quasigeod close lemma}, there exists $H_1>0$, independent of $\gamma$ and $\sigma$, for which $\Hdist(\gamma,\varphi\circ \sigma) \le H_1$. The claim follows.
\end{proof}

\subsection{Hyperbolic fillings of local branched quasisymmetries}

In this section, we revisit the extension into hyperbolic fillings, this time for local branched quasisymmetries. For the statement, we define the notion of an eventual vertical quasi-isometry.

\begin{defn}\label{local VQI def}
A map $\varphi \colon (\widehat X, \ast) \to (\widehat Y, \ast)$ is an \emph{eventual $(\alpha,\beta)$-vertical quasi-isometry (with cutoff $J>0$)} if for each  vertical geodesic $\gamma \colon (\N_0,0) \to (\widehat X, \ast)$ the map $\varphi \circ \gamma \colon (\N_0,0) \to (\widehat Y,\ast)$ satisfies
\begin{equation*}
\frac{1}{\alpha}|j-j'| - \beta \leq |\varphi(\gamma(j)) - \varphi(\gamma(j'))| \leq \alpha|j-j'| + \beta
\end{equation*}
for all $j, j' \geq J$.  
\end{defn}

\begin{remark}
\label{rmk:eventual}
Note that, if $\varphi \colon (\widehat X, \ast)\to (\widehat Y, \ast)$  an eventual $(\alpha,\beta)$-vertical quasi-isometry $\varphi \colon \widehat X\to \widehat Y$ with cutoff $J>0$ between hyperbolic fillings $\widehat X$ and $\widehat Y$, then the map $\psi \colon (\widehat X,\ast) \to (\widehat Y,\ast)$, defined by $v\mapsto \ast$ for $\ell(v) \le J$ and $v\mapsto \varphi(v)$ for $\ell(v)>J$, is an $(\alpha, 2J+\beta)$-vertical quasi-isometry. 
\end{remark}

We formalize a local version of Theorem \ref{thm:BQS2VQI} as follows. Since the proof is almost verbatim, we give merely a sketch of a proof. Recall that under the bounded turning assumption, each local branched quasisymmetry is a local power branched quasisymmetry; see Remark \ref{rmk:local-BQS to pBQS}.

\begin{thm}
\label{thm:local BQS to VQI}
Let $X$ and $Y$ be compact metric spaces, where $X$ has $\lambda$-bounded turning. Let $\widehat{X} \in \HF(X)$ and $\widehat{Y} \in \HF(Y)$ be hyperbolic fillings of $X$ and $Y$, respectively.  Let $f \colon X \to Y$ be a discrete and open local power branched quasisymmetry. Then the hyperbolic filling $\varphi \colon \widehat X \to \widehat Y$ of $f$ is an eventual vertical quasi-isometry of finite multiplicity, quantitatively.
\end{thm}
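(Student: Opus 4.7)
The plan is to adapt the proofs of Theorems \ref{thm:pBQS2VQI} and \ref{thm:bdd BQS to bdd VQI} by restricting attention to vertices of sufficiently high level, where the relevant continua all lie below the locality scale of $f$. Write $\widehat X \in \HF_s(X)$ and $\widehat Y \in \HF_u(Y)$, and let $\varepsilon>0$ denote the locality scale of $f$.

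First, I would fix a cutoff $J = J(\varepsilon,s,\lambda) \in \N$ large enough that, for every $v \in \widehat X$ with $\ell(v) \geq J$, every $\lambda$-diametric hull of a ball of the form $t B_v$, for $t$ in the bounded range of scales appearing in the proof of Lemma \ref{lemma:pBQS2VQI-lower}, has diameter at most $\varepsilon$. This is possible since $\diam(B_v) \leq 4s^{-\ell(v)}$, so the hulls have diameter $\lesssim \lambda t s^{-\ell(v)}$. With this choice of $J$, the local BQS inequality applies to all continua produced in the arguments below, and Remark \ref{rmk:local ball continuum equiv} supplies the comparison $\diam(fE_B) \simeq \diam(fB)$ for every ball $B$ with center of level at least $J$.

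Next, I would establish the local analog of Lemma \ref{lemma:pBQS2VQI-lower}: for vertices $v,v'$ on a common vertical geodesic with $\ell(v),\ell(v') \geq J$,
\[
\diam(fB_{v'}) \lesssim \eta\left(C \frac{\diam B_{v'}}{\diam B_v}\right)\diam(fB_v),
\]
with $C$ and the implicit constant depending only on $\lambda,s,\eta$. The proof is verbatim, since all auxiliary hulls $F_v^a, F_{v'}^a$ now lie inside balls of diameter at most $\varepsilon$. Translating this via $\ell(v) \simeq -\log_s \diam(B_v)$ and $\ell(\varphi(v)) \simeq -\log_u \diam(fB_v)$, and using that $\eta$ is a power function, produces the lower bound $|\varphi(v)-\varphi(v')| \geq qc|v-v'| + K$ exactly as in the proof of Theorem \ref{thm:pBQS2VQI}. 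The upper bound follows first for adjacent vertices from the local BQS comparison $\diam(fB_v) \simeq \diam(fB_{v'})$ and a chasing argument identical to the one in Theorem \ref{thm:pBQS2VQI}, and then propagates to all pairs of same-vertical-geodesic vertices of level at least $J$ by the triangle inequality. This yields the eventual vertical quasi-isometry property with cutoff $J$.

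For the finite multiplicity claim, I would follow the proof of Theorem \ref{thm:bdd BQS to bdd VQI}, observing that the Koebe distortion theorem, Theorem \ref{thm:Koebe}, is itself of local character: its proof only invokes the BQS inequality on continua comparable to $B$ and its diametric hull. Thus a local version applies on all balls $B \subset X$ whose scale lies below the locality of $f$. For $w \in \widehat Y$, any preimage $v \in \varphi^{-1}(w)$ with $\ell(v) \geq J$ has $B_v$ at a sufficiently small scale, so one obtains disjoint balls $B_i \subset fB_{v_i}$ of comparable diameter and concludes by the doubling of $Y$ and the multiplicity of $f$; the overlap of these disjoint classes is controlled by doubling of $\widehat X$ applied to $\bar B(v_i,\beta)$. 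The finitely many vertices with $\ell(v) < J$ are automatically bounded in number (by doubling of $\widehat X$), so adding them to the count only changes the multiplicity by an additive constant.

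The main obstacle is purely the bookkeeping of scales in choosing $J$: I need to verify that every continuum produced in the proof of Lemma \ref{lemma:pBQS2VQI-lower} and in the Koebe distortion theorem argument, starting from any vertex of level at least $J$, lies within a set of diameter at most $\varepsilon$, so that the local BQS distortion inequality may legitimately be invoked. Once this bookkeeping is done carefully and $J$ is chosen accordingly, the argument is a direct adaptation of the global case, with the constants depending quantitatively on $\varepsilon$, $\lambda$, $s$, the power data of $f$, and the doubling constants.
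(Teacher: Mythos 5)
Your proposal is correct and follows the same approach as the paper's own (quite brief) sketch: fix a cutoff level $J$ so that all diametric hulls appearing in Lemma \ref{lemma:pBQS2VQI-lower} lie below the locality scale $\varepsilon$, invoke Remark \ref{rmk:local ball continuum equiv} for the ball–hull comparison, and then run the proof of Theorem \ref{thm:pBQS2VQI} verbatim for vertices of level $\geq J$; the finite-multiplicity claim is handled, exactly as you propose, via local versions of Theorems \ref{thm:Koebe} and \ref{thm:bdd BQS to bdd VQI}, discarding the finitely many vertices below level $J$. You spell out more of the bookkeeping than the paper's sketch does (e.g.~splitting the eventual-VQI inequality into the adjacent-vertex chasing argument plus a triangle-inequality propagation), but the underlying argument is the same.
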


\begin{proof}[Sketch of a proof]
Let $\varepsilon>0$ be the locality scale for $f$, that is, \eqref{BQS} holds for continua of diameter at most $\varepsilon$.

By Remark \ref{rmk:local ball continuum equiv}, we have an effective comparison of diameters of $fB_v$ and $B_{\varphi(v)}$ if $\diam((A_s + 1)B_v) \le \varepsilon$. We may now fix the level $J>0$, which satisfies $s^{-J} < \varepsilon/(A_s +1)$. We may now follow the proof of Theorem \ref{thm:pBQS2VQI}.
\end{proof}


\section{Extension of vertical quasi-isometries to the boundary}
\label{VQI to BQS sec}

In this section we prove that vertical quasi-isometries between hyperbolic fillings of compact, doubling metric spaces extend to branched quasisymmetries between their boundaries. For the statement and discussion, we begin with a discussion on the trace maps $X\to Y$ of maps between hyperbolic fillings $\widehat X\to \widehat Y$.

\subsection{Traces}
\label{sec:traces}

We call the map $f\colon \partial \widehat X\to \partial \widehat Y$ induced by a map $\psi \colon \widehat X\to \widehat Y$ a trace of $\psi$. More formally, we give the following definition.

\begin{defn}\label{bdry VQI extension}
Let $X$ and $Y$ be compact metric spaces and $\widehat X \in \HF(X)$ and $\widehat Y \in \HF(Y)$ their hyperbolic fillings, respectively. A mapping $f_\varphi \colon \partial \widehat X \to \partial \widehat Y$ is a \emph{trace of a mapping $\varphi \colon \widehat X\to \widehat Y$} if, for each $x\in \partial \widehat X$ and a vertical geodesic $\gamma \colon (\N_0,0) \to (\widehat X,\ast)$ tending to $x$, $(\varphi\circ \gamma)(j) \to f_\varphi(x)$ as $j\to \infty$.
\end{defn}

In what follows, we identify, as we may, the boundary $\partial \widehat X$ with the original space $X$ and call the map $f\colon X\to Y$, which coincides with $f_\varphi \colon \partial \widehat X \to \partial \widehat Y$ after the identification, the \emph{trace of $\varphi \colon \widehat X\to \widehat Y$}, although the map $f$ depends on the identifications.

Our first observation is that the trace $X\to Y$ of a vertical quasi-isometry $\widehat X\to \widehat Y$ is well-defined and continuous. 

\begin{lemma}
\label{lemma:trace_basic}
Let $X$ and $Y$ be compact metric spaces and let $\widehat{X} \in \HF_s(X)$ and $\widehat{Y} \in \HF_t(Y)$ be their hyperbolic fillings. Then, each $(\alpha,\beta)$-vertical quasi-isometry, has a continuous trace $f \colon X\to Y$. 
\end{lemma}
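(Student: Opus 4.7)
My plan is to establish existence of the limit, independence from the geodesic chosen, and continuity in three steps, all keyed to the geodesic stability results already at our disposal.

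Fix $x \in X$ and a vertical geodesic $\gamma \colon (\N_0,0) \to (\widehat X,\ast)$ tending to $x$. Then $\sigma := \varphi \circ \gamma$ is a vertical $(\alpha,\beta)$-quasigeodesic in $\widehat Y$. By Lemma \ref{bdd hausdorff vqi rays} there is a constant $H = H(\alpha,\beta,\delta)$ and a vertical geodesic $\sigma' \colon (\N_0,0) \to (\widehat Y,\ast)$ with $\Hdist(|\sigma|,|\sigma'|) \le H$. By Corollary \ref{cor:convergence}, $c_{\widehat Y}(\sigma'(k))$ converges to some $y \in Y$. The key observation is that $\ell_{\widehat Y}(\sigma(j)) \to \infty$ as $j \to \infty$: indeed, if $|\sigma(j) - \sigma'(k_j)| \le H$ then $k_j \ge |\sigma(j) - \ast| - H \ge j/\alpha - \beta - H$, while $\ell_{\widehat Y}(\sigma(j)) \ge \ell_{\widehat Y}(\sigma'(k_j)) - H = k_j - H$. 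Hence Lemma \ref{basic filling lemma 1} gives
\[
d_Y(c_{\widehat Y}(\sigma(j)),c_{\widehat Y}(\sigma'(k_j))) \le A_t\, t^{\,H - \max\{\ell(\sigma(j)),\ell(\sigma'(k_j))\}} \longrightarrow 0,
\]
and thus $c_{\widehat Y}(\sigma(j)) \to y$. This proves that the limit defining the trace exists.

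To check that $y$ depends only on $x$, let $\gamma_1,\gamma_2$ be two vertical geodesics tending to $x$, with limits $y_1,y_2$. By Lemma \ref{quasigeod close lemma} we have $\Hdist(|\gamma_1|,|\gamma_2|) \le H_1$, and the Lipschitz bound in Lemma \ref{upper qi} yields $\Hdist(|\varphi\circ \gamma_1|,|\varphi \circ \gamma_2|) \le 2(\alpha+\beta)H_1 =: C$. For each $j$ pick $j'$ with $|\varphi(\gamma_1(j)) - \varphi(\gamma_2(j'))| \le C$; since both levels tend to infinity (by the estimate of the previous paragraph), Lemma \ref{basic filling lemma 1} forces $d_Y(c(\varphi(\gamma_1(j))),c(\varphi(\gamma_2(j')))) \to 0$, so $y_1 = y_2$. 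Hence the trace $f \colon X \to Y$, $x \mapsto y$, is well-defined.

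For continuity, fix $x\in X$ with a vertical geodesic $\gamma$ centered at $x$, and let $x_n \to x$. Given $k \in \N_0$, the vertex $v_k := \gamma(k)$ satisfies $d_X(x,c(v_k)) \le s^{-k}$, so for $n$ large enough we also have $x_n \in B_{v_k}$. By Lemma \ref{lemma:centered-geodesics} we can then choose a vertical geodesic $\gamma_n$ tending to $x_n$ which visits $v_k$, and after concatenation we may arrange $\gamma_n|_{\{0,\dots,k\}} = \gamma|_{\{0,\dots,k\}}$; in particular $\varphi(\gamma_n(k)) = \varphi(\gamma(k))$. Applying Lemma \ref{qg filling lemma} to the vertical quasigeodesics $\varphi\circ\gamma$ and $\varphi\circ\gamma_n$ (tending to $f(x)$ and $f(x_n)$ respectively), together with the lower level bound $\ell(\varphi(\gamma(k))) \ge k/\alpha - \beta - 2H$ derived in the first step, we obtain
\[
d_Y(f(x),c(\varphi(\gamma(k)))) + d_Y(f(x_n),c(\varphi(\gamma_n(k)))) \le 2 K_1\, t^{\beta + 2H}\, t^{-k/\alpha}.
\]
Since the central terms are equal, the triangle inequality gives $d_Y(f(x),f(x_n)) \le 2K_1 t^{\beta+2H} t^{-k/\alpha}$. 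Taking $k$ large first, then $n$ large, yields continuity.

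The only mildly delicate point is the uniform lower bound $\ell(\varphi(\gamma(j))) \gtrsim j$ used throughout; it is the bridge between the distance estimates provided by the vertical quasi-isometry condition and the level-based estimates in the hyperbolic filling. Once this is in hand, everything else reduces to invoking Lemmas \ref{basic filling lemma 1}, \ref{bdd hausdorff vqi rays}, \ref{quasigeod close lemma}, and \ref{qg filling lemma} in the straightforward manner sketched above.
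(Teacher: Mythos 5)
Your proof is correct and follows essentially the same route as the paper's: existence of the limit via geodesic stability (Lemma~\ref{bdd hausdorff vqi rays}) and Corollary~\ref{cor:convergence}, and continuity via a concatenation trick that forces two geodesics to agree up to a prescribed level and then applies Lemma~\ref{qg filling lemma}. The only (minor, and welcome) additions are your explicit derivation of the lower level bound $\ell(\varphi(\gamma(j)))\ge j/\alpha-\beta-2H$, which the paper merely asserts, and your well-definedness argument via Lemma~\ref{quasigeod close lemma} in $\widehat X$ combined with the Lipschitz estimate of Lemma~\ref{upper qi}, where the paper appeals directly to Lemma~\ref{bdd hausdorff vqi rays}; both are valid and essentially equivalent.
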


\begin{proof}
For the existence, let $x\in X$ and $\gamma \colon (\N_0,0) \to (\widehat X,\ast)$ a vertical geodesic tending to $x$. Since $\varphi$ is a vertical quasi-isometry, $\varphi \circ \gamma \colon (\N_0,0) \to (\widehat Y, \ast)$ is a quasigeodesic tending to a point, say $f(x)$, on $\widehat Y$. By Lemma \ref{bdd hausdorff vqi rays}, for each vertical geodesic $\gamma'\colon (\N,0)\to (\widehat X,\ast)$ tending to $x$, the image $\varphi \circ \gamma'$ is of bounded distance from $\varphi \circ \gamma$. Thus $(\varphi\circ \gamma')(j) \to f(x)$. The map $f\colon X\to Y$, $x\mapsto f(x)$, therefore is well-defined.

To show that the trace $f$ is continuous, let $x\in X$ and $\epsilon > 0$. Let also $\gamma \colon (\N_0,0) \to (\widehat X,\ast)$ be a centered vertical geodesic tending to $x$. Let now $A_t>0$ be the structure constant \eqref{eq:As} and $H=H(\alpha,\beta)>0$ a constant as in Lemma \ref{bdd hausdorff vqi rays} for $\widehat Y$. Since $\ell(\varphi(\gamma(j))) \to \infty$ as $j\to \infty$, we may fix a level $j_0\in \N_0$ for which 
\[
3(A_t + 1) t^H t^{-\ell(\varphi(\gamma(j)))} < \epsilon/2
\]
for all $j\ge j_0$.

Let now $\delta=s^{-j_0}/2$. We show that $f_\varphi B(x,\delta) \subset B(f_\varphi(x),\epsilon)$. Let $x'\in B(x,\delta)$ and let $\gamma'\colon (\N_0,0) \to (\widehat X, \ast)$ be a vertical geodesic tending to $x'$. Since $d(x,x')<s^{-j_0}$, we may further assume that $\gamma(j_0)=\gamma'(j_0)$. Then, by Lemma \ref{qg filling lemma}, we have that
\[
d(f(x), c(B_{\varphi(\gamma'(j_0))}) < 3(A_t +1) t^H t^{-\ell(\varphi(\gamma(j_0)))} < \varepsilon/2
\]
for the center of the ball $B_{\varphi(\gamma'(j_0))}$ in $X$ and likewise for $f(x')$. Thus, by the triangle inequality, $d(f(x),f(x')) < \varepsilon$. Hence, the map $f_\varphi$ is continuous.
\end{proof}

Heuristically, the trace is a left inverse of the hyperbolic filling. We state this formally as follows. Note that, in the statement, the assumption that the hyperbolic filling is a vertical quasi-isometry is required by the definition of the trace.

\begin{lemma}
\label{lemma:trace-filling}
Let $X$ and $Y$ be compact metric spaces and let $\widehat{X} \in \HF(X)$ and $\widehat{Y} \in \HF(Y)$ be their hyperbolic fillings, respectively.  Let $f \colon X \to Y$ be a non-collapsing map admitting a hyperbolic filling $\varphi \colon \widehat X \to \widehat Y$, which is a vertical quasi-isometry. 
Then the trace $f_\varphi \colon X\to Y$ of $\varphi$ satisfies $f_\varphi = f$.
\end{lemma}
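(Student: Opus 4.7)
The plan is to fix $x \in X$ and a suitably chosen vertical geodesic $\gamma \colon (\N_0, 0) \to (\widehat X, \ast)$ tending to $x$, and then to show that both $f(x)$ and $f_\varphi(x)$ lie in (or arbitrarily close to) the vertex balls $B_{\varphi(\gamma(n))}$ in $Y$, whose diameters shrink to zero. The equality $f(x) = f_\varphi(x)$ then follows by letting $n \to \infty$.

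For the first ingredient, I would invoke Lemma \ref{lemma:centered-geodesics} to choose $\gamma$ centered at $x$, which ensures $x \in B_{\gamma(n)}$ for every $n \in \N_0$. By the definition of a hyperbolic filling of $f$ (Definition \ref{Extension def}), $f(B_{\gamma(n)}) \subset B_{\varphi(\gamma(n))}$, so in particular $f(x) \in B_{\varphi(\gamma(n))}$ for every $n$; hence $d_Y(f(x), c_{\widehat Y}(\varphi(\gamma(n)))) < 2 t^{-\ell(\varphi(\gamma(n)))}$, where $t>1$ is the scaling constant of $\widehat Y$.

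For the second ingredient, I would use the assumption that $\varphi$ is a vertical quasi-isometry: $\varphi \circ \gamma$ is a vertical $(\alpha,\beta)$-quasigeodesic in $\widehat Y$, which (by the argument in Lemma \ref{lemma:trace_basic}) tends to the point $f_\varphi(x) \in Y \simeq \partial \widehat Y$. Lemma \ref{qg filling lemma} applied to this quasigeodesic then yields
\[
d_Y\bigl(f_\varphi(x),\, c_{\widehat Y}(\varphi(\gamma(n)))\bigr) \le 3(A_t+1)\, t^{H}\, t^{-\ell(\varphi(\gamma(n)))},
\]
where $H = H(\alpha,\beta,\delta)$ is the stability constant of Lemma \ref{bdd hausdorff vqi rays} for $\widehat Y$.

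Combining these two bounds by the triangle inequality controls $d_Y(f(x), f_\varphi(x))$ by a uniform multiple of $t^{-\ell(\varphi(\gamma(n)))}$. Because $\varphi \circ \gamma$ is an honest quasigeodesic ray, $\ell(\varphi(\gamma(n))) \to \infty$ as $n\to\infty$, so the bound collapses to zero and $f(x) = f_\varphi(x)$. The only slightly delicate point is making sure that both $f(x)$ and $f_\varphi(x)$ are anchored to the \emph{same} sequence of vertex balls; this is exactly what the hyperbolic-filling inclusion $f(B_v) \subset B_{\varphi(v)}$ provides, and it is the one place where the non-collapsing hypothesis on $f$ is needed (to guarantee that the filling $\varphi$ is well-defined in the first place).
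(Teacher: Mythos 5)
Your proof is correct and follows essentially the same route as the paper's: fix a centered geodesic $\gamma$ at $x$, note $f(x) \in B_{\varphi(\gamma(k))}$ from the filling inclusion $f(B_v) \subset B_{\varphi(v)}$, and use the fact that $\varphi \circ \gamma$ is a quasigeodesic ray tending to $f_\varphi(x)$ together with the shrinking of $\diam B_{\varphi(\gamma(k))}$ to conclude $f(x) = f_\varphi(x)$. Your invocation of Lemma \ref{qg filling lemma} merely makes explicit the quantitative version of the step the paper treats qualitatively.
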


\begin{proof}
Let $x\in X$ and let $\gamma \colon (\N_0,0) \to (\widehat{X},\ast)$ be a vertical geodesic centered at $x$. It suffices to show $\varphi \circ\gamma$ tends to $f(x)$. Since $\varphi_f$ is a vertical quasi-isometry, it follows that $\varphi\circ \gamma$ tends to a point $y\in Y$.  Since $f(B_{\gamma(k)}) \subseteq B_{\varphi(\gamma(k))}$ for each $k\in \N_0$, it follows that $f(x) \in B_{\varphi_f(\gamma(k))}$ for all $k$.  Since $\diam(\varphi_f(\gamma(k))) \to 0$ as $k\to \infty$, we have that $y = f(x)$.  
\end{proof}

\subsection{Finite multiplicity of the trace}

We now add the assumption that our vertical quasi-isometry has finite multiplicity and show that the trace has finite multiplicity.

\begin{lemma}\label{degree lemma}
Let $X$ and $Y$ be compact metric spaces, where $Y$ is doubling, and let $\widehat{X} \in \HF_s(X)$ and $\widehat{Y} \in \HF_t(Y)$. Let $f\colon X\to Y$ be a trace of an $(\alpha,\beta)$-vertical quasi-isometry $\varphi \colon \widehat X \to \widehat Y$ of finite multiplicity. Then $f$ has finite multiplicity, quantitatively.  That is,
\[
N(f) \lesssim N(\varphi)\left( \alpha +\beta \right),
\]
where the constant depends only on $t$, $s$, and the doubling constant of $Y$.
\end{lemma}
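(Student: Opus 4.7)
Fix $y \in Y$ and suppose $x_1,\ldots,x_k$ are distinct points in $f^{-1}(y)$. Using Lemma \ref{lemma:centered-geodesics}, for each $i$ choose a centered vertical geodesic $\gamma_i \colon (\N_0,0) \to (\widehat{X},\ast)$ at $x_i$. Each $\varphi \circ \gamma_i \colon (\N_0,0) \to (\widehat{Y},\ast)$ is then a vertical $(\alpha,\beta)$-quasigeodesic, and by Lemma \ref{lemma:trace_basic} it tends to $f(x_i) = y$. The objective is to bound $k$ by a constant independent of $y$.

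The key idea is to pass to a common ``target level'' $L \in \N$ in $\widehat{Y}$ (rather than a common level $n$ in $\widehat{X}$), so that all the relevant images land in a bounded number of shadows of $y$. Since $|v - \ast| = \ell(v)$ in a hyperbolic filling, the upper vertical quasi-isometry bound applied to consecutive geodesic indices gives $|\ell(\varphi(\gamma_i(n+1))) - \ell(\varphi(\gamma_i(n)))| \le \alpha + \beta$. Set
\[
n_i \;=\; \min\{n \in \N_0 : \ell(\varphi(\gamma_i(n))) \ge L\}.
\]
Since $\varphi \circ \gamma_i$ tends to $\infty$ in level, $n_i$ is well defined and, by the above jump estimate, $\ell(\varphi(\gamma_i(n_i))) \in [L, L + \alpha + \beta)$. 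By Lemma \ref{qg filling lemma}, $\varphi(\gamma_i(n_i))$ lies in the shadow $S^{\alpha,\beta}_{\widehat{Y}}(y;\ell(\varphi(\gamma_i(n_i))))$. Lemma \ref{qg filling lemma 2} bounds each such shadow by $m$, where $m$ depends only on $\alpha$, $\beta$, $t$, the Gromov hyperbolicity of $\widehat{Y}$ (itself a function of $t$), and the doubling constant of $Y$. Summing over the integer levels in $[L, L + \alpha + \beta)$ yields
\[
|\{\varphi(\gamma_i(n_i)) : 1 \le i \le k\}| \;\le\; m\lceil \alpha + \beta \rceil.
\]
Since $\varphi$ has multiplicity at most $N(\varphi)$, pulling back via $\varphi$ gives $|\{\gamma_i(n_i) : 1 \le i \le k\}| \le N(\varphi)\, m \lceil \alpha + \beta \rceil$.

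To finish, I show that for $L$ sufficiently large the map $i \mapsto \gamma_i(n_i)$ is injective, so that $k$ equals the cardinality on the left. If $n_i \ne n_j$, then $\ell(\gamma_i(n_i)) = n_i \ne n_j = \ell(\gamma_j(n_j))$, so the vertices are trivially distinct. If $n_i = n_j$, then the fact that $\gamma_i$, $\gamma_j$ are centered at $x_i \ne x_j$ implies that $c(\gamma_i(n_i))$ and $c(\gamma_j(n_i))$ lie within $s^{-n_i}$ of $x_i$ and $x_j$ respectively; once $n_i \ge (L - \beta)/\alpha$ is large enough that $2 s^{-n_i} < d(x_i, x_j)$, the centers and hence the vertices differ. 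Choosing $L$ above this threshold (which depends on $\min_{i \ne j} d(x_i,x_j)$, but not on the final bound) gives
\[
k \;\le\; N(\varphi)\, m \lceil \alpha + \beta \rceil \;\lesssim\; N(\varphi)(\alpha + \beta),
\]
with the implicit constant depending only on $t$, $s$, and the doubling constant of $Y$, as claimed.

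The main (mild) obstacle is this injectivity argument: one must not try to fix a common level $n$ in $\widehat{X}$ (since the image levels spread over a range growing with $n$, giving no uniform shadow bound), and one must handle the two cases $n_i \ne n_j$ and $n_i = n_j$ separately. Everything else is a bookkeeping combination of the shadow bound, the finite multiplicity of $\varphi$, and the Lipschitz-type control of $\ell(\varphi \circ \gamma_i)$ coming from the vertical quasi-isometry condition.
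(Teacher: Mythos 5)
Your proof is correct and follows essentially the same strategy as the paper's: push each vertical geodesic forward to the first index where its $\varphi$-image crosses a common target level in $\widehat Y$, observe that these images land in at most $\lceil\alpha+\beta\rceil$ shadows of $y$ whose sizes are controlled by Lemma~\ref{qg filling lemma 2}, and pull back through the finite multiplicity of $\varphi$. The only cosmetic difference is that you fix the target level $L$ in $\widehat Y$ first and verify injectivity of $i \mapsto \gamma_i(n_i)$ afterwards via the two cases $n_i\ne n_j$ and $n_i=n_j$, whereas the paper first fixes a separation level $K$ in $\widehat X$ and then pushes past $n=\sup_i\ell(\varphi(\gamma_{x_i}(K)))$; these are equivalent reorganizations of the same argument.
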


\begin{proof}
Let $y \in Y$ and let $G = f^{-1}(y)$.  For each $x \in G$, let $\gamma_x \colon (\N_0,0) \to (\widehat X, \ast)$ be a vertical geodesic tending to $x$.
Let $L \in \N$ be a natural number such that $L \leq |f_\varphi^{-1}(y)|$. We show that $L$ has a uniform upper bound.

By Lemma \ref{basic filling lemma 2}, there are points $x_1, \dots, x_L \in G$ and a level $K$ depending on $x_1, \dots, x_L$ such that for all $k, k' \geq K$ we have that $\gamma_{x_i}(k) \neq \gamma_{x_j}(k')$ when $i \neq j$.  Let $n = \sup_i{\ell(\varphi(\gamma_{x_i}(K)))}$.  For each $x_i$, let 
\begin{equation*}
k_i = \inf\{k : k \geq K \text{ and } \ell(\varphi(\gamma_{x_i}(k + 1)) > n\}.
\end{equation*}
Then $\ell(\varphi(\gamma_{x_i}(k_i + 1))) > n$ and $\ell(\varphi(\gamma_{x_i}(k_i))) \leq n$ for each $i=1,\ldots, L$.  

Since $\varphi$ is an $(\alpha, \beta)$-vertical quasi-isometry, we have that $\ell(\varphi(\gamma_{x_i}(k_i + 1))) \leq n + \alpha + \beta$ for each $i=1,\ldots, L$.  Since each $\varphi(\gamma_{x_i})$ tends to $y$, we have, by Lemma \ref{qg filling lemma 2}, that there are at most $m(\alpha + \beta)$ possible images of $\varphi(\gamma_{x_i}(k_i + 1))$, where $m\in \N_0$ depends only on $\alpha$, $\beta$, $\eta$, $t$, and the doubling constant of $Y$.  Hence we have that $L \leq N(\varphi) m(\alpha + \beta)$.  As $y$ and $L$ were arbitrary, we conclude that $N(f) \leq N(\varphi) m(\alpha+\beta)$.
\end{proof}

\subsection{Traces of finite multiplicity vertical quasi-isometries are branched quasisymmetric}
\label{sec:traces_R_BQS}

In this section, we show that traces of vertical quasi-isometries of finite multiplicity between hyperbolic fillings are branched quasisymmetries, i.e.~Theorem \ref{thm:intro-VQI2pBQS} in the introduction. We reformulate this theorem as follows.

\begin{thm}
\label{thm:VQI2pBQS}
Let $X$ and $Y$ be compact, doubling metric spaces and let $\widehat X \in \HF_s(X)$ and $\widehat Y\in \HF_t(Y)$ be hyperbolic fillings. Let $\varphi \colon (\widehat X,\ast) \to (\widehat Y, \ast)$ be a pointed vertical quasi-isometry of finite multiplicity. Then the trace $f\colon X\to Y$ of $\varphi$ is a power branched quasisymmetry of finite multiplicity. 
\end{thm}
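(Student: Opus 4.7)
The plan is as follows. The well-definedness and continuity of the trace $f\colon X\to Y$ follow from Lemma~\ref{lemma:trace_basic}, and its finite multiplicity with the quantitative bound asserted in the theorem is exactly Lemma~\ref{degree lemma}. What remains is to establish the power branched quasisymmetry inequality.

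Fix intersecting continua $E,E'\subset X$ with $c\in E\cap E'$, and let $v_E,v_{E'}\in\widehat X$ be vertices of maximal level with $E\subset B_{v_E}$ and $E'\subset B_{v_{E'}}$, so that $\diam E\simeq s^{-\ell(v_E)}$ and $\diam E'\simeq s^{-\ell(v_{E'})}$ by Lemma~\ref{vertex comp}. For the \emph{upper bound} $\diam fE\lesssim t^{-\ell(\varphi(v_E))}$, for each $x\in E$ I would produce a vertical geodesic centered at $x$ visiting $v_E$ (Lemma~\ref{lemma:centered-geodesics}), observe that its $\varphi$-image is a vertical $(\alpha,\beta)$-quasigeodesic tending to $f(x)$ and visiting $\varphi(v_E)$, and invoke Lemma~\ref{qg filling lemma} to conclude $d(f(x),c(\varphi(v_E)))\lesssim t^{-\ell(\varphi(v_E))}$. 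The analogous bound holds for $E'$.

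The key \emph{level comparison} would come from a vertical geodesic $\gamma_c$ in $\widehat X$ tending to $c$: since both $v_E$ and $v_{E'}$ lie in the bounded shadow of $c$ (Lemma~\ref{basic filling lemma 3}), they are within bounded graph distance of $w_E:=\gamma_c(\ell(v_E))$ and $w_{E'}:=\gamma_c(\ell(v_{E'}))$, respectively. Applying Lemma~\ref{upper qi} and using that $\varphi\circ\gamma_c$ is a vertical $(\alpha,\beta)$-quasigeodesic tending to $f(c)$ together with Lemma~\ref{bdd hausdorff vqi rays} yields the two-sided estimate
\[
\tfrac{1}{\alpha}|\ell(v_E)-\ell(v_{E'})|-O(1)\le|\ell(\varphi(v_E))-\ell(\varphi(v_{E'}))|\le\alpha|\ell(v_E)-\ell(v_{E'})|+O(1),
\]
with the correct sign dictated by the monotonicity of position along the quasigeodesic $\varphi\circ\gamma_c$. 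Once a matching \emph{lower bound} $\diam fE\gtrsim t^{-\ell(\varphi(v_E))}$ is in place, substituting this into $\diam fE/\diam fE'\simeq t^{\ell(\varphi(v_{E'}))-\ell(\varphi(v_E))}$ and translating levels back to diameters via Lemma~\ref{vertex comp} delivers the power branched quasisymmetry gauge $\eta(r)=C\max\{r^{q},r^{1/q}\}$ with $q=\log_{s}t/\alpha$ and constants depending only on $\alpha,\beta$, the hyperbolicity constants, and the doubling constant of $Y$.

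The main obstacle is the sharp lower bound on $\diam fE$; a merely bi-H\"older bound with mismatched exponents $\log_{s}t/\alpha$ and $\alpha\log_{s}t$ would only yield a valid BQS gauge when $\alpha=1$. To obtain the matching exponent I would apply Lemma~\ref{geod split lemma} to produce $J$ vertical geodesics $\gamma_1,\dots,\gamma_J$ centered at points $z_1,\dots,z_J\in E$ all visiting $v_E$ and pairwise separated beyond level $n_0:=\ell(v_E)+k_0$. Finite multiplicity of $\varphi$ ensures that many $\varphi(\gamma_i(n_0))\in\widehat Y$ are distinct, finite multiplicity of $f$ and Lemma~\ref{quasigeod close lemma} imply that many $f(z_i)$ are distinct, and the shadow bound of Lemma~\ref{qg filling lemma 2} combined with the doubling of $Y$ forces the distinct boundary values to be spread out over a region of diameter $\gtrsim t^{-\ell(\varphi(v_E))}$ around $c(\varphi(v_E))$, for $J$ chosen appropriately in terms of $N(\varphi)$, $N(f)$, and the doubling data. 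This pigeonhole-type argument coupling the three finite invariants is the delicate point of the proof.
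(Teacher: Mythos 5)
Your overall architecture is right: Lemmas \ref{lemma:trace_basic} and \ref{degree lemma} dispose of continuity and finite multiplicity, the upper diameter bound from shadows is fine, and you correctly identify the sharp lower bound $\diam fE\gtrsim t^{-\ell(\varphi(v_E))}$ as the crux, together with Lemma \ref{geod split lemma} as the tool that supplies well-separated geodesics through $v_E$. The choice of vertices $v_E,v_{E'}$ and the reduction to vertices on a common centered geodesic $\gamma_c$ (the content of Lemma \ref{geod ball diam lemma}) are also on target. However, two of the delicate steps are not carried out correctly.

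First, the pigeonhole mechanism you describe for the lower bound does not obviously close. You propose combining $N(\varphi)$, $N(f)$, Lemma \ref{qg filling lemma 2} and the doubling of $Y$ to argue that the boundary values $f(z_i)$ must ``be spread out over a region of diameter $\gtrsim t^{-\ell(\varphi(v_E))}$.'' But having many distinct points $f(z_i)$ in $fE$ does not force $fE$ to have large diameter — a doubling bound on shadows caps the number of vertices near a point, it does not push points apart. In the paper's proof of Lemma \ref{ball diam lemma}, the contradiction is obtained entirely inside the fillings, not on the boundary: one fixes the vertex $w$ of highest level containing $fE$, takes a geodesic segment $\sigma$ from $\ast$ to $w$, forms a bounded set $\Sigma$ consisting of vertices near $\sigma$ in a narrow level band just below $\ell(w)$ (bounded cardinality by the degree of $\widehat Y$, i.e.\ the doubling of $Y$), sets $J=N(\varphi)|\Sigma|+1$, and uses Lemma \ref{geod split lemma} plus Lemma \ref{quasigeod close lemma} to produce $J$ distinct vertices of $\widehat X$ on the branching geodesics whose $\varphi$-images all land in $\Sigma$, contradicting the multiplicity of $\varphi$. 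The multiplicity $N(f)$ plays no role there (Lemma \ref{degree lemma} is invoked only to guarantee that $fE$ is not a single point so that Lemma \ref{vertex comp} applies), and $J$ is chosen purely from $N(\varphi)$ and the doubling of $Y$.

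Second, the phrase ``with the correct sign dictated by the monotonicity of position along the quasigeodesic $\varphi\circ\gamma_c$'' hides a real gap. Levels along a vertical $(\alpha,\beta)$-quasigeodesic are \emph{not} monotone; they may drop by $O(\alpha+\beta)$ repeatedly. Concretely, in the case $\ell(v_E)>\ell(v_{E'})$ (so $\diam E/\diam E'\lesssim 1$) but $\ell(\varphi(v_E))\le\ell(\varphi(v_{E'}))$, the two-sided bound on $|\ell(\varphi(v_E))-\ell(\varphi(v_{E'}))|$ gives no usable upper bound on $\diam fE/\diam fE'$ unless one first shows $\ell(v_E)-\ell(v_{E'})$ is bounded. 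The paper does exactly this in Case 3 of its proof by a separate counting argument: the image of the segment of $\gamma_c$ between $v_{E'}$ and a vertex one level above where the quasigeodesic first exceeds $\ell(\varphi(v_{E'}))$ lies in a bounded tube around a geodesic in $\widehat Y$ (by stability), and finite multiplicity of $\varphi$ bounds its length. Without a case split of this kind, the estimate you write down is one-sided and does not yield a valid BQS gauge.
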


The assumption that the vertical quasi-isometry has finite multiplicity is necessary as the following example shows.

\begin{example}
Consider $X = [0,1]^2$ and $Y = [0,1]$.  Let $f \colon X \to Y$ be the second coordinate projection $(x,y) \mapsto y$. Let $\widehat{X} \in \HF(X)$ be the hyperbolic filling of $X$ obtained by taking as vertex centers all dyadic points, that is, $\{(x,y) : x, y \in \{ i2^{-n} : 0 \leq i \leq 2^n\}\}$. Similarly, let  $\widehat{Y} \in \HF(Y)$ be obtained similarly with centers in $\{i2^{-n} : 0 \leq i \leq 2^n\}$.  Let then $\varphi \colon \widehat{X} \to \widehat{Y}$ be the unique map satisfying $c_{\widehat Y}(\varphi(v))=f(c_{\widehat X}(v))$ and $\ell_{\widehat Y}(\varphi(v))=\ell_{\widehat X}(v)$ for all $v\in \widehat X$. Then $f$ is a trace of $\varphi$; in fact, $\varphi$ is also a hyperbolic filling of $f$. The map $\varphi$ is a vertical $(1,0)$-quasi-isometry, but $f$ is not a branched quasisymmetry, since $f^{-1}(y) = [0,1]\times \{y\}$ for each $y\in [0,1]$.
\end{example}

Since we need to verify the distortion condition \eqref{BQS} for intersecting continua in $X$, we begin the proof of Theorem \ref{thm:VQI2pBQS} with a lemma which associates diameters of images of continua in $X$ to diameters of images $fB_v$ of vertex balls $B_v$ for $v\in \widehat X$.

\begin{lemma}\label{ball diam lemma}
Let $X$ and $Y$ be compact metric spaces, and let $\widehat{X} \in \HF_s(X)$ and $\widehat{Y} \in \HF_{t}(Y)$ be their hyperbolic fillings, respectively.  Let $\varphi \colon (\widehat X,\ast) \to (\widehat Y, \ast)$ be a pointed $(\alpha, \beta)$-vertical quasi-isometry of finite multiplicity, and let $f\colon X\to Y$ be the trace of $\varphi$. Then, for a continuum $E \subset X$ and a vertex $v \in \widehat X$ of highest level satisfying $E \subseteq B_v$, 
\begin{equation*}
\diam(fE) \simeq \diam(B_{\varphi(v)}),
\end{equation*}
where the constants depend on $\alpha, \beta, s, t, N(\varphi)$.
\end{lemma}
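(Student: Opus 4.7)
The plan is to establish the two inequalities of $\diam(fE) \simeq \diam(B_{\varphi(v)})$ separately. The upper bound is a direct application of the shadow estimate for vertical quasigeodesics. Each $x \in E \subseteq B_v$ admits, by Lemma \ref{lemma:centered-geodesics}, a vertical geodesic $\gamma_x \colon (\N_0, 0) \to (\widehat X, \ast)$ visiting $v$, that is, $\gamma_x(\ell(v)) = v$. Since $\varphi$ is a vertical $(\alpha,\beta)$-quasi-isometry, the composition $\varphi \circ \gamma_x$ is a vertical $(\alpha,\beta)$-quasigeodesic in $\widehat Y$ visiting $\varphi(v)$, and by the definition of the trace (Lemma \ref{lemma:trace_basic}) it tends to $f(x)$. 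Applying Lemma \ref{qg filling lemma} to $\varphi \circ \gamma_x$ at the vertex $\varphi(v)$ yields
\[
d(f(x), c_{\widehat Y}(\varphi(v))) \le 3(A_t + 1) t^{H} t^{-\ell(\varphi(v))},
\]
where $H = H(\alpha,\beta,\delta)$ is the constant from Lemma \ref{bdd hausdorff vqi rays} for the hyperbolicity constant $\delta$ of $\widehat Y$. Hence $f(E)$ is contained in a ball of radius $\lesssim t^{-\ell(\varphi(v))}$ about $c_{\widehat Y}(\varphi(v))$, so $\diam(fE) \lesssim t^{-\ell(\varphi(v))} \simeq \diam(B_{\varphi(v)})$.

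For the lower bound $\diam(fE) \gtrsim \diam(B_{\varphi(v)})$, I would argue by contradiction combining Lemma \ref{geod split lemma}, Lemma \ref{qg filling lemma 2}, and the finite multiplicity of $\varphi$. Let $m$ be the uniform bound on $\#S^{\alpha,\beta}_{\widehat Y}(y;n)$ from Lemma \ref{qg filling lemma 2} and let $N := N(\varphi)$. Apply Lemma \ref{geod split lemma} with $J := Nm + 1$ to produce points $z_1,\dots,z_J \in E$ and vertical geodesics $\gamma_j$ through $v$ centered at $z_j$ whose vertices $w_j := \gamma_j(\ell(v)+k_0)$ are pairwise distinct. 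Suppose, toward a contradiction, that $\diam(fE) \le c \diam(B_{\varphi(v)})$ for a constant $c>0$ to be chosen small. Then all $f(z_j)$ lie within $c \diam(B_{\varphi(v)})$ of $f(z_1)$. Since each $\varphi(w_j)$ lies on the vertical quasigeodesic $\varphi \circ \gamma_j$ tending to $f(z_j)$, Lemma \ref{qg filling lemma} places the center $c_{\widehat Y}(\varphi(w_j))$ within $3(A_t+1) t^H t^{-\ell(\varphi(w_j))}$ of $f(z_j)$. For $c$ small enough, all the $\varphi(w_j)$ are therefore forced into a common shadow $S^{\alpha,\beta}_{\widehat Y}(f(z_1);n)$ at a common level $n$. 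That shadow has at most $m$ vertices by Lemma \ref{qg filling lemma 2}, and since $\varphi$ has multiplicity at most $N$, the total number of distinct $w_j$ that can land there is at most $Nm < J$, contradicting distinctness.

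The main technical obstacle lies in the level bookkeeping for the lower bound. Because $\varphi$ is only a vertical quasi-isometry with additive slack $\beta$, the images $\varphi(w_j)$ do not all lie on a single level in $\widehat Y$; their levels spread across a window of width $O(\alpha k_0 + \beta)$ around $\ell(\varphi(v))$. I would address this either by enlarging the single shadow to the union of shadows $\bigcup_n S^{\alpha,\beta}_{\widehat Y}(f(z_1);n)$ over the relevant level-window (whose total cardinality remains uniformly bounded by the doubling of $Y$ and Lemma \ref{qg filling lemma 2}), or by using Lemma \ref{bdd hausdorff vqi rays} to replace each $\varphi \circ \gamma_j$ by a bona fide vertical geodesic to $f(z_j)$ for which exact level bookkeeping is available. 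Once this adjustment is made, the pigeonhole contradiction with finite multiplicity delivers the desired lower bound, with implicit constants depending only on $\alpha$, $\beta$, $s$, $t$, $N(\varphi)$, and the doubling constant of $Y$.
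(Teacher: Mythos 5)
Your upper bound matches the paper's: for $x \in E$ take a vertical geodesic through $v$ tending to $x$, note that $\varphi$ carries it to a vertical quasigeodesic through $\varphi(v)$ tending to $f(x)$, and apply Lemma~\ref{qg filling lemma}.

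The lower bound uses the same three ingredients as the paper (Lemma~\ref{geod split lemma}, shadow cardinality, multiplicity pigeonhole), but your bookkeeping has an unaddressed circularity. You sample all $J$ geodesics at a single level $\ell(v)+k_0$ in $\widehat X$ and try to pigeonhole the images $\varphi(w_j)$. As you note, the vertices $\varphi(w_j)$ spread over a level window of width $O(\alpha k_0 + \beta)$ in $\widehat Y$, so the union of shadows has cardinality $O(m(\alpha k_0 + \beta))$, not $m$. The pigeonhole therefore requires $J > N(\varphi)\cdot O(m(\alpha k_0 + \beta))$, but $k_0$ is determined by $J$ in Lemma~\ref{geod split lemma} (it grows like $\log_s J$), so setting $J = Nm+1$ is not enough and the requirement on $J$ is circular. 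The circularity can be broken — the right-hand side grows only logarithmically in $J$, so a self-consistent $J$ exists — but you neither flag nor resolve this, and the assertion that the cardinality of the enlarged shadow "remains uniformly bounded" is false for the $J$ you chose.

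The paper sidesteps this entirely by anchoring the pigeonhole target in $\widehat Y$. It fixes $w$ of maximal level with $fE \subseteq B_w$, takes a geodesic segment $\sigma$ from $\ast$ to $w$, and sets $\Sigma$ to be the vertices within $2H_1$ of $\sigma$ with levels in the \emph{fixed-width} window $[\ell(w)-\alpha-\beta, \ell(w))$; then $J = N(\varphi)|\Sigma|+1$ is defined \emph{before} Lemma~\ref{geod split lemma} is invoked and $k_0$ is produced. Crucially, instead of sampling all geodesics at the common level $\ell(v)+k_0$, the paper uses a per-geodesic exit level $k_j$ (the first $k$ with $\ell(\varphi(\gamma_j(\ell(v)+k))) \geq \ell(w)$) and places $\gamma_j(\ell(v)+k_j-1)$ into the pigeonhole: these all map into the width-$(\alpha+\beta)$ window, so $|\Sigma|$ depends only on $\alpha$, $\beta$, $H_1$, and the degree of $\widehat Y$, never on $k_0$ or $J$. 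Your second suggested fix (replacing $\varphi\circ\gamma_j$ by genuine geodesics via Lemma~\ref{bdd hausdorff vqi rays}) points toward the paper's argument, but the essential adjustment you are missing is adapting the sampling time per geodesic rather than enlarging the window.
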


\begin{proof}
We first show that $\diam(fE) \lesssim \diam(B_{\varphi(v)})$.  Let $x \in E$, $y = f(x)$, and let $\gamma \colon (\N_0,0) \to (\widehat X,\ast)$ be a vertical geodesic passing through $v$ and tending to $x$.
Then, $\varphi\circ \gamma$ is an $(\alpha, \beta)$-quasigeodesic tending to $f(x)$. By Lemma \ref{qg filling lemma}, we have that
\[
d(f(x), c_{\widehat X}(B_v)) \le 3(A_t + 1) t^H t^{-\ell(\varphi(v))}.
\]
We conclude that
\[
\diam(fE) \le 6(A_t + 1) t^H t^{-\ell(\varphi(v))} \lesssim \diam(B_{\varphi(v)}).
\]

We prove now the other direction $\diam(B_{\varphi(v)}) \lesssim \diam(fE)$. By Lemma \ref{degree lemma}, $fE$ is not a point. Since $f$ is continuous by Lemma \ref{lemma:trace_basic}, we have that $fE$ is a continuum. Thus we may fix a vertex $w \in \widehat Y$ of maximal level such that $fE \subset B_w$.  By Lemma \ref{vertex comp}, we have that $\diam(fE) \simeq \diam(B_w) \simeq t^{-\ell(w)}$.  Since we aim to bound $\diam(fE)$ below, we may assume that $\ell(w) \ge \ell(\varphi(v))$.  

Let $\sigma \colon \{0, \dots, \ell(w)\} \to \widehat Y$ be a geodesic segment with $\sigma(\ell(w)) = w$ and $\sigma(0)=\ast$. Let $H_1>0$ be the constant in Lemma \ref{quasigeod close lemma} and let 
\begin{equation*}
\Sigma = \{w' \in \widehat Y \colon \dist(w', \sigma\{0,\ldots, \ell(w)\}) \leq 2H_1 \text{ and } \ell(w) - \alpha - \beta \leq \ell(w') < \ell(w) \}.
\end{equation*}
We fix $J = N(\varphi)|\Sigma| + 1$. By Lemma \ref{geod split lemma}, there exists $k_0\in \N$ depending only on $J$ and $s$, and geodesics $\gamma_1,\ldots, \gamma_J \colon (\N_0,0) \to (\widehat X, \ast)$ with the properties that $B_{\sigma_j(k)} \cap E \ne \emptyset$ for each $k\in \N_0$ and $1\le j\le J$, and $\gamma_j(\ell(v)+k) \ne \gamma_i(\ell(v)+k)$ for $k\ge k_0$ and $j\ne i$. 

We show that 
\begin{equation}
\label{eq:ell_w}
\ell(w) \le \ell(\varphi(v)) + \alpha(k_0+1)+ \beta.
\end{equation}
Suppose towards contradiction that \eqref{eq:ell_w} does not hold.


Since $\varphi$ is an $(\alpha,\beta)$-vertical quasi-isometry, we have that
\[
\ell(\gamma_j(\ell(v)+k)) \le \ell(\varphi(v)) + \alpha k + \beta.
\]
For each $j=1,\ldots, J$, let $k_j\in \N_0$ be the smallest index $k>0$ for which 
\[
\ell(\gamma_j(\ell(v)+k)) \ge \ell(w).
\]
Since $\ell(w)\ge \ell(\varphi(v))$ and \eqref{eq:ell_w} does not hold, we have that $k_j \ge k_0+1 > k_0$ for each $j=1,\ldots, J$.

Consider the set 
\begin{equation*}
A = \{ \gamma_j (\ell(v) + k_j - 1) : 1 \leq j \leq J\}.
\end{equation*}
Since $k_j - 1 \geq k_0$ for each $j$, we have, by \eqref{item:gpl3} in Lemma \ref{geod split lemma}, that $|A| = J$. Moreover, the levels of vertices in $\varphi(A)$ are in $[\ell(w) - \alpha - \beta , \ell(w))$.

We claim that $\varphi(A) \subseteq \Sigma$. To show this, let $\sigma_j = \varphi \circ \gamma_j \colon (\N_0,0) \to (\widehat Y,\ast)$ for each $j=1,\ldots, J$. We fix  $j\in \{1,\ldots, J\}$ and denote $y\in fE$ the limit of $\sigma_j$. Since $fE \subset B_w$ and $\sigma(\ell(w)) = w$, we may extend $\sigma$ to a geodesic $\sigma \colon (\N_0,0) \to (\widehat Y,\ast)$ tending to $y$. Then, by Lemma \ref{quasigeod close lemma}, we have that $\sigma_j$ and $\sigma$ are within $H_1$ of each other. Since $\sigma_j(\ell(v) + k_j - 1)\in \varphi(A)$, we have by the bound on the levels of $\varphi(A)$ that $\varphi(A) \subseteq \Sigma$. Since $|A| = J = N(\varphi)|X| + 1$ and $|\varphi^{-1}(X)| \leq N(\varphi)|X|$, we have a contradiction. Thus \eqref{eq:ell_w} holds. 

In particular, we have that
\[
\ell(w) \leq \ell(\varphi(v)) + D,
\]
where $D>0$ is a constant depending only on $\alpha, \beta, N(\varphi)$, $s$, and $t$. Thus
\begin{equation*}
\diam(fE) \simeq t^{-\ell(w)} \gtrsim t^{-\ell(\varphi(v))} \simeq \diam(B_{\varphi(v)}).
\end{equation*}
The claim follows.
\end{proof}

We refine Lemma \ref{ball diam lemma} to geodesics.

\begin{lemma}\label{geod ball diam lemma}
Let $X$ and $Y$ be compact metric spaces and let $\widehat{X} \in \HF_s(X)$ and $\widehat{Y} \in \HF_{t}(Y)$ be their hyperbolic fillings, respectively.  Let $\varphi \colon (\widehat X,\ast) \to (\widehat Y,\ast)$ be an $(\alpha, \beta)$-vertical quasi-isometry of finite multiplicity, and let $f\colon X\to Y$ be the trace of $\varphi$. Then, for a continuum $E \subseteq X$ and a vertical geodesic $\gamma\colon (\N_0,0)\to (\widehat X,\ast)$ tending to a point in $E$, we have for the maximal index $j\in\N$ satisfying $E \subset 2A_s B_{\gamma(j)}$ that
\[
\diam(fE) \simeq \diam(B_{\varphi(\gamma(j))}),
\]
where the constants depend only on $\alpha$, $\beta$, $s$, $t$, and $N(\varphi)$.
\end{lemma}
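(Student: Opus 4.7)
\textbf{Proof proposal for Lemma \ref{geod ball diam lemma}.} The plan is to reduce to Lemma \ref{ball diam lemma} by comparing $\gamma(j)$ with the vertex $v \in \widehat X$ of maximal level satisfying $E \subset B_v$. Let $x \in E$ be the point to which $\gamma$ tends, and recall from Corollary \ref{cor:convergence} that $d(x, c(\gamma(n))) \le A_s s^{-n}$ for all $n$. Lemma \ref{ball diam lemma} applied to $v$ gives
\[
\diam(fE) \simeq \diam(B_{\varphi(v)}),
\]
so it suffices to show that $|v - \gamma(j)|$ is bounded by a constant depending only on $s$; indeed, by the Lipschitz property of Lemma \ref{upper qi}, such a bound yields $|\ell(\varphi(v)) - \ell(\varphi(\gamma(j)))| \le |\varphi(v) - \varphi(\gamma(j))| \lesssim 1$, and hence $\diam B_{\varphi(v)} \simeq \diam B_{\varphi(\gamma(j))}$, finishing the proof.

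The first step is to show that $|\ell(v) - j|$ is bounded. From $E \subset B_v$ together with the maximality of $\ell(v)$, Lemma \ref{vertex comp} gives $\diam E \simeq s^{-\ell(v)}$. For $j$, the containment $E \subset 2A_s B_{\gamma(j)}$ yields $\diam E \le 8A_s s^{-j}$, while the failure $E \not\subset 2A_s B_{\gamma(j+1)}$ produces a point $y \in E$ with $d(y, c(\gamma(j+1))) \ge 4A_s s^{-(j+1)}$; combined with $d(x, c(\gamma(j+1))) \le A_s s^{-(j+1)}$, this gives $\diam E \ge 3A_s s^{-(j+1)}$, and hence $\diam E \simeq s^{-j}$. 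Comparing the two comparabilities shows $|\ell(v) - j|$ is bounded in terms of $s$ alone.

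The second step is to bound the graph distance $|v - \gamma(\ell(v))|$. Both vertices sit at level $\ell(v)$; since $x \in B_v$ we have $d(c(v), x) < 2s^{-\ell(v)}$, and combining with $d(x, c(\gamma(\ell(v)))) \le A_s s^{-\ell(v)}$ yields $d(c(v), c(\gamma(\ell(v)))) < (A_s + 2) s^{-\ell(v)}$. Applying Corollary \ref{cor:struct lemma} with $p = \lceil \log_s(A_s + 2) \rceil$ gives $|v - \gamma(\ell(v))| \le 2p$. Since $\gamma$ is a vertical geodesic, $|\gamma(\ell(v)) - \gamma(j)| = |\ell(v) - j|$, so by the previous step $|v - \gamma(j)|$ is bounded by a constant depending only on $s$, as required. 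The only routine obstacle is tracking the constants (and in particular verifying that the factor $2A_s$ in the definition of $j$ really interacts correctly with $A_s$ in Corollary \ref{cor:convergence} to force $\diam E \simeq s^{-j}$); once that quantitative comparison is in place, the argument reduces to citing Lemma \ref{upper qi} and Lemma \ref{ball diam lemma}.
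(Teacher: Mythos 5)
Your proof is correct and follows essentially the same route as the paper's: establish $\diam E \simeq s^{-j}$ from the maximality of $j$ (using $A_s$ from Corollary \ref{cor:convergence} against the factor $2A_s$ in the definition) and $\diam E \simeq s^{-\ell(v)}$ from Lemma \ref{vertex comp}, bound the graph distance between $\gamma(j)$ and the vertex $v$ of Lemma \ref{ball diam lemma} via Corollary \ref{cor:struct lemma}, then push through with Lemma \ref{upper qi} and Lemma \ref{ball diam lemma}. The only cosmetic difference is that you bound $|v-\gamma(j)|$ by first going through the stepping-stone vertex $\gamma(\ell(v))$ at the common level and then using the geodesic property of $\gamma$, whereas the paper applies Corollary \ref{cor:struct lemma} directly to the pair $v$, $\gamma(j)$ at possibly different levels (its statement already absorbs the $|\ell(v)-\ell(v')|$ term); both deliver the same uniform bound. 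One small caveat worth tightening: Corollary \ref{cor:struct lemma} requires the strict inequality $d(c(v),c(\gamma(\ell(v)))) < s^{-(\ell(v)-p)}$, so you should take $p$ to be the smallest integer with $s^{p} > A_s+2$ rather than $\lceil \log_s(A_s+2)\rceil$, to avoid an off-by-one when $A_s+2$ is an exact power of $s$; this does not affect the argument.
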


\begin{proof}
We show that the distance between $v=\gamma(j)$ and a vertex $v'\in \widehat{X}$ of highest level satisfying $E \subseteq B_{v'}$ is bounded independently of $\gamma$.  First we estimate $\diam(E)$ in terms of $\diam(B_v)$.  Since $E \subseteq 2A_s B_v$, we have that
\begin{equation*}
\diam(E) \leq \diam(2A_s B_v) \leq 8 A_s s^{-\ell(v)}.
\end{equation*}
Let $x\in X$ be the limit of $\gamma$. Then, by Lemma \ref{basic filling lemma 2}, we have $d(x, c_{\widehat X}(v)) \leq A_s s^{-\ell(v)}$.  Hence, if $\diam(E) \leq A_s s^{-\ell(v)}$, then $E \subseteq 2 A_s B_v$.  As $v$ was the vertex of highest level in $\gamma$ such that $E \subseteq 2A_s B_v$, it follows that
\begin{equation*}
\diam(E) > A_s s^{-\ell(v) - 1}.
\end{equation*}
So $\diam(E) \simeq \diam(B_v)$ with constants depending only on $s$.  

Since $\diam(E)$ is comparable with both $\diam(B_v)$ and $\diam(B_{v'})$ with constants only depending on $s$, there is a constant $C > 0$ depending only on $s$ for which 
\[
|\ell(v) - \ell(v')| \leq C.
\]

Let $i = \min\{ \ell(v), \ell(v')\}$. Since $E \subset B_{v'}\cap 2A_sB_v$, we have, by the triangle inequality, that
\begin{align*}
d(c_{\widehat X}(v), c_{\widehat X}(v')) &\le \frac{1}{2}\left( \diam(2A_s B_v) + \diam(B_{v'})\right) \\
&\le 2A_s s^{-\ell(v)} + s^{-\ell(v')} \le (2A_s+1)s^{-i+C}
\end{align*}
Let $p\in \N_0$ be the smallest integer for which $(2A_s+1)s^C < s^p$. Then, by Corollary \ref{cor:struct lemma}, we have that
\[
|v-v'| \le 2p + |\ell(v)-\ell(v')| \le C'.
\]
where $C'$ depends only on $s$. Hence, by Lemma \ref{upper qi}, we have 
\begin{equation*}
|\varphi(v) - \varphi(v')| \leq 2(\alpha + \beta)C'.
\end{equation*}
Since $\diam(fE) \simeq \diam(B_{\varphi(v')})$ by Lemma \ref{ball diam lemma}, we have that 
\[
\diam(fE) \simeq \diam(B_{\varphi(v')}) \simeq \diam(B_{\varphi(v)}).
\] 
The proof is complete.
\end{proof}

We are now ready to prove Theorem \ref{thm:VQI2pBQS}.

\begin{proof}[Proof of Theorem \ref{thm:VQI2pBQS}]
Let $E$ and $E'$ be intersecting continua in $X$.  Let $x \in E \cap E'$ and let $\gamma \colon (\N_0,0) \to (\widehat X,\ast)$ be a vertical geodesic ray tending to $x$. Let also $v=\gamma(j)$ and $v'=\gamma(j')$ be vertices of highest level in $\gamma$ for which $E \subseteq 2 A_s B_v$ and $E'\subseteq 2A_s B_{v'}$, respectively. Then 
\begin{equation*}
\frac{\diam (E)}{\diam (E')} \simeq \frac{\diam(B_v)}{\diam(B_{v'})} \simeq s^{\ell(v') - \ell(v)}.
\end{equation*}
By Lemma \ref{geod ball diam lemma}, we also have that 
\begin{equation*}
\frac{\diam (fE)}{\diam (fE')} \simeq t^{\ell(\varphi(v')) - \ell(\varphi(v))}.
\end{equation*}
Let $c = \log(t) / \log(s)$.
We apply the vertical quasi-isometry condition in three different cases.  

\smallskip
\noindent
\emph{Case 1:} Suppose first that $\ell(v) \leq \ell(v')$.  Then $\ell(v') - \ell(v) = |v-v'|$ and
\begin{align*}
\ell(\varphi(v')) - \ell(\varphi(v)) &\le |\ell(\varphi(v')) - \ell(\varphi(v))| \leq |\varphi(v') - \varphi(v)| \leq \alpha |v-v'| + \beta \\
&= \alpha(\ell(v')-\ell(v)) + \beta.  
\end{align*}
Hence
\begin{equation*}
\frac{\diam (fE)}{\diam (fE')} \simeq t^{\ell(\varphi(v')) - \ell(\varphi(v))} \leq t^{\alpha (\ell(v') - \ell(v)) + \beta} \simeq t^\beta \biggl(\frac{\diam(E)}{\diam(E')}\biggr)^{c\alpha}.
\end{equation*}

\smallskip
\noindent
\emph{Case 2:} Suppose now that $\ell(v) > \ell(v')$ and $\ell(\varphi(v)) > \ell(\varphi(v'))$.  Then, we have $\ell(v') - \ell(v) = -|v-v'|$.  By Lemma \ref{bdd hausdorff vqi rays}, there is a constant $H = H(\alpha, \beta, \delta)$, where $\delta$ is the Gromov hyperbolicity constant of $\widehat Y$, and a vertical geodesic $\sigma \colon (\N_0,0) \to (\widehat Y, \ast)$ for which $\Hdist(\sigma, \varphi(\gamma))\le H$. In particular, there are vertices $w$ and $w'$ on $\sigma$ for which $|w-\varphi(v)| \le H$ and $|w'-\varphi(v')|\le H$.

Since $\sigma$ is a vertical geodesic, we have, by the triangle inequality, that 
\begin{align*}
|\varphi(v) - \varphi(v')| &\leq |\varphi(v) - w| + |w-w'| + |w' - \varphi(v')| \leq 2H + |w-w'| \\
&=2H + |\ell(w)-\ell(w')| \le 2H + 2H + |\ell(\varphi(v))-\ell(\varphi(v'))|\\
&=4H +  \ell(\varphi(v))-\ell(\varphi(v')).
\end{align*}

On the other hand, we also have that
\begin{equation*}
\alpha^{-1} (\ell(v) - \ell(v')) - \beta = \alpha^{-1} |v-v'| - \beta \leq |\varphi(v) - \varphi(v')|
\end{equation*}
and so 
\begin{equation*}
\ell(\varphi(v')) - \ell(\varphi(v)) \leq \alpha^{-1} (\ell(v') - \ell(v)) + \beta + 4H.
\end{equation*}
It follows that
\begin{equation*}
\frac{\diam (fE)}{\diam (fE')} \simeq t^{\ell(\varphi(v')) - \ell(\varphi(v))} \leq t^{\alpha^{-1} (\ell(v') - \ell(v)) + \beta + 4H} \simeq t^{(\beta + 4H)} \biggl(\frac{\diam(E)}{\diam(E')}\biggr)^{c/\alpha}.
\end{equation*}

\smallskip
\noindent
\emph{Case 3:}  Suppose finally that $\ell(v) > \ell(v')$ and $\ell(\varphi(v)) \leq \ell(\varphi(v'))$.  We bound $\ell(v) - \ell(v')$.  Let $n = \ell(v')$ and let 
\begin{equation*}
k_0 = \sup \{k : \ell(\varphi(\gamma(n+k))) \leq \ell(\varphi(v'))\}. 
\end{equation*}
We have $k_0 \geq \ell(v) - \ell(v')$ as $v$ is such a vertex.  Let $v_0 = \gamma(n + k_0 + 1)$ and $\ell_0 = \ell(\varphi(v_0))$.  Then, as $\varphi\circ \gamma$ is a vertical $(\alpha,\beta)$-quasigeodesic, we have that
\begin{equation*}
\ell(\varphi(v')) <\ell_0 \leq \ell(\varphi(v')) + \alpha + \beta.
\end{equation*}
As in Case 2, we have by Lemma \ref{bdd hausdorff vqi rays} that there is a constant $H = H(\alpha, \beta, \delta)$ and a vertical geodesic $\sigma \colon (\N_0,0) \to (\widehat Y, \ast)$ for which $\Hdist(\sigma, \varphi(\gamma)) \leq H$.  In particular, 
\begin{equation*}
|\varphi(v_0) - \varphi(v')| \leq 4H + \alpha + \beta.
\end{equation*}
Let $\sigma_0$ be a geodesic from $\varphi(v_0)$ to $\varphi(v')$.  Then, the size of the $H$-neighborhood of $\sigma_0$, denoted $N_H(\sigma_0)$, is bounded above by a constant $C_0$ that depends only on $\alpha, \beta, H$, and the degree of $\widehat Y$.  Let $\gamma'$ be the segment on $\gamma$ from $v'$ to $v_0$.  The segment $\gamma'$ is part of a vertical geodesic, so by \cite[Theorem 5.6]{GH} we have $\varphi(\gamma') \subseteq N_H(\sigma_0)$.  Hence the length of $\gamma'$ is bounded above by $N(\varphi) C_0$.  This length is also $\ell(v_0) - \ell(v')$ so, as $\ell(v_0) \geq \ell(v)$, it follows that $\ell(v) - \ell(v') \leq N(\varphi) C_0$.

Now, if $\ell(v) - \ell(v') \leq C$, then 
\[
\frac{\diam(E)}{\diam(E')} \gtrsim s^{\ell(v') - \ell(v)} \gtrsim s^{-C}.
\]
Hence, we need only bound $\diam(fE) / \diam(fE')$ in this case.  For this, we proceed as in Case 1.  We have 
\begin{equation*}
|\ell(\varphi(v')) - \ell(\varphi(v))| \leq |\varphi(v') - \varphi(v)| \leq \alpha |v-v'| + \beta.  
\end{equation*}
It follows that $\ell(\varphi(v')) - \ell(\varphi(v)) \leq \alpha (\ell(v) - \ell(v')) + \beta$ and so
\begin{equation*}
\frac{\diam (fE)}{\diam (fE')} \simeq t^{(\ell(\varphi(v')) - \ell(\varphi(v)))} \leq t^{\alpha (\ell(v) - \ell(v')) + \beta} \simeq t^\beta \biggl(\frac{\diam(E')}{\diam(E)}\biggr)^{c\alpha}.
\end{equation*}

\smallskip
\noindent
\emph{End game:}
Since the map $f$ is a power branched quasisymmetry in each of the cases, we conclude that $f$ is a power quasisymmetry with constants depending only on the data.
\end{proof}

\subsection{Openness of traces}
\label{sec:traces_openness}

We have seen (Lemma \ref{degree lemma}) that finite multiplicity of a vertical quasi-isometry between vertex sets of hyperbolic fillings corresponds to discreteness of the trace map. We show that the following (discrete) path lifting property of the vertical quasi-isometry corresponds the openness of the trace map. We begin with an auxiliary definition.

Let $\widehat X \in \HF(X)$ be a hyperbolic filling of $X$. The \emph{$k$-cylinder $[\gamma]_k$ of a vertical geodesic $\gamma \colon (\N_0,0) \to (\widehat X,0) $ for $k\in \N_0$} is the set of geodesics
\[
[\gamma]_k = \{ \gamma' \in \Gamma(\widehat X, \ast) \colon \gamma'(j) = \gamma(j),\ j = 0,\ldots, k\}.
\]

\begin{defn}
\label{VQI lifting prop}
Let $X$ and $Y$ be compact metric spaces, and $\widehat X\in \HF(X)$ and $\widehat Y \in \HF(Y)$. A vertical quasi-isometry $\varphi \colon \widehat X \to \widehat Y$ has the \emph{vertical quasi-isometry lifting property} (or \emph{VQI lifting property} for short), if there exists a constant $H'>0$ for which, for each $x \in X$ and centered vertical geodesics $\gamma \in \Gamma_x(\widehat X,\ast)$ and $\sigma \in \Gamma_{f(x)}(\widehat Y,\ast)$, there exist increasing sequences $(L_k)$ and $(M_k)$ in $\N_0$ having the property that, for each $k\in \N_0$ and $\sigma' \in [\sigma]_{M_k}$, there exists  $\gamma'\in [\gamma]_{L_k}$ satisfying $\Hdist(\varphi \circ \gamma', \sigma') \leq H'$, where $f \colon X\to Y$ is the trace of $\varphi \colon \widehat X \to \widehat Y$.  Here $\Gamma_x(\widehat X, \ast)$ denotes the set of vertical geodesics $\gamma$ in $\widehat X$ such that $\gamma \to x$ and $\Gamma_{f(x)}(\widehat Y, \ast)$ is defined similarly.  
\end{defn}

\begin{thm}\label{VQI LP iff open}
Let $X$ and $Y$ be compact metric spaces and let $\widehat{X}\in \HF_s(X)$ and $\widehat{Y}\in \HF_t(Y)$ be hyperbolic fillings of $X$ and $Y$, respectively.  Let $\varphi \colon \widehat{X} \to \widehat{Y}$ be a vertical quasi-isometry and let $f \colon X \to Y$ be the trace of $\varphi$. Then, $f$ is open if and only if $\varphi$ satisfies the VQI Lifting Property.
\end{thm}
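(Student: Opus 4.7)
The plan is to prove both directions by exploiting the correspondence between the cylinder $[\gamma]_L$ in $\Gamma(\widehat X, \ast)$ and a neighborhood of $x = \lim \gamma$ in $X$ at scale $s^{-L}$. Concretely, by Lemma \ref{basic filling lemma 2} the limit of any $\gamma' \in [\gamma]_L$ lies in $(A_s+1) B_{\gamma(L)}$; conversely, for each $x' \in B_{\gamma(L)}$, Lemma \ref{lemma:centered-geodesics} produces a vertical geodesic tending to $x'$ visiting $\gamma(L)$, and by taking its initial segment from $\ast$ to $\gamma(L)$ to be $\gamma|_{\{0,\ldots,L\}}$ one obtains such a geodesic in $[\gamma]_L$. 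The analogous correspondence holds at $f(x)$ with $\sigma$ replacing $\gamma$.

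For the implication ``$f$ open $\Rightarrow$ VQI Lifting Property'', I would fix $x \in X$ and centered vertical geodesics $\gamma, \sigma$ and set $L_k = k$. Since $f$ is open and $x \in B_{\gamma(k)}$, the set $f(B_{\gamma(k)})$ contains a ball $B(f(x), r_k)$ with $r_k > 0$, and one may arrange $r_k$ to decrease. Next, choose $M_k$ increasing with $2(A_t+1) t^{-M_k} < r_k$. Given $\sigma' \in [\sigma]_{M_k}$, its limit $y'$ lies in $(A_t+1) B_{\sigma(M_k)} \subseteq B(f(x), r_k) \subseteq f(B_{\gamma(k)})$, yielding $x' \in B_{\gamma(k)}$ with $f(x') = y'$. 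The correspondence then produces $\gamma' \in [\gamma]_k$ tending to $x'$, and both $\varphi \circ \gamma'$ and $\sigma'$ are vertical quasigeodesics tending to $y'$, so Lemma \ref{quasigeod close lemma} bounds $\Hdist(\varphi \circ \gamma', \sigma')$ by a constant $H_1$ depending only on the quasi-isometry constants and the Gromov hyperbolicity of $\widehat Y$, and one may take $H' = H_1$.

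For the converse, given an open neighborhood $U$ of a point $x \in X$, I would fix centered $\gamma$ and $\sigma$ at $x$ and $f(x)$ and apply the VQI Lifting Property to obtain sequences $(L_k), (M_k)$ and a constant $H'$. Choose $k$ large enough that $(A_s+1) B_{\gamma(L_k)} \subseteq U$. Since $\sigma$ is centered at $f(x)$, any $y' \in B(f(x), t^{-M_k})$ lies in $B_{\sigma(M_k)}$, so by the correspondence there is $\sigma' \in [\sigma]_{M_k}$ tending to $y'$. The VQI Lifting Property then yields $\gamma' \in [\gamma]_{L_k}$ with $\Hdist(\varphi \circ \gamma', \sigma') \leq H'$. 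The geodesic $\gamma'$ tends to some $x' \in (A_s+1) B_{\gamma(L_k)} \subseteq U$; since bounded Hausdorff distance between quasigeodesics in a Gromov hyperbolic space forces the same boundary limit, $f(x') = \lim (\varphi \circ \gamma') = y'$, giving $B(f(x), t^{-M_k}) \subseteq f(U)$.

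The crux---and the part where the proof must be written carefully---is the correspondence between cylinders $[\gamma]_L$ and balls $B_{\gamma(L)}$ on the boundary, especially the ``surjective'' direction supplied by Lemma \ref{lemma:centered-geodesics} together with the freedom to prescribe the initial segment of the centered geodesic. Once this bridge is in place, both implications reduce to direct translations between $X$-side and $Y$-side neighborhoods, with $H'$ in the Lifting Property essentially identified with the universal quasigeodesic tracking constant $H_1$ from Lemma \ref{quasigeod close lemma}.
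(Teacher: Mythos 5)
Your proposal is correct and takes essentially the same route as the paper: both directions relate cylinders $[\gamma]_L$ in the filling to neighborhoods of the limit point in the boundary space (via Lemma \ref{basic filling lemma 2}, Corollary \ref{cor:convergence}, and the freedom to prescribe the initial segment in Lemma \ref{lemma:centered-geodesics}), use openness or the lifting property to pass between the $X$-side and $Y$-side, and invoke Lemma \ref{quasigeod close lemma} for the tracking constant $H'$. The only cosmetic difference is that you set $L_k = k$ and work directly with the vertex balls $B_{\gamma(k)}$, while the paper introduces $B_k = B(x,1/k)$ and derives $L_k$ from it; this does not change the argument. One tiny quibble: the inclusion $(A_t+1) B_{\sigma(M_k)} \subseteq B(f(x), r_k)$ is slightly loose as written, since $(A_t+1)B_{\sigma(M_k)}$ has radius $2(A_t+1)t^{-M_k}$ about a center at distance up to $t^{-M_k}$ from $f(x)$; either require $(2A_t+3)t^{-M_k} < r_k$, or skip the intermediate set and bound $d(y',f(x)) \le (A_t+1)t^{-M_k}$ directly via Corollary \ref{cor:convergence}, which already suffices.
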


\begin{proof}
Suppose first that the trace $f\colon X\to Y$ of $\varphi$ is an open map and let $x \in X$. Let $\gamma \colon (\N_0,0) \to (\widehat X, \ast)$ and $\sigma \colon (\N_0,0) \to (\widehat Y,\ast)$ be centered vertical geodesics tending to $x$ and $f(x)$, respectively. For each $k\in \N$, let $B_k = B(x,1/k) \subseteq X$.

We first specify sequences $(M_k)$ and $(L_k)$. Let $M_0 = 0$, $L_0 = 0$, and suppose that levels $M_0 < M_1< \cdots <M_{k-1}$ and $L_0 \leq L_1 \leq \cdots \leq L_{k-1}$ have been determined.  

We determine the level $M_k\in \N_0$. Since $f$ is open, there exists a ball $B_k' = B(f(x), r_k') \subseteq fB_k$.  Let now $M_k\in \N_0$ be an integer for which $2 A_t t^{-M_k} < r_k'$; we may assume that $M_k > M_{k-1}$. Note that, by Lemma \ref{cor:convergence}, each vertical geodesic $\sigma' \in [\sigma]_{M_k}$ tends to a point in $B'_k$.

Let $L_k \ge 0$ be the largest integer for which $s^{L_k} < k$. Note that, since $d(x,\gamma(L))\le s^{-L}$ for all $L\in \N_0$, we have that $B_k \subseteq B_{\gamma(L_k)}$. Moreover, for each $x'\in B_k$, there exists a vertical geodesic $\gamma' \in [\gamma]_{L_k}$ tending to $x'$.

We now show that, for these sequences $(L_k)$ and $(M_k)$, the map $\varphi \colon \widehat X \to \widehat Y$ satisfies the VQI lifting property.  Let $k\in \N_0$ and let $\sigma'\in [\sigma]_{M_k}$. Then $\sigma'$ tends to a point $y'\in B'_k$. Since $B'_k \subset fB_k$, we may fix $x'\in f^{-1}(y') \cap B_k$. Then, by our choice of $L_k$, there exists a vertical geodesic $\gamma'\in [\gamma]_{L_k}$ tending to $x'$. Since $f(x') = y'$, we have that $\varphi \circ \gamma' \colon (\N_0,0)\to (\widehat Y, \ast)$ is a vertical quasigeodesic tending $y'$. Then, by Lemma \ref{quasigeod close lemma}, we have that $\Hdist(|\varphi \circ \gamma'|, |\sigma'|) \leq H'$, where $H'$ depends only on $\alpha$ and $\beta$ and hyperbolic filling parameters of $\widehat X$ and $\widehat Y$.

We now show the VQI Lifting Property implies that $f$ is open.  It suffices to show that for every ball $B = B(x, r) \subseteq X$, the image $fB$ contains a ball $B' = B(f(x), r')$.  Let $B = B(x, r)$ be an open ball in $X$ and let $y = f(x)$.  Let $\gamma\colon (\N_0,0) \to (\widehat X, \ast)$ and $\sigma \colon (\N_0,0) \to (\widehat Y,\ast)$ be centered vertical geodesics tending to $x$ and $y$, respectively, and let $(L_k)$ and $(M_k)$ be the sequences guaranteed by the VQI lifting property. Since $L_k \to \infty$ as $k\to \infty$ and $\gamma$ is centered, we may fix, by Lemma \ref{basic filling lemma 2}, an index $k\in \N_0$ for which each vertical geodesic in $[\gamma]_{L_k}$ tends to a point in $B$.  

Since $\sigma$ is centered, we have that $y\in B_{\sigma(M_k)}$. Thus there exists $r'>0$ for which $B(y,r') \subseteq B_{\sigma(M_k)}$. Let $y' \in B(y, r')$. Since $B(y,r') \subseteq B_{\sigma(M_k)}$, there exists a vertical geodesic $\sigma' \in [\sigma]_{M_k}$ tending to $y'$. By the VQI lifting property, there exists now a vertical geodesic $\gamma' \in [\gamma]_{L_k}$ for which $\Hdist(\varphi \circ \gamma', \sigma') \le H'$. Then $\gamma'$ tends to a point $x'\in B$ by the choice of $k$. Since $\Hdist(\varphi\circ \gamma',\sigma') \le H'$, we have that $\varphi \circ \gamma'$ tends to $y'$. Thus $f(x') = y'$. We conclude that $B(y,r') \subseteq fB$. Hence, the map $f$ is open.
\end{proof}

\subsection{Surjectivity of traces}
\label{sec:traces_surjectivity}

Recall that a quasi-isometry $\psi \colon Z\to W$ between metric spaces $Z$ and $W$ is a cobounded map satisfying \eqref{QI} and that a map $\psi \colon Z\to W$ is cobounded if there exists a constant $C>0$ for which $W = B(\psi(Z),C)$. 

In the following theorem we show that the coboundedness of a vertical quasi-isometry also characterizes surjectivity of the trace map.

\begin{thm}\label{thm:bdry map surj}
Let $X$ and $Y$ be compact metric spaces and let $\widehat{X}\in \HF_s(X)$ and $\widehat{Y}\in HF_t(Y)$ be hyperbolic fillings for $X$ and $Y$, respectively. Let $\varphi \colon \widehat X \to \widehat Y$ be a vertical quasi-isometry and let $f \colon X \to Y$ be the trace of $\varphi$. Then $\varphi$ is cobounded if and only if $f$ is surjective.
\end{thm}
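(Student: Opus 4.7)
The plan is to treat the two implications separately, with the easier direction (surjectivity of $f$ implies coboundedness of $\varphi$) first.

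For the backward direction, I would start with any $w \in \widehat Y$ and need to find $v \in \widehat X$ with $|\varphi(v) - w|$ bounded by a constant independent of $w$. Extend a geodesic segment from $\ast$ to $w$ to a full vertical geodesic $\sigma \colon (\N_0,0) \to (\widehat Y,\ast)$ passing through $w$, and let $y \in Y$ be its limit. Surjectivity of $f$ gives $x \in X$ with $f(x) = y$. Pick any vertical geodesic $\gamma \colon (\N_0,0) \to (\widehat X,\ast)$ tending to $x$; then $\varphi \circ \gamma$ is a vertical $(\alpha,\beta)$-quasigeodesic tending to $y = f(x)$ by definition of the trace. Applying Lemma \ref{quasigeod close lemma} to $\varphi \circ \gamma$ and $\sigma$, both tending to $y$, yields the uniform Hausdorff bound $\Hdist(|\varphi \circ \gamma|, |\sigma|) \leq H_1$, so some $v \in |\gamma|$ satisfies $|\varphi(v) - w| \leq H_1$. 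The constant $H_1$ depends only on $\alpha$, $\beta$, and the hyperbolicity constant of $\widehat Y$, so $\varphi$ is cobounded.

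For the forward direction, assume $\varphi$ is $C$-cobounded and let $y \in Y$ with a centered vertical geodesic $\sigma \in \Gamma_y(\widehat Y,\ast)$. For each $n \in \N_0$, pick $v_n \in \widehat X$ with $|\varphi(v_n) - \sigma(n)| \leq C$. Since graph distances in $\widehat Y$ differ level by at most the distance, $\ell(\varphi(v_n)) \geq n - C$, and combining with the lower inequality in the vertical quasi-isometry condition (applied along a vertical geodesic through $v_n$ and using $|v_n - \ast| = \ell(v_n)$) gives $\ell(v_n) \to \infty$. Next, for each $n$ choose a vertical geodesic $\gamma_n$ visiting $v_n$ and let $x_n \in X$ be its limit; by Corollary \ref{cor:convergence}, $d(x_n, c_{\widehat X}(v_n)) \leq A_s s^{-\ell(v_n)} \to 0$. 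By compactness of $X$, a subsequence $x_{n_k}$ converges to some $x \in X$. Setting $y_n = f(x_n)$, the image $\varphi \circ \gamma_n$ is an $(\alpha,\beta)$-quasigeodesic tending to $y_n$ and passing through $\varphi(v_n)$, so by Lemma \ref{qg filling lemma} we have $d(y_n, c_{\widehat Y}(\varphi(v_n))) \to 0$. Since $c_{\widehat Y}(\varphi(v_n))$ and $c_{\widehat Y}(\sigma(n))$ are close (Lemma \ref{basic filling lemma 1}) and $\sigma$ is centered at $y$, we conclude $y_n \to y$. Continuity of $f$ (Lemma \ref{lemma:trace_basic}) along the subsequence then gives $f(x) = y$, proving surjectivity.

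The only subtle step is verifying $\ell(v_n) \to \infty$ and then identifying the limit $f(x)$ with $y$; the main technical obstacle is keeping straight the two separate approximations (vertex $v_n$ versus its center $c_{\widehat X}(v_n)$ in $X$, and $\varphi(v_n)$ versus $\sigma(n)$ in the filling $\widehat Y$) and chaining them through Lemmas \ref{basic filling lemma 1}, \ref{qg filling lemma}, and Corollary \ref{cor:convergence}. Once this bookkeeping is set up, both directions reduce to invocations of the geodesic stability results already established.
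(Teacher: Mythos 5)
Your proof is correct and follows essentially the same two directions and the same chain of lemmas as the paper's proof. The one structural variation is in the coboundedness $\Rightarrow$ surjectivity direction: where the paper constructs a limit vertical geodesic $\gamma$ by a diagonal argument on the $\gamma_n$ and then takes $x = \lim\gamma$, you extract a convergent subsequence of the boundary points $x_n$ directly by compactness of $X$ --- a slightly more economical route to the same place, since in both proofs the decisive step is showing $y_n = f(x_n) \to y$ via Lemmas \ref{qg filling lemma} and \ref{basic filling lemma 1} and then appealing to the continuity of $f$. One small slip worth correcting: to deduce $\ell(v_n) \to \infty$ from $\ell(\varphi(v_n)) \ge n - C$ you need the \emph{upper} vertical quasi-isometry inequality (equivalently, the Lipschitz bound of Lemma \ref{upper qi}), not the lower one as written, since the lower inequality only bounds $\ell(v_n)$ from above in terms of $\ell(\varphi(v_n))$; note also that $\ell(v_n)\to\infty$ is never actually used downstream in your argument --- only $\ell(\varphi(v_n))\to\infty$ enters the estimate from Lemma \ref{qg filling lemma}.
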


\begin{proof}
Suppose first that the trace map $f$ is surjective. Let $w \in \widehat{Y}$ and $\sigma \colon (\N_0,0) \to (\widehat{Y}, \ast)$ a vertical geodesic satisfying $\sigma(k)=w$.  Suppose that $\sigma \to y \in Y$.  We fix $x \in f^{-1}(y)$ and let $\gamma \colon (\N_0,0) \to (\widehat{X},\ast)$ be a vertical geodesic tending to $x$.  By Lemma \ref{quasigeod close lemma}, there exists a constant $H_1$ depending only on $\alpha$, $\beta$, $s$, and $t$ for which $\Hdist(\varphi \circ \gamma, \sigma) \leq H_1$.  Hence, there exists an index $j \in \N_0$ for which 
\[
|(\varphi \circ \gamma)(j) - \sigma(k)| \leq H_1.
\]
Thus $w\in B(\varphi(\widehat X), H_1)$. We conclude that $\varphi$ is cobounded.

\bigskip
Suppose now that $\varphi$ is cobounded. We show that $f$ is surjective. Let $y \in Y$ and $\sigma \colon (\N_0,0)\to \widehat Y$ a vertical geodesic tending to $y$. Since $\varphi$ is cobounded, we may fix $C>0$ for which $B(\varphi(\widehat X), C) = \widehat Y$. Then, for each $n\in \N_0$, there exists $v_n \in \widehat X$ for which $|\varphi(v_n) - \sigma(n)| < C$. Note that, since $\sigma$ is a vertical geodesic, $\ell(\varphi(v_n)) \ge n-C$ for each $n\in \N_0$. Let $w_n = \varphi(v_n)$ for each $n\in \N_0$.

For each $n\in \N_0$, we fix a vertical geodesic $\gamma_n \colon (\N_0,0) \to (\widehat X,\ast)$ for which $\gamma_n(\ell(v_n)) = v_n$. Since $\ell(\gamma_n(m)) = m$ for each $n$ and $m$, we may pass to a (diagonal) subsequence of $(\gamma_n)$ having the property that for each $n\in \N_0$ there exits an index $m_n\in \N$ for which $\gamma_m(n) = \gamma_{m_n}(n)$ for each $m\ge m_n$. Let now $\gamma \colon (\N_0,0) \to (\widehat X, \ast)$ be the vertical geodesic $n \mapsto \gamma_{m_n}(n)$.

Let $x$ be the limit of $\gamma$ in $X$. We show that $f(x) = y$. For each $k\in \N_0$, let $x_k$ be the limit of the vertical geodesic $\gamma_{m_k}$. Then $f(x_k)$ is the limit of $\varphi\circ \gamma_{m_k}$. Denote $y_k = f(x_k)$ for each $k\in \N_0$. Since
\[
d(y_k, y) \le d(y_k, c_Y(w_{m_k})) + d(c_Y(w_{m_k}), y)
\]
for each $k\in \N_0$. Since $c_Y(w_{m_k}) = c_Y(\varphi(v_{m_k})) = c_Y(\varphi(\gamma_{m_k}(\ell(v_{m_k}))))$, we have, by Lemma \ref{qg filling lemma}, that
\[
d(c_Y(w_{m_k}), y_{m_k}) \leq 3(A_t + 1) t^H t^{-\ell(w_{m_k})} \to 0
\]
as $\ell(w_{m_k}) \geq m_k - C \to \infty$ as $k \to \infty$. By Lemma  \ref{basic filling lemma 1}, 
\[
d(c_Y(w_{m_k}), c_Y(\sigma(m_k))) \leq A_t t^{C - m_k} \to 0
\]
as $k \to \infty$. 

Since $\sigma$ tends to $y$, we obtain by combining these estimates that $d(y, y_k) \to 0$ as $k\to \infty$. Thus $f(x_k) = y_k \to y$ as $k\to \infty$. Since $\gamma_{m_n}\in [\gamma]_n$ for each $n\in \N_0$, we have that $d(x_k,x)\lesssim s^{-k}$ for each $k\in \N_0$. Thus $x_k \to x$ and $f(x)=y$ by continuity of $f$ (Lemma \ref{lemma:trace_basic}). This completes the proof of the surjectivity of the trace map.
\end{proof}

\subsection{Application to local branched quasisymmetries}

As a consequence of both extension results, we obtain that local branched quasisymmetries self-improve to branched quasisymmetries. More precisely, we have the following.

\begin{thm}
\label{thm:local-self-improvement}
Let $f\colon X\to Y$ be a discrete and open local branched quasisymmetry between compact metric spaces $X$ and $Y$, where $X$ has bounded turning. Then $f$ is a branched quasisymmetry, quantitatively.
\end{thm}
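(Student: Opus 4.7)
The plan is to push $f$ up to a hyperbolic filling, convert the resulting eventual vertical quasi-isometry into a genuine one by truncation, and then recover a branched quasisymmetry on the boundary via Theorem \ref{thm:VQI2pBQS} (under the standing doubling assumption on $X$ and $Y$ invoked in the discussion preceding the theorem).

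First, fix hyperbolic fillings $\widehat X \in \HF_s(X)$ and $\widehat Y \in \HF_t(Y)$. Noting that $f$ is non-collapsing (being discrete and open), take a hyperbolic filling $\varphi\colon \widehat X \to \widehat Y$ of $f$ in the sense of Definition \ref{Extension def}. By Theorem \ref{thm:local BQS to VQI}, $\varphi$ will be an eventual $(\alpha,\beta)$-vertical quasi-isometry of finite multiplicity with some cutoff $J > 0$, quantitatively. Using Remark \ref{rmk:eventual}, replace $\varphi$ by the genuine vertical $(\alpha, 2J+\beta)$-quasi-isometry $\psi$ obtained by collapsing every vertex of level at most $J$ to $\ast$. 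Finite multiplicity of $\psi$ is preserved: for $w \ne \ast$ one has $\psi^{-1}(w) \subseteq \varphi^{-1}(w)$, and $\psi^{-1}(\ast)$ exceeds $\varphi^{-1}(\ast)$ only by the finite set $\{v \in \widehat X : \ell(v) \le J\}$, which is finite because compactness of $X$ forces each level to contain only finitely many vertices.

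The key verification, and the step I expect to require the most care, is that the trace $f_\psi$ equals $f$. Given $x \in X$ and a vertical geodesic $\gamma$ tending to $x$, the paths $\psi \circ \gamma$ and $\varphi \circ \gamma$ agree on indices $j > J$. Since $\varphi$ is a hyperbolic filling of $f$, one has $f(x) \in B_{\varphi(\gamma(j))}$ for each $j$, and $\ell(\varphi(\gamma(j))) \to \infty$ because $\varphi \circ \gamma$ is eventually an $(\alpha,\beta)$-quasigeodesic, forcing $\diam B_{\varphi(\gamma(j))} \to 0$. Thus $\psi \circ \gamma \to f(x)$, so $f_\psi = f$ by Definition \ref{bdry VQI extension}. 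Theorem \ref{thm:VQI2pBQS} applied to $\psi$ then yields that $f = f_\psi$ is a power branched quasisymmetry of finite multiplicity, quantitatively; in particular $f$ is a branched quasisymmetry. The main obstacle to guard against is that the truncation $\varphi \leadsto \psi$ could conceivably alter the trace or destroy finite multiplicity, but since traces depend only on tail behavior of vertical geodesics and $\psi$ agrees with $\varphi$ above level $J$, both concerns are handled by the observations above.
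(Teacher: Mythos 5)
Your proposal is correct and follows essentially the same route as the paper: pass to a hyperbolic filling of $f$, invoke Theorem \ref{thm:local BQS to VQI} to get an eventual vertical quasi-isometry of finite multiplicity, truncate via Remark \ref{rmk:eventual} to obtain a genuine vertical quasi-isometry $\psi$ with the same trace, and apply Theorem \ref{thm:VQI2pBQS}. The only cosmetic difference is that the paper re-derives $N(\varphi)<\infty$ from local versions of the Koebe theorem and Theorem \ref{thm:bdd BQS to bdd VQI} rather than citing the finite-multiplicity clause of Theorem \ref{thm:local BQS to VQI} directly, whereas you cite that clause; you also spell out the preservation of trace and multiplicity under truncation, which the paper leaves implicit.
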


\begin{proof}
Here our quantitative constants are allowed to depend on the locality scale of $f$.
By Theorem \ref{thm:local BQS to VQI}, the hyperbolic filling $\varphi \colon (\widehat X,\ast) \to (\widehat Y,\ast)$ is an eventual vertical quasi-isometry. By Remark \ref{rmk:eventual}, we may fix a vertical quasi-isometry $\psi \colon (\widehat X,\ast) \to (\widehat Y, \ast)$ having the same trace as $\varphi$. 

We claim that $N(\varphi) < \infty$.  This follows from local versions of Theorem \ref{thm:Koebe} and \ref{thm:bdd BQS to bdd VQI}.  Indeed, Theorem \ref{thm:Koebe} applies for all small enough balls $B$, so we may modify the proof of Theorem \ref{thm:bdd BQS to bdd VQI} by only considering non-overlapping balls with levels above the cutoff $J$ for $\varphi$.  As there are only finitely many vertices with level bounded above by $J$, we have $N(\varphi) < \infty$. Clearly also $N(\psi)<\infty$.

We apply Theorem \ref{thm:VQI2pBQS} to the vertical quasi-isometry $\psi$ to conclude that $f$, as the trace of $\psi$, is a branched quasisymmetry. 
\end{proof}


\part{Appendix}

\appendix

\section{Branched quasisymmetries and quasiregular maps}\label{QR and BQS sec}

In this appendix, we prove Theorem \ref{thm:QR=BQS} in the introduction. We consider the claim in two parts. We prove first a version of the Guo--Williams theorem for quasiregular mappings between Riemannian manifolds and then the reverse direction characterizing quasiregularity in terms of branched quasisymmetry.

Recall that a continuous map $f\colon M\to N$ between oriented Riemannian $n$-manifolds is \emph{$K$-quasiregular} if $f$ is in the Sobolev space $W^{1,n}_{\mathrm loc}(M,N)$ and satisfies the distortion inequality \eqref{QR}, that is,
\[
\norm{Df}^n \le K J_f
\]
almost everywhere in $M$. 

\subsection{Quasiregularity implies branched quasisymmetry}

As discussed, we begin with a Riemannian version of the Guo--Williams theorem.

\begin{thm}
\label{thm:Riemannian-GW}
Let $f\colon M \to N$ be a quasiregular mapping between oriented Riemannian $n$-manifolds. Then $f$ is a local branched quasisymmetry, quantitatively.
\end{thm}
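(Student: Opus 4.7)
The plan is to combine the standard Poletsky modulus inequality for quasiregular maps with the hyperbolic filling machinery of Part \ref{part:extension}: I will show that the hyperbolic filling of $f$ is an eventual vertical quasi-isometry of finite multiplicity on small scales, and then invoke Theorem \ref{thm:VQI2pBQS} (together with Remark \ref{rmk:eventual}) to recover local branched quasisymmetry.

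First I would localize by pulling $f$ back to bi-Lipschitz Euclidean charts around $x_0$ and $f(x_0)$. Since the BQS condition is quantitatively bi-Lipschitz invariant, it suffices to prove local BQS for a quasiregular map $\tilde f \colon B \to B' \subset \R^n$. Because $\tilde f$ is non-constant quasiregular, it is discrete, open, and sense-preserving by Reshetnyak's theorem, hence non-collapsing, so the hyperbolic filling $\varphi_{\tilde f} \colon \widehat B \to \widehat{B'}$ is well-defined in the sense of Definition \ref{Extension def}.

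Next I would show $\varphi_{\tilde f}$ is an eventual vertical quasi-isometry. Along a vertical geodesic $\gamma$ in $\widehat B$, consecutive vertex balls $B_{\gamma(n)} \supset B_{\gamma(n+1)}$ form a ring whose $n$-modulus depends only on $s$. Applying Poletsky's inequality $M(\Gamma) \le K_I\, M(\tilde f \Gamma)$ to the family of paths connecting $B_{\gamma(n+1)}$ to the complement of $B_{\gamma(n)}$ bounds the image ring modulus in terms of $K_I$ and $s$. Via the standard Loewner-type relation between ring modulus and separation of continua in $\R^n$, this forces
\[
c\, \diam(\tilde f B_{\gamma(n)}) \le \diam(\tilde f B_{\gamma(n+1)}) \le C\, \diam(\tilde f B_{\gamma(n)})
\]
with constants depending only on $K_I$, $n$, and $s$. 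Taking logarithms and using Lemma \ref{vertex comp} translates this into $c \le \ell(\varphi_{\tilde f}(\gamma(n+1))) - \ell(\varphi_{\tilde f}(\gamma(n))) \le C$, which is exactly the vertical quasi-isometry condition along $\gamma$. Finite multiplicity of $\varphi_{\tilde f}$ follows from the classical Väisälä $K_O$-Koebe estimate for quasiregular maps, run in the manner of the proof of Theorem \ref{thm:bdd BQS to bdd VQI}.

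Finally, the eventual VQI property plus finite multiplicity of $\varphi_{\tilde f}$, together with Lemma \ref{lemma:trace-filling} and Theorem \ref{thm:VQI2pBQS}, yields that the trace of $\varphi_{\tilde f}$---which is exactly $\tilde f$---is a power branched quasisymmetry on small scales. The main obstacle lies in the modulus-to-diameter translation in the third paragraph: because $\tilde f$ is not injective, the image sets $\tilde f B_v$ can be geometrically complicated, so one must work with moduli of continua-separating path families rather than moduli of concentric spherical rings, and must keep the constants scale-invariant so that the estimates remain uniform as $n$ grows along the vertical geodesic.
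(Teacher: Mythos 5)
Your high-level strategy matches the paper's: localize via bi-Lipschitz charts, show the hyperbolic filling $\varphi_{\tilde f}$ is an eventual vertical quasi-isometry of finite multiplicity, and invoke Theorem \ref{thm:VQI2pBQS} with Remark \ref{rmk:eventual} to obtain local branched quasisymmetry of the trace. The correctness issue is concentrated in the middle step, where there are two gaps.

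First, your modulus argument is run only at the single-step scale, comparing $B_{\gamma(n)}$ and $B_{\gamma(n+1)}$, and the conclusion you draw from it does not follow. A two-sided comparison $c\,\diam(\tilde f B_{\gamma(n)}) \le \diam(\tilde f B_{\gamma(n+1)}) \le C\,\diam(\tilde f B_{\gamma(n)})$ with $c < 1 < C$ (which is all one can hope for at a single scale) translates, after Lemma \ref{vertex comp} and the unavoidable additive $O(1)$ errors, into $|\ell(\varphi(\gamma(n+1))) - \ell(\varphi(\gamma(n)))| \le C'$: a bounded level increment, i.e.\ only the \emph{upper} half of the vertical quasi-isometry condition (the Lipschitz bound). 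It does \emph{not} give your claimed $c \le \ell(\varphi(\gamma(n+1))) - \ell(\varphi(\gamma(n)))$ with $c > 0$; that would require the image diameter to shrink by a definite factor at every step, which no single-scale modulus inequality supplies — the same obstacle appears for quasiconformal maps, and is why the lower bound there is also proved multi-scale. The paper circumvents this by proving the modulus estimate for an arbitrary pair $A \subseteq \tfrac{1}{2}B$ and obtaining the log-log inequality
\[
\log\Bigl(\tfrac{\diam B}{\diam A}\Bigr) \le \alpha \log\Bigl(\tfrac{\diam fB}{\diam fA}\Bigr) + \beta,
\]
which directly yields the lower VQI bound for distant vertices on the geodesic even though it says nothing useful about consecutive ones. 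You need your argument phrased at that level of generality, not per edge.

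Second, the right-hand inequality $\diam(\tilde f B_{\gamma(n+1)}) \le C\,\diam(\tilde f B_{\gamma(n)})$ — the one that ensures image levels do not drop — is not a consequence of Poletsky's $K_I$-inequality at all. Poletsky gives $M(\Gamma) \le K_I M(f\Gamma)$, hence only a \emph{lower} bound on the image modulus, hence only the left-hand inequality. Since $B_{\gamma(n+1)}$ is not contained in $B_{\gamma(n)}$ but merely in a dilate of it, the inclusion $\tilde f B_{\gamma(n+1)} \subseteq \tilde f ((1+2/s) B_{\gamma(n)})$ does not by itself control the diameter: a non-injective quasiregular map may blow up a ball's image under a small dilation. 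The paper supplies this control separately through a normal-families compactness argument (Lemma \ref{QR Upper VQI Lemma}), which you would need to reproduce or replace (e.g.\ by a Väisälä-inequality argument controlling multiplicity). The remaining ingredients — the Koebe estimate for finite multiplicity of $\varphi_{\tilde f}$ and the passage through Theorem \ref{thm:VQI2pBQS} — are organized as in the paper and are fine once the two gaps above are closed.
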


In the statement, the quantitatively is understood as follows. Suppose that $f\colon M\to N$ is $K$-quasiregular for $K\ge 1$. Then there exists a radius $r>0$ depending on $f$, $M$, and $N$, such that for all $x \in M$, the restricted map $f|_{B(x,r)} \colon B(x,r) \to N$ is a branched $\eta$-quasisymmetry for $\eta\colon [0,\infty) \to [0,\infty)$ depending only on $K$

A few comments are in order.

\begin{remark}
Theorem \ref{thm:Riemannian-GW} bears some similarity to the correspondence between quasiconformal and quasisymmetric maps. Similar to quasisymmetric maps, branched quasisymmetries $X\to Y$, where $X$ has $\lambda$-bounded turning, are bounded maps. This is not true for quasiconformal nor quasiregular mappings. 

Note, however, by result of Heinonen and Koskela \cite{HK} that quasiconformal   maps between Loewner spaces are quasisymmetric. The analog for branched quasisymmetries is not true. Indeed, consider the exponential map $\exp \colon \C \to \C$, $z\mapsto e^z$. The complex plane $\C$ is a Loewner space and $\exp$ is holomorphic and hence $1$-quasiregular, but $\exp$ is not a branched quasisymmetry. Indeed, consider $k\in \N$ and segments $E=[0,k]$ and $E'=[0,ik]$. Then $\diam \exp(E) \simeq e^k$ and $\diam \exp(E') \sim 1$.
\end{remark}

\begin{remark}
Since the claim is a local statement, it suffices to prove it only for quasiregular mappings $f\colon B^n \to B^n$. Indeed, let $M$ and $N$ be oriented Riemannian $n$-manifolds and $f\colon M\to N$ a $K$-quasiregular map. Let also $\varepsilon>0$. Then there exists atlases $\{ (B_i, \varphi_i)\}_{i\in I}$ and $\{ (B'_j,\varphi'_j)\}_{j\in J}$ of $M$ and $N$, respectively, with following properties:
\begin{itemize}
\item each $B_i=B(x_i,r_i)$ and $B'_j=B(x'_j,r'_j)$ is a ball, 
\item each chart $\varphi_i \colon B_i \to B^n(0,r_i)$ and $\varphi'_j \colon B'_j \to B^n(0,r'_j)$ is diffeomorphism which is $(1+\varepsilon)$-bilipschitz, and
\item for each $i\in I$ there exists $j_i\in J$ for which $fB_i \subset B'_{j_i}$. 
\end{itemize}
Such atlases are given by continuity of $f$ and Riemannian exponential functions $TM \to M$ and $TN \to N$.

Now 
\[
\{\lambda'_{j_i} \circ \varphi'_{j_i} \circ f \circ \varphi_i^{-1} \circ \lambda_i \colon B^n \to B^n \colon i\in I\},
\]
where $\lambda_i \colon \R^n \to \R^n$ and $\lambda'_j \colon \R^n \to \R^n$ are scalings $x\mapsto r_i x$ and $y\mapsto y/r'_j$, respectively, is a family of $(1+\varepsilon)^{4n}K$-quasiregular mappings encoding the map $f$.
\end{remark}

\begin{remark}
Note that, for closed Riemannian manifolds, we obtain that $f$ is a branched quasisymmetry.
\end{remark}

We prove Theorem \ref{thm:Riemannian-GW} by showing that the hyperbolic filling $\varphi \colon \widehat M \to \widehat N$ is an eventual vertical quasi-isometry and use the trace theorem to obtain the result. We do not aim for optimal results and argue using compactness; see \cite{Pankka-Souto} for a similar discussion.

Our first lemma is a diameter distortion estimate for images of balls.

\begin{lemma}\label{QR Upper VQI Lemma}
Let $f\colon B^n \to B^n$ be a $K$-quasiregular map of finite multiplicity and $c > 1$.  Then there exists $C > 0$, with the following property.  If $B = B^n(b, r) \subseteq \tfrac{1}{2}B^n$ is a ball and $A = B(a, r') \subseteq B$ satisfies $\diam(B) \leq c \diam(A)$, then $\diam(fB) \leq C \diam(fA)$.  
\end{lemma}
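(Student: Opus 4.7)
The plan is to establish this Koebe-type distortion estimate by a modulus argument combined with the Poletsky--V\"ais\"al\"a modulus inequality for quasiregular mappings. The hypothesis $B \subseteq \tfrac{1}{2}B^n$ is crucial: it guarantees that the doubled ball $2B$ still lies inside $B^n$, so the annular region $2B \setminus \bar A$ is compactly contained in the domain of $f$. Without loss of generality $f$ is non-constant, so $f$ is discrete and open by Reshetnyak's theorem and both $fB$ and $fA$ are continua of positive diameter.

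I would then consider the curve family $\Gamma = \Gamma(\bar A, \partial(2B);\, 2B\setminus\bar A)$. Since $\diam(\bar A) \geq 2r/c$ is comparable to $\diam(2B) = 4r$, a Loewner ring-modulus estimate gives a lower bound $M(\Gamma) \geq c_0(c, n) > 0$. Applying the inner-direction quasiregular modulus inequality (Rickman's monograph) yields
\[
M(\Gamma) \leq K_I(f)\, N(f)\, M(f\Gamma),
\]
and hence $M(f\Gamma) \geq c_0/(K_I N) > 0$. For an upper bound on $M(f\Gamma)$, the image curves connect the continuum $f(\bar A)$ of diameter $\diam(fA)$ to the continuum $f(\partial(2B))$. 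Since $f(2B)$ is a bounded open set, $\diam(\partial f(2B)) = \diam(f(2B)) \geq \diam(fB)$, and combined with the inclusion $\partial(f(2B)) \subseteq f(\partial(2B))$ this gives $\diam(f(\partial(2B))) \geq \diam(fB)$. A standard spherical-ring Euclidean estimate, centered at a suitable $y_0 \in f(\bar A)$, then bounds $M(f\Gamma)$ above by $\omega_{n-1}\bigl(\log(\diam(fB)/\diam(fA))\bigr)^{1-n}$. Combining the two bounds produces $\log(\diam(fB)/\diam(fA)) \leq C(K, N, c, n)$, which is the claim.

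The main obstacle will be justifying the Euclidean spherical-ring upper bound on $M(f\Gamma)$: it requires the image continuum $f(\partial(2B))$ to lie outside a sizeable ball around $y_0 \in f(\bar A)$, which can fail when this continuum winds back near $f(\bar A)$. This should be handled by choosing $y_0$ optimally in $f(\bar A)$ and restricting $\Gamma$ to the subfamily of curves whose images approach a connected sub-arc of $f(\partial(2B))$ realizing approximately the maximum distance from $y_0$; the existence and size of such a sub-arc follow from the connectedness of $f(\partial(2B))$ and its diameter lower bound. An equivalent, perhaps cleaner, route is to rephrase the whole argument in condenser-capacity language, comparing $\mathrm{cap}(2B, \bar A)$, which is bounded above by a constant depending only on $c$ and $n$, with $\mathrm{cap}(f(2B), f(\bar A))$ via the quasiregular capacity inequality, and reading off the bound on $\diam(fB)/\diam(fA)$ from the resulting constraint on the image condenser.
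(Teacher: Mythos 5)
Your approach is genuinely different from the paper's. The paper proves this lemma by a soft normal-families blow-up: assuming the claim fails along a sequence of ball pairs $(A_m, B_m)$, it renormalizes to maps $g_m \colon B^n(0,2) \to \R^n$ with $g_m(0)=0$ and $\sup_{|z|=1}|g_m(z)|=1$, invokes the compactness theorem of Martio--Srebro--V\"ais\"al\"a for bounded-multiplicity quasiregular families to extract a nonconstant limit $g$, and then observes that a fixed sub-ball $B'$ of the rescaled $A_m$'s would have $\diam(g B')=0$, contradicting openness of $g$. This costs nothing in hypotheses but produces a constant $C$ that depends on $f$ (through compactness), not only on $(n,K,N(f),c)$; your modulus route, if it worked, would yield a genuinely quantitative constant, which is a real advantage.

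However, your argument has a gap exactly where you suspect it, and your proposed fix does not close it. The lower bound $M(f\Gamma)\geq c_0/(K_I N)$ via Poletsky is fine, and the identity $\diam(f(\partial(2B)))\geq\diam(\partial(f(2B)))=\diam(f(2B))\geq\diam(fB)$ is correct. The problem is the upper bound. A large diameter of $f(\partial(2B))$ does not force the image curves to cross a thick spherical annulus around $y_0\in f(\bar A)$: the continuum $f(\partial(2B))$ can re-enter $\bar B(y_0, 2\diam(fA))$, so many curves in $f\Gamma$ can be very short, and $M(f\Gamma)$ can remain large while $\diam(fB)/\diam(fA)\to\infty$. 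Restricting $\Gamma$ to the subfamily $\Gamma'$ whose image endpoints land on a far sub-continuum of $f(\partial(2B))$ does give the upper bound for $M(f\Gamma')$, but it destroys the lower bound: the preimage in $\partial(2B)$ of that far sub-continuum could be a very small set (even a single point), so $M(\Gamma')$---and hence $M(f\Gamma')$---may be arbitrarily small, and no contradiction results. The capacity reformulation has the same defect: an upper bound on the capacity of $(f(2B), f(\bar A))$ does not control $\diam(f(2B))/\diam(f(\bar A))$, since thin elongated condensers have small capacity. Note, for contrast, how the paper's proof of the companion Lemma~\ref{QR Lower VQI Lemma} makes a modulus argument work: there one places a round sphere $F$ of radius $4\diam(fB)$ about $f(a)$ in the target (guaranteed disjoint from $fB$), lifts curves from $f\bar A$ to $F$, and uses the fact that each lift must exit $B$. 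To salvage your route you would need a similar lifting device adapted to this direction, rather than the raw spherical-ring estimate on the image side.
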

\begin{proof}
Suppose that the claim fails for a $K$-quasiregular map $f\colon B^n\to \R^n$ of finite multiplicity. Then, for each $m \in \N$ there exist balls $B_m = B(b_m, r_m) \subseteq \tfrac{1}{2} B^n$ and $A_m = B(a_m, r_m')$ with $A_m \subseteq B_m$ and $\diam(B_m) \leq c \diam(A_m)$, but $\diam(fB_m) \geq m \diam(fA_m)$.  Let $\psi_m \colon \R^n \to \R^n$ be the conformal map $x\mapsto \frac{1}{r_m}(x-b_m)$.  Let $\vartheta_m$ be the conformal map $x \mapsto \lambda_m (x-f(b_m))$ where $\lambda_m>0$ is chosen such that $\sup_{z \in \partial B_m} \lambda_m|f(z) - f(b_m)| = 1$. Define $g_m = \vartheta_m \circ f \circ \psi_m^{-1} \colon B^n(0,2) \to \R^n$ for each $m\in \N$.  Here the domain of $g_m$ makes sense as $2 B_m \subseteq B^n$.

Each $g_m$ is $K$-quasiregular and of multiplicity $N(g_m) \le N(f) <\infty$.  Also, each $g_m$ has the properties that $g_m(0) = 0$ and $\sup_{|z| = 1} |g_m(z)| = 1$.  Hence, we may apply \cite[Corollary 2.5]{MSV} to obtain a subsequence also denoted $(g_m)$ and a non-constant $K$-quasiregular map $g \colon B(0, 2) \to \R^n$ such that $g_m \to g$ locally uniformly.  Now, each $\psi_m A_m$ is a ball in $\psi_m B_m = B^n$ and, by our diameter comparison assumption, there exists a constant $r_0 > 0$ depending only on $c$ such that $B(\psi_m(a_m), r_0) \subseteq B^n$.   By compactness, there is a subsequence of $(\psi_m(a_m))$, which we also denote $(\psi_m(a_m))$, which converges to a point $a \in \overline{B^n(0, 1 - r_0)}$.  Consider $B' =  B(a, r_0 / 2)$.  From the convergence of $\psi_m(a_m)$, we see that $B' \subseteq \psi_m A_m$ for large $m$.    By our normalization with $\lambda_m$ we have $\diam(g_m B_m) \leq 2$, so by assumption $\diam(g_m A_m) \to 0$.  Thus, $\diam(g_m B') \to 0$, so $g B'$ is a single point.  This is a contradiction as $g$ is open as it is a non-constant quasiregular map.
\end{proof}

We now obtain a (soft) Koebe distortion theorem for quasiregular maps $B^n \to B^n$.

\begin{lemma}\label{QR finite mult}
Let $f \colon B^n \to B^n$ be a $K$-quasiregular map of finite multiplicity.  Then, there exists a constant $c > 0$ with the following property.  Whenever $B = B(b, r) \subset \tfrac{1}{2}B^n$ is a ball, there exists a ball $B(f(b), c \diam(fB)) \subseteq fB$.   
\end{lemma}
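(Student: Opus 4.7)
The plan is to argue by contradiction using a rescaling and normal-family argument parallel to the proof of Lemma \ref{QR Upper VQI Lemma}. If the conclusion fails, then for each $m\in\N$ there exist a ball $B_m=B(b_m,r_m)\subseteq\tfrac{1}{2}B^n$ and a point $y_m$ with $|y_m-f(b_m)|<\tfrac{1}{m}\diam(fB_m)$ but $y_m\notin fB_m$. First I would rescale the domain by $\psi_m(x)=(x-b_m)/r_m$, so that $\psi_m B_m=B^n$ and $\psi_m$ is defined on $B(0,2)$, and normalize the target by $\vartheta_m(y)=\lambda_m(y-f(b_m))$ where $\lambda_m>0$ is chosen to give $\sup_{|z|=1}|g_m(z)|=1$ for $g_m=\vartheta_m\circ f\circ\psi_m^{-1}$. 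Then each $g_m\colon B(0,2)\to\R^n$ is $K$-quasiregular with multiplicity bounded by $N(f)$, $g_m(0)=0$, and the fixed normalization, so by the normal family result \cite[Corollary 2.5]{MSV} invoked in the proof of Lemma \ref{QR Upper VQI Lemma}, a subsequence converges locally uniformly to a non-constant $K$-quasiregular $g\colon B(0,2)\to\R^n$ with $g(0)=0$.

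Next I would control the rescaled bad point $w_m=\vartheta_m(y_m)$. The openness of $g_m$ yields a maximum principle: $\sup_{z\in\bar B^n}|g_m(z)|=\sup_{|z|=1}|g_m(z)|=1$, so $\diam(g_m B^n)\le 2$. Therefore
\[
|w_m|=\lambda_m|y_m-f(b_m)|<\tfrac{1}{m}\lambda_m\diam(fB_m)=\tfrac{1}{m}\diam(g_m B^n)\le\tfrac{2}{m},
\]
so $w_m\to 0$, while the assumption $y_m\notin fB_m$ becomes $w_m\notin g_m B^n$ for every $m$.

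The main step is to derive a contradiction from the existence of such $w_m$, i.e., to show that a fixed neighborhood of the origin eventually lies in $g_m B^n$. Since $g$ is non-constant quasiregular, it is discrete, so the fiber $g^{-1}(0)$ has no accumulation point in $B(0,2)$. I would fix $s\in(0,1)$ with $g^{-1}(0)\cap\bar B(0,s)=\{0\}$, so that $0\notin g(\partial B(0,s))$ and the topological degree $\deg(g,B(0,s),0)$ equals the positive local index $i(0,g)\ge 1$. By continuity of the topological degree in both the value and the map (using uniform convergence $g_m\to g$ on $\bar B(0,s)$ to keep the boundary sphere away from the relevant values), there is $\rho>0$ with $\deg(g_m,B(0,s),y)=\deg(g,B(0,s),y)\ge 1$ for all $y\in B(0,\rho)$ and all sufficiently large $m$. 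Hence $B(0,\rho)\subseteq g_m B(0,s)\subseteq g_m B^n$ for large $m$, and since $w_m\to 0$ this contradicts $w_m\notin g_m B^n$.

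The main obstacle I anticipate is the image-stability step: one must show that the inclusion $g(V)\supseteq W$ for an open set $W$ near $0$ persists under locally uniform convergence $g_m\to g$. This is standard from topological degree theory combined with positivity of the quasiregular local index, but it must be applied on a fixed compact subdomain (like $\bar B(0,s)$ above) where homotopy invariance of the degree is available.
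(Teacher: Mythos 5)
Your argument is correct, and it shares the entire blow-up scaffolding with the paper's proof: the contradiction setup, the rescalings $\psi_m, \vartheta_m$ with the normalization $\sup_{|z|=1}|g_m(z)|=1$, the normal-family extraction of a non-constant $K$-quasiregular limit $g$ via Martio--Srebro--V\"ais\"al\"a, and the observation that the renormalized bad points satisfy $|w_m|\le 2/m$ yet $w_m\notin g_m B^n$. Where you diverge is the end game. The paper lifts the segment $[0,w_m']$ under $g_m$ starting at the origin; because $w_m'\notin g_m B^n$, the maximal lift $\gamma_m$ must exit $B^n$, so it meets every sphere $S_\theta$, $\theta\in(0,1)$, and passing to limits along $g_m\to g$ produces a zero of $g$ on every such sphere, contradicting discreteness (equivalently, finite multiplicity) of the non-constant limit $g$. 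You instead run a topological degree argument: isolate the zero of $g$ at $0$ using discreteness, note $\deg(g,B(0,s),0)=i(0,g)\ge 1$ by sense-preservingness, and invoke stability of the degree under uniform convergence on $\overline{B(0,s)}$ plus local constancy in the target point to conclude $B(0,\rho)\subseteq g_mB(0,s)\subseteq g_mB^n$ for large $m$, which $w_m\to 0$ contradicts. Both endings are valid. The path-lifting version stays within the lifting machinery of Rickman's book already used elsewhere in the appendix (e.g.\ in Lemma \ref{QR Lower VQI Lemma}) and derives a contradiction purely from discreteness of $g$; your degree version substitutes the lifting lemma for standard degree theory (homotopy invariance plus positivity of the local index of a discrete, open, sense-preserving map), which some readers may find more familiar, and as you note it makes the image-stability step under locally uniform convergence the visible crux. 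You also spell out explicitly, via the maximum principle, the bound $\diam(g_mB^n)\le 2$ that the paper uses implicitly to get $|w_m|\le 2/m$.
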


\begin{proof}
Suppose towards a contradiction that the claim does not hold.  Thus, for each $m \in N$, there exists a ball $B_m = B(b_m, r_m) \subset \tfrac{1}{2}B^n$ and a point $w_m \notin fB_m$ with $d(f(b_m), w_m) \leq \diam(fB)/m$.  As in Lemma \ref{QR Upper VQI Lemma}, we may renormalize the maps $f|_{2B_m} \colon 2B_m \to \R^n$ to $K$-quasiregular maps $g_m \colon B^n(0, 2) \to \R^n$ with $g_m(0) = 0$, $N(g_m) \leq N(f)$, and $\sup_{|z| = 1} |g_m(z)| = 1$.  In this case, there also exists a sequence $(w_m')$ in $\R^n$ such that, for each $m\in \N$, we have $|w_m'| \leq 2 / m$ and $w_m' \notin g_m B^n$.  As in Lemma \ref{QR Upper VQI Lemma}, passing to a subsequence (also indexed by $m$) there is a local uniform limit $g \colon B^n(0, 2) \to \R^n$ of $(g_m)$ which is non-constant, quasiregular, and has bounded multiplicity.  

For each $m\in \N_0$, let $\gamma_m'=[0,w'_m]$ be the line segment in $\R^n$ connecting $0$ to $w_m'$, and let $\gamma_m$ be a maximal lift of $\gamma_m'$ under $g_m$ containing the origin.  Since $w'_m\not\in g_mB^n$, we have that $\gamma_m$ leaves $B^n$.  To see this, note that both maximal lifts and local lifts of quasiregular maps always exist by \cite[II.3]{Ri}. 
  
For $\theta \in (0, 1)$, let $S_\theta = \partial B^n(0, \theta) \subseteq B^n$.  Fix $\theta \in (0, 1)$ and let $x_m \in S_\theta \cap \gamma_m$.  Passing to a subsequence, we may assume that $x_m \to x \in S_\theta$ as $m\to \infty$.  We claim that $g(x) = 0$.  Since $g$ is continuous, we have that $g(x_m) \to g(x)$ as $m\to \infty$.  Let $\epsilon > 0$.  For $m\in \N$ large enough, we have that $|g_m(x_m)| < \epsilon$ as $x_m \in \gamma_m$.  Since $g_m \to g$ locally uniformly as $m\to \infty$, we have for large enough $m\in \N$ that $|g_m(x_m) - g(x_m)| < \epsilon$.  It follows that $|g(x)| \leq 2\epsilon$ and, as this is true for all $\epsilon > 0$, we have $g(x) = 0$.  

We have shown that for every $\theta \in (0, 1)$, there is an $x \in S_\theta$ with $g(x) = 0$.  This contradicts the finite multiplicity of $g$.
\end{proof}

The following lemma will be used to obtain the lower bound in vertical quasi-isometry condition for the hyperbolic filling $\widehat X \to \widehat Y$.

\begin{lemma}\label{QR Lower VQI Lemma}
Let $f \colon B^n \to B^n$ be a $K$-quasiregular mapping.  Then, there are constants $\alpha\ge 1$ and $\beta \ge 0$, depending only on $n$ and $K$, and $0<R<1$ depending on $f$, with the following property.  Whenever $B = B^n(b, r) \subseteq \tfrac{1}{2}B^n$ is a ball with $0< r < R$ and $A = B(a, r_A)$ is a ball with $A \subseteq \frac{1}{2}B$, then 
\begin{equation*}
\log \biggl(\frac{\diam(B)}{ \diam(A)}\biggr) \leq \alpha \log \biggl(\frac{\diam(fB)}{ \diam(fA)}\biggr) + \beta.
\end{equation*}
\end{lemma}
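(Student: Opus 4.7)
The strategy is to rescale $f$ on $B$ to a normalized $K$-quasiregular map $g$ on a fixed domain, apply the uniform Hölder (growth) estimate for such normalized maps due to Martio--Rickman--V\"ais\"al\"a (see Rickman \cite{Ri}), and translate back. This produces constants $\alpha$ and $\beta$ depending only on $K$ and $n$; the constraint on $R$ is only that the rescaled domain fit inside $B^n$, so that $R = 1/4$ already suffices.

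Fix $b \in \tfrac{1}{2}\bar B^n$ and $r < R := 1/4$, so that $B(b, 2r) \subset B^n$. Set $\psi(x) = b + rx$ and
\[
M := \sup_{|x|=1}|f(\psi(x)) - f(b)|.
\]
Since $f$ is non-constant and open, $f(\partial B(b,r))$ cannot collapse to $\{f(b)\}$, for then $f(B(b,r))$ would be clopen and nonempty in $\R^n$, contradicting $f(B(b,r)) \subseteq B^n$. Thus $M > 0$. Define
\[
g(x) := M^{-1}\bigl(f(\psi(x)) - f(b)\bigr), \qquad x \in B^n(0, 2).
\]
Then $g$ is $K$-quasiregular with $g(0) = 0$ and $\sup_{|x|=1}|g(x)| = 1$. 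Openness of non-constant quasiregular maps precludes interior local maxima of $|g|$, so $|g| \le 1$ on $\bar B^n$. The Martio--Rickman--V\"ais\"al\"a growth theorem then yields constants $C_0 > 0$ and $\alpha_0 \in (0,1]$ depending only on $K$ and $n$ with
\[
|g(x) - g(y)| \le C_0 |x-y|^{\alpha_0}, \qquad x, y \in \tfrac{1}{2}\bar B^n.
\]

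Let $A' := \psi^{-1}(A)$. The hypothesis $A \subseteq \tfrac{1}{2}B$ gives $A' \subseteq \tfrac{1}{2}\bar B^n$, so $\diam(g A') \le C_0 (\diam A')^{\alpha_0}$. Unraveling the rescaling via $\diam(fA) = M \diam(gA')$ and $\diam A' = 2 \diam(A)/\diam(B)$, together with $M \le \diam(fB)$ (since $f(\bar B) = \overline{fB}$ has diameter $\diam(fB)$ and contains both $f(x)$ and $f(b)$ for $x \in \bar B$), one obtains
\[
\frac{\diam(fA)}{\diam(fB)} \le 2^{\alpha_0} C_0 \left(\frac{\diam(A)}{\diam(B)}\right)^{\alpha_0}.
\]
Taking logarithms gives the inequality with $\alpha = 1/\alpha_0 \ge 1$ and $\beta = \alpha_0^{-1}\log(2^{\alpha_0}C_0) \ge 0$, each depending only on $K$ and $n$. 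The main technical input is the uniformity of the growth theorem across all normalized $K$-quasiregular maps; the hardest step to verify cleanly is the passage $|g| \le 1$ on $\bar B^n$ via the no-interior-maximum principle for non-constant open maps, which is the bridge between the sphere-normalization $\sup_{|x|=1}|g(x)| = 1$ and the hypotheses of the growth theorem.
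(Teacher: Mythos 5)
Your proof is correct, but it follows a genuinely different route than the paper's. The paper works directly with modulus: it forms the continua $E = f\overline{A}$ and a sphere $F$ of radius $4\diam(fB)$ around $f(a)$, lifts paths joining them, applies the Poletsky-type modulus inequality $\nmod(E,F)\le K\nmod(\Gamma)$ together with the Loewner function estimate and the Teichm\"uller-type lower bound $\phi(t)\ge C\log(t)^{1-n}$, and then inverts. You instead rescale to normalize $f$ on $B$ into a $K$-quasiregular map $g$ with $g(0)=0$, $\sup_{|x|=1}|g(x)|=1$, observe that openness forces $|g|\le 1$ on $\overline{B^n}$, and then cite the uniform H\"older estimate for normalized $K$-quasiregular maps on $\tfrac{1}{2}\overline{B^n}$, which is itself a packaged form of the same capacity/modulus machinery. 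The algebra translating back ($\diam A' = 2\diam A/\diam B$, $\diam(fA)=M\diam(gA')$, $M\le\diam(fB)$) is correct. Your approach is shorter and has a pleasant by-product: because the only constraint on $r$ is that $B(b,2r)\subset B^n$, your $R=1/4$ is independent of $f$, strengthening the lemma slightly. The paper's approach needs an $f$-dependent $R$ because it requires the sphere $F$ of radius $4\diam(fB)$ around $f(a)$ to lie inside $f(B^n)$, but in return it is more self-contained, making the dependence on the Loewner function explicit and avoiding appeal to the growth theorem as a black box.
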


\begin{proof}
We first chose a constant $c$ for later in the proof.  Let $\phi$ be the $n$-Loewner function for $\R^n$ (cf. \cite[Chapter 8]{He}).  By \cite[Theorem 3.6]{HK}, there are constants $t_0 > 0$ and $C > 0$ such that for all $t \geq t_0$ we have $ C \log(t)^{1-n} \leq \phi(t)$.  

Since $\phi$ is decreasing, positive, and has $\lim_{t \to \infty} \phi(t) = 0$ we may, for an ambient constant $C_1$, choose $c'$ large enough so that 
\begin{equation*}
 \phi (t_0) =  K C_1 \log \biggl(\frac{c'}{8}\biggr)^{1 - n}
\end{equation*}  
holds.  It follows that, if $s\ge c'$ and $t>0$ are such that
\[
\phi(t) \le K C_1 \log \left( \frac{s}{8}\right)^{1-n},
\]
then we have that $t\ge t_0$ and so 
\[
C \log(t)^{1-n} \leq K C_1 \log \biggl(\frac{s}{8}\biggr)^{1 - n}.
\]
Set $c = \max\{c', 10\}$.

We also determine $R$.  For this, note that $G = \overline{\tfrac{1}{2}B^n}$ is compact and $f$ is open, so $\varepsilon' := \dist(\partial f(B^n), f(G)) > 0$.  Moreover, $f|_G$ is uniformly continuous, and so there is a radius $R$ such that whenever $r < R$ and $B = B(b,r) \subseteq G$, we have $B(f(b), 10\diam(fB)) \subseteq fB^n$.

Let $B = B(b,r) \subseteq \tfrac{1}{2}B^n$ with $0<r<R$ and $A\subseteq \frac{1}{2}B$ be balls. Because of the additive term $\beta$ on the right hand side of our desired inequality, we may assume that $\diam(B) \geq c \diam(A)$.  Let 
\begin{equation*}
F = \{w \in \R^n : |f(a)-w| = 4 \diam(fB)\}.
\end{equation*}
Note that $F \subseteq f(B^n)$ by our choice of $R$.  Let also $E = f \overline{A}$.  Then, $E$ and $F$ are continua in $\R^n$.  We see that
\begin{equation*}
\Delta(E, F) := \frac{\dist(E, F)}{\min (\diam(E), \diam(F))} \leq \frac{4 \diam fB}{\diam fA}.
\end{equation*}
It follows that 
\begin{equation*}
\nmod(E, F) \geq \phi\biggl(\frac{4 \diam fB}{\diam fA}\biggr).
\end{equation*}
For each path $\gamma$ connecting $E$ to $F$, we lift to a path $\tilde{\gamma}$ starting from a point in $\overline{A}$ (cf. \cite[Corollary II.3.4]{Ri}).  This lift $\tilde{\gamma}$ must leave $B$ as $F \cap fB = \emptyset$.  Let $\Gamma$ be the collection of these lifts.  Then, as $f$ is quasiregular, we have, by \cite[Theorem II.8.1]{Ri}, that
\begin{equation*}
\nmod(E, F) \leq \nmod(f\Gamma) \leq  K\nmod(\Gamma). 
\end{equation*}
We note that $B(a, \diam(B) / 8)) \subseteq B$. By the fact that $c \geq 10$ and \cite[Lemma 7.8]{He}, there is a constant $C_1 > 0$ such that
\begin{equation*}
\nmod(\Gamma) \leq C_1 \log\biggl(\frac{\diam(B)}{8 \diam(A)}\biggr)^{1 - n}.
\end{equation*}
Thus, 
\begin{equation}\label{ineq diam}
\phi \biggl(\frac{4 \diam fB}{\diam fA}\biggr) \leq \nmod(E, F) \leq K C_1 \log \biggl(\frac{\diam(B)}{8 \diam(A)}\biggr)^{1 - n}.
\end{equation}
From the fact that $c \geq c'$ at the beginning of the proof, we must have that 
\[
\frac{4 \diam fB}{\diam fA} \geq t_0
\]
and so
\begin{equation*}
 C \log\biggl(\frac{4 \diam(fB) }{\diam(fA)}\biggr)^{1-n} \leq K C_1 \log\biggl(\frac{\diam(B)}{8 \diam(A)}\biggr)^{1 - n}.
\end{equation*}
Thus, there is a constant $C_2 > 0$ such that
\begin{equation*}
\log\biggl(\frac{\diam(B)}{8 \diam(A)}\biggr) \leq C_2 \log\biggl(\frac{4 \diam(fB)}{ \diam(fA)}\biggr),
\end{equation*}
which proves the result.
\end{proof}

We are now ready to prove that quasiregular maps induce eventual vertical quasi-isometries between hyperbolic fillings.  

\begin{lemma}\label{QR induce EVQI}
Let $f \colon M \to N$ be a $K$-quasiregular map between closed and oriented Riemannian $n$-manifolds.  Let $\widehat{M}\in \HF_s(M)$ and $\widehat{N} \in \HF_t(N)$ be hyperbolic fillings of $M$ and $N$, respectively, and let $\varphi \colon \widehat M \to \widehat N$ be a hyperbolic filing of $f$. Then, $\varphi$ is an eventual $(\alpha,\beta)$-vertical quasi-isometry with finite multiplicity, where $\alpha\ge 1$ and $\beta\ge 0$ depend only on $n$ and $K$.
\end{lemma}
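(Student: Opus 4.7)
The plan is to translate the three local-chart lemmas (Lemmas \ref{QR Upper VQI Lemma}, \ref{QR finite mult}, and \ref{QR Lower VQI Lemma}) into level estimates for the filling map $\varphi$, and then convert level estimates into graph-distance estimates in $\widehat N$. First I would localize: using compactness of $M$, fix a finite cover by precompact domains, each $(1+\varepsilon)$-bilipschitz to $B^n$ via the Riemannian exponential map and such that $fU_i$ sits inside an analogous chart in $N$. Choose a radius $r_0>0$ so that, for every $x\in M$, a ball of radius $(A_s+4)\cdot 4 r_0$ around $x$ lies in such a chart pair. Let $J\in\N$ be large enough that $s^{-J}<r_0$. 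For vertices $v\in\widehat M$ with $\ell(v)\ge J$, every dilation $(A_s+4)B_v$ that I will need sits inside a chart where (after bilipschitz conjugation with a slightly worse $K$) the three lemmas apply to $f$ as a quasiregular self-map of $B^n$. Recall from Lemma \ref{vertex comp} that $\diam fB_v\simeq t^{-\ell(\varphi(v))}$ with constants only depending on $t$, so level differences in $\widehat N$ are controlled by logarithms of diameter ratios.

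For the upper vertical-QI bound, let $\gamma\colon(\N_0,0)\to(\widehat M,\ast)$ be a vertical geodesic and $v=\gamma(j),\ v'=\gamma(j+1)$ with $j\ge J$. Since $\diam B_v\simeq s^{-j}\simeq \diam B_{v'}$ and both balls lie in a common slight dilation $B^{*}$ of $B_v$, Lemma \ref{QR Upper VQI Lemma} gives $\diam fB_{v'}\simeq \diam fB_v$, whence $|\ell(\varphi(v))-\ell(\varphi(v'))|\le C_0$. Moreover the centers $c(\varphi(v))$ and $c(\varphi(v'))$ both contain $f(c(v))$ or $f(c(v'))$, which are at most $O(t^{-\min(\ell(\varphi(v)),\ell(\varphi(v')))})$ apart, so Corollary \ref{cor:struct lemma} yields $|\varphi(v)-\varphi(v')|\le C_1$ uniformly. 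Chaining along a geodesic gives $|\varphi(\gamma(j))-\varphi(\gamma(j'))|\le C_1|j-j'|$ whenever $j,j'\ge J$.

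For the lower bound, take $v=\gamma(j),\ v'=\gamma(j')$ with $J\le j<j'$. By Lemma \ref{basic filling lemma 1}, $d(c(v),c(v'))\le A_s s^{-j}$, hence $B_{v'}\subseteq \tfrac12 B^{*}$ for $B^{*}=2(A_s+2)B_v$, which by the choice of $J$ lies inside a chart. Lemma \ref{QR Lower VQI Lemma} then gives
\[
\log\!\bigl(\diam B^{*}/\diam B_{v'}\bigr)\le \alpha_0\log\!\bigl(\diam fB^{*}/\diam fB_{v'}\bigr)+\beta_0.
\]
Combined with $\diam fB^{*}\simeq \diam fB_v$ from Lemma \ref{QR Upper VQI Lemma} and the identifications $\diam B_w\simeq s^{-\ell(w)}$, $\diam fB_w\simeq t^{-\ell(\varphi(w))}$, this rearranges to $\ell(v')-\ell(v)\le \alpha_1(\ell(\varphi(v'))-\ell(\varphi(v)))+\beta_1$. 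Because edges in $\widehat N$ change the level by at most one, $|\varphi(v)-\varphi(v')|\ge \ell(\varphi(v'))-\ell(\varphi(v))$; hence
\[
\tfrac{1}{\alpha_1}|j-j'|-\tfrac{\beta_1}{\alpha_1}\le |\varphi(v)-\varphi(v')|,
\]
which together with the previous paragraph gives the eventual $(\alpha,\beta)$-vertical quasi-isometry with cutoff $J$.

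Finally, for finite multiplicity of $\varphi$, fix $w\in\widehat N$ and (as in Theorem \ref{thm:bdd BQS to bdd VQI}) a maximal family $v_1,\dots,v_J\in\varphi^{-1}(w)$ with pairwise disjoint balls. If $\ell(v_i)\ge J$, Lemma \ref{vertex comp} gives $\diam fB_{v_i}\simeq\diam B_w$, and Lemma \ref{QR finite mult} provides a ball $B'_i\subset fB_{v_i}$ of diameter $c\diam B_w$. The balls $B'_i$ can overlap, but each point of $N$ is covered at most $N(f)$ times, and $f$ has finite multiplicity because on closed Riemannian manifolds quasiregularity together with the path-lifting theory in \cite[II.3]{Ri} and compactness bounds $N(f)$ uniformly. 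Metric doubling of $N$ then bounds $J$. Vertices in $\varphi^{-1}(w)$ with $\ell(v)<J$ come from a fixed finite set, and those whose ball meets a given $B_{v_i}$ form an $\widehat M$-ball of radius controlled by the already-obtained Lipschitz constant of $\varphi$, whose cardinality is uniformly bounded by the doubling of $M$. The main obstacle is the careful bookkeeping in step three — ensuring that the same dilated ball $B^{*}$ fits inside a chart and simultaneously satisfies both Lemmas \ref{QR Upper VQI Lemma} and \ref{QR Lower VQI Lemma} — and quoting the correct constants from \cite{Ri} for the multiplicity bound on $f$.
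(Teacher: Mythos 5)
Your proposal is correct and follows essentially the same approach as the paper: localize to bilipschitz charts and pick a cutoff $J$ so the local lemmas apply, use Lemma \ref{QR Upper VQI Lemma} plus a common enclosing ball to bound $|\varphi(v)-\varphi(v')|$ for adjacent $v,v'$ (yielding the upper eventual VQI bound by chaining), translate Lemma \ref{QR Lower VQI Lemma} into a level inequality for the lower bound, and combine the Koebe-type Lemma \ref{QR finite mult} with metric doubling and $N(f)<\infty$ for finite multiplicity. The only cosmetic differences are that you feed the dilated ball $2(A_s+2)B_v$ directly into Lemma \ref{QR Lower VQI Lemma} where the paper replaces it by a nearby vertex ball $B_{v''}$, and you spell out the multiplicity argument rather than invoking Theorem \ref{thm:local-self-improvement}.
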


\begin{proof}
It suffices to show that there exist constants $\alpha, \beta, J > 0$ with the property that, for every vertical geodesic $\gamma \colon (\N_0,0) \to (\widehat M,\ast)$ and all $j, j' \geq J$, we have that
\begin{equation*}
\alpha^{-1}|j-j'| - \beta \leq |\varphi(\gamma(j)) - \varphi(\gamma(j'))| \leq \alpha |j-j'| + \beta,
\end{equation*}
and that $\varphi$ has finite multiplicity. We fix first a constant $J_0>0$ with the following properties.  The diameters of balls $B_v$ are below the radius $r>0$, which admits the localization of $f$ using $2$-bilipschitz charts, and we are be able to, for each ball $B_v$ of this small enough radius, find charts where $B^n$ is in the image of the chart and the image of $B_v$ in the chart is contained in $\tfrac{1}{2} B^n$.  We also require that we may apply Lemma \ref{QR Lower VQI Lemma} to the balls $B_v$, which further restricts the size of $\diam(B_v)$ due to the factor $R$ in that lemma.  We note that as $M$ and $N$ are compact, there are only finitely many pairs of charts, and so the fact that the radius $R$ from Lemma \ref{QR Lower VQI Lemma} may change from chart pair to chart pair can be mitigated by taking a large enough $J_0$.

We first prove the upper inequality.  We note that there is a constant $c_1 > 0$ such that, if $v, v'$ are vertices in $\widehat M$ with $|v - v'| = 1$, then there exists a third vertex $v''\in \widehat M$ satisfying $B_v, B_{v'} \subseteq B_{v''}$ and both estimates $|v - v''| \leq c_1$ and $|v' - v''| \leq c_1$ hold.  This allows us to apply Lemma \ref{QR Upper VQI Lemma} and conclude that, if the balls $B_v$, $B_{v'}$, and $B_{v''}$ are small enough, then the diameters of $fB_{v}, fB_{v'}$, and $fB_{v''}$ are all comparable.  As $v''$ is a bounded number of levels away from $v$ and $v'$, small enough in this case just means that there is a $J_1\ge J_0$ such that we may apply Lemma \ref{QR Upper VQI Lemma} to $B_{\gamma(j)}$ and $B_{\gamma(j')}$ when $j, j' \geq J_1$.  As $B_v \cap B_{v'} \neq \emptyset$ when $|v - v'| = 1$, it follows from Lemma \ref{vertex comp} applied to $B_v$ and $B_{v'}$ that $|\varphi(v) - \varphi(v')|$ is bounded independently of $v, v'$. 
The upper vertical quasi-isometry inequality follows now from the triangle inequality.

We now prove the lower inequality.  Let $j > j' \geq J_2$ with $J_2\ge J_1$ to be determined and let $v = \gamma(j)$ and $v' = \gamma(j')$ be vertices on the same vertical geodesic $\gamma \colon (\N_0,0)\to (\widehat M,\ast)$.  Then, by Lemma \ref{basic filling lemma 2}, $B_v \subseteq (A_s + 2) B_{v'}$. By applying Lemma \ref{vertex comp} to the ball $2(A_s + 2) B_{v'}$, we see that there is a constant $c_2 > 0$ independent of $v$, $v'$, and a vertex $v''$ with $B_v \subseteq \frac{1}{2} B_{v''}$ and $|v' - v''| \leq c_2$.  If $J_2$ is large enough, we apply Lemma \ref{QR Lower VQI Lemma} to $B_v$ and $B_{v''}$ to conclude that there are constants $C, D > 0$ independent of $v$ and $v'$ such that
\begin{equation*}
\log \biggl(\frac{\diam(B_{v''})}{ \diam(B_v)}\biggr) \leq C \log \biggl(\frac{\diam(fB_{v''})}{ \diam(fB_v)}\biggr) + D.
\end{equation*}
We see that there is a constant $A > 0$ such that 
\begin{equation*}
\biggl|\log_s \biggl(\frac{\diam(B_{v''})}{ \diam(B_v)}\biggr) - (-\ell(v'') + \ell(v))\biggr| \leq A
\end{equation*}
and
\begin{equation*}
\biggl|\log_t \biggl(\frac{\diam(fB_{v''})}{ \diam(fB_v)}\biggr) - (-\ell(\varphi(v'')) + \ell(\varphi(v)))\biggr| \leq A
\end{equation*}
from which we conclude that there are constants $C', D' > 0$ such that
\begin{equation*}
|\ell(v) - \ell(v'')| \leq C' |\ell(\varphi(v))) - \ell(\varphi(v''))| + D'.
\end{equation*}
As $B_v \cap B_{v''} \neq \emptyset$, this means that
\begin{equation*}
|v - v''| \leq C' |\varphi(v) - \varphi(v'')| + D'.
\end{equation*}
We note that by construction, $|v - v'| \leq |v - v''|$.  If $J_2$ is large enough, we may apply the upper vertical quasi-isometry inequality proven above and conclude that
\begin{equation*}
\begin{split}
|v - v'| &\leq C' |\varphi(v) - \varphi(v')| + C'|\varphi(v') - \varphi(v'')| + D'. \\
& \leq C' |\varphi(v) - \varphi(v')| + D' + C'(\alpha |v' - v''| + \beta).
\end{split}
\end{equation*}
As $|v' - v''|$ is uniformly bounded in our construction, the lower eventual vertical quasi-isometry inequality holds.

It remains to show $\varphi$ has finite multiplicity.  This follows as in the proof of Theorem \ref{thm:local-self-improvement} from the local versions of Theorems \ref{thm:Koebe} and \ref{thm:bdd BQS to bdd VQI}.
\end{proof}

We are now ready to prove Theorem \ref{thm:Riemannian-GW}.

\begin{proof}[Proof of Theorem \ref{thm:Riemannian-GW}]
By Lemma \ref{QR induce EVQI}, $f \colon M\to N$ induces an eventual vertical quasi-isometry $\varphi \colon (\widehat M,\ast) \to (\widehat N,\ast)$ between hyperbolic fillings $\widehat M$ and $\widehat N$ of $M$ and $N$, respectively.  This promotes to a vertical quasi-isometry $\psi \colon (\widehat M,\ast) \to (\widehat N,\ast)$ of finite multiplicity with the same trace $f$ as $\varphi$. Thus, by Theorem \ref{thm:intro-VQI2pBQS}, we see $f$ is a branched quasisymmetry, quantitatively.
\end{proof}

\subsection{Branched quasisymmetry is quasiregular}

To show that branched quasisymmetries are quasiregular, we use a metric characterization of quasiregular mappings based on inverse linear dilatation. The interested reader may want to compare the argument to the proof that quasisymmetries are quasiconformal.

For the definitions, let $f\colon M \to N$ be a discrete and open map between $n$-manifolds. 

A domain $U \subset M$ is a \emph{normal neighborhood of $x\in M$} if $U$ is relatively compact, $f(\partial U) = \partial fU$, and $f^{-1}(f(x))\cap \overline{U} = \{x\}$. For $x\in M$ and $r>0$, we denote also $U(x,f,r) \subset M$ the $x$-component of $f^{-1}B(f(x),r)$. For each $x\in M$ there exists $r_x>0$ for which $U(x,f,r)$ is a normal neighborhood of $x$ whenever $r < r_x$; the Euclidean argument in \cite[Lemma 2.9]{MRV1} works also in the Riemannian case.

For $x\in M$, the \emph{inverse linear dilatation of $f$ at $x$} is
\[
H^*(x,f) = \limsup_{r\to 0} \frac{L^*(x,f,r)}{\ell^*(x,f,r)}
\]
where 
\[
L^*(x,f,r) = \sup_{x'\in \partial U(x,f,r)} d(x,x');
\]
and
\[
\ell^*(x,f,r) = \inf_{x'\in \partial U(x,f,r)} d(x,x')
\]
for $r>0$. Note that $0<\ell^*(x,f,r) \le L^*(x,f,r) < \infty$ for $0<r<r_x$.

The characterization of quasiregular mappings in terms of inverse linear dilatation is proven by Martio, Rickman, and V\"ais\"al\"a in \cite{MRV1}. 

\begin{thm}[{\cite[Theorem 4.14]{MRV1}}]
Let $G\subset \R^n$ be a domain. A non-constant mapping $f\colon G\to \R^n$ is quasiregular if and only if it satisfies the following conditions:
\begin{enumerate}
\item $f$ is sense-preserving, discrete, and open. \label{item:MRV1}
\item $H^*(x,f)$ is locally bounded in $G$.
\item There exists $a^*<\infty$ such that $H^*(x,f) \le a^*$ for almost every $x\in G\setminus B_f$.\label{item:MRV3}
\end{enumerate}
\end{thm}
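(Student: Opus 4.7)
The statement is the classical Martio--Rickman--V\"ais\"al\"a characterization of quasiregular mappings via metric distortion, so my proof plan is to verify each direction using the modulus machinery for quasiregular maps and normal neighborhoods. I would split the argument into the (easy) necessity of the three conditions and the (hard) sufficiency.

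For necessity, I would invoke Reshetnyak's theorem to obtain \eqref{item:MRV1}, since every non-constant quasiregular mapping between oriented Riemannian $n$-manifolds is discrete, open, and sense-preserving. For the dilatation bounds \eqref{item:MRV3} (which automatically implies local boundedness), the plan is to fix $x\in G$ and $r < r_x$ small enough that $U = U(x,f,r)$ is a normal neighborhood, and then compare the $n$-modulus of the curve family $\Gamma$ joining the sphere $\partial U \cap S(x,L^*)$ to $\{x\}$-side of $\partial U$ inside $U$ with the $n$-modulus of its image under $f$, a family of curves connecting concentric spheres in $B(f(x),r)$. The $K_I$-inequality for $f$ (the inner dilatation version of \eqref{QR}) and the standard annular modulus formula $\omega_{n-1}(\log(L^*/\ell^*))^{1-n}$ from \cite[Lemma 7.8]{He} would give the pointwise bound $H^*(x,f) \leq C(n,K)$ everywhere off $B_f$.

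For sufficiency, the strategy is to show that the metric conditions force $f \in W^{1,n}_{\mathrm{loc}}(G,\R^n)$ with the distortion inequality $\|Df\|^n \le K J_f$ a.e. I would first use \eqref{item:MRV1} together with local boundedness of $H^*$ to prove that for every $x_0 \in G$ one has $L^*(x,f,r) \lesssim r$ for $x$ in a neighborhood of $x_0$ and $r$ small; indeed, comparing annular moduli in $f U(x,f,r)$ and $U(x,f,r)$ again gives a local H\"older-type estimate $L^*(x,f,r)^{n} \lesssim H^*(x,f)^{n-1} r^n / \mathrm{something}$, and iterating/telescoping over dyadic scales via a standard Mori-type argument yields local boundedness of $f$ and, more importantly, the ACL property along almost every line by a Fuglede-type argument on exceptional curve families. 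This provides weak partial derivatives and $f \in W^{1,n}_{\mathrm{loc}}$.

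The main obstacle, which I expect to occupy the bulk of the work, is upgrading the pointwise almost everywhere bound $H^*(x,f) \le a^*$ to the analytic inequality $\|Df(x)\|^n \le K J_f(x)$ a.e. At a point $x\in G\setminus B_f$ of differentiability, $f$ is locally a homeomorphism, so $U(x,f,r)$ has a precise comparison with $Df(x)^{-1} B(0,r)$ as $r \to 0$, and the ratio $L^*/\ell^*$ converges to $\|Df(x)^{-1}\|\,\|Df(x)\|^{-1}\cdot$(something), which in the standard MRV argument can be identified with the inverse of a singular value ratio of $Df(x)$. Combined with $J_f(x) \ne 0$ (sense-preserving plus differentiability plus nondegeneracy) one obtains $\|Df(x)\|^n \le (a^*)^{n-1} J_f(x)$. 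Since the branch set $B_f$ has measure zero by \v Cernavski\u\i--V\"ais\"al\"a (this requires \eqref{item:MRV1} and some control on Hausdorff dimension of $B_f$, which follows once $f \in W^{1,n}_{\mathrm{loc}}$), this gives \eqref{QR} almost everywhere in $G$, completing the proof.
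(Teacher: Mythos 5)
The paper does not prove this theorem: it is quoted directly with the citation \cite[Theorem 4.14]{MRV1} and used as a black box in the proof of Theorem~\ref{thm:not-GW}, so there is no in-paper argument to compare against. Your sketch is a reasonable outline of the Martio--Rickman--V\"ais\"al\"a proof, and the skeleton --- necessity via Reshetnyak plus an annulus modulus comparison on normal neighborhoods, sufficiency by establishing the $\mathrm{ACL}^n$ property and a.e.\ differentiability and then reading off the distortion inequality pointwise --- is the right shape.

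Two points in the sufficiency direction are, however, genuine gaps as written. First, the treatment of $B_f$ is circular: you invoke Lebesgue-nullity of $B_f$ to push the pointwise distortion estimate from $G\setminus B_f$ to a.e.\ $x\in G$, but then say this nullity ``follows once $f\in W^{1,n}_{\mathrm{loc}}$,'' which is precisely what you are in the middle of establishing; moreover, the \v{C}ernavski\u{\i}--V\"ais\"al\"a theorem only yields that $B_f$ has topological dimension at most $n-2$, which by itself does not imply measure zero. Nullity of $B_f$ has to be derived by a separate argument exploiting the open, discrete, sense-preserving structure and the already-established Sobolev regularity, and cannot simply be quoted mid-stream. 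Second, the claim that local boundedness of $H^*$ forces ``$L^*(x,f,r)\lesssim r$'' does not have the right form: $L^*$ is a radius in the domain and $r$ a radius in the target, while $H^*$ compares two domain radii $L^*$ and $\ell^*$ at a fixed image radius, so it furnishes no comparison between $L^*$ and $r$. What the annulus modulus estimate actually yields is a two-scale oscillation bound relating $L^*(x,f,r_1)$ and $L^*(x,f,r_2)$ for nearby image radii, which one iterates over dyadic scales to obtain local H\"older continuity and then $\mathrm{ACL}^n$; this should replace the miscalibrated inequality in your sketch.
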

Here $B_f \subset G$ is the branch set $f$, that is, the set of points $x\in G$ at which $f$ is not a local homeomorphism. 

The proof of this theorem in \cite{MRV1} reveals that, if $f$ satisfies conditions \eqref{item:MRV1}--\eqref{item:MRV3}, then $f$ is $(a^*)^{n-1}$-quasiregular.

Our statement is an almost immediate corollary of this theorem.

\begin{thm}
\label{thm:not-GW}
A sense-preserving, discrete, and open local branched quasisymmetry $f\colon M\to N$ between oriented Riemannian $n$-manifolds is quasiregular, quantitatively.
\end{thm}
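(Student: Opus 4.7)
The plan is to verify the Martio--Rickman--V\"ais\"al\"a characterization by bounding the inverse linear dilatation $H^*(x,f)$ uniformly in $x$. Since $f$ is sense-preserving, discrete, and open by hypothesis, condition (i) of the MRV criterion holds, and it suffices to produce a constant $a^*<\infty$ depending only on the distortion function $\eta$ of $f$ such that $H^*(x,f)\le a^*$ for every $x\in M$. Because quasiregularity is a local property and Riemannian manifolds are locally bilipschitz to Euclidean charts, the MRV criterion transfers from the Euclidean formulation; I may work at scales $r>0$ below the locality scale of the BQS inequality at $x$, and small enough that closed metric balls coincide with closures of open balls and are continua.

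Fix $x\in M$. For $r>0$ sufficiently small, the set $U:=U(x,f,r)$ is a normal neighborhood of $x$, so $f\bar U=\bar B(f(x),r)$ and $\diam f\bar U=2r$; moreover $\bar U$ is a continuum, since $U$ is connected and relatively compact. Set $s=\ell^*(x,f,r)$ and $L=L^*(x,f,r)$. By the definition of $s$, the open ball $B(x,s)$ cannot meet $\partial U$, and since it is connected and contains the interior point $x\in U$, it follows that $B(x,s)\subseteq U$; passing to closures yields $\bar B(x,s)\subseteq \bar U$. The infimum defining $s$ is attained at some $y^*\in\partial U$ with $d(x,y^*)=s$, and then $y^*\in\bar B(x,s)$ with $f(y^*)\in\partial B(f(x),r)$, so $\diam f\bar B(x,s)\ge d(f(x),f(y^*))=r$.

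Applying the BQS distortion inequality to the intersecting continua $\bar B(x,s)\subseteq \bar U$ now yields
\begin{equation*}
r \;\le\; \diam f\bar B(x,s) \;\le\; \eta\!\left(\frac{\diam \bar B(x,s)}{\diam \bar U}\right)\diam f\bar U \;=\; 2r\,\eta\!\left(\frac{\diam \bar B(x,s)}{\diam \bar U}\right),
\end{equation*}
whence $\eta^{-1}(1/2)\le \diam \bar B(x,s)/\diam \bar U\le 2s/\diam \bar U$, so that $L\le \diam \bar U \le 2s/\eta^{-1}(1/2)$ and therefore $H^*(x,f)\le 2/\eta^{-1}(1/2)=:a^*$ uniformly in $x$. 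The MRV criterion then produces quasiregularity with $K\le (a^*)^{n-1}$ depending only on $n$ and $\eta$. The main subtlety is conceptual rather than technical: recognizing that the BQS inequality applied to the single pair formed by the largest inscribed ball $\bar B(x,s)$ and the whole normal neighborhood $\bar U$ directly bounds the inverse linear dilatation; the remaining ingredients (normal neighborhoods at small scales, identity $f\bar U=\bar B(f(x),r)$, and bilipschitz charts to $\R^n$ making the Euclidean MRV theorem applicable) are standard.
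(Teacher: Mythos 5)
Your proof is correct, and it takes a genuinely cleaner route than the paper's. Both proofs localize via bilipschitz charts and then bound the inverse linear dilatation $H^*(x,f)$ to invoke the Martio--Rickman--V\"ais\"al\"a characterization, but the key application of the BQS inequality differs. The paper picks points $y,z\in\partial U(x,f,r)$ realizing $\ell^*$ and $L^*$, and then must \emph{lift} the radial segment $[f(z),f(x)]$ to a continuum $E\subset\overline{U(x,f,r)}$ joining $z$ to $x$ (via path lifting for discrete open maps), applying BQS to the pair $[y,x]$ and $E$ to obtain $1\le\eta(\ell^*/L^*)$, hence $H^*\le 1/\eta^{-1}(1)$. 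You instead apply BQS directly to the nested continua $\bar B(x,\ell^*)$ and $\bar U$, using that $f\bar U=\bar B(f(x),r)$ has diameter exactly $2r$ and that $\diam f\bar B(x,\ell^*)\ge r$ because the point realizing $\ell^*$ lands on the sphere $S(f(x),r)$; this gives $\tfrac12\le\eta\bigl(\diam\bar B(x,\ell^*)/\diam\bar U\bigr)$ and then $H^*\le 2/\eta^{-1}(1/2)$. Your version dispenses with the lifting lemma entirely, which is an appreciable simplification, at the modest cost of a slightly different (but still uniform, quantitatively controlled) constant in the final bound. The auxiliary facts you use — that $U(x,f,r)$ is a normal neighborhood for small $r$, that $f\bar U=\bar B(f(x),r)$ and $f(\partial U)=S(f(x),r)$, that $B(x,\ell^*)\subseteq U$ by connectedness, and that one can keep everything below the locality scale of the BQS condition — are all sound and are the same background ingredients the paper relies on.
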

\begin{proof}
By localizing the map with $2$-bilipschitz charts, it suffices to show the claim for branched $\eta$-quasisymmetries $f\colon B^n \to B^n$. 

Let $x\in B^n$ and let $r_x>0$ be the radius for which domains $U(x,f,r)$ are normal neighborhoods of $x$ for $r<r_x$. Let now $r < r_x$ be such that $B(f(x), r) \subseteq f(B^n)$ and fix points $y$ and $z$ on $\partial U(x,f,r)$ for which $|y-x| = \ell^*(x,f,r)$ and $|z-x|=L^*(x,f,r)$; note that infimum and supremum are actually minimum and maximum due to compactness of $\partial U(x,f,r)$.

Since $fU(x,f,r) = B^n(f(x),r)$, there exists a continuum $E \subset \overline{U(x,f,r)}$ connecting $z$ to $x$ for which $fE = [f(z),f(x)]$; for this either continuum lifting lemma (Lemma \ref{cont lift}) or path lifting under discrete and open maps (see e.g.~\cite[Lemma 2.7]{MRV1}) can be used. Note that, when we lift $[f(z),f(x)]$ from $z$ in $U(x,f,r)$ the lift is total and hence contains $x$, since $f^{-1}(f(x)) = \{x\}$.

Since $f[y,x]$ connects $S^{n-1}(f(x),r)$ to $f(x)$ and $E$ connects $z$ to $x$, we have that $\diam f[y,x] \ge r$ and $\diam E \ge |z-x|$. Thus, by the $\eta$-branched quasisymmetry condition, 
\begin{align*}
r \le \diam(f[y,x]) &\le \eta\left( \frac{\diam [y,x]}{\diam E}\right) \diam fE \\
&\le \eta\left( \frac{|y-x|}{|z-x|}\right) \diam [f(z),f(x)] = \eta\left( \frac{|y-x|}{|z-x|}\right) r.
\end{align*}
Hence,
\[
\frac{\ell^*(x,f,r)}{L^*(x,f,r)} = \frac{|y-x|}{|z-x|} \ge \eta^{-1}(1),
\]
where $\eta^{-1}$ is the inverse of $\eta$. Thus,
\[
H^*(x,f) = \limsup_{r\to 0} \frac{L^*(x,f,r)}{\ell^*(x,f,r)} \le \frac{1}{\eta^{-1}(1)}.
\]
This concludes the proof.
\end{proof}

Theorem \ref{thm:QR=BQS} follows now from Theorems \ref{thm:Riemannian-GW} and \ref{thm:not-GW}.


\end{document}